\theoremstyle{definition}
\numberwithin{equation}{section}
\newtheorem{conj}[equation]{Conjecture}
\theoremstyle{definition}
\newtheorem{thm}[equation]{Theorem}
\theoremstyle{definition}
\newtheorem{cor}[equation]{Corollary}
\theoremstyle{definition}
\newtheorem{prop}[equation]{Proposition}
\theoremstyle{definition}
\newtheorem{lem}[equation]{Lemma}
\theoremstyle{definition}
\newtheorem{definition}[equation]{Definition}
\theoremstyle{definition}
\newtheorem{rem}[equation]{Remark}
\theoremstyle{definition}
\def\op{\mathrm{op}}
\begin{document}
\title[On Kazhdan-Yom Din Asymptotic Orthogonality]{On Kazhdan-Yom Din Asymptotic Orthogonality for $K$-finite matrix coefficients of tempered representations}
\author{Anne-Marie Aubert} 

\address{Sorbonne Universit\'e and Universit\'e Paris Cit\'e, CNRS, IMJ-PRG, F-75005 Paris, France}

\email{anne-marie.aubert@imj-prg.fr}

\author{Alfio Fabio La Rosa}

\address{Institute for the Advanced Study of Mathematics, Zhejiang University, East No. 7, Zijingang Campus, Hangzhou, China}

\email{alfiofabiolarosa@zju.edu.cn}
\date{}
\keywords{real Lie group, tempered representation, asymptotic Schur orthogonality}

\subjclass{22E50, 20G05}
\maketitle

\begin{abstract} In a recent article, D. Kazhdan and A. Yom Din conjectured the validity of an asymptotic form of Schur orthogonality for 
tempered, irreducible, unitary representations of semisimple groups defined over local fields. In the non-Archimedean case, they established it for $K$-finite matrix coefficients. The purpose of this article is to prove the analogous result in the Archimedean case. 
\end{abstract}

\tableofcontents

\section{Introduction} \label{sec:Introduction}
Let $G$ be a semisimple group over a local field, let $K$ be a maximal compact subgroup of $G$. We fix a Haar measure on $G$, denoted $\,dg$. If $H$ is the Hilbert space underlying a unitary representation of $G$, let $H_{K}$ denote the space of $K$-finite vectors and $H^{\infty}$ the space of smooth vectors.\\

In their recent work \cite{KYD}, D. Kazhdan and A. Yom Din conjectured the validity of an asymptotic version of Schur orthogonality relations. It should hold for matrix coefficients of tempered, irreducible,
unitary representations of $G$, generalising Schur well-known orthogonality relations for discrete series.\\

Following their article, we fix a norm on the Lie algebra $\mathfrak{g}$ of $G$. By \cite[Claim 5.2]{KYD}, we can choose it so that $\text{Ad}K$ acts unitarily on $\mathfrak{g}$. We define the function 
\begin{equation*}\textbf{r}:G\longrightarrow \mathbb{R}_{\geq0},\quad \textbf{r}(g)=\textrm{log} \left( \textrm{max}\{\|\textrm{Ad}(g)\|_{\op},\|\textrm{Ad}(g^{-1})\|_{\op} \}\right)
\end{equation*}
so that, given $r\in\mathbb{R}_{>0}$, we can introduce the corresponding ball 
\begin{equation*}
G_{<r}:=\{ g\in G|\textbf{r}(g)< r\}.
\end{equation*}

Given this set-up, we can state their conjecture.

\begin{conj}[Kazhdan-Yom Din, Asymptotic Schur Orthogonality Relations]
\label{asym}
Let $G$ be a semisimple group over a local field $F$ and let $(\pi, H)$ be a tempered, irreducible, unitary representation of $G$. Then there exist $\textbf{d}(\pi)\in \mathbb{Z}_{\geq0}$ and $\textbf{f}(\pi)\in \mathbb{R}_{>0}$ such that, for all $v_{1},v_{2},v_{3},v_{4}\in H$, the following holds: 
\begin{equation*}
\lim_{r\to\infty}\frac{1}{r^{\textbf{d}(\pi)}}\int_{G_{<r}}\langle \pi(g) v_{1},v_{2}\rangle \overline{\langle \pi(g)v_{3},v_{4}\rangle}\,dg=\frac{1}{\textbf{f}(\pi)}\langle v_{1},v_{3}\rangle \overline{\langle v_{2},v_{4}\rangle} .
\end{equation*}
\end{conj}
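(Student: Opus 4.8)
I would prove the relation for smooth (in particular $K$-finite) matrix coefficients, which is the scope of \cite{KYD} in the non-Archimedean case, and read off the precise shape of the right-hand side from Schur's lemma once the existence of the limit has been secured. First, using Claim 5.2 of \cite{KYD}, fix on $\mathfrak g$ the inner product attached to the Killing form and the Cartan involution, so that $\mathrm{Ad}(K)$ acts by isometries and $\mathrm{ad}(X)$ is self-adjoint for $X\in\mathfrak p$. Writing $G=K\,\overline{A^{+}}\,K$ for the Cartan decomposition, $\mathrm{Ad}(\exp H)$ is then positive self-adjoint with $\|\mathrm{Ad}(\exp H)\|_{op}=e^{\mu(H)}$ where $\mu$ is the highest restricted root (for $G$ simple; in general the relevant $W$-invariant piecewise-linear function), and the same for $\mathrm{Ad}(\exp(-H))$; hence $\textbf{r}(k_{1}\exp(H)k_{2})=\mu(H)$, and in Cartan coordinates $G_{<r}=K\times r\Omega\times K$ for a fixed polytope $\Omega\subset\overline{\mathfrak a^{+}}$ with nonempty interior, while Haar measure becomes $\delta(H)\,dk_{1}\,dH\,dk_{2}$ with $\delta(H)\le Ce^{2\rho(H)}$ and $\delta(H)=e^{2\rho(H)}(1+O(e^{-c\|H\|}))$ deep in the chamber. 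Since the $v_{i}$ are $K$-finite (or, more generally, smooth, using the domination of tempered matrix coefficients by Harish-Chandra's $\Xi$-function up to a continuous seminorm), the variables $k_{1},k_{2}$ enter only through matrix coefficients of $K$, and the problem is reduced to the behaviour of $\langle\pi(\exp H)w_{1},w_{2}\rangle$ as $H\to\infty$ in $\overline{\mathfrak a^{+}}$ and its integration against $\delta(H)$ over $r\Omega$.

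\textbf{Asymptotics of the matrix coefficients.} By the Knapp--Zuckerman description of the tempered dual, $\pi$ is a direct summand of $\mathrm{Ind}_{P}^{G}(\sigma\otimes e^{i\nu})$ for a cuspidal parabolic $P=MAN$, a discrete series $\sigma$ of $M$, and a real $\nu\in\mathfrak a_{P}^{*}$; hence each smooth matrix coefficient of $\pi$ is a finite sum of matrix coefficients of this induced representation. For the latter I would use Harish-Chandra's theory of the asymptotics of Eisenstein integrals (compatibly with the Casselman--Milicic expansions): writing $H=H_{\mathfrak a}+H_{M}$ with respect to $\mathfrak a=\mathfrak a_{P}\oplus\mathfrak a_{M}$, the leading term along $\overline{\mathfrak a^{+}}$ has the form $e^{-\rho_{P}(H_{\mathfrak a})}\sum_{s}e^{i(s\nu)(H_{\mathfrak a})}\Phi_{s}(H_{M})$, the sum over a Weyl set, where the coefficients are built from the Knapp--Stein $c$-functions --- which are \emph{regular} on the unitary axis, so that no polynomial factor in $H_{\mathfrak a}$ appears --- and each $\Phi_{s}$ is essentially a matrix coefficient of the discrete series $\sigma$, in particular square-integrable on $M$ and exponentially dominated by $e^{-\rho_{M}(H_{M})}$; the remaining exponents have strictly negative real part and the remaining terms decay strictly faster. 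Temperedness is used exactly through the non-positivity of the real parts of all exponents, the regularity of the $c$-functions, and the square-integrability of $\sigma$'s coefficients.

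\textbf{Extraction of the leading term and conclusion.} Substituting these expansions into $\langle\pi(g)v_{1},v_{2}\rangle$ and $\overline{\langle\pi(g)v_{3},v_{4}\rangle}$ and multiplying by $\delta(H)$, the top-order part of the integrand over $r\Omega$ is $\sum_{s,s'}(\cdots)\,e^{i(s\nu-s'\nu)(H_{\mathfrak a})}\,e^{2\rho_{M}(H_{M})}\Phi_{s}(H_{M})\overline{\Phi_{s'}(H_{M})}$. Integrating first in $H_{M}$, the discrete series decay makes this converge --- uniformly over the $H_{\mathfrak a}$-slice and with exponentially small error --- to a constant which, by the classical Schur orthogonality relations for $\sigma$, is a bilinear expression in the $M$-data; integrating then in $H_{\mathfrak a}$ over the dilating polytope slice, the terms with $s\nu\ne s'\nu$ are oscillatory and, by the divergence theorem on $r\Omega$, contribute $O(r^{\dim\mathfrak a_{P}-1})$, whereas the matched terms $s\nu=s'\nu$ contribute $\sim c_{\Omega}r^{\dim\mathfrak a_{P}}$, the lower-order exponents, the chamber-wall regions and the error in $\delta$ all being $o(r^{\dim\mathfrak a_{P}})$. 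This yields $\textbf{d}(\pi):=\dim\mathfrak a_{P}$ (the split rank of the inducing cuspidal parabolic, automatically in $\mathbb Z_{\ge 0}$, and equal to $0$ precisely when $\pi$ is a discrete series, recovering the classical relations) and shows that the limit
\[ T_{\infty}(v_{1},v_{2},v_{3},v_{4}):=\lim_{r\to\infty}\frac{1}{r^{\textbf{d}(\pi)}}\int_{G_{<r}}\langle\pi(g)v_{1},v_{2}\rangle\overline{\langle\pi(g)v_{3},v_{4}\rangle}\,dg \]
exists. Now $T_{\infty}$ is sesquilinear (linear in $v_{1},v_{4}$, antilinear in $v_{2},v_{3}$) and $G$-invariant: submultiplicativity of the operator norm gives $\textbf{r}(gx)\le\textbf{r}(g)+\textbf{r}(x)$, so $G_{<r}x$ and $G_{<r}$ differ by a subset of the annulus $G_{<r+\textbf{r}(x)}\setminus G_{<r-\textbf{r}(x)}$, over which the integrand's absolute value integrates to $O(r^{\textbf{d}(\pi)-1})$ by the same polytope estimate, and right-invariance of Haar measure then gives $T_{\infty}(\pi(x)v_{1},v_{2},\pi(x)v_{3},v_{4})=T_{\infty}(v_{1},v_{2},v_{3},v_{4})$, and likewise on the right. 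Applying Schur's lemma for the irreducible unitary representation $\pi$ in $(v_{1},v_{3})$ and then in $(v_{2},v_{4})$ gives a constant $c$ with $T_{\infty}(v_{1},v_{2},v_{3},v_{4})=c\,\langle v_{1},v_{3}\rangle\overline{\langle v_{2},v_{4}\rangle}$; taking $v_{1}=v_{3}$, $v_{2}=v_{4}$ shows $c\ge 0$ (the truncated integrals are non-negative and non-decreasing in $r$), while the sharpness of the asymptotics above (the non-oscillatory part does not globally cancel) gives $c>0$, and one sets $\textbf{f}(\pi):=1/c$.

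\textbf{Main obstacle.} The substance lies entirely in the asymptotic analysis: obtaining the expansion of the (smooth) matrix coefficients of $\pi$ with coefficients and remainder controlled well enough to integrate over the growing region --- notably the regularity of the relevant $c$-functions on the unitary axis (which is what makes $\textbf{d}(\pi)$ come out as $\dim\mathfrak a_{P}$ and no larger), the uniform convergence of the $M$-integral over the $H_{\mathfrak a}$-slices, the negligibility of the chamber-wall regions, and the oscillatory-integral estimates on the dilating polytope that demote the cross terms. The reduction step and the final Schur-theoretic identification are, by comparison, soft; the passage from the smooth (hence $K$-finite) case to arbitrary $v_{i}\in H$ is the point left open, matching the scope of \cite{KYD}.
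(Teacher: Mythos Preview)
First, note that this statement is presented in the paper as a \emph{conjecture}; the paper does not prove it but rather establishes the $K$-finite Archimedean case (Theorem~\ref{Main}). Your proposal likewise targets only the $K$-finite case, so the comparison is with the paper's proof of Theorem~\ref{Main}.

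Your strategy and the paper's agree on the broad outline---reduce via the Cartan decomposition, use asymptotic expansions of matrix coefficients along parabolics, then conclude by a Schur argument---but diverge sharply at the invariance step. You argue $G$-invariance of the limiting form $T_\infty$ directly: translation by $x$ replaces $G_{<r}$ by $G_{<r}x$, which differs from $G_{<r}$ only in an annulus whose contribution you claim is $O(r^{\mathbf d(\pi)-1})$. The paper instead proves $(\mathfrak g,K)$-invariance on $H_K$ by an indirect route: it identifies the leading asymptotic contributions with functions $\Gamma_{\lambda,l}$ that are square-integrable on the Levi $M_\lambda$, realizes these via Casselman's Frobenius reciprocity (Theorem~\ref{Frobenius}) as values of $(\mathfrak g,K)$-equivariant maps $T_w\colon H_K\to\mathrm{Ind}_{\overline P_\lambda}(\sigma,\lambda|_{\mathfrak a_{\lambda_0}})$ into a \emph{unitary} induced representation, and reads off $\mathfrak g$-invariance from skew-adjointness of the Lie algebra action there (Corollary~\ref{corparabunit}).

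Your $G$-invariance argument has a genuine gap in the Archimedean setting. The space $H_K$ is \emph{not} $G$-stable: for $v_1,v_3\in H_K$ and generic $x\in G$, the vectors $\pi(x)v_1,\pi(x)v_3$ are smooth but not $K$-finite. Granting the annulus estimate, your computation does show that the limit defining $T_\infty(\pi(x)v_1,v_2,\pi(x)v_3,v_4)$ exists and equals $T_\infty(v_1,v_2,v_3,v_4)$; but this does not exhibit $T_\infty$ as a $G$-invariant sesquilinear form on a $G$-module, so Schur's lemma for the unitary representation $(\pi,H)$ does not apply to a form defined only on $H_K$. The natural fix---differentiate the identity $T_\infty(\pi(\exp tX)v_1,v_2,\pi(\exp tX)v_3,v_4)=T_\infty(v_1,v_2,v_3,v_4)$ at $t=0$ to obtain $\mathfrak g$-invariance on $H_K$, then invoke the $(\mathfrak g,K)$-module Schur lemma (Corollary~\ref{Schur})---requires commuting $\lim_{r\to\infty}$ with $\tfrac{d}{dt}\big|_{t=0}$, equivalently controlling a boundary integral over $\partial G_{<r}$, which your fixed-width annulus bound does not obviously supply. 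This interchange is precisely the obstacle the paper's Frobenius-reciprocity machinery is built to circumvent; see the paper's own remark that the space of $K$-finite vectors ``does not afford a representation of $G$'' and the ensuing strategy.

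A secondary point: your identification $\mathbf d(\pi)=\dim\mathfrak a_P$ via regularity of the Knapp--Stein $c$-functions on the unitary axis is plausible but not established by what you write. The paper's definition \eqref{defdp} explicitly allows contributions $2l_i$ from polynomial factors in the leading asymptotics along the purely imaginary directions, and such factors can arise from coincidences among exponents rather than from poles of intertwining operators; you would need to rule these out separately.
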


Assuming that the matrix coefficients involved are $K$-finite, one has the following result:

\begin{thm}\cite[Theorem 1.7]{KYD} \label{kfin} Let $G$ be a semisimple group defined over a local field $F$ and let $K$ be a maximal compact subgroup of $G$. Let $(\pi,H)$ be a tempered, irreducible, unitary representation of $G$ and let $H_{K}$ denote the space of $K$-finite vectors in $H$. Then there exists $\textbf{d}(\pi)\in\mathbb{Z}_{\geq0}$ such that:
\begin{enumerate}
\item[(1)] If $F$ is non-Archimedean, there exists $\textbf{f}(\pi)\in \mathbb{R}_{>0}$ such that, for all $v_{1},v_{2},v_{3},v_{4}\in H_{K}$, we have 
\begin{equation*}
\lim_{r\to\infty}\frac{1}{r^{\textbf{d}(\pi)}}\int_{G_{<r}}\langle \pi(g) v_{1},v_{2}\rangle \overline{\langle \pi(g)v_{3},v_{4}\rangle}\,dg=\frac{1}{\textbf{f}(\pi)}\langle v_{1},v_{3}\rangle \overline{\langle v_{2},v_{4}\rangle}.
\end{equation*}   
\item[(2)] If $F$ is Archimedean, for any given non-zero $v_{1},v_{2}\in H_{K}$, there exists $C(v_{1},v_{2})>0$ such that 
\begin{equation*}
\lim_{r\to\infty}\frac{1}{r^{\textbf{d}(\pi)}}\int_{G_{<r}} |\langle \pi(g)v_{1},v_{2}\rangle|^{2}\,dg=C(v_{1},v_{2}).
\end{equation*}
\end{enumerate}
\end{thm}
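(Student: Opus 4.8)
The plan is to transfer the integral to the Cartan torus and then feed in Harish-Chandra's asymptotic expansions of matrix coefficients; statement (1) is \cite{KYD}, so I treat the Archimedean case (2) (the same analysis should also give the four-vector refinement of Conjecture~\ref{asym} for $K$-finite vectors). Using the Cartan decomposition $G=K\overline{A^{+}}K$ and the fact that $\mathrm{Ad}(K)$ acts unitarily, $\textbf{r}$ is bi-$K$-invariant and on $\overline{A^{+}}$ equals $\exp H\mapsto\max_{i}\widetilde{\alpha}_{i}(H)$, where the $\widetilde{\alpha}_{i}$ are the highest restricted roots of the simple factors; hence $G_{<r}\cap\overline{A^{+}}=\exp(r\Omega)$ for a fixed bounded polytope $\Omega\subset\overline{\mathfrak{a}^{+}}$ with nonempty interior. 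Combining this with the integration formula $\int_{G}f\,dg=c\int_{\overline{\mathfrak{a}^{+}}}\bigl(\int_{K\times K}f(k_{1}\exp H\,k_{2})\,dk_{1}\,dk_{2}\bigr)J(\exp H)\,dH$, where $J(\exp H)=\prod_{\alpha\in\Sigma^{+}}(\sinh\alpha(H))^{m_{\alpha}}$, and with the $K$-finiteness of $v_{1},v_{2}$, the integral $\int_{G_{<r}}|\langle\pi(g)v_{1},v_{2}\rangle|^{2}\,dg$ becomes a finite sum of terms $\int_{r\Omega}|\langle\pi(\exp H)v,w\rangle|^{2}\,J(\exp H)\,dH$ with $v,w$ in the finite-dimensional spaces spanned by the $K$-orbits of $v_{1},v_{2}$.

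Next I would insert the asymptotic expansion of $K$-finite matrix coefficients of admissible representations: for $H$ deep in the chamber, $\langle\pi(\exp H)v,w\rangle=\sum_{\mu}P_{\mu}(H;v,w)\,e^{(\mu-\rho)(H)}$ up to an exponentially smaller error, with finitely many exponents $\mu$ and $P_{\mu}$ polynomial in $H$ and sesquilinear in $(v,w)$. Temperedness constrains the exponents ($\mathrm{Re}\,\mu\le 0$, the leading ones lying on $i\mathfrak{a}^{*}$); and since $\pi$ need not be a discrete series, the dominant contribution to $\int_{r\Omega}$ comes \emph{not} from the interior of $\Omega$ but from a neighbourhood of the face of $\overline{\mathfrak{a}^{+}}$ matching the cuspidal support of $\pi$. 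There the relevant asymptotics is Harish-Chandra's constant term: one realizes $\pi$ inside $\mathrm{Ind}_{P}^{G}(\sigma\otimes e^{i\nu})$ with $\sigma$ a discrete series of the Levi $M$ of $P=MAN$, writes the matrix coefficients as finite combinations of Eisenstein integrals, and uses the explicit leading term along $A_{P}^{+}$, a sum over the small Weyl group $W(\mathfrak{a}_{P})$ of $\textbf{c}$-function multiples of exponentials in $H$ times polynomials whose degree is governed by the stabilizer $W(\mathfrak{a}_{P})_{\nu}$. Using $J(\exp H)=c_{0}e^{2\rho(H)}(1+O(e^{-\varepsilon\|H\|}))$ in the interior, and its analogue relative to $P$ near the dominant face, one obtains on the relevant region $|\langle\pi(\exp H)v,w\rangle|^{2}J(\exp H)=c_{0}\bigl|\textstyle\sum_{\mu}Q_{\mu}(H)e^{i\mu(H)}\bigr|^{2}+(\text{exponentially small})$ for suitable polynomials $Q_{\mu}$.

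I would then evaluate the resulting integral by rescaling $H=rX$ and expanding the square. The diagonal terms $\mu=\nu$ contribute $r^{\textbf{d}(\pi)}\int_{\Omega}\bigl|\sum_{\deg Q_{\mu}=d}Q_{\mu}^{\mathrm{top}}(X)\bigr|^{2}\,dX+o(r^{\textbf{d}(\pi)})$, with $d$ the maximal degree occurring and $\textbf{d}(\pi):=\dim\mathfrak{a}_{P}^{G}+2d$ an integer intrinsic to $\pi$; the off-diagonal terms are oscillatory integrals $\int_{\Omega}(\text{polynomial})(X)\,e^{ir(\mu-\nu)(X)}\,dX$ which, by repeated integration by parts against the piecewise-smooth boundary of $\Omega$ (or a smooth partition of unity), are of strictly smaller order in $r$; and the exponentially small remainder, after multiplication by $J$, integrates to $O(1)$. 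This gives the existence of $\lim_{r\to\infty}r^{-\textbf{d}(\pi)}\int_{G_{<r}}|\langle\pi(g)v_{1},v_{2}\rangle|^{2}\,dg=C(v_{1},v_{2})\ge 0$, with $C(v_{1},v_{2})=c_{0}\int_{K\times K}\int_{\Omega}\bigl|\Phi^{\mathrm{top}}_{k_{1},k_{2}}(X)\bigr|^{2}\,dX\,dk_{1}\,dk_{2}$, where $\Phi^{\mathrm{top}}_{k_{1},k_{2}}$ is the degree-$d$ part of the leading asymptotics of $H\mapsto\langle\pi(\exp H)\pi(k_{2})v_{1},\pi(k_{1})^{-1}v_{2}\rangle$.

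The heart of the matter, and the main obstacle, is twofold. First, making the asymptotic analysis uniform over the \emph{closed} Cartan cone: correctly locating the dominant face, controlling the transition regions near the various lower-rank walls, and handling the attendant oscillatory-integral estimates — this is precisely the new analytic difficulty relative to the non-Archimedean case, where the expansions are combinatorial and no stationary-phase input is needed. Second, the strict positivity $C(v_{1},v_{2})>0$: this amounts to showing that $\Phi^{\mathrm{top}}_{k_{1},k_{2}}$ does not vanish for $(k_{1},k_{2})$ in a set of positive measure whenever $v_{1},v_{2}\neq0$, which I would deduce from the holomorphy and invertibility of the relevant $\textbf{c}$-functions on the unitary axis (Harish-Chandra) together with injectivity of the constant-term map and unitarity of $\pi$ — indeed one expects the leading form to be a positive multiple of $\|v_{1}\|^{2}\|v_{2}\|^{2}$, matching $\textbf{f}(\pi)^{-1}$ in Conjecture~\ref{asym}. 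A cleaner route for both points would be to run the argument through Harish-Chandra's Plancherel formula, expressing $\int_{G_{<r}}$ of a wave packet of Eisenstein integrals via the Maass--Selberg inner-product formulas and localizing as $r\to\infty$ to the parameter $\nu$, which would simultaneously produce the constant $\textbf{f}(\pi)$ and the four-vector statement.
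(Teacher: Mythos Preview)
This theorem is not proved in the present paper; it is quoted verbatim from \cite{KYD}, Theorem~1.7, and used as a black box (see the proof of Theorem~\ref{main3}, where part~(2) is invoked to produce the constant $C(v_{0},v_{0})>0$). So there is no ``paper's own proof'' to compare against. That said, your sketch of (2) is broadly in line with the strategy of \cite{KYD} as summarised in the Introduction and Section~3 here: transfer to the Cartan cone, feed in the asymptotic expansion relative to $P$, isolate the terms with $\textbf{d}_{P}(\lambda,l)$ maximal, and compute the limit via the lemmata in \cite{KYD}, Appendix~A. Your formula $\textbf{d}(\pi)=\dim\mathfrak{a}_{P}^{G}+2d$ matches the definition \eqref{defdp}, once one takes the maximum over all standard parabolics.

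Where your proposal diverges from the paper is in the parenthetical remark that ``the same analysis should also give the four-vector refinement''. The paper's route to Theorem~\ref{Main} is not a direct asymptotic computation of the four-vector limit, nor a Plancherel/Maass--Selberg argument; instead it accepts part~(2) as given and then proves, by an independent algebraic argument, that the resulting form $D(\cdot,v_{2},\cdot,v_{4})$ is $(\mathfrak{g},K)$-invariant (Proposition~\ref{ginv}), so that Schur's lemma for $(\mathfrak{g},K)$-modules (Corollary~\ref{Schur}) forces it to be a scalar multiple of the inner product. The $\mathfrak{g}$-invariance is obtained by identifying the relevant $\Gamma_{\lambda,l}$-functions with coefficient functions in the expansion along a suitable standard parabolic $P_{\lambda}$ (Proposition~\ref{Gamma}), showing they are smooth vectors in $L^{2}(M_{\lambda})$ (Proposition~\ref{GammaGar}), and then applying Casselman's Frobenius reciprocity (Theorem~\ref{Frobenius}) to build $(\mathfrak{g},K)$-intertwiners into a unitary induced representation, whence skew-symmetry of the inner product under $\mathfrak{g}$ does the work. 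Your suggested Plancherel route is plausible in principle and might yield $\textbf{f}(\pi)$ more explicitly, but it is a genuinely different argument, and the two hard points you flag---uniform control of the asymptotics near the walls of $\overline{\mathfrak{a}^{+}}$, and strict positivity of $C(v_{1},v_{2})$ via $\textbf{c}$-function invertibility---are precisely the analytic inputs that \cite{KYD} has to supply and that the present paper avoids by taking (2) for granted.
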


In the non-Archimedean case, the proof of (1) is achieved by first establishing the validity of the analogous version of (2). The polarisation identity allows the authors of \cite{KYD} to define a form 
\begin{equation*}
D(\cdot,\cdot,\cdot,\cdot):H_{K}\times H_{K}\times H_{K}\times H_{K}\longrightarrow \mathbb{C}
\end{equation*}
via the prescription 
\begin{equation*}
D(v_{1},v_{2},v_{3},v_{4}):=\lim_{r\to\infty}\frac{1}{r^{\textbf{d}(\pi)}}\int_{G_{<r}}\langle \pi(g) v_{1},v_{2}\rangle \overline{\langle \pi(g)v_{3},v_{4}\rangle}\,dg.
\end{equation*}
In \cite[Section 4.1]{KYD}, this form is shown to be $G$-invariant and one would like to invoke an appropriate form of  the Schur lemma to argue as in the standard proof of Schur orthogonality relations. That is, for fixed $v_{2},v_{4}\in H_{K}$, one defines the form 
\begin{equation*}
D(\cdot,v_{2},\cdot,v_{4}):H_{K}\times H_{K}\longrightarrow \mathbb{C},
\end{equation*}
and, for fixed $v_{1},v_{3}\in H_{K}$, the form 
\begin{equation*}
D(v_{1},\cdot,v_{3},\cdot):H_{K}\times H_{K}\longrightarrow \mathbb{C}.
\end{equation*}
One applies Schur lemma to these forms, which implies that each such form is a scalar multiple of the inner product on $H$. Upon comparing them, one obtains the desired orthogonality relations.\\

In the non-Archimedean case, it seems to us that the representations considered in \cite{KYD} are implicitly assumed to be smooth \cite[D\'efinition III.1.1]{Ren}, otherwise it is not clear how the theory of asymptotic expansion can be applied.\\

The appropriate version of the Schur lemma in this case is a consequence of Dixmier's lemma \cite[Lemma 0.5.2]{Wal}, which can be applied since in the non-Archimedean setting the subspace of $K$-finite vectors $H_{K}$ and the subspace of smooth vectors $H^{\infty}$ coincide: the latter is irreducible since $H$ itself is irreducible. The required countability of the dimension of $H_{K}$ follows from the admissibility \cite[Th\'eor\`eme VI.2.2]{Ren} of the irreducible smooth unitary representation $(\pi,H)$ and by invoking \cite[Lemma 0.5.2]{Wal} in the proof of \cite[III.1.9]{Ren}.\\

The purpose of this article is to prove that the analogue of (1) in Theorem \ref{kfin} holds in the Archimedean case. As explained in \cite[Section 4.2]{KYD}, it suffices to prove the result for real semisimple groups (Theorem \ref{main3}). 

\begin{thm}\label{Main} Let $(\pi,H)$ be a tempered, irreducible, unitary representation of a connected, semisimple Lie group $G$ with finite centre. Let $K$ be a maximal compact subgroup of $G$. Then there exists $\textbf{f}(\pi)\in\mathbb{R}_{>0}$ such that, for all $v_{1},v_{2},v_{3},v_{4}\in H_{K}$, we have 
\begin{equation*}
\lim_{r\to\infty}\frac{1}{r^{\textbf{d}(\pi)}}\int_{G_{<r}}\langle \pi(g) v_{1},v_{2}\rangle \overline{\langle \pi(g)v_{3},v_{4}\rangle}\,dg=\frac{1}{\textbf{f}(\pi)}\langle v_{1},v_{3}\rangle \overline{\langle v_{2},v_{4}\rangle}.
\end{equation*}
\end{thm}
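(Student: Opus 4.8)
The plan is to follow the argument of \cite{KYD} recalled above, substituting for Dixmier's lemma --- which is unavailable here since $H_K\subsetneq H^{\infty}$ --- the combination of the Casselman--Wallach automatic continuity theorem and the algebraic form of Schur's lemma for the irreducible admissible $(\mathfrak{g},K)$-module $H_K$. First, from the Archimedean part of Theorem \ref{kfin} --- the existence, for $v,w\in H_K$, of $C(v,w):=\lim_{r\to\infty}r^{-\mathbf{d}(\pi)}\int_{G_{<r}}|\langle\pi(g)v,w\rangle|^{2}\,dg\geq 0$, positive when $v,w\neq 0$ --- I would apply the polarisation identity twice, once in the pair $(v_1,v_3)$ and once in $(v_2,v_4)$, to write, for each fixed $g$, the product $\langle\pi(g)v_1,v_2\rangle\,\overline{\langle\pi(g)v_3,v_4\rangle}$ as a finite $\mathbb{C}$-linear combination of terms $|\langle\pi(g)u_j,w_j\rangle|^{2}$ with $u_j\in\operatorname{span}\{v_1,v_3\}$ and $w_j\in\operatorname{span}\{v_2,v_4\}$. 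Integrating over $G_{<r}$, dividing by $r^{\mathbf{d}(\pi)}$ and letting $r\to\infty$ then shows that the four-variable limit
\begin{equation*}
D(v_1,v_2,v_3,v_4):=\lim_{r\to\infty}\frac{1}{r^{\mathbf{d}(\pi)}}\int_{G_{<r}}\langle\pi(g)v_1,v_2\rangle\,\overline{\langle\pi(g)v_3,v_4\rangle}\,dg
\end{equation*}
exists on $H_K$. I would record that $D$ is linear in $v_1,v_4$ and conjugate-linear in $v_2,v_3$; that the substitution $g\mapsto g^{-1}$, together with unimodularity of $G$ and the identity $\mathbf{r}(g^{-1})=\mathbf{r}(g)$, gives $D(v_1,v_2,v_3,v_4)=\overline{D(v_2,v_1,v_4,v_3)}$; and that $(v_1\otimes\overline{v_2},v_3\otimes\overline{v_4})\mapsto D(v_1,v_2,v_3,v_4)$ is a positive semi-definite Hermitian form on $H_K\otimes\overline{H_K}$, so that $|D(v_1,v_2,v_3,v_4)|^{2}\leq C(v_1,v_2)\,C(v_3,v_4)$.

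Next I would pass to the Casselman--Wallach smooth globalisation $H^{\infty}$. Using that $\pi$ is tempered, together with the asymptotics of its matrix coefficients underlying the proof of Theorem \ref{kfin} (Harish-Chandra's expansions and the estimates in terms of the $\Xi$-function and of Sobolev seminorms on $H^{\infty}$), I would show that the limit defining $C(v,w)$ persists for $v,w\in H^{\infty}$ and that $C$ is bounded there by a product of continuous seminorms; by the Cauchy--Schwarz bound above, $D$ then extends to a separately continuous --- hence, $H^{\infty}$ being Fr\'echet, jointly continuous --- form on $(H^{\infty})^{4}$. With this in hand, the same manipulation as in \cite{KYD}, Section 4.1 --- based on the submultiplicativity $\mathbf{r}(g_1g_2)\leq\mathbf{r}(g_1)+\mathbf{r}(g_2)$, the relation $\mathbf{r}(g^{-1})=\mathbf{r}(g)$, unimodularity, and $(r\pm c)^{\mathbf{d}(\pi)}/r^{\mathbf{d}(\pi)}\to 1$, with the normalised integrals over $G_{<r+c}\setminus G_{<r-c}$ discarded by means of the seminorm bounds --- yields, for all $h\in G$ and $v_i\in H^{\infty}$,
\begin{equation*}
D(\pi(h)v_1,v_2,\pi(h)v_3,v_4)=D(v_1,v_2,v_3,v_4)=D(v_1,\pi(h)v_2,v_3,\pi(h)v_4).
\end{equation*}
Fixing $v_2,v_4\in H^{\infty}$, the sesquilinear form $B_{v_2,v_4}:=D(\,\cdot\,,v_2,\,\cdot\,,v_4)$ on $H^{\infty}$ is therefore $G$-invariant; differentiating the first identity along one-parameter subgroups --- legitimate because $D$ is continuous on the Fr\'echet space $(H^{\infty})^{4}$ and $h\mapsto\pi(h)v$ is smooth into $H^{\infty}$ --- gives $B_{v_2,v_4}(\pi(X)v_1,v_3)+B_{v_2,v_4}(v_1,\pi(X)v_3)=0$ for $X\in\mathfrak{g}$, while $K$-invariance is immediate, so $B_{v_2,v_4}$ restricts to a $(\mathfrak{g},K)$-invariant sesquilinear form on $H_K$.

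For the endgame, a $(\mathfrak{g},K)$-invariant sesquilinear form on $H_K$ is the same datum as a $(\mathfrak{g},K)$-morphism from $H_K$ to the $K$-finite dual of the conjugate module $\overline{H_K}$; since $\pi$ is unitary the inner product identifies $H_K$ with that dual, and $H_K$ is irreducible admissible, so Dixmier's lemma (valid because $H_K$ has countable dimension over the uncountable field $\mathbb{C}$) forces $B_{v_2,v_4}=\lambda(v_2,v_4)\langle\,\cdot\,,\,\cdot\,\rangle$ on $H_K$ for a scalar $\lambda(v_2,v_4)\in\mathbb{C}$, hence on $H^{\infty}$ by continuity and density. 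The analogous argument with $v_1,v_3$ fixed gives $D(v_1,v_2,v_3,v_4)=\mu(v_1,v_3)\,\overline{\langle v_2,v_4\rangle}$. Comparing the two formulas, setting $v_3=v_1$ and $v_4=v_2$, and using $C(v_1,v_2)=D(v_1,v_2,v_1,v_2)=\lambda(v_2,v_2)\|v_1\|^{2}=\mu(v_1,v_1)\|v_2\|^{2}$ --- which is $\geq 0$ and $>0$ for $v_1,v_2\neq 0$ --- I would deduce that $C(v_1,v_2)\big/(\|v_1\|^{2}\|v_2\|^{2})$ is a constant $\mathbf{f}(\pi)^{-1}\in\mathbb{R}_{>0}$ independent of $v_1,v_2$; then $\lambda(v_2,v_4)=\mathbf{f}(\pi)^{-1}\overline{\langle v_2,v_4\rangle}$, and therefore $D(v_1,v_2,v_3,v_4)=\mathbf{f}(\pi)^{-1}\langle v_1,v_3\rangle\overline{\langle v_2,v_4\rangle}$ on $H^{\infty}$, in particular on $H_K$.

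I expect the main obstacle to be the extension step: showing that the normalised integrals converge for all smooth vectors and are dominated by continuous seminorms requires the quantitative asymptotics of tempered matrix coefficients --- Harish-Chandra's $\Xi$-function estimates and the polynomial-in-$r$ growth of $\int_{G_{<r}}\Xi^{2}$ --- so it is here that the Archimedean structure theory genuinely enters; granting this, the remaining steps are formal, the only essentially new ingredient compared with \cite{KYD} being the substitution of automatic continuity together with algebraic Schur on $H_K$ for Dixmier's lemma on $H^{\infty}$.
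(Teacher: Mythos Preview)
Your overall architecture is sound and the endgame (Schur on $H_K$, then the comparison of $A_{v_2,v_4}$ and $B_{v_1,v_3}$) matches the paper exactly. The divergence is in how you obtain $(\mathfrak{g},K)$-invariance of the partial forms. The paper never leaves $H_K$: it proves $\mathfrak{g}$-invariance directly (Proposition~\ref{ginv2}) by reducing the asymptotic integral, via Harish-Chandra's expansions along the minimal and standard parabolics, to inner products of the coefficient functions $\Gamma_{\lambda,l}$ on the Levi $M_\lambda$; these are shown to be smooth $L^2$-vectors, and Casselman's Frobenius reciprocity packages the assignment $v\mapsto\Gamma_{\lambda,l}(\cdot,\pi(k)v,w)$ as a $(\mathfrak{g},K)$-map into a unitarily induced representation, whence skew-symmetry of $\mathfrak{g}$ on the induced inner product gives the required identity. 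You instead propose to extend $D$ to $H^\infty$, prove $G$-invariance there by the ball argument of \cite{KYD}, and differentiate.

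The gap is precisely the extension step, and it is more than a technical obstacle. To run the ball argument for $h\in G$ you need $D$ (or at least $C$) defined at $\pi(h)v_1$, which is not $K$-finite; so you must show that $\lim_{r\to\infty} r^{-\mathbf{d}(\pi)}\int_{G_{<r}}|\phi_{v,w}|^2\,dg$ exists for $v,w\in H^\infty$ and is controlled by continuous seminorms. The crude tempered bound $|\phi_{v,w}(g)|\le p(v)\,q(w)\,\Xi(g)(1+\log\|g\|)^N$ only gives $r^{-\mathbf{d}(\pi)}\int_{G_{<r}}\Xi^2(1+\log\|g\|)^{2N}$ on the right, and this grows like a power of $r$ that is in general strictly larger than $r^{\mathbf{d}(\pi)}$, so it yields no uniform bound, let alone convergence. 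The convergence for $K$-finite vectors in Theorem~\ref{kfin}(2) comes from the \emph{exact} polynomial--exponential structure of the Harish-Chandra expansion (specific leading exponents and bounded $l$), and that expansion is available only on $H_K$. Establishing the analogous convergence on $H^\infty$ would essentially amount to proving Conjecture~\ref{asym} for smooth vectors, which is strictly stronger than the theorem you are trying to prove; the paper's whole strategy is designed to avoid this. If you want to salvage your route, you would need a smooth-vector asymptotic expansion with uniform remainder estimates in Sobolev seminorms (in the spirit of Wallach's real reductive groups, volume~II), and then to check that the combinatorics determining $\mathbf{d}(\pi)$ survive; this is substantial additional input, not a formality.
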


\begin{rem} It is well-known that an irreducible, tempered representation as in Theorem \ref{Main} is unitarisable. We have chosen the somewhat redundant formulation above to emphasize that the unitarity of the 
representation plays a crucial role in the following. 
From now on, if $(\pi,H)$ is a tempered, irreducible representation we will implicitly assume that $\pi$ acts unitarily with respect to the inner product on $H$.  
\end{rem}

We need to modify the strategy above to account for the fact that the space of $K$-finite vectors of a unitary representation $(\pi,H)$ of a real semisimple group does not afford a representation of $G$. It is, however, an admissible $(\mathfrak{g},K)$-module. \\

Our approach relies crucially on the admissibility of irreducible, unitary representations of reductive Lie groups, a foundational theorem proved by Harish-Chandra. The theory of admissible $(\mathfrak{g},K)$-modules then provides us with the appropriate version of the Schur lemma for $(\mathfrak{g},K)$-invariant forms (Definition \ref{gkinvform}).

Hence, we are reduced to verify that $D(\cdot,v_{2},\cdot,v_{4})$ and $D(v_{1},\cdot,v_{3},\cdot)$ are, indeed, $(\mathfrak{g},K)$-invariant. Having established this, to conclude the proof of Theorem \ref{Main}, we can argue as in \cite[Section 4]{KYD}.  \\

From now on, to make the notation look more compact, given a unitary representation $(\pi,H)$ of $G$ and vectors $v,w\in H$, we set 
\begin{equation*} \phi_{v,w}(g):=\langle\pi(g)v,w\rangle.
\end{equation*}
For connected, semisimple Lie groups with finite centre, $K$-invariance is a consequence of $\mathfrak{g}$-invariance (Proposition \ref{ginvkinv}). Therefore, the problem is establishing the $\mathfrak{g}$-invariance. Explicitly, we prove the following (Proposition \ref{ginv}). 

\begin{prop}\label{ginv2} Let $G$ be a connected, semisimple Lie group with finite centre and let $(\pi,H)$ be a tempered, irreducible, unitary representation of $G$. Then, for all $X\in\mathfrak{g}$, and for all $v_{1},v_{2},v_{3},v_{4}\in H_{K}$, we have
\begin{equation*}
\lim_{r\rightarrow \infty}\frac{1}{r^{\textbf{d}(\pi)}}\int_{G_{<r}}\phi_{\dot{\pi}(X)v_{1},v_{2}}(g)\overline{\phi_{v_{3},v_{4}}(g)}\,dg=-\lim_{r\rightarrow \infty}\frac{1}{r^{\textbf{d}(\pi)}}\int_{G_{<r}}\phi_{v_{1},v_{2}}(g)\overline{\phi_{\dot{\pi}(X)v_{3},v_{4}}(g)}\,dg
\end{equation*}
and 
\begin{equation*}
\lim_{r\rightarrow \infty}\frac{1}{r^{\textbf{d}(\pi)}}\int_{G_{<r}}\phi_{v_{1},\dot{\pi}(X)v_{2}}(g)\overline{\phi_{v_{3},v_{4}}(g)}\,dg=-\lim_{r\rightarrow \infty}\frac{1}{r^{\textbf{d}(\pi)}}\int_{G_{<r}}\phi_{v_{1},v_{2}}(g)\overline{\phi_{v_{3},\dot{\pi}(X)v_{4}}(g)}\,dg.
\end{equation*}
\end{prop}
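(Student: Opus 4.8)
The plan is to reduce the identity to an integration-by-parts computation along one-parameter subgroups, carried out on the truncated balls $G_{<r}$, and then to control the resulting boundary contribution. First I would fix $X \in \mathfrak{g}$ and write $g_t = \exp(tX)\,g$, so that $\phi_{\dot\pi(X)v_1,v_2}(g) = \frac{d}{dt}\big|_{t=0}\,\phi_{v_1,v_2}(g_t)$ and similarly, using unitarity of $\pi$, $\overline{\phi}_{\dot\pi(X)v_3,v_4}(g) = \frac{d}{dt}\big|_{t=0}\,\overline{\phi}_{v_3,v_4}(g_t)$. Hence the integrand of the left-hand side plus the integrand of the right-hand side is exactly $\frac{d}{dt}\big|_{t=0}\big[\phi_{v_1,v_2}(g_t)\,\overline{\phi}_{v_3,v_4}(g_t)\big]$, i.e. the derivative of the product $\Phi(g) := \phi_{v_1,v_2}(g)\,\overline{\phi}_{v_3,v_4}(g)$ in the direction of the left-invariant (or rather left-translation) flow generated by $X$. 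Since $\,dg$ is a Haar measure, $\int_{\exp(tX)\cdot G_{<r}} \Phi(g)\,dg = \int_{G_{<r}} \Phi(\exp(tX)g)\,dg$, so differentiating in $t$ at $t=0$ shows that $\int_{G_{<r}}\big(\tfrac{d}{dt}\big|_0 \Phi(g_t)\big)\,dg$ equals $\frac{d}{dt}\big|_{t=0}\int_{\exp(tX)G_{<r}}\Phi\,dg$, which measures the failure of the ball $G_{<r}$ to be invariant under the flow — i.e. a flux through the ``boundary'' $\partial G_{<r} = \{\mathbf{r}(g) = r\}$. The whole identity thus amounts to showing that after dividing by $r^{\mathbf{d}(\pi)}$ this flux term tends to $0$ as $r \to \infty$.

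Next I would make this flux term estimable. The key analytic input is the quantitative decay of $K$-finite matrix coefficients of tempered representations: by Harish-Chandra's theory (the tempered bound, $|\phi_{v,w}(g)| \ll_{v,w} \Xi(g)(1+\sigma(g))^{N}$ for suitable $N$ and the Harish-Chandra $\Xi$-function, with a uniform power of a length function when $v,w$ are $K$-finite), together with the comparison between $\sigma(g)$ and $\mathbf{r}(g)$, one knows that $\int_{G_{<r}}|\Phi(g)|\,dg$ grows at most polynomially in $r$, with exponent $\mathbf{d}(\pi)$ — this is precisely the content behind the existence of $\mathbf{d}(\pi)$ in Theorem \ref{kfin}. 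The flux across $\{\mathbf{r}(g)=r\}$ is governed by the same matrix-coefficient bounds restricted to a ``shell'' $\{r - \varepsilon \le \mathbf{r}(g) \le r + \varepsilon\}$ whose measure, relative to that of $G_{<r}$, is of lower order — more precisely I would show the flux is $O(r^{\mathbf{d}(\pi) - 1})$ or at worst $o(r^{\mathbf{d}(\pi)})$, by writing it as a derivative in $r$ of $\int_{G_{<r}}\Phi\,dg$ composed with the (smooth, proper) deformation of the balls under the flow, and bounding $\|\mathrm{Ad}(\exp(tX)g)\|_{op}$ in terms of $\|\mathrm{Ad}(g)\|_{op}$ to see that $\exp(tX)G_{<r} \subseteq G_{<r + c|t|}$ for a constant $c = c(X)$ depending only on the operator norm of $\mathrm{ad}(X)$. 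Dividing the elementary inequality $\big|\int_{G_{<r}}\Phi(g_t)\,dg - \int_{G_{<r}}\Phi(g)\,dg\big| \le \int_{G_{<r+c|t|}\setminus G_{<r-c|t|}}|\Phi|\,dg$ by $|t|$ and letting $t \to 0$ bounds the flux by $2c$ times the ``boundary density'' $\limsup_{h\to 0}\frac{1}{h}\int_{G_{<r+h}\setminus G_{<r-h}}|\Phi|\,dg$, which one then shows is $O(r^{\mathbf{d}(\pi)-1})$ using the explicit polynomial-times-$\Xi^2$ bound on $|\Phi|$ and the known volume asymptotics of the balls $G_{<r}$ (the measure of the shell of width $h$ around radius $r$ is $O(h \cdot r^{\mathbf{d}(\pi)-1} e^{\rho r}$-type factors which are cancelled against the decay of $\Xi^2 \sim e^{-2\rho r}$).

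Finally I would assemble the pieces: the left- plus right-hand integrands integrate to the flux term; the flux term is $O(r^{\mathbf{d}(\pi)-1})$; dividing by $r^{\mathbf{d}(\pi)}$ and letting $r \to \infty$ gives $0$, which is exactly the asserted identity (the second displayed identity in Proposition \ref{ginv2} is handled identically, differentiating in the second variable rather than the first, using $\frac{d}{dt}\big|_0\langle \pi(g)v, \pi(\exp(-tX))w\rangle$). One subtlety to flag: the limits on the two sides must be known to exist before one can split them — but this is guaranteed by Theorem \ref{kfin}(2) together with the polarisation identity already used in the excerpt to define $D$, applied to the vectors $\dot\pi(X)v_i$, which are again $K$-finite since $\dot\pi(X)$ maps each $K$-type into finitely many $K$-types (it shifts $K$-types by the adjoint action of $X$). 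I expect the main obstacle to be the flux estimate: one must establish not merely that $\int_{G_{<r}}|\Phi|$ is $O(r^{\mathbf{d}(\pi)})$ but the sharper statement that its ``radial derivative'' is one power of $r$ smaller, which requires knowing that the singular part of $\Phi$ is genuinely spread over a band of radii rather than concentrated near $\mathbf{r}(g) = r$ — this should follow from the structure of the $\Xi$-function and the Cartan decomposition, but it is the step where the real-group geometry (as opposed to soft functional analysis) enters, and where care with the interplay between $\mathbf{r}$ and the usual spherical function estimates is essential.
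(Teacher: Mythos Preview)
Your approach is genuinely different from the paper's, and the overall architecture---rewrite the sum of the two integrands as a right-translation derivative of $\Phi=\phi_{v_1,v_2}\overline{\phi}_{v_3,v_4}$, convert via Haar invariance into a flux across $\partial G_{<r}$, and show this flux is $o(r^{\mathbf d(\pi)})$---is sound in outline. (A small slip: with $g_t=\exp(tX)g$ one has $\tfrac{d}{dt}|_0\phi_{v_1,v_2}(g_t)=-\phi_{v_1,\dot\pi(X)v_2}(g)$, not $\phi_{\dot\pi(X)v_1,v_2}(g)$; you want $g_t=g\exp(tX)$ for the first identity and left translation for the second. This does not affect the strategy.) The paper instead never estimates a boundary term: it uses the asymptotic expansion along the minimal and the standard parabolic $P_\lambda$ to identify the surviving contributions to $D$ with inner products $\langle T_{w_2}(v_1),T_{w_4}(v_3)\rangle$ in a \emph{unitary} induced representation $\mathrm{Ind}_{\overline P_\lambda}(\sigma,\lambda|_{\mathfrak a_{\lambda_0}})$, where $T_{w_2},T_{w_4}$ are $(\mathfrak g,K)$-maps built via Casselman's Frobenius reciprocity; the skew-adjointness of $\dot\pi(X)$ then comes for free from unitarity of the induced module. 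That route buys a structural explanation (the form $D$ \emph{is} an induced inner product), at the cost of the machinery of Sections~2--3.

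Your route would be more elementary if the flux estimate goes through, but this step as written has a real gap. From $I(r):=\int_{G_{<r}}|\Phi|=O(r^{\mathbf d(\pi)})$ one cannot conclude $I'(r)=o(r^{\mathbf d(\pi)})$ pointwise: a monotone function of size $O(r^d)$ can have derivative of size $r^d$ on a sparse set. The weak Harish-Chandra bound $|\phi_{v,w}|\le C\,\Xi\,(1+\sigma)^N$ is also too crude, since the exponent $N$ it produces need not equal the sharp $\mathbf d(\pi)$; you would only get the shell integral $=O(r^{M-1})$ for some $M\ge\mathbf d(\pi)$, which is useless when $M>\mathbf d(\pi)$. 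To make the shell estimate work you would have to feed in the \emph{exact} asymptotic expansion of $\phi_{v,w}$ on $\exp(\mathfrak a^+)$ (Theorem~\ref{asymp1}) and reproduce the term-by-term analysis of \cite[Appendix~A]{KYD} on the $(n-1)$-dimensional faces of the polytope $\mathfrak a^+_{<r}$, controlling the tail of the series uniformly in $r$---which is essentially the same analytic input the paper uses, deployed toward a boundary integral rather than toward building the maps $T_{w_i}$. So the obstacle you flag at the end is the heart of the matter, and your proposal does not yet indicate how to cross it without importing that machinery.
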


The key observation is that, by exploiting the theory of asymptotic expansions of matrix coefficients of tempered representations both with respect to a minimal parabolic subgroup $P=MAN$ and with respect to the standard (for $P$) parabolic subgroups of $G$, the expression 
\begin{equation*}
\lim_{r\rightarrow \infty}\frac{1}{r^{\textbf{d}(\pi)}}\int_{G_{<r}}\phi_{\dot{\pi}(X)v_{1},v_{2}}(g)\overline{\phi_{v_{3},v_{4}}(g)}\,dg
\end{equation*}
reduces, roughly, to a sum of finitely many terms of the form 
\begin{equation*}
\int_{K}\langle \Gamma_{\lambda,l}(m_{\lambda},\pi(k)\dot{\pi}(X)v_{1},w_{2}),\Gamma_{\mu,m}(m_{\lambda},\pi(k)v_{3},w_{4})\rangle_{L^{2}(M_{\lambda})}\,dk.
\end{equation*}
Here, $M_{\lambda}$ comes from a standard parabolic subgroup $P_{\lambda}=M_{\lambda}A_{\lambda_{0}}N_{\lambda_{0}}$ of $G$. We denote $\mathfrak{m}_{\lambda},\text{ }\mathfrak{a}_{\lambda_{0}},\text{ }\mathfrak{n}_{\lambda_{0}}$ the Lie algebras of $M_{\lambda},\text{ }A_{\lambda_{0}},\text{ }N_{\lambda_{0}}$, respectively. The pairs $(\lambda,l)$ and $(\mu,m)$ will be introduced precisely in Theorem \ref{asymp2}, we can think of $\lambda,\mu$ as $n$-tuples of complex numbers and of $l,m$ as $n$-tuples of integers. The functions $\Gamma_{\lambda,l}$, $\Gamma_{\mu,m}$ are defined in Equation \eqref{defgamma}. As functions of $m_{\lambda}$, they are analytic and square-integrable and they arise from the asymptotic expansion of the matrix coefficients $\phi_{\dot{\pi}(X)v_{1},v_{2}}$ and $\phi_{v_{3},v_{4}}$, respectively, relative to $P_{\lambda}$ (see Theorem \ref{asymp2}).
The subscript in $P_{\lambda}$ is meant to indicate that the parabolic subgroup is obtained, in an appropriate sense, from the datum of $\lambda$. Moreover, $(\lambda,l)$ and $(\mu,m)$ are related in a precise way (see the discussion after 
Equation~\eqref{asymp1} and the proof of Proposition~\ref{ginv} after Equation~\eqref{asympint2}). \\

We shall elaborate on these points later on. For the moment, let us point out that we reduced the initial problem to showing that, for every $X\in \mathfrak{g}$, and for all relevant pairs $(\lambda,l)$ and $(\mu,m)$, the integral
\begin{equation*}
\int_{K}\langle \Gamma_{\lambda,l}(m_{\lambda},\pi(k)\dot{\pi}(X)v_{1},w_{2}),\Gamma_{\mu,m}(m_{\lambda},\pi(k)v_{3},w_{4})\rangle_{L^{2}(M_{\lambda})}\,dk
\end{equation*}
equals 
\begin{equation*}
-\int_{K}\langle \Gamma_{\lambda,l}(m_{\lambda},\pi(k)v_{1},w_{2}),\Gamma_{\mu,m}(m_{\lambda},\pi(k)\dot{\pi}(X)v_{3},w_{4})\rangle_{L^{2}(M_{\lambda})}\,dk.
\end{equation*}
We will prove that, if $(\lambda,l)$ and $(\mu,m)$ satisfy a certain condition (to be explained below), the functions $\Gamma_{\lambda,l}(\cdot,v_{1},w_{2})$ and $\Gamma_{\mu,m}(\cdot,v_{3},w_{4})$ are, in fact,  $\mathrm{Z}(\mathfrak{g}_{\mathbb{C}})$-finite, with $\mathrm{Z}(\mathfrak{g}_{\mathbb{C}})$ denoting the centre of the universal enveloping algebra of the complexification $\mathfrak{g}_{\mathbb{C}}$ of $\mathfrak{g}$, and $K\cap M_{\lambda}$-finite. It will then follow from a theorem of Harish-Chandra (Theorem \ref{smoothvector}) that they are smooth vectors in the right-regular representation $(R,L^{2}(M_{\lambda}))$ of $M_{\lambda}$ . \\

The idea is to combine this observation with an appropriate form of the Frobenius reciprocity (Theorem \ref{Frobenius}), due to Casselman, to construct $(\mathfrak{g},K)$-invariant maps 
\begin{equation*}
T_{w_{2}}\colon H_{K}\longrightarrow \text{Ind}_{P_{\lambda},K_{\lambda}}(H_{\sigma},\lambda|_{\mathfrak{a}_{\lambda_{0}}}),\text{ }T_{w_{2}}(v)(k)(m_{\lambda}):=\Gamma_{\lambda,l}(m_{\lambda},\pi(k)v,w_{2})
\end{equation*}
and 
\begin{equation*}
T_{w_{2}}\colon H_{K}\longrightarrow \text{Ind}_{P_{\lambda},K_{\lambda}}(H_{\sigma},\lambda|_{\mathfrak{a}_{\lambda_{0}}}),\text{ }T_{w_{4}}(v')(k)(m_{\lambda}):=\Gamma_{\lambda,l}(m_{\lambda},\pi(k)v',w_{4}).
\end{equation*}
Here, the subgroup $\overline{P_{\lambda}}$ is the parabolic subgroup opposite to $P_{\lambda}$. The notation $\text{Ind}_{\overline{P_{\lambda}},K}(H_{\sigma},\lambda|_{\mathfrak{a}_{\lambda_{0}}})$ stands for the space of $K$-finite vectors in the representation induced from the $(\mathfrak{m_{\lambda}}\oplus \mathfrak{a}_{\lambda_{0}},K\cap M_{\lambda})$-module $$H_{\sigma}\otimes \mathbb{C}_{\lambda|_{\mathfrak{a}_{\lambda_{0}}}-\rho_{\lambda_{0}}}$$ for an appropriately chosen admissible, unitary, sub-representation $(\sigma,H_{\sigma})$ of $(R,L^{2}(M_{\lambda}))$. \\

To apply the required form of the Frobenius reciprocity, we need to show that the maps 
\begin{equation*}
S_{w_{2}}\colon H_{K}\longrightarrow H_{\sigma}\otimes \mathbb{C}_{\lambda|_{\mathfrak{a}_{\lambda_{0}}}-\rho_{\lambda_{0}}}, \text{ }S_{w_{2}}(v)(m_{\lambda})\colon =\Gamma_{\lambda,l}(m_{\lambda},v,w_{2})
\end{equation*}
and 
\begin{equation*}
S_{w_{2}}\colon H_{K}\longrightarrow H_{\sigma}\otimes \mathbb{C}_{\lambda|_{\mathfrak{a}_{\lambda_{0}}}-\rho_{\lambda_{0}}}, \text{ }S_{w_{4}}(v')(m_{\lambda}):=\Gamma_{\lambda,l}(m_{\lambda},v',w_{4})
\end{equation*}
descend to  $(\mathfrak{m_{\lambda}}\oplus\mathfrak{a}_{\lambda_{0}},K_{\lambda})$-equivariant maps on $H_{K}/\mathfrak{n}_{\lambda_{0}}H_{K}$. Establishing this result is the technical heart of the article.\\

Assuming it, the integral 
\begin{equation*}\int_{K}\langle \Gamma_{\lambda,l}(m_{\lambda},\pi(k)\dot{\pi}(X)v_{1},w_{2}),\Gamma_{\mu,m}(m_{\lambda},\pi(k)v_{3},w_{4})\rangle_{L^{2}(M_{\lambda})}\,dk
\end{equation*}
is nothing but 
\begin{equation*}
\langle \dot{\text{Ind}}_{\overline{P_{\lambda}}}(\sigma,\lambda|_{\mathfrak{a}_{\lambda_{0}}})(X)\Gamma_{\lambda,l}(m_{\lambda},v_{1},w_{2}),\Gamma_{\mu,m}(m_{\lambda},v_{3},w_{4})\rangle_{\text{Ind}_{\overline{P_{\lambda}}}(\sigma,\lambda|_{\mathfrak{a}_{\lambda_{0}}})},
\end{equation*}
where
\begin{equation*}
\langle\cdot,\cdot\rangle_{\text{Ind}_{\overline{P_{\lambda}}}(\sigma,\lambda|_{\mathfrak{a}_{\lambda_{0}}})}
\end{equation*}
is the inner product on $\text{Ind}_{\overline{P_{\lambda}}}(\sigma,\lambda|_{\mathfrak{a}_{\lambda_{0}}})$. We will see that this makes sense since the inducing data ensure unitarity. The sought equality will then follow from the skew-invariance of the inner product on a unitary representation with respect to the action of the Lie algebra.\\ 

To explain how the functions $\Gamma_{\lambda,l}(\cdot,v_{1},v_{2})$ and $\Gamma_{\mu,m}(\cdot,v_{3},v_{4})$ arise, we need to recall the main features of the asymptotic expansions of $K$-finite matrix coefficients of tempered representations. If $\phi_{v,w}$ is such a matrix coefficient, then its restriction to a certain region of the subgroup $A$ of a minimal parabolic subgroup $P=MAN$ of $G$ admits an asymptotic expansion which can be thought of as a sum indexed by a countable collection 
\begin{equation*}
\Lambda\colon =\{(\lambda,l)\}_{\lambda\in \mathcal{E},\text{ } l\in \mathbb{Z}^{n}_{\geq 0}:|l|\leq l_{0}}.
\end{equation*}
The set $\mathcal{E}$ is a collection of complex-valued real-linear functionals on $\text{Lie}(A)$ depending on $(\pi,H)$ and not on the particular choice of $v,w\in H_{K}$. It is the set of \textbf{exponents} of $(\pi,H)$. The number $n$ is the rank of $G$ and $l_{0}$, too, depends on $(\pi,H)$ only.\\

The term indexed by $(\lambda,l)$ is multiplied by a complex coefficient $c_{\lambda,l}(v,w)$. The choice of $v,w\in H_{K}$ determines the pairs in $\mathcal{C}$ for which $c_{\lambda,l}(v,w)\neq 0$. If $\lambda\in \mathcal{E}$, there exists at least a pair of $v,w\in H_{K}$ such that, for some $l\in \mathbb{Z}_{\geq 0}^{n}$ with $|l|\leq l_{0}$, we have $c_{\lambda,l}(v,w)\neq 0$.\\

For any standard (for $P$) parabolic subgroup $P'=M'A'N'$ of $G$, the restriction of the matrix coefficient $\phi_{v,w}$ to an appropriate region of $A'$ admits a similar asymptotic expansion. It can be thought of as a sum indexed by a countable collection 
\begin{equation*}
\Lambda'\colon =\{(\nu,q)\}_{\nu\in \mathcal{E'},\text{ }q\in \mathbb{Z}_{\geq 0}^{r}: |q|\leq q_{0}}.
\end{equation*}
Here, $r\leq n$ is the dimension of $A'$, the set $\mathcal{E}'$ consists of complex-valued real-linear functionals on $\text{Lie}(A')$. On regions on which both the expansion relative to $P$ and the expansion relative to $P'$ are meaningful, by comparing the two it turns out that the element in $\mathcal{E}'$ are precisely the restrictions to $\text{Lie}(A')$ of the elements in $\mathcal{E}$ and, making the appropriate identifications following from $A'\subset A$, each $q$ is the projection to $\mathbb{Z}_{\geq 0}^{r}$ of an $l$ appearing in the expansion relative to $P$.\\ 

While in the expansion relative to $P$ the term indexed by $(\lambda,l)$ is multiplied by the complex coefficient $c_{\lambda,l}(v,w)$, the term indexed by $(\nu,q)$ in the expansion relative to $P'$ is mutiplied by a real-analytic function 
\begin{equation}
c^{P'}_{\nu,q}(\cdot,v,w):M'\longrightarrow \mathbb{C}.
\end{equation}
We need one more piece of information to explain how $\Gamma_{\lambda,l}(\cdot,v_{1},v_{2})$ and $\Gamma_{\mu,m}(\cdot,v_{3},v_{4})$ arise: the construction of $\textbf{d}(\pi)$ in \cite{KYD}. The idea is as follows. We can think of $\lambda\in \mathcal{E}$ as an $n$-tuple of complex numbers $(\lambda_{1},\cdots,\lambda_{n})$. It can be shown that there exist a finite sub-collection $\mathcal{E}_{0}\subset \mathcal{E}$ such that, for every $\lambda\in \mathcal{E}$, there exists $\hat{\lambda}\in \mathcal{E}_{0}$ such that 
\begin{equation*}
\hat{\lambda}-\lambda\in \mathbb{Z}_{\geq 0}^{n}.
\end{equation*}
Moreover, any two distinct elements in $\mathcal{E}_{0}$ are integrally inequivalent: their difference does not belong to $\mathbb{Z}^{n}$. By a result of Casselman (Theorem \ref{tempcriterion}), for every $\hat{\lambda}\in \mathcal{E}_{0}$ and for every $i\in \{1,\dots,n\}$, we have 
\begin{equation*}
\text{Re}\hat{\lambda}_{i}\leq 0 ,
\end{equation*}
and it is clear that this holds for every $\lambda\in \mathcal{E}$.\\

For $(\lambda,l)\in \Lambda$, we introduce the set $I_{\lambda}:=\{i\in \{1,\dots,n\}|\text{Re}\lambda_{i}<0\}$, we define 
\begin{equation}\label{defdp}
\textbf{d}_{P}(\lambda,l):=|I^{c}_{\lambda}|+\sum_{i\in I^{c}_{\lambda}}2l_{i},
\end{equation}
and we take the maximum, $\textbf{d}_{P}$, as $(\lambda,l)$ ranges over all the pairs with $\lambda\in \mathcal{E}_{0}$.\\

We can proceed analogously for every standard parabolic $P'$ and obtain a non-negative integer $\textbf{d}_{P'}$. The maximum over all $P'$ is $\textbf{d}(\pi)$.\\

Now, given $\lambda\in \mathcal{E}_{0}$, identifying $I_{\lambda}$ with a subset of the simple roots determined by an order on the root system attached to the pair $(\mathfrak{g},\mathfrak{a})$, we can construct a standard (for $P$) parabolic subgroup $P_{\lambda}=M_{\lambda}A_{\lambda_{0}}N_{\lambda_{0}}$ associated to $I_{\lambda}$. We will show that if $(\lambda,l)\in \Lambda$ satisfies $\lambda\in \mathcal{E}_{0}$ and $\textbf{d}_{P}(\lambda,l)=\textbf{d}(\pi)$, then $\Gamma_{\lambda,l}(\cdot,v_{1},v_{2})$ is precisely the function $c^{P'}_{\nu,q}(\cdot,v_{1},v_{2})$ with $\nu:=\lambda|_{\mathfrak{a}_{\lambda_{0}}}$, where $\mathfrak{a}_{\lambda_{0}}:=\text{Lie}(A_{\lambda_{0}})$, and $q$ equal to the projection of $l$ to $\mathbb{Z}^{I^{c}_{\lambda}}_{\geq 0}$.\\ 

Finally, we mentioned that in the integral
\begin{equation*}
\int_{K}\langle \Gamma_{\lambda,l}(m_{\lambda},\pi(k)v_{1},w_{2}),\Gamma_{\mu,m}(m_{\lambda},\pi(k)v_{3},w_{4})\rangle_{L^{2}(M_{\lambda})}\,dk
\end{equation*}
the pairs $(\lambda,l)$ and $(\mu,m)$ must be related in a precise way. First of all, $(\mu,m)\in \Lambda$ satisfies $\mu\in \mathcal{E}_{0}$ and $\textbf{d}_{P}(\mu,m)=\textbf{d}(\pi)$. In addition, we must have $I_{\lambda}=I_{\mu}$ (so that $P_{\lambda}=P_{\mu}$) and $\lambda|_{\mathfrak{a}_{\lambda_{0}}}=\mu|_{\mathfrak{a}_{\lambda_{0}}}$. The last condition, together with the unitarity of the representation $(\sigma,H_{\sigma})$ introduced above, is precisely what ensures that $\text{Ind}_{P_{\lambda}}(\sigma,\lambda|_{\mathfrak{a}_{\lambda_{0}}})$ is unitary.\\

Implementing the strategy sketched above requires gathering a number of intermediate results. Several are inspired from the chapter in \cite{Knapp} on the Langlands classification of tempered representations. Here is a more detailed outline of the article.  \\

\textbf{Section~\ref{sec:2}:} The first part includes a discussion of the $(\mathfrak{g},K)$-module version of the Schur lemma (Corollary~\ref{Schur}). In the second part, we recall the result of Harish-Chandra establishing that smooth, $\mathrm{Z}(\mathfrak{g}_{\mathbb{C}})$-finite, $K$-finite, square-integrable functions on reductive groups are smooth vectors in the right-regular representation (Theorem~\ref{smoothvector}). As a consequence, we prove that, on such a function, the action of $\mathfrak{g}$ through differentiation is the same as the action of the Lie algebra through the right-regular representation (Proposition \ref{skewinv}). After stating the basic facts on parabolically induced representations that we need, we discuss Casselman's version of the Frobenius reciprocity 
(Theorem \ref{Frobenius}). \\

\textbf{Section~\ref{sec:3}:} In the first part, we recall the theory of asymptotic expansions of matrix coefficients of tempered representations both with respect to a minimal parabolic subgroup and with respect to standard parabolic subgroups. We then explain in detail how the functions $\Gamma_{\lambda,l}(\cdot,v_{1},v_{2})$, $\Gamma_{\mu,m}(\cdot,v_{3},v_{4})$ arise. We begin by introducing an equivalence relation on the data indexing the asymptotic expansion relative to $P$ of the $K$-finite matrix coefficients of a tempered, irreducible,  representation $(\pi,H)$. This equivalence relation is motivated by the construction of $\textbf{d}(\pi)$ in \cite{KYD} and it is meant to exploit the criteria for the computation of asymptotic integrals in \cite[Appendix A]{KYD}. Imposing the conditions on $(\lambda,l)$ and $(\mu,m)$ that we discussed above, we identify the functions $\Gamma_{\lambda,l}(\cdot,v_{1},v_{2})$ and $\Gamma_{\mu,m}(\cdot,v_{3},v_{4})$ with the coefficient functions in the asymptotic expansion relative to $P_{\lambda}$ of $\phi_{v_{1},v_{2}}$ and $\phi_{v_{3},v_{4}}$ (Proposition \ref{Gamma}). We then prove that they are smooth vectors in $(R,L^{2}(M_{\lambda}))$ (Proposition \ref{GammaGar}). Combining Proposition \ref{GammaGar} with the technical Lemma \ref{computationa} and 
Lemma \ref{computationn}, we can construct unitary, admissible, finitely generated representations $(\sigma_{1}, H_{\sigma_{1}})$ and $(\sigma_{2},H_{\sigma_{2}})$  whose direct sum is the unitary, admissible, finitely generated representation $(\sigma,H_{\sigma})$ introduced above (Proposition \ref{applyfrob}).     \\   

\textbf{Section~\ref{sec:4}:} Having gathered the results we need, we prove Proposition \ref{ginv2} (Proposition \ref{ginv}). This consists in an application of the considerations in \cite[Appendix A]{KYD} to show that the integral 
\begin{equation*}
\lim_{r\rightarrow \infty }\frac{1}{r^{\textbf{d}(\pi)}}\int_{G_{<r}}\phi_{v_{1},v_{2}}(g)\overline{\phi_{v_{3},v_{4}}(g)}\,dg
\end{equation*}
can be computed in terms of a sum of integrals of the form 
\begin{equation*}
\int_{K}\langle \Gamma_{\lambda,l}(m_{\lambda},\pi(k)v_{1},w_{2}),\Gamma_{\mu,m}(m_{\lambda},\pi(k)v_{3},w_{4}) \rangle\,dk
\end{equation*}
with the pairs $(\lambda,l)$ and $(\mu,m)$ both belonging to $\Lambda$ with $\lambda,\mu\in \mathcal{E}_{0}$, $I_{\lambda}=I_{\mu}$, $\lambda|_{\mathfrak{a}_{\lambda_{0}}}=\mu|_{\mathfrak{a}_{\lambda_{0}}}$ and
\begin{equation*}
\textbf{d}_{P}(\lambda,l)=\textbf{d}_{P}(\mu,m)=\textbf{d}(\pi).
\end{equation*}

At this point, the representation theoretic arguments explained in the Introduction and proved in Section~\ref{sec:3} conclude the proof of Proposition \ref{ginv2}.\\

Finally, we proceed as explained in the first part of the Introduction to prove Theorem \ref{Main} (Theorem \ref{main3}).\\

\textbf{Acknowledgements:} We would like to thank the referees for pointing out a serious gap in a previous version of the article and for suggesting the improvements that led to this final version. The second author began to work on this article as a Doctoral Researcher at the University of Luxembourg and completed it as a Postdoctoral Researcher at the IASM of Zhejiang University. He is grateful to his Ph.D. supervisor, G. Wiese, for his support; to J. R. Getz and G. Chenevier for their comments; and to the members of the IASM for their generous welcome.

 
\section{Recollections on Representation Theory} \label{sec:2}

Our presentation of the theory of $(\mathfrak{g},K)$-modules follows \cite{Wal}. To discuss its basic features, we need to gather some results on unitary representations of compact groups. We begin by recalling the basic notions in the study of representations of topological groups, which we always assume to be Hausdorff. \\

First, following \cite[Section 1.1]{Wal}, let $G$ denote a second-countable, locally compact group, equipped with a left Haar measure $\,dg$, and let $V$ denote a complex topological vector space. We denote by $\mathrm{GL}(V)$ the group of invertible continuous endomorphisms of $V$. A \textbf{representation} of $G$ on $V$ is a strongly continuous homomorphism $\pi\colon G\longrightarrow \mathrm{GL}(V)$. Let $(\pi,V)$ denote the datum of a representation of $G$. A subspace of $V$ which is stable under the action of $G$ through $\pi$ is called an \textbf{invariant subspace}. A representation $(\pi,V)$, with $V\neq {0}$, is said to be \textbf{irreducible} if the only closed invariant subspaces are the trivial subspace and $V$ itself. \\

If $(H,\langle \cdot,\cdot\rangle)$ is a separable Hilbert space, a representation $\pi$ of $G$ on $H$ is termed a \textbf{Hilbert representation}. If, in addition, $G$ acts by unitary operators through $\pi$, the representation is said to be \textbf{unitary}. \\

Next, following \cite[Section 10]{KL}, we introduce the basic features of the theory of vector-valued integration. 

Let $(X,\,dx)$ be a Radon measure space, let $H$ be a Hilbert space and assume that 
\begin{equation*}
f\colon X\longrightarrow H
\end{equation*}
is measurable. The function $f$ is \textbf{integrable} if it satisfies the following two conditions:
\begin{enumerate}
\item[(1)] For all $v\in H$, $$\int_{X} |\langle f(x), v\rangle|\,dx<\infty.$$
\item[(2)] The map $$v\mapsto \int_{X} \langle f(x), v\rangle\,dx$$ is a bounded conjugate-linear functional.
\end{enumerate}

If $f\colon X\longrightarrow H$ is integrable, then, by the Riesz representation theorem, there exists a unique element in $H$, denoted 
\begin{equation*}
\int_{X}f(x)\,dx,
\end{equation*}
such that, for all $v\in H$, we have 
\begin{equation*}
\Bigl \langle\int_{X}f(x)\,dx,v\Bigr \rangle=\int_{X}\langle f(x),v\rangle\,dx.
\end{equation*}

\begin{prop} Let $(X,\,dx)$ be as above. Let $H$, $E$ be Hilbert spaces,  $f\colon X\longrightarrow H$ a measurable function and $T\colon H\longrightarrow E$ a bounded linear operator. Then the following holds:
\begin{enumerate} 
\item[(1)] If 
\begin{equation*}
\int_{X}\|f(x)\|\,dx<\infty,
\end{equation*}
then $f\colon X\longrightarrow H$ is integrable.
\item[(2)] If $f\colon X\longrightarrow H$ is integrable, then so is $Tf\colon X\longrightarrow E$. Moreover, 
\begin{equation*}
T \left( \int_{X}f(x)\,dx \right)=\int_{X}Tf(x)\,dx.
\end{equation*}
\end{enumerate}
\end{prop}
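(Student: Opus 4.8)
The plan is to reduce the vector-valued statement to the scalar case via the defining property of the $H$-valued integral and the Riesz representation theorem. For part (1), suppose $\int_X \|f(x)\|\,dx < \infty$. For any $v \in H$ we have $|\langle f(x), v\rangle| \le \|f(x)\|\,\|v\|$ by Cauchy--Schwarz, so $\int_X |\langle f(x),v\rangle|\,dx \le \|v\| \int_X \|f(x)\|\,dx < \infty$, which is condition (1) in the definition of integrability. The same estimate shows that the map $v \mapsto \int_X |\langle f(x),v\rangle|\,dx$ is bounded by $\int_X \|f(x)\|\,dx$; conjugate-linearity (away from the absolute value, the map $v \mapsto \int_X \langle f(x),v\rangle\,dx$ is conjugate-linear, and here one should be slightly careful — the condition as literally stated involves the absolute value, so one checks instead that $v \mapsto \int_X \langle f(x),v\rangle\,dx$ is a well-defined bounded conjugate-linear functional, which is the content actually used) then gives condition (2). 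Hence $f$ is integrable.

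For part (2), assume $f$ is integrable and let $T\colon H \longrightarrow E$ be bounded. First I would check $Tf$ is measurable: since $T$ is continuous, $Tf = T \circ f$ is measurable as a composition. Next, for $w \in E$, write $\langle Tf(x), w\rangle_E = \langle f(x), T^{*}w\rangle_H$ where $T^{*}$ is the adjoint of $T$. Therefore $\int_X |\langle Tf(x),w\rangle_E|\,dx = \int_X |\langle f(x), T^{*}w\rangle_H|\,dx < \infty$ because $f$ is integrable and $T^{*}w \in H$; this is condition (1) for $Tf$. Condition (2) follows similarly: $w \mapsto \int_X \langle Tf(x),w\rangle_E\,dx = \int_X \langle f(x),T^{*}w\rangle_H\,dx$ is the composition of the bounded conjugate-linear functional associated to $f$ with the bounded linear map $T^{*}$, hence bounded and conjugate-linear. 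So $Tf$ is integrable.

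It remains to prove $T\bigl(\int_X f(x)\,dx\bigr) = \int_X Tf(x)\,dx$. Both sides are elements of $E$, so by the Riesz representation theorem it suffices to show they have the same inner product against every $w \in E$. On one hand, $\bigl\langle \int_X Tf(x)\,dx, w\bigr\rangle_E = \int_X \langle Tf(x),w\rangle_E\,dx$ by the defining property of the $E$-valued integral. On the other hand, $\bigl\langle T\bigl(\int_X f(x)\,dx\bigr), w\bigr\rangle_E = \bigl\langle \int_X f(x)\,dx, T^{*}w\bigr\rangle_H = \int_X \langle f(x), T^{*}w\rangle_H\,dx$, using the adjoint relation and then the defining property of the $H$-valued integral. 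Since $\langle f(x),T^{*}w\rangle_H = \langle Tf(x),w\rangle_E$ pointwise, the two integrals agree, and the identity follows.

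The only genuine subtlety — and the step I would be most careful about — is the role of measurability and the use of the adjoint $T^{*}$: one needs $f$ (hence $Tf$) to be measurable in the sense compatible with the weak conditions (1)--(2) used to define the integral, and one needs $T^{*}$ to exist and be bounded, which holds for any bounded operator between Hilbert spaces. Everything else is a direct unwinding of the Riesz-representation definition of the vector-valued integral. No asymptotic analysis or representation theory enters here; this proposition is a foundational tool to be invoked later.
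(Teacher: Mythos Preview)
Your argument is correct: part (1) follows from Cauchy--Schwarz exactly as you say, and part (2) is the standard adjoint trick, reducing everything to the scalar defining property of the weak integral. You also correctly flag that the absolute value in condition (2) of the paper's definition must be a typo, since a map of the form $v\mapsto\int_X|\langle f(x),v\rangle|\,dx$ cannot be conjugate-linear; what is actually needed (and what Riesz is applied to) is the functional $v\mapsto\int_X\langle f(x),v\rangle\,dx$.

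There is nothing to compare against in the paper itself: the authors do not prove this proposition but simply cite Propositions~10.8 and~10.9 of Knightly--Li. Your proof is essentially the one found in that reference, so you have supplied what the paper outsources.
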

\begin{proof}
See \cite[Proposition 10.8 and Proposition 10.9]{KL}.
\end{proof}

Now, let $(\pi,H)$ be a unitary representation of $G$. Let $v\in H$ and $f\colon G\longrightarrow H$ be such that the map 
\begin{equation*} 
g\mapsto f(g)\pi(g)v
\end{equation*}
is integrable. Let $\pi(f)v$ denote the unique element in $H$ such that, for all $w\in H$, we have 
\begin{equation*}
\langle\pi(f)v,w\rangle=\int_{G}f(g)\langle\pi(g)v,w\rangle\,dg.
\end{equation*}

\begin{prop}\label{intop} Let $(\pi,H)$ be as above. If $f\in L^{1}(G)$, then, for all $v\in H$, the map $g\mapsto f(g)\pi(g)v$ is integrable and the prescription $$\pi(f)\colon H\longrightarrow H, \text{ } v\mapsto \pi(f)v$$ defines a bounded linear operator.
\end{prop}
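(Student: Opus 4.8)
The plan is to verify the two defining conditions of integrability from the preceding discussion for the map $g\mapsto f(g)\pi(g)v$, and then to extract boundedness of the resulting operator. Fix $v\in H$. Since $\pi$ is a Hilbert representation the map $g\mapsto\pi(g)v$ is continuous, hence (being scalar-multiplied by the measurable $f$) the map $g\mapsto f(g)\pi(g)v$ is measurable. By unitarity, $\|\pi(g)v\|=\|v\|$ for every $g$, so
\begin{equation*}
\int_{G}\|f(g)\pi(g)v\|\,dg=\|v\|\int_{G}|f(g)|\,dg=\|v\|\,\|f\|_{L^{1}(G)}<\infty.
\end{equation*}
By part (1) of the previous Proposition, this already shows that $g\mapsto f(g)\pi(g)v$ is integrable, so $\pi(f)v$ is well defined as the unique element of $H$ with $\langle\pi(f)v,w\rangle=\int_{G}f(g)\langle\pi(g)v,w\rangle\,dg$ for all $w\in H$.

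It remains to see that $v\mapsto\pi(f)v$ is linear and bounded. Linearity is immediate from the defining identity: for $v,v'\in H$ and scalars $a,a'$, the functional $w\mapsto\int_{G}f(g)\langle\pi(g)(av+a'v'),w\rangle\,dg$ equals $a\langle\pi(f)v,w\rangle+a'\langle\pi(f)v',w\rangle$ for all $w$, and uniqueness in the Riesz representation forces $\pi(f)(av+a'v')=a\pi(f)v+a'\pi(f)v'$. For boundedness, estimate directly: for any $w\in H$,
\begin{equation*}
|\langle\pi(f)v,w\rangle|\le\int_{G}|f(g)|\,|\langle\pi(g)v,w\rangle|\,dg\le\int_{G}|f(g)|\,\|\pi(g)v\|\,\|w\|\,dg=\|f\|_{L^{1}(G)}\,\|v\|\,\|w\|,
\end{equation*}
using Cauchy--Schwarz and unitarity. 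Taking the supremum over $w$ with $\|w\|\le 1$ gives $\|\pi(f)v\|\le\|f\|_{L^{1}(G)}\,\|v\|$, so $\pi(f)$ is bounded with operator norm at most $\|f\|_{L^{1}(G)}$.

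There is no serious obstacle here; the only points requiring a little care are the measurability of $g\mapsto f(g)\pi(g)v$ (which follows from strong continuity of $\pi$ together with measurability of $f$) and the appeal to part (1) of the cited Proposition to pass from the finiteness of $\int_{G}\|f(g)\pi(g)v\|\,dg$ to genuine integrability in the sense defined above, rather than reverifying conditions (1) and (2) by hand. Everything else is a routine Cauchy--Schwarz estimate exploiting that $\pi$ is unitary.
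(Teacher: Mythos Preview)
Your argument is correct and is exactly the standard proof one expects: unitarity gives $\int_G\|f(g)\pi(g)v\|\,dg=\|v\|\,\|f\|_{L^1}$, part~(1) of the preceding Proposition then yields integrability, and Cauchy--Schwarz with unitarity gives the operator bound $\|\pi(f)\|\le\|f\|_{L^1}$. The paper itself does not give a proof but simply refers to \cite{KL}, Proposition~10.20, where precisely this argument appears; so your write-up fills in what the paper chose to outsource.
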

\begin{proof} See \cite[Proposition 10.20]{KL}.
\end{proof}

With the integral operators introduced in Proposition \ref{intop} at our disposal, we have all the tools needed to state the main results on the unitary representations of compact groups. \\

Let $K$ be a compact group. Let $\widehat{K}$ denote the set of equivalence classes of irreducible unitary representations of $K$. If $(\pi,H)$ is a unitary representation, for each $[\gamma]\in \widehat{K}$ let $H(\gamma)$ denote the closure of the sum of all the closed invariant subspaces of $H$ in the equivalence class of $\gamma$. We refer to $H(\gamma)$ as the $\gamma$-isotypic component of $H$. This notion is independent of the choice of representative for the equivalence class.

\begin{prop}\label{findim} Let $K$ be a compact group. Let $(\pi,H)$ be an irreducible unitary representation of $K$. Then $H$ is finite-dimensional. 
\end{prop}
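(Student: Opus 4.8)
The plan is to prove that an irreducible unitary representation $(\pi, H)$ of a compact group $K$ is finite-dimensional by producing a nonzero finite-rank operator that commutes with the $K$-action, and then invoking irreducibility. First I would fix a unit vector $v_0 \in H$ and consider the rank-one orthogonal projection $P_0$ onto $\mathbb{C}v_0$, which is a bounded operator on $H$. The idea is to average $P_0$ over the group: define $T := \int_K \pi(k) P_0 \pi(k)^{-1}\, dk$, where the integral is taken in the sense of vector-valued (or operator-valued) integration as set up earlier in the excerpt, applied entrywise via the matrix coefficients $\langle T v, w\rangle = \int_K \langle \pi(k) P_0 \pi(k)^{-1} v, w\rangle\, dk$. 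Since $K$ is compact, the constant function is integrable, so $T$ is well-defined and bounded.

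Next I would record the key properties of $T$. It is self-adjoint and positive, since each $\pi(k) P_0 \pi(k)^{-1}$ is a positive operator (conjugate of a projection by a unitary) and positivity is preserved under the integral. It is nonzero: pairing against $v_0$ gives $\langle T v_0, v_0\rangle = \int_K |\langle v_0, \pi(k) v_0\rangle|^2\, dk \geq \mathrm{vol}(K) > 0$ because the integrand at $k = e$ equals $1$ and the integrand is continuous. It commutes with every $\pi(g)$: by the left-invariance of Haar measure on $K$, $\pi(g) T \pi(g)^{-1} = \int_K \pi(gk) P_0 \pi(gk)^{-1}\, dk = \int_K \pi(k) P_0 \pi(k)^{-1}\, dk = T$. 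Finally, and most importantly, $T$ is compact — indeed I would aim to show it is trace-class, or at least that its nonzero spectral projections are finite-dimensional — because it is an average of rank-one operators. Concretely one can check that $T$ is a Hilbert–Schmidt (in fact trace-class) operator: $\mathrm{tr}(T) = \int_K \mathrm{tr}(\pi(k) P_0 \pi(k)^{-1})\, dk = \int_K \mathrm{tr}(P_0)\, dk = \mathrm{vol}(K) < \infty$, so $T$ is a positive trace-class, hence compact, operator.

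With $T$ in hand, the conclusion follows from the spectral theorem for compact self-adjoint operators together with irreducibility. Since $T$ is compact, self-adjoint, positive and nonzero, it has a nonzero eigenvalue $c > 0$, and the corresponding eigenspace $H_c := \ker(T - c\,\mathrm{Id})$ is finite-dimensional and nonzero. Because $T$ commutes with $\pi(g)$ for all $g \in K$, the eigenspace $H_c$ is an invariant subspace of $H$; it is closed (finite-dimensional) and nonzero, so by irreducibility $H_c = H$. Therefore $H = H_c$ is finite-dimensional, as claimed.

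The main obstacle I anticipate is purely a matter of rigor rather than of idea: making the operator-valued integral $T = \int_K \pi(k) P_0 \pi(k)^{-1}\, dk$ precise within the vector-valued integration framework recalled above (which is phrased for $H$-valued, not $B(H)$-valued, functions), and verifying that the formal manipulations — positivity passing through the integral, the invariance identity via left-invariance of Haar measure, and above all the computation of $\mathrm{tr}(T)$ to deduce compactness — are legitimate. One clean way around this is to avoid $B(H)$-valued integrals entirely: for fixed $v \in H$ define $T v \in H$ by $\langle T v, w\rangle = \int_K \langle v, \pi(k) v_0\rangle\, \overline{\langle w, \pi(k) v_0\rangle}\, dk$ using only scalar integration, check directly (Cauchy–Schwarz) that $v \mapsto Tv$ is bounded, then establish boundedness, self-adjointness, positivity, $K$-equivariance, and Hilbert–Schmidtness of $T$ by elementary estimates on these scalar integrals, using an orthonormal basis to compute $\sum_i \|Te_i\|^2 \leq \mathrm{vol}(K)^2 < \infty$. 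This keeps every step inside the scalar-integration toolkit already developed in the excerpt.
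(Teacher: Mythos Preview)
The paper does not give its own proof; it simply cites \cite{Wal}, Proposition~1.4.2. Your argument is the standard one (and essentially the one found in Wallach): average a rank-one projection to produce a nonzero compact self-adjoint intertwiner, then use the spectral theorem plus irreducibility.

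One small slip to fix: the inequality $\langle T v_0, v_0\rangle = \int_K |\langle \pi(k) v_0, v_0\rangle|^2\,dk \geq \mathrm{vol}(K)$ is false, since the integrand is $\leq 1$, not $\geq 1$. What your stated justification (``the integrand at $k=e$ equals $1$ and the integrand is continuous'') actually proves is the weaker---but sufficient---conclusion $\langle T v_0, v_0\rangle > 0$. With that corrected, the argument is complete; your final paragraph's scalar-integration workaround and the Hilbert--Schmidt estimate $\sum_i \|Te_i\|^2 \leq \mathrm{vol}(K)^2$ are both sound.
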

\begin{proof} See \cite[Proposition 1.4.2]{Wal}.
\end{proof}

Given Proposition \ref{findim}, we can associate, to each irreducible representation $\gamma$ of $K$, the function 
\begin{equation*}
\chi_{\gamma}\colon K\longrightarrow \mathbb{C},\text{ }\chi_{\gamma}(g):=\mathrm{tr}\gamma(g),
\end{equation*}
the \textbf{character} of $\gamma$. A standard argument proves that equivalent representations have the same character. \\

Recall that if $\{(\pi_{i},H_{i})| i\in I\}$ is a countable family of unitary representations of a topological group $G$, we can construct a new unitary representation of $G$, the \textbf{direct sum}, on the Hilbert space completion of the algebraic direct sum of the $H_{i}$'s. We refer the reader to \cite[Section 1.4.1]{Wal}, for the details of this construction. We let 
\begin{equation*}
\bigoplus_{i\in I}H_{i}
\end{equation*}
denote the direct sum of the family $\{(\pi_{i},H_{i})| i\in I\}$, dropping explicit reference to the $\pi_{i}$'s. 

\begin{prop}\label{PW} Let $K$ be a compact group. Let $(\pi,H)$ be a unitary representation of $K$. Then $(\pi,H)$ is the direct sum representation of its $K$-isotypic components; that is,
\begin{equation*}
H=\bigoplus_{[\gamma]\in \widehat{K}}H(\gamma).
\end{equation*}
 Moreover, let $\alpha_{\gamma}$ denote the function 
\begin{equation*}
\alpha_{\gamma}(k):=\text{dim}(\gamma)\overline{\chi_{\gamma}(k)}.
\end{equation*}
Then the following holds: 
\begin{equation*}
H(\gamma)=\pi(\alpha_{\gamma})H.
\end{equation*}
\end{prop}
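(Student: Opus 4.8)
The plan is to deduce the statement from the Peter--Weyl theorem together with the properties of the integral operators $\pi(f)$ established in Proposition \ref{intop}. First I would verify that each $\alpha_\gamma = \dim(\gamma)\overline{\chi_\gamma}$ lies in $L^1(K)$: since $K$ is compact and $\chi_\gamma$ is continuous (being a finite sum of matrix entries of $\gamma$, finite-dimensional by Proposition \ref{findim}), $\alpha_\gamma$ is bounded, hence integrable with respect to the (normalised) Haar measure on $K$. Thus $\pi(\alpha_\gamma)$ is a well-defined bounded operator on $H$ by Proposition \ref{intop}. The heart of the argument is then the identity $\pi(\alpha_\gamma) = $ orthogonal projection onto $H(\gamma)$, after which $H=\bigoplus_\gamma H(\gamma)$ follows from the fact that these projections are mutually orthogonal and sum (in the strong operator topology) to the identity.

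For the projection claim I would first treat the case where $(\pi,H)$ is itself irreducible, say in class $\delta\in\widehat K$, so that $H$ is finite-dimensional. Using Schur orthogonality for the characters of the compact group $K$ — namely $\int_K \overline{\chi_\gamma}(k)\,\chi_\delta(k)\,dk = \delta_{\gamma\delta}$ together with the stronger orthogonality relations for matrix coefficients — one computes directly that $\pi(\alpha_\gamma)$ acts as the identity on $H$ when $\gamma=\delta$ and as zero when $\gamma\neq\delta$; this is the standard finite-dimensional computation and gives $\pi(\alpha_\gamma)H = H(\gamma)$ in this case. For a general unitary representation $(\pi,H)$, one decomposes $H$ into its isotypic components using Zorn's lemma (a maximal orthogonal family of irreducible invariant subspaces has dense span, since the orthogonal complement of such a family, if nonzero, contains a nonzero finite-dimensional invariant subspace by Proposition \ref{intop} applied to a suitable $\pi(f)$, hence an irreducible one). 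On each irreducible summand $\pi(\alpha_\gamma)$ restricts to the operator just analysed, and because $\pi(\alpha_\gamma)$ is bounded it is determined by its restriction to the dense span of these summands; hence $\pi(\alpha_\gamma)$ is the orthogonal projection onto $H(\gamma)$. Summing over $\gamma$ and using $\sum_\gamma \dim(\gamma)\chi_\gamma$ as an approximate identity (or simply orthogonality of the projections together with the density of $\bigoplus_\gamma H(\gamma)$) yields $H=\bigoplus_{\gamma\in\widehat K}H(\gamma)$.

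I expect the main obstacle to be the passage from the irreducible (finite-dimensional) case to an arbitrary unitary representation: one must justify both that $H$ is the Hilbert-space direct sum of irreducible subrepresentations — which requires producing nonzero finite-dimensional invariant subspaces inside any nonzero invariant subspace, via the compact self-adjoint operators $\pi(f)$ for well-chosen $f\in L^1(K)$ — and that the bounded operator $\pi(\alpha_\gamma)$ commutes with this decomposition, so that its global behaviour is pinned down by the finite-dimensional computation. Both points are standard consequences of the Peter--Weyl circle of ideas, and since the excerpt explicitly cites \cite{Wal} for the underlying representation theory of compact groups, I would in practice invoke the Peter--Weyl theorem from \cite{Wal}, Section 1.4, rather than reprove it, and reduce the proposition to the character-orthogonality computation identifying $\pi(\alpha_\gamma)$ with the projection onto $H(\gamma)$.
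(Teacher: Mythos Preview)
Your proposal is correct, and it aligns with the paper's approach: the paper does not give an independent argument but simply cites \cite{Wal}, Lemma 1.4.7, which is exactly the Peter--Weyl circle of ideas you outline and ultimately propose to invoke. Your detailed sketch is thus more than the paper itself provides, but it is the standard argument underlying that reference.
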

\begin{proof} See \cite[Lemma 1.4.7]{Wal}.
\end{proof}

\begin{prop}\label{unitarian} Let $K$ be a compact group. If $(\pi,H)$ is a Hilbert space representation of $K$, then there exists an inner product on $H$ that induces the original topology on $H$ and for which $K$ acts unitarily through $\pi$.
\end{prop}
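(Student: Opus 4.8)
The plan is to use the averaging operator $\pi(\alpha_\gamma)$ associated with each $\gamma\in\widehat K$ (as in Proposition \ref{PW}) together with the finite-dimensionality of isotypic pieces to build a convergent series of compactly-supported-in-$\widehat K$ corrections to the original inner product, but it is cleaner to adopt the classical Weyl unitarian trick adapted to the Hilbert setting. First I would fix the given (not necessarily $K$-invariant) inner product $\langle\cdot,\cdot\rangle$ on $H$ whose norm induces the topology of $H$. Since $K$ is compact and $\pi$ is strongly continuous with each $\pi(k)$ a continuous invertible operator, the uniform boundedness principle applied to the compact set $\{\pi(k):k\in K\}$ shows there is a constant $C\geq 1$ with $C^{-1}\|v\|\leq\|\pi(k)v\|\leq C\|v\|$ for all $k\in K$, $v\in H$; in particular each $\pi(k)$ is a bounded operator with bounded inverse. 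This uniform two-sided bound is exactly what guarantees the averaged form will again define a norm equivalent to the original one.

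Next I would define, for $v,w\in H$, the new pairing
\begin{equation*}
(v,w):=\int_{K}\langle\pi(k)v,\pi(k)w\rangle\,dk,
\end{equation*}
where $dk$ is the normalised Haar measure on $K$ and the integral is the scalar (Bochner, or simply Lebesgue after noting $k\mapsto\langle\pi(k)v,\pi(k)w\rangle$ is continuous hence measurable and bounded) integral. One checks directly that $(\cdot,\cdot)$ is sesquilinear, Hermitian-symmetric, and positive: indeed $(v,v)=\int_K\|\pi(k)v\|^2\,dk\geq C^{-2}\|v\|^2>0$ for $v\neq 0$, and also $(v,v)\leq C^2\|v\|^2$. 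Hence $(\cdot,\cdot)$ is an inner product on $H$ whose associated norm is equivalent to the original norm, so it induces the same topology; completeness is preserved since equivalent norms on a complete space remain complete. For $K$-invariance, given $k_0\in K$ one substitutes $k\mapsto kk_0$ and uses right-invariance of Haar measure on the compact group $K$:
\begin{equation*}
(\pi(k_0)v,\pi(k_0)w)=\int_K\langle\pi(kk_0)v,\pi(kk_0)w\rangle\,dk=\int_K\langle\pi(k)v,\pi(k)w\rangle\,dk=(v,w),
\end{equation*}
so each $\pi(k_0)$ is unitary for $(\cdot,\cdot)$.

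The only genuinely delicate point is the uniform operator bound $\sup_{k\in K}\|\pi(k)\|<\infty$ together with $\sup_{k\in K}\|\pi(k)^{-1}\|<\infty$: strong continuity alone gives pointwise bounds, and one promotes these to a uniform bound via the Banach--Steinhaus theorem applied to the family $\{\pi(k):k\in K\}$ (using that $H$ is a Banach space and that for each fixed $v$ the orbit $k\mapsto\pi(k)v$ is continuous on the compact $K$, hence bounded); the bound on the inverses follows by the same argument applied to $k\mapsto\pi(k^{-1})=\pi(k)^{-1}$, or simply from $\|\pi(k)^{-1}v\|=\|\pi(k^{-1})v\|$. I would also briefly note that integrability of the $H$-valued integrand is not needed here since we only integrate the scalar quantity $\langle\pi(k)v,\pi(k)w\rangle$; invoking Proposition \ref{intop} or the vector-valued framework is unnecessary for this statement. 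With the equivalence of norms and the change-of-variables computation in hand, the proposition follows.
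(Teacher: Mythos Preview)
Your argument is correct: this is exactly the classical Weyl unitarian trick, and it is the proof given in the reference the paper cites (Wallach, \textit{Real Reductive Groups I}, Lemma~1.4.8), where the averaged inner product is constructed and shown to be equivalent to the original one via the uniform operator bound coming from Banach--Steinhaus. The paper itself does not reproduce the argument but only points to that lemma, so your write-up in fact supplies the details behind the citation.
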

\begin{proof} See \cite[Lemma 1.4.8]{Wal}.
\end{proof}

We are finally ready to introduce $(\mathfrak{g},K)$-modules.  

\begin{definition} Let $G$ be a connected, semisimple Lie group with finite centre. Let $\mathfrak{g}$ denote its Lie algebra. Let $K$ be a maximal compact subgroup of $G$, which we fix from now on, with Lie algebra $\mathfrak{k}$. A vector space $V$, equipped with the structure of $\mathfrak{g}$-module and $K$-module, is called a $(\mathfrak{g},K)$-\textbf{module} if the following conditions hold:
\begin{enumerate}
\item[(1)] For all $v\in V$, for all $X\in \mathfrak{g}$, for all $k\in K$, 
\begin{equation*}
kXv=\text{Ad}(k)Xkv
\end{equation*}
\item[(2)] For all $v\in V$, the span of the set 
\begin{equation*}
Kv:=\{kv|k\in K\}
\end{equation*}
is a finite-dimensional
subspace of $V$, on which the action of $K$ is continuous.
\item[(3)] For all $v\in V$, for all $Y\in\mathfrak{k}$, 
\begin{equation*}
\frac{d}{dt}\mathrm{exp}(tY)v|_{t=0}=Yv.
\end{equation*}
\end{enumerate}
\end{definition}

We remark that (3) implicitly uses the smoothness of the action of $K$ on the span of $Kv$. This follows from the fact that a continuous group homomorphism between Lie groups is automatically smooth. \\

Let $V$ and $W$ be $(\mathfrak{g},K)$-modules and let $\text{Hom}_{\mathfrak{g},K}(V,W)$ denote the space of $\mathfrak{g}$-morphisms that are also $K$-equivariant. Then $V$ and $W$ are said to be \textbf{equivalent} if $\text{Hom}_{\mathfrak{g},K}(V,W)$ contains an invertible element.\\

A $(\mathfrak{g}, K)$-module $V$ is called \textbf{irreducible} if the only subspaces that are invariant under the actions of $\mathfrak{g}$ and $K$ are the trivial subspace and $V$ itself. In this case, we have the following theorem:

\begin{thm}\label{g,k} Let $V$ be an irreducible $(\mathfrak{g},K)$-module. Then the space  $\mathrm{Hom}_{\mathfrak{g},K}(V,V)$ is $1$-dimensional.   
\end{thm}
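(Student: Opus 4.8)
The statement is the $(\mathfrak{g},K)$-module analogue of Schur's lemma, and the natural route is an algebraic version of the classical argument combined with an argument that turns an abstract field-of-scalars obstruction into the conclusion $\mathbb{C}$. The plan is as follows. First I would show that any nonzero $T\in\mathrm{Hom}_{\mathfrak{g},K}(V,V)$ is invertible: its kernel and image are $\mathfrak{g}$-stable and $K$-stable subspaces of $V$, hence by irreducibility $\ker T\in\{0,V\}$ and $\mathrm{im}\,T\in\{0,V\}$; since $T\neq 0$ this forces $\ker T=0$ and $\mathrm{im}\,T=V$, so $T$ is a bijective $(\mathfrak{g},K)$-morphism, and its set-theoretic inverse is again a $(\mathfrak{g},K)$-morphism. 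Thus $\mathrm{Hom}_{\mathfrak{g},K}(V,V)$ is a division algebra over $\mathbb{C}$.

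The second step is to produce a nonzero finite-dimensional subspace on which the whole endomorphism algebra acts faithfully, so that I can invoke a countability/algebraicity argument. Here I would use that $V$ is an irreducible $(\mathfrak{g},K)$-module: it is generated by any nonzero vector, and (this is where admissibility of irreducible $(\mathfrak{g},K)$-modules, or at the very least the standard fact that $V$ is countable-dimensional, enters — it is available since $V$ arises as $H_K$ for an irreducible Hilbert representation and Proposition \ref{PW} shows each $K$-isotypic component is a sum of finite-dimensional pieces, one per $\widehat{K}$, with $\widehat{K}$ countable) that $V$ has at most countable dimension over $\mathbb{C}$. Fix a nonzero vector $v_0\in V$ lying in a single $K$-isotypic component $V(\gamma)$, and let $W$ be the (finite-dimensional, by Proposition \ref{findim} and the structure of $V(\gamma)$) span of $Kv_0$. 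Any $D\in\mathrm{Hom}_{\mathfrak{g},K}(V,V)$ is $K$-equivariant, hence preserves each isotypic component $V(\gamma)$; I would like $D$ to be determined by its restriction to a finite-dimensional space. More robustly: since $D$ is a division-algebra element and $V$ is countable-dimensional, the subalgebra $\mathbb{C}[D]\subseteq\mathrm{Hom}_{\mathfrak{g},K}(V,V)$ is a commutative division algebra (a field) that is countable-dimensional over $\mathbb{C}$.

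The third step closes the argument: a field extension $F/\mathbb{C}$ that is countable-dimensional must equal $\mathbb{C}$, because $\mathbb{C}$ is algebraically closed of uncountable transcendence degree considerations aside, the cleaner statement is: if $D$ is not algebraic over $\mathbb{C}$ then $\mathbb{C}(D)\cong\mathbb{C}(t)$ contains the uncountable family $\{(D-c)^{-1}\}_{c\in\mathbb{C}}$, which is $\mathbb{C}$-linearly independent (this is the standard partial-fractions argument), contradicting countable-dimensionality; and if $D$ is algebraic over $\mathbb{C}$ then, $\mathbb{C}$ being algebraically closed, $D-\lambda$ is not invertible for some $\lambda\in\mathbb{C}$, forcing $D-\lambda=0$ by the first step, i.e. $D=\lambda\,\mathrm{id}$. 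Hence $\mathrm{Hom}_{\mathfrak{g},K}(V,V)=\mathbb{C}\,\mathrm{id}$, which is $1$-dimensional.

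The main obstacle is the second step — ensuring the countable-dimensionality of $V$ (equivalently, having a workable version of Dixmier's lemma in the $(\mathfrak{g},K)$ setting). This is where one must be careful: an arbitrary irreducible $(\mathfrak{g},K)$-module need not be countable-dimensional a priori without some input, but here $V=H_K$ for a Hilbert representation, and the decomposition $H=\bigoplus_{\gamma\in\widehat K}H(\gamma)$ together with the finite-dimensionality of each irreducible $K$-representation and the countability of $\widehat K$ for $K$ a compact Lie group gives exactly what is needed; alternatively one cites Harish-Chandra's admissibility theorem for the irreducible case, which makes each $H(\gamma)\cap V$ finite-dimensional and $V$ thus countable-dimensional. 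Once countable-dimensionality is in hand, steps one and three are routine. I would present step one, then the countable-dimensionality remark, then the field-theoretic conclusion.
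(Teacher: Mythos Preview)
Your argument is correct and is precisely the Dixmier-style proof that Wallach gives in the reference the paper cites (Lemma~3.3.2 of \cite{Wal}). One unnecessary detour: you do not need $V=H_K$ or admissibility to secure countable dimension---for any irreducible $(\mathfrak{g},K)$-module and any nonzero $v$, the compatibility axiom $kXw=\mathrm{Ad}(k)X\cdot kw$ makes $U(\mathfrak{g})\cdot\mathrm{span}(Kv)$ already a $(\mathfrak{g},K)$-submodule, hence all of $V$, and since $U(\mathfrak{g})$ is countable-dimensional and $\mathrm{span}(Kv)$ is finite-dimensional, $V$ is automatically countable-dimensional.
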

\begin{proof} This is the result actually proved in \cite[Lemma 3.3.2]{Wal}, although the statement there says $\text{Hom}_{\mathfrak{g},K}(V,W)$, for an unspecified $W$. We believe it is a typo. 
\end{proof}

Let $V$ be a $(\mathfrak{g},K)$-module. Since, given each $v\in V$, the span of $Kv$, say $W_{v}$, is a finite-dimensional continuous representation of $K$, we can use Proposition \ref{unitarian} and then apply Proposition \ref{PW}, thus decomposing $W_{v}$ into a finite sum of finite-dimensional $K$-invariant subspaces of $V$. For $\gamma\in \widehat{K}$, we let $V(\gamma)$ denote the sum of all the $K$-invariant finite dimensional subspaces in the equivalence class of $\gamma$. Then the discussion above implies that 
\begin{equation*}
V=\bigoplus_{\gamma\in \widehat{K}}V(\gamma)
\end{equation*}
as a $K$-module, with the direct sum indicating the algebraic direct sum. A $(\mathfrak{g},K)$-module $V$ is called \textbf{admissible} if, for all $\gamma\in \widehat{K}$, $V(\gamma)$ is finite-dimensional. \\

Given a unitary representation $(\pi,H)$, there exists a $(\mathfrak{g},K)$-module naturally associated to it. To define it, recall that a vector $v\in H$ is called \textbf{smooth} if the map 
\begin{equation*}
g\mapsto \pi(g)v
\end{equation*}
is smooth. Let $H^{\infty}$ denote the subspace of smooth vectors of $H$. It is a standard fact that the prescription 
\begin{equation*} 
\dot{\pi}(X):=\frac{d}{dt}\pi(\mathrm{exp}(tX))v|_{t=0}, 
\end{equation*} 
for $v\in H^{\infty}$ and $X\in \mathfrak{g}$, defines an action of $\mathfrak{g}$ on $H^{\infty}$. Recall that a vector $v\in H$ is $K$-\textbf{finite} if the span of the set 
\begin{equation*}
\pi(K)v:=\{\pi(k)v|k\in K\}
\end{equation*}
is finite-dimensional. Let $H_{K}$ denote the subspace of $K$-finite vectors of $H$. By \cite[Lemma 3.3.5]{Wal}, with the action of $\mathfrak{g}$ so defined and with the action of $K$ through $\pi$, the space $H_{K}\cap H^{\infty}$ is a $(\mathfrak{g},K)$-module. The representation $(\pi,H)$ is said to be \textbf{admissible} if $H_{K}\cap H^{\infty}$ is admissible as a $(\mathfrak{g},K)$-module and $(\pi,H)$ is called \textbf{infinitesimally irreducible} if $H_{K}\cap H^{\infty}$ is irreducible as a $(\mathfrak{g},K)$-module.  It is in general not true that a $K$-finite vector is smooth. However, if $(\pi,H)$ is admissible, we have the following result:

\begin{thm}\label{Ksmooth} Let $G$ be a connected, semisimple Lie group with finite centre. Let $(\pi,H)$ be an admissible representation of $G$. Then every $K$-finite vector is smooth.
\end{thm}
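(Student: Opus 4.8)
The statement to prove is Theorem \ref{Ksmooth}: for a connected, semisimple Lie group $G$ with finite centre and an admissible representation $(\pi,H)$ of $G$, every $K$-finite vector is smooth.

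\textbf{Proof proposal.} The plan is to exploit the admissibility hypothesis to reduce the problem, for a fixed $K$-finite vector $v$, to a finite-dimensional situation, and then to produce smoothness by a standard convolution argument. First I would fix $v\in H_K$ and let $\gamma_1,\dots,\gamma_m\in\widehat K$ be the (finitely many) $K$-types occurring in the finite-dimensional $K$-invariant subspace spanned by $\pi(K)v$; then $v\in\bigoplus_{j=1}^m H(\gamma_j)$. By admissibility each $H(\gamma_j)\cap H^\infty$ is finite-dimensional, but I actually want to work with the full isotypic components $H(\gamma_j)$ of the Hilbert representation. The key point is that $H(\gamma_j)=\pi(\alpha_{\gamma_j})H$ by Proposition \ref{PW}, and I will show $H(\gamma_j)$ is itself finite-dimensional.

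To see this, take $W:=\sum_{j=1}^m H(\gamma_j)$, a closed $K$-invariant subspace which is the $K$-isotypic realisation containing $v$; the claim is $\dim W<\infty$. Here is where admissibility of the $(\mathfrak{g},K)$-module $H_K\cap H^\infty$ enters: the Gårding subspace argument shows $H^\infty$ is dense in $H$, hence $H(\gamma_j)\cap H^\infty=\pi(\alpha_{\gamma_j})H^\infty$ is dense in $H(\gamma_j)$ (since $\pi(\alpha_{\gamma_j})$ is continuous and $\pi(\alpha_{\gamma_j})H^\infty$ already contains $\pi(\alpha_{\gamma_j})$ of a dense set), and moreover every vector of $H(\gamma_j)\cap H^\infty$ is automatically $K$-finite, so it lies in $H_K\cap H^\infty$ and hence, by admissibility, the space $H(\gamma_j)\cap H^\infty$ is finite-dimensional. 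A finite-dimensional dense subspace of a Hilbert space equals the whole space, so $H(\gamma_j)=H(\gamma_j)\cap H^\infty$ is finite-dimensional and consists of smooth vectors. In particular $W$ is finite-dimensional and $W\subset H^\infty$.

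It remains to conclude that $v\in W$ is smooth, and this is now immediate: $v\in W\subset H^\infty$, so we are done. The only genuine content is therefore the finite-dimensionality of the isotypic components containing $v$, and within that the one substantive input is the density of smooth vectors, which follows from the Gårding construction: for $\phi\in C_c^\infty(G)$ the operator $\pi(\phi)$ (Proposition \ref{intop}) has image in $H^\infty$, and taking $\phi$ running through an approximate identity shows $\overline{H^\infty}=H$.

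\textbf{Main obstacle.} The step I expect to require the most care is the passage from \emph{admissibility of the $(\mathfrak{g},K)$-module} $H_K\cap H^\infty$ to \emph{finite-dimensionality of the Hilbert-space isotypic components} $H(\gamma)$. One must verify carefully that $\pi(\alpha_\gamma)H^\infty$ is dense in $H(\gamma)$ — this uses that $\pi(\alpha_\gamma)$ commutes with $\pi(K)$ hence preserves the decomposition, that $H^\infty$ is dense and $\pi$-invariant under $\pi(K)$ so that $\pi(\alpha_\gamma)H^\infty\subset H(\gamma)$, and that $\pi(\alpha_\gamma)$ acts as the identity on $H(\gamma)$ — and that every smooth vector in $H(\gamma)$ is $K$-finite of type $\gamma$, so lands in the admissible module. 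Once the isotypic components are known to be finite-dimensional, everything else is formal.
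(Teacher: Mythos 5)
Your argument is correct and is essentially the standard Harish-Chandra argument that the cited reference (Wallach, Theorem 3.4.10) carries out for the smoothness part of that theorem: show the Gårding space is dense, project onto an isotypic component $H(\gamma)$ via $\pi(\alpha_\gamma)$ to get a dense set of smooth vectors in $H(\gamma)$, use admissibility to bound the dimension, and conclude $H(\gamma)\subset H^\infty$. The two small points worth flagging are (i) Proposition \ref{PW} is stated for unitary representations of the compact group, so one should first invoke Proposition \ref{unitarian} to re-norm $H$ so that $\pi|_K$ is unitary (this changes neither the topology, nor $H^\infty$, nor $H(\gamma)$), and (ii) the step that every vector of $H(\gamma)$ is $K$-finite is true but deserves a word (writing $v=\sum_{i=1}^d e_i\otimes m_i$ in $H_\gamma\hat\otimes M$ shows $\mathrm{span}\,\pi(K)v$ has dimension at most $d^2$). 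With those remarks, the proposal matches the paper's route.
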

\begin{proof} See the proof \cite[ Theorem 3.4.10]{Wal}.
\end{proof}

In light of the following fundamental result of Harish-Chandra, Theorem~\ref{Ksmooth} will play an important role in this article.

\begin{thm}\label{HC} Let $G$ be a connected, semisimple Lie group with finite centre. Let $(\pi,H)$ be an irreducible, Hilbert representation of $G$. Then $(\pi,H)$ is admissible. 
\end{thm}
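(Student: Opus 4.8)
\textbf{Proof proposal for Theorem \ref{HC}.}

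The statement is Harish-Chandra's celebrated admissibility theorem, so the plan is to reproduce the classical argument via the action of the algebra of $K$-finite functions on $K$. First I would introduce, for each $\gamma\in\widehat{K}$, the operator $\pi(\alpha_\gamma)$ of Proposition \ref{PW}, which projects $H$ onto the isotypic component $H(\gamma)$; the content to be proved is that $\dim H(\gamma)<\infty$. The strategy is to produce a single function $f$ on $K$, built from finitely many characters, such that $\pi(f)$ is \emph{compact} and acts as the identity on $H(\gamma)$; compactness of an operator that is the identity on $H(\gamma)$ forces $\dim H(\gamma)<\infty$. The natural candidate is not $\alpha_\gamma$ itself (its image is exactly $H(\gamma)$, but there is no reason for $\pi(\alpha_\gamma)$ to be compact), so the real work is to exhibit a suitable $f$.

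The key device is the Casimir element, or more elementarily the Laplacian on $K$. Concretely, I would argue as follows. Fix $\gamma$ and choose, via Proposition \ref{unitarian} and an averaging argument, a $K$-invariant vector $v\in H(\gamma)$ of norm one spanning (inside some irreducible $K$-summand) together with its $K$-translates a finite-dimensional space; the matrix coefficient $k\mapsto\langle\pi(k)v,v\rangle$ is a smooth function on $K$ in the $\gamma$-isotypic part of $L^2(K)$. Apply a large power of $(1-\Omega_K)$, where $\Omega_K$ is the Casimir of $K$, so that $f:=(1-\Omega_K)^N(\dim\gamma\,\overline{\chi_\gamma})$ is still supported (spectrally) on $\gamma$ but has been smoothed enough that, together with ellipticity of $1-\Omega_K$ on the compact manifold $K$, one can write $\alpha_\gamma = g * f$ (convolution on $K$) for an $L^2$, indeed continuous, function $g$ — this is where elliptic regularity on $K$ enters. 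Then $\pi(\alpha_\gamma)=\pi(g)\pi(f)$; but more usefully, run the same argument inside $G$: by Theorem \ref{Ksmooth}'s circle of ideas the operator $\pi(\beta)$ for $\beta\in C_c^\infty(G)$ of the right kind is Hilbert–Schmidt, hence compact, when $(\pi,H)$ is irreducible (this uses the fact that an irreducible unitary representation of a type I group, or directly Harish-Chandra's estimate on matrix coefficients, makes $\pi(\beta)$ trace class). The honest route, which I would follow, is Harish-Chandra's: embed the problem into $L^2(K)$ via a cyclic vector and use that $\pi(\alpha_\gamma)$ restricted to the cyclic subspace is intertwined with convolution by $\alpha_\gamma$ on a subrepresentation of $L^2(K\backslash G)$-type data, where the isotypic components are visibly finite-dimensional by the Peter–Weyl theorem (Proposition \ref{PW}) applied to $K$.

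Let me restate the cleanest version of the plan. Step 1: reduce to showing, for a single cyclic unit vector $v_0\in H$ and every $\gamma$, that $\dim\big(\overline{\pi(C_c^\infty(G))v_0}\cap H(\gamma)\big)<\infty$; since $H$ is irreducible it equals the closure of $\pi(C_c^\infty(G))v_0$, so this suffices. Step 2: consider the map $C_c^\infty(G)\to H$, $\beta\mapsto\pi(\beta)v_0$, which is $G$-equivariant for right translation, and whose kernel is a left ideal; this realizes $H$ as a subquotient of the right-regular representation of $G$ on a space of functions, and the crucial point — Harish-Chandra's — is that after convolving on the left by $\alpha_\gamma$ on $K$ and using the Casimir eigenvalue equation (which holds because $v_0$ can be chosen $Z(\mathfrak g_{\mathbb C})$-finite, $H$ being irreducible and admissibility of the infinitesimal character being automatic), one lands in a space of functions on $G$ that are annihilated by an elliptic operator, hence in a finite-dimensional space by Harish-Chandra's finiteness theorem for systems of differential equations on $G/K$. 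Step 3: conclude $\dim H(\gamma)<\infty$ and hence admissibility.

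The main obstacle is precisely Step 2: making rigorous that the relevant space of matrix-coefficient functions on $G$ — those that are $K$-finite on both sides and $Z(\mathfrak g_{\mathbb C})$-finite — is finite-dimensional. This is the substantive analytic input (Harish-Chandra's theorem that such joint eigenspaces of the Casimir and $Z(\mathfrak g_{\mathbb C})$ are finite-dimensional, proved via elliptic regularity and a subrepresentation-type argument), and it is the heart of the whole theorem; everything else is Peter–Weyl bookkeeping (Propositions \ref{findim}, \ref{PW}, \ref{unitarian}) together with Theorem \ref{Ksmooth} to guarantee the $K$-finite vectors in play are genuinely smooth so that the differential operators act. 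Since this theorem is classical and not re-proved in the paper, I would in fact cite it — it is stated here as Theorem \ref{HC} precisely to be invoked, with the proof referred to Harish-Chandra's original work or to \cite{Wal}.

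\begin{proof}
This is a fundamental theorem of Harish-Chandra; see \cite{Wal}, Theorem 3.4.10 and the references therein, or the discussion above.
\end{proof}
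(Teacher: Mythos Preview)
Your final proof is a citation, and that is exactly what the paper does too: its entire proof is ``See \cite{KnappVogan}, Theorem~7.204.'' So the approaches agree; you simply cite a different standard source.

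One remark on the sketch preceding your citation: it is circular in one place. You appeal to Theorem~\ref{Ksmooth} ``to guarantee the $K$-finite vectors in play are genuinely smooth so that the differential operators act,'' but Theorem~\ref{Ksmooth} has admissibility of $(\pi,H)$ as a hypothesis --- precisely what you are trying to prove. In the genuine Harish-Chandra argument this smoothness is obtained directly (via approximate identities in $C_c^\infty(G)$, i.e.\ the G\aa rding subspace), not by invoking admissibility. The sketch is also somewhat meandering (several restarts before settling on the $Z(\mathfrak g_{\mathbb C})$-finiteness route). Since you end by citing the result anyway, none of this affects the correctness of your formal proof, but you should drop the sketch or at least remove the appeal to Theorem~\ref{Ksmooth}.
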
 
\begin{proof} See \cite[Theorem 7.204]{KnappVogan}. 
\end{proof}

In the following, given a unitary representation $(\pi,H)$, we will write $H_{K}$ for the $(\mathfrak{g},K)$-module $H_{K}\cap H^{\infty}$ even if $(\pi,H)$ is not admissible. We believe it will not cause any confusion.

We are now in position to prove the version of the Schur lemma for sesquilinear forms that we will use in Section~\ref{sec:3}. It is given as Corollary~\ref{Schur} below. First, we need:

\begin{thm}\label{irred} Let $G$ be a connected, semisimple Lie group with finite centre. Let $(\pi,H)$ be an admissible Hilbert representation of $G$. Then $(\pi,H)$ is irreducible if and only if it is infinitesimally irreducible.
\end{thm}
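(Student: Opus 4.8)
The plan is to prove Theorem~\ref{irred} by establishing both implications separately, reducing each to the $(\mathfrak{g},K)$-module Schur lemma (Theorem~\ref{g,k}) together with the admissibility of $(\pi,H)$ guaranteed by Harish-Chandra (Theorem~\ref{HC}).

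\textbf{From irreducibility to infinitesimal irreducibility.} Suppose $(\pi,H)$ is irreducible as a Hilbert representation, and let $W\subset H_K$ be a nonzero $(\mathfrak{g},K)$-submodule; I want to show $W=H_K$. First I would take the closure $\overline{W}$ in $H$. Since $W$ is $K$-invariant and $\mathfrak{g}$-invariant, and since $\mathfrak{g}$ acts by closable operators whose domain contains the smooth vectors, one checks that $\overline{W}$ is a closed $G$-invariant subspace: for $X\in\mathfrak{g}$ and $w\in W$, the one-parameter group $\pi(\exp tX)$ preserves $\overline{W}$ because $t\mapsto\pi(\exp tX)w$ is the solution of an ODE with generator $\dot\pi(X)$ tangent to $\overline{W}$; together with $K$-invariance and connectedness of $G$ this gives $G$-invariance of $\overline{W}$. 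By irreducibility $\overline{W}=H$, so $\overline{W}\cap H_K=H_K$. Finally I invoke admissibility: each isotypic piece $H_K(\gamma)$ is finite-dimensional, and $W\cap H_K(\gamma)$ is a subspace whose closure (equal to its span, being finite-dimensional) lies in $\overline{W}\cap H_K(\gamma)=H_K(\gamma)$; a finite-dimensional subspace that is dense in $H_K(\gamma)$ equals it, so $W(\gamma)=H_K(\gamma)$ for every $\gamma$, hence $W=H_K$.

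\textbf{From infinitesimal irreducibility to irreducibility.} Suppose $H_K$ is irreducible as a $(\mathfrak{g},K)$-module, and let $M\subset H$ be a nonzero closed $G$-invariant subspace. Then $M$ is itself a Hilbert representation of $G$; restricting $\pi$ to $M$ and applying Theorem~\ref{HC} again, $M$ is admissible, and its space of $K$-finite vectors $M_K$ is a nonzero $(\mathfrak{g},K)$-submodule of $H_K$ (nonzero because $M=\bigoplus_\gamma M(\gamma)$ by Proposition~\ref{PW}, so $M$ contains nonzero $K$-finite — hence, by Theorem~\ref{Ksmooth}, smooth — vectors). By infinitesimal irreducibility $M_K=H_K$, so $M$ contains $H_K$, which is dense in $H$ (the $K$-finite vectors are dense by Proposition~\ref{PW}). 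Since $M$ is closed, $M=H$. Hence $H$ has no proper nonzero closed invariant subspace and $(\pi,H)$ is irreducible.

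\textbf{Main obstacle.} The one genuinely delicate point is the first direction: verifying that the closure $\overline{W}$ of a $(\mathfrak{g},K)$-submodule is $G$-invariant, i.e.\ that $\mathfrak{g}$-invariance of $W$ on smooth vectors upgrades to $\exp(\mathfrak{g})$-invariance of $\overline{W}$. I would handle this via the standard argument that for $w\in W$ the function $t\mapsto\pi(\exp tX)w$ is smooth and its derivative lies in $W\subset\overline{W}$, so the curve stays in the closed subspace $\overline{W}$; combined with the density of $K$-finite vectors and the $K$-invariance, together with connectedness of $G$ (so $G$ is generated by $\exp\mathfrak{g}$ and $K$, indeed by $\exp\mathfrak{g}$ alone), this yields $G\overline{W}\subseteq\overline{W}$. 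Everything else is a bookkeeping application of admissibility and the already-quoted theorems.
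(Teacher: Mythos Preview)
The paper does not give a proof; it simply cites \cite{Wal}, Theorem~3.4.11. So your attempt is necessarily a different route, and the substantive question is whether it is complete.

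The second implication is essentially fine, except that your appeal to Theorem~\ref{HC} to conclude that $M$ is admissible is illegitimate: Theorem~\ref{HC} applies only to \emph{irreducible} Hilbert representations, and $M$ is merely a closed invariant subspace. This does not matter, because you never actually use admissibility of $M$; what you need is that $M_K:=M\cap H_K$ is a nonzero $(\mathfrak{g},K)$-submodule of $H_K$, and you supply this directly via Proposition~\ref{PW} (nonzero) together with closedness of $M$ and $G$-invariance (for $\mathfrak{g}$-stability, since $\dot\pi(X)v$ is a limit of elements of $M$).

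The first implication has a genuine gap at exactly the point you flag as the ``main obstacle''. Your ODE argument is circular as written. You assert that $t\mapsto\pi(\exp tX)w$ stays in $\overline W$ because its derivative lies in $W$; but the derivative at time $t$ is $\pi(\exp tX)\dot\pi(X)w$, and to place this in $\overline W$ you would already need to know that $\pi(\exp tX)$ preserves $\overline W$. Reformulated as ``$\dot\pi(X)$ is tangent to $\overline W$, hence its flow preserves $\overline W$'', this is a statement about an unbounded operator on an infinite-dimensional space, and it simply does not follow without further input: knowing that a dense $\mathfrak g$-invariant subspace sits inside $\overline W$ does not by itself force the one-parameter group generated by (the closure of) $\dot\pi(X)$ to preserve $\overline W$. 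The standard way to close this gap is Harish-Chandra's theorem that $K$-finite vectors in an admissible representation are \emph{analytic} vectors: then for small $t$ one has the convergent series $\pi(\exp tX)w=\sum_{n\ge 0}\tfrac{t^n}{n!}\dot\pi(X)^n w$ with each term in $W$, hence the sum in $\overline W$, and a connectedness argument extends to all $t$ and to all of $G$. You have not invoked analyticity anywhere, and without it (or an equivalent device) the argument does not go through.
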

\begin{proof} See \cite[Theorem 3.4.11]{Wal}.
\end{proof}

\begin{definition}\label{gkinvform} Let $V$ and $W$ be $(\mathfrak{g},K)$-modules. A sesquilinear form 
\begin{equation*}
B(\cdot,\cdot):V\times W\longrightarrow \mathbb{C}
\end{equation*} 
is $(\mathfrak{g},K)$-invariant if it satisfies the following two conditions:
\begin{enumerate}
\item[(i)] For all $k_{1},k_{2}\in K$ and all $v,w\in V$ we have  
\begin{equation*}
B(k_{1}v,k_{2}w)=B(v,w).
\end{equation*}
\item[(ii)] For all $X\in \mathfrak{g}$ and all $v,w\in V$ we have 
\begin{equation*}
B(Xv,w)=-B(v,Xw).
\end{equation*}
\end{enumerate}
\end{definition}

\begin{thm}\label{dual} Let $G$ be a connected, semisimple Lie group with finite centre. Let $V$ be an admissible $(\mathfrak{g},K)$-module. Suppose that there exist a $(\mathfrak{g},K)$-module $W$ and a non-degenerate $(\mathfrak{g},K)$-invariant sesquilinear form 
\begin{equation*}
B(\cdot,\cdot):V\times W\longrightarrow \mathbb{C}.
\end{equation*}
Then $W$ is $(\mathfrak{g},K)$-isomorphic to $\overline{V}$.
\end{thm}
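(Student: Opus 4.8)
The plan is to use the non-degeneracy of $B$ to produce a $(\mathfrak{g},K)$-isomorphism $W\to\overline{V}$, and to use admissibility of $V$ to upgrade the injectivity coming from non-degeneracy to surjectivity. Concretely, recall that $\overline{V}$ is $V$ with the conjugate action of $K$ and $\mathfrak{g}$ (so a sesquilinear form on $V\times W$ is the same as a bilinear pairing on $\overline{V}\times W$, or equivalently a linear map $W\to\overline{V}^{*}$ into the full linear dual). First I would define $\Phi\colon W\to \overline{V}^{*}$ by $\Phi(w)=B(\cdot,w)$; conditions (i) and (ii) of Definition \ref{gkinvform} translate exactly into $\Phi$ being $\mathfrak{g}$-equivariant and $K$-equivariant for the natural (contragredient) actions on $\overline{V}^{*}$, and non-degeneracy of $B$ in the second variable gives that $\Phi$ is injective. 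Symmetrically, $w\mapsto B(w,\cdot)$ wait — more usefully, non-degeneracy in the first variable says that for each nonzero $v\in V$ there is $w\in W$ with $B(v,w)\neq 0$, i.e. the image of $\Phi$ separates points of $\overline{V}$.

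Next I would cut down from $\overline{V}^{*}$ to the $K$-finite part. Since $\Phi$ is $K$-equivariant and every $w\in W$ is $K$-finite, the image of $\Phi$ lands in the $K$-finite part $(\overline{V}^{*})_{K}$ of the full dual; for an admissible $(\mathfrak{g},K)$-module this $K$-finite dual is precisely $\overline{V}^{\vee}$, the algebraic contragredient $(\mathfrak{g},K)$-module, which on each $K$-type $\overline{V}(\gamma)$ is just the ordinary (finite-dimensional) dual — here admissibility of $V$, hence of $\overline{V}$, is what makes $\overline{V}^{\vee}$ itself admissible and makes the double-dual map $\overline{V}\to(\overline{V}^{\vee})^{\vee}$ an isomorphism, $K$-type by $K$-type. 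So $\Phi\colon W\to\overline{V}^{\vee}$ is an injective $(\mathfrak{g},K)$-morphism. Composing with the canonical isomorphism $\overline{V}\xrightarrow{\ \sim\ }(\overline{V}^{\vee})^{\vee}$ is not quite what I want; rather I will show $\Phi$ is also surjective.

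For surjectivity I would argue one $K$-type at a time. Fix $\gamma\in\widehat{K}$. Since $\Phi$ is $K$-equivariant it restricts to a linear map $W(\gamma)\to\overline{V}^{\vee}(\gamma)=\overline{V(\gamma)}^{*}$, a map between finite-dimensional spaces by admissibility of $V$ (and hence of $W$, which one sees because $\Phi$ is injective and $K$-equivariant, so $\dim W(\gamma)\le\dim\overline{V(\gamma)}^{*}=\dim V(\gamma)<\infty$). The fact that $\mathrm{Image}(\Phi)$ separates points of $\overline{V}$, applied on the finite-dimensional space $\overline{V(\gamma)}$ and using that $B$ pairs $V(\gamma)$ with $W(\gamma)$ (different $K$-types are orthogonal under a $K$-invariant form, by Schur orthogonality / Proposition \ref{PW}), shows that the induced pairing $V(\gamma)\times W(\gamma)\to\mathbb{C}$ is non-degenerate on the left; hence $W(\gamma)\to\overline{V(\gamma)}^{*}$ is surjective, and combined with the dimension count it is a bijection. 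Running over all $\gamma$ gives that $\Phi$ is bijective, hence a $(\mathfrak{g},K)$-isomorphism $W\xrightarrow{\ \sim\ }\overline{V}^{\vee}$. Finally, to land at $\overline{V}$ rather than its contragredient, I note that the hypothesis is symmetric in $V$ and $W$ (with $B$ replaced by the sesquilinear form $\overline{B(w,v)}$ wait — one should instead observe that non-degeneracy lets us equally regard $B$ as giving $V\to\overline{W}^{\vee}$, and the same argument shows this is an isomorphism; dualizing and using $(\overline{W}^{\vee})^{\vee}\cong\overline{W}$ then yields $\overline{V}^{\vee}\cong \overline{W}$, so $W\cong\overline{V}^{\vee}\cong$ wait this only recovers $W\cong\overline{V}^{\vee}$).

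The clean way to finish: having shown $\Phi\colon W\to \overline{V}^{\vee}$ is a $(\mathfrak{g},K)$-isomorphism, symmetrically the map $\Psi\colon V\to \overline{W}^{\vee}$, $\Psi(v)=B(v,\cdot)$, is a $(\mathfrak{g},K)$-isomorphism. Applying the contragredient functor (which is exact and sends isomorphisms to isomorphisms) to $\Psi$ gives $\overline{W}^{\vee\vee}\cong V^{\vee}$; using the canonical isomorphism $\overline{W}\cong\overline{W}^{\vee\vee}$ valid for admissible modules we get $\overline{W}\cong V^{\vee}$, i.e. $W\cong\overline{V^{\vee}}=\overline{V}^{\vee}$ — consistent but still the contragredient. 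Since the statement as printed asserts $W\cong\overline{V}$, I would simply follow the convention (implicit in the cited treatment) that for an admissible $V$ equipped with such a form one identifies $\overline{V}^{\vee}$ with $\overline{V}$ when the form in question is moreover \emph{definite}, or — more likely what is intended — the statement should be read with $\overline{V}$ denoting the $(\mathfrak{g},K)$-module dual of $V$ in the Hermitian sense; in any case the substantive content, and the only thing used later, is that $\Phi$ is the asserted isomorphism.

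The main obstacle is the passage from the full linear dual to the $K$-finite (algebraic contragredient) module and the verification that, for admissible $V$, the canonical map $\overline{V}\to\overline{V}^{\vee\vee}$ is an isomorphism: this is exactly where admissibility is indispensable, since without the finiteness of each $V(\gamma)$ neither the dimension count forcing surjectivity $K$-type by $K$-type nor the reflexivity of the contragredient would hold. Everything else — the translation of (i),(ii) into equivariance, the orthogonality of distinct $K$-types under a $K$-invariant form, and the finite-dimensional linear algebra on each $V(\gamma)$ — is routine.
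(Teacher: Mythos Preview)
Your argument is the same as the paper's: the paper simply defers to Wallach's Lemma~4.5.1 with a one-line modification for sesquilinearity, and what you have written is precisely a reconstruction of that argument (send $w$ to the functional $B(\cdot,w)$, land in the $K$-finite contragredient by $K$-equivariance, then use admissibility for a $K$-type-by-$K$-type dimension count forcing bijectivity).

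Your hesitation at the end is justified and worth recording: the natural target of the map is the Hermitian dual $\overline{V}^{\vee}$ (equivalently $\overline{V^{\vee}}$), not the bare conjugate $\overline{V}$, and the paper is loose on this point---its own $T(w)$ is visibly a functional on $V$, not an element of $V$. This is harmless for the only application, Corollary~\ref{Schur}: there the theorem is invoked twice with $V=W=H_K$, producing two isomorphisms $T,T'$ to the \emph{same} target, so $T(T')^{-1}\in\mathrm{Hom}_{\mathfrak{g},K}(H_K,H_K)$ is a scalar regardless of what that target is called. You may therefore stop at $W\cong\overline{V}^{\vee}$ and discard your last two paragraphs of back-and-forth.
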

\begin{proof} This is \cite[Lemma 4.5.1]{Wal},  except for the fact that our form is sesquilinear. To account for it, we modify the definition of the map $T$ in the reference by setting, for a given $w\in W$, $T(w)(v)=B(w,v)$ for all $v\in V$. This defines a map from $W$ to $\overline{V}$ obtained by sending $w$ to $T(w)$ which, by the argument in the reference, is a $(\mathfrak{g},K)$-isomorphism. 
\end{proof}

The next corollary is proved by adapting to our case the argument in \cite[Proposition 8.5.12]{Gold} and using the beginning of the proof of \cite[Proposition 9.1]{Knapp}.

\begin{cor}\label{Schur} Let $G$ be a connected, semisimple Lie group with finite centre. Let $(\pi,H)$ be an irreducible, Hilbert representation of $G$. Then, up to a constant, there exists at most one non-zero $(\mathfrak{g},K)$-invariant sesquilinear form on $H_{K}$. In particular, if $(\pi,H)$ is irreducible unitary, then every such form is a constant multiple of $\langle\cdot,\cdot\rangle$. 
\end{cor}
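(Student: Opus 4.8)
The plan is to deduce Corollary \ref{Schur} from Theorem \ref{dual} together with the $(\mathfrak{g},K)$-module Schur lemma, Theorem \ref{g,k}. First I would dispose of the trivial case: if there is no non-zero $(\mathfrak{g},K)$-invariant sesquilinear form on $H_K$, there is nothing to prove, so assume $B$ is such a form, non-zero. By Theorem \ref{HC} the representation $(\pi,H)$ is admissible, hence $H_K$ (our shorthand for $H_K\cap H^\infty$) is an admissible $(\mathfrak{g},K)$-module, and by Theorem \ref{irred} it is irreducible as a $(\mathfrak{g},K)$-module.

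The key step is to show that any non-zero $(\mathfrak{g},K)$-invariant sesquilinear form $B$ on $H_K$ is automatically non-degenerate. Indeed, consider the left radical $N_\ell=\{v\in H_K : B(v,w)=0 \text{ for all } w\in H_K\}$ and the right radical $N_r=\{w\in H_K : B(v,w)=0 \text{ for all }v\in H_K\}$. Condition (i) of Definition \ref{gkinvform} shows each radical is $K$-stable, and condition (ii) shows each is $\mathfrak{g}$-stable: if $v\in N_\ell$ then $B(Xv,w)=-B(v,Xw)=0$ for all $w$, so $Xv\in N_\ell$, and similarly for $N_r$. By irreducibility of $H_K$ each radical is either $0$ or all of $H_K$; since $B\neq 0$, both must be $0$, so $B$ is non-degenerate. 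Now Theorem \ref{dual}, applied with $V=W=H_K$ and the form $B$, produces a $(\mathfrak{g},K)$-isomorphism $H_K\xrightarrow{\sim}\overline{H_K}$. Composing, any two non-zero $(\mathfrak{g},K)$-invariant sesquilinear forms $B_1,B_2$ on $H_K$ give, via the recipe in the proof of Theorem \ref{dual} (sending a form to the map $w\mapsto B(w,\cdot)\in\overline{H_K}$), two $(\mathfrak{g},K)$-isomorphisms $T_1,T_2\colon H_K\to\overline{H_K}$, whence $T_2^{-1}\circ T_1\in\operatorname{Hom}_{\mathfrak{g},K}(H_K,H_K)$. By Theorem \ref{g,k} this space is one-dimensional, so $T_1=cT_2$ for a scalar $c\in\mathbb{C}^\times$, which unwinds to $B_1=cB_2$. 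This gives uniqueness up to a constant.

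For the final assertion, suppose moreover that $(\pi,H)$ is unitary. Then the restriction of the Hilbert inner product $\langle\cdot,\cdot\rangle$ to $H_K\times H_K$ is a non-zero sesquilinear form which is $K$-invariant because $\pi(k)$ is unitary, and satisfies $\langle \dot\pi(X)v,w\rangle=-\langle v,\dot\pi(X)w\rangle$ for $X\in\mathfrak{g}$ because $\dot\pi(X)$ is the derivative of the one-parameter unitary group $t\mapsto\pi(\exp tX)$, hence skew-symmetric on smooth vectors; thus $\langle\cdot,\cdot\rangle$ is a non-zero $(\mathfrak{g},K)$-invariant sesquilinear form on $H_K$. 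By the uniqueness just established, any $(\mathfrak{g},K)$-invariant sesquilinear form on $H_K$ is a constant multiple of $\langle\cdot,\cdot\rangle$.

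The main obstacle I anticipate is not conceptual but bookkeeping: one must be careful that the passage from forms to maps into $\overline{H_K}$ is set up so that $(\mathfrak{g},K)$-invariance of the form (conditions (i) and (ii)) corresponds exactly to $(\mathfrak{g},K)$-equivariance of the associated map into the conjugate module $\overline{H_K}$ — in particular that the sign in condition (ii) matches the $\mathfrak{g}$-action on $\overline{H_K}$ used in Theorem \ref{dual}. One also needs the (standard, and implicitly used above) fact that $\dot\pi(X)$ preserves $H_K\cap H^\infty$ and is skew-Hermitian there; this is where unitarity enters for the ``in particular'' clause. Everything else is a direct assembly of Theorems \ref{g,k}, \ref{HC}, \ref{irred}, and \ref{dual}.
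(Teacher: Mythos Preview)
Your proof is correct and follows essentially the same route as the paper's: show any non-zero invariant form is non-degenerate by checking its radicals are $(\mathfrak{g},K)$-submodules and invoking irreducibility of $H_K$ (via Theorem~\ref{irred}), then pass to the associated isomorphism $H_K\to\overline{H_K}$ from Theorem~\ref{dual} and compare two such isomorphisms using the one-dimensionality of $\operatorname{Hom}_{\mathfrak{g},K}(H_K,H_K)$ from Theorem~\ref{g,k}. One trivial remark: admissibility is already hypothesised in the Corollary, so your appeal to Theorem~\ref{HC} in the first paragraph is superfluous there (though, as in the paper, it is relevant for the ``in particular'' clause where only irreducibility and unitarity are assumed).
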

\begin{proof} The irreducibility of $(\pi,H)$ implies that of $H_{K}$, by Theorem \ref{irred} and by Theorem \ref{Ksmooth}. Let $B(\cdot,\cdot)$ be a $(\mathfrak{g},K)$-invariant sesquilinear form. Consider the linear subspace $V_{0}$ of $H_{K}$ defined as $$V_{0}:=\{v\in H_{K}| B(v,w)=0 \text{ for all }w\in H_{K}\}.$$ 
Since $B(\cdot,\cdot)$ is non-zero, $V_{0}$ is a proper subspace of $H_{K}$. Since $B(\cdot,\cdot)$ is moreover $(\mathfrak{g},K)$-invariant, it follows that $V_{0}$ is a $(\mathfrak{g},K)$-invariant subspace of $H_{K}$, hence, by the irreducibility of $H_{K}$, it must be zero. Analogous considerations for the subspace 
\begin{equation*}
V^{0}:=\{w\in H_{K}| B(v,w)=0 \text{ for all } v\in H_{K}\}
\end{equation*}
imply that $B(\cdot,\cdot)$ is non-degenerate. By Theorem \ref{dual}, the map $v\mapsto T(v)$, $T(v)(\cdot):=B(v,\cdot)$, is a $(\mathfrak{g},K)$-isomorphism. Since $H_{K}$ is irreducible, the space $\text{Hom}_{\mathfrak{g},K}(H_{K},H_{K})$ is $1$-dimensional by Theorem \ref{g,k}. Now, let $B'(\cdot,\cdot)$ be another such form, with associated isomorphism $T'$. Then $T(T')^{-1}=cI$, for some $c\in\mathbb{C}$. For the last statement, the unitarity of $(\pi,H)$ implies that $\langle\cdot,\cdot\rangle$ is a $(\mathfrak{g},K)$-invariant non-degenerate sesquilinear form and Theorem \ref{HC}, with the discussion above, implies the result. 
\end{proof}

Since we are assuming that $G$ is connected, proving $(\mathfrak{g},K)$-invariance reduces to proving $\mathfrak{g}$-invariance. Indeed, by \cite[Theorem 2.2, p. 256]{Helg}, any maximal compact subgroup $K$ of $G$ is connected. Therefore, by  \cite[Corollary 4.48]{Knapp2}, the exponential map 
\begin{equation*}
\text{exp}\colon \mathfrak{k}\longrightarrow K
\end{equation*}
is surjective. 

\begin{prop}\label{ginvkinv} Let $G$ be a connected, semisimple Lie group with finite centre. Let $V$ be a $(\mathfrak{g},K)$-module, let 
\begin{equation*}
B(\cdot,\cdot):V\times V\longrightarrow \mathbb{C}
\end{equation*}
be a $\mathfrak{g}$-invariant sesquilinear form. Then $B(\cdot,\cdot)$ is $K$-invariant.
\end{prop}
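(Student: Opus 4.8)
The plan is to reduce $K$-invariance to $\mathfrak{g}$-invariance via the surjectivity of the exponential map $\exp\colon\mathfrak{k}\to K$, which is available because $K$ is connected (as already noted in the text preceding the statement). First I would fix $Y\in\mathfrak{k}$ and $v,w\in V$ and consider the function $t\mapsto B(\exp(tY)v,\exp(tY)w)$. The goal is to show this function is constant in $t$; evaluating at $t=0$ and $t=1$ and then invoking the surjectivity of $\exp|_{\mathfrak{k}}$ (so that every $k\in K$ is of the form $\exp(Y)$ for some $Y\in\mathfrak{k}$) would give $B(kv,kw)=B(v,w)$ for all $k\in K$. Combined with $\mathfrak{g}$-linearity arguments, this yields the full condition (i) of Definition \ref{gkinvform}, namely $B(k_1v,k_2w)=B(v,w)$ for independent $k_1,k_2$.

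The key computation is to differentiate $f(t):=B(\exp(tY)v,\exp(tY)w)$. Using axiom (2) of the $(\mathfrak{g},K)$-module structure, the span of $Kv$ is finite-dimensional with continuous $K$-action, hence smooth, and axiom (3) identifies $\frac{d}{dt}\exp(tY)v|_{t=0}$ with $Yv$; more generally $\frac{d}{dt}\exp(tY)v = \exp(tY)(Yv)$ by the group law, and likewise for $w$. Since $B$ is sesquilinear and everything lives in finite-dimensional $K$-stable subspaces on which $B$ restricts to a continuous (hence smooth) bilinear-type pairing, the product rule gives
\begin{equation*}
f'(t)=B\bigl(\exp(tY)(Yv),\exp(tY)w\bigr)+B\bigl(\exp(tY)v,\exp(tY)(Yw)\bigr).
\end{equation*}
Now apply $\mathfrak{g}$-invariance of $B$ (condition (ii)) with $X=Y\in\mathfrak{k}\subset\mathfrak{g}$ to the vectors $\exp(tY)v$ and $\exp(tY)w$: the first term equals $-B(\exp(tY)v,\exp(tY)(Yw))$, which cancels the second term. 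Hence $f'(t)\equiv 0$, so $f$ is constant, giving $B(\exp(Y)v,\exp(Y)w)=B(v,w)$.

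To get the stronger two-sided statement $B(k_1v,k_2w)=B(v,w)$, I would note that once we know $B(kv,kw)=B(v,w)$ for all $k\in K$, replacing $v$ by $k_1^{-1}v'$ shows $B(v',kw)=B(k_1v', k_1 k w)$; choosing appropriately, or more cleanly: $\mathfrak{g}$-invariance already forces each "slot" to be handled separately. Actually the cleanest route is to first establish, from $\mathfrak{g}$-invariance alone via the same differentiation trick applied only in the first variable, that $t\mapsto B(\exp(tY)v,w)$ has derivative $B(\exp(tY)Yv,w)=-B(\exp(tY)v,Yw)$ — this by itself is not obviously constant, so instead I stick with the two-variable version above and then observe that combining $B(kv,kw)=B(v,w)$ with replacing $w$ by $k^{-1}w$ gives $B(kv,w)=B(v,k^{-1}w)$; iterating over generators of $K$ and using that every element of $K$ is an $\exp$ of something in $\mathfrak{k}$, one deduces $B(k_1v,k_2w)=B(k_2^{-1}k_1v,w)=B(v,w)$ once we also know the one-variable invariance $B(kv,w)=B(v,w)$. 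That last one-variable fact, however, need not hold for a general $\mathfrak{g}$-invariant form; what Definition \ref{gkinvform}(i) actually requires is only the balanced two-sided statement, and $B(k_1v,k_2w)=B((k_2^{-1}k_1)v, w)$ reduces it to the diagonal case $B(kv,w)=B(v,k^{-1}w)$ already obtained. So I would present the argument as: (a) differentiate the diagonal orbit function, (b) cancel using (ii), (c) conclude $B(kv,kw)=B(v,w)$ via surjectivity of $\exp|_{\mathfrak k}$, (d) rewrite as $B(k_1v,k_2w)=B((k_2^{-1}k_1)v,w)\cdot$—and here observe that applying step (c) with the single element $k_2^{-1}k_1$ to the pair $(v, k_2 w)$...

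The main obstacle is this bookkeeping in the final step: making sure the diagonal invariance $B(kv,kw)=B(v,w)$ genuinely upgrades to the two-variable form $B(k_1v,k_2w)=B(v,w)$ demanded by Definition \ref{gkinvform}(i). The resolution is that $B(k_1 v, k_2 w) = B(k_2(k_2^{-1}k_1 v), k_2 w) = B(k_2^{-1}k_1 v, w)$ by diagonal invariance applied to the pair $(k_2^{-1}k_1 v, w)$ and the element $k_2$; then a second application of diagonal invariance with the element $k_2^{-1}k_1$ to the pair $(v, (k_1^{-1}k_2) w)$... this still leaves a residual twist, so in fact the honest statement one proves is exactly $B(k_1v, k_2w) = B(k_2^{-1}k_1 v, w)$, and one should check whether the paper's intended meaning of (i) is this or the fully untwisted version. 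If the latter is required, one additionally needs the one-variable invariance, which follows from differentiating $t\mapsto B(\exp(tY)v, w)$ only when the form is assumed $\mathfrak{k}$-invariant in a one-sided sense — but in the application ($B = D(\cdot, v_2, \cdot, v_4)$ on an irreducible unitary representation) the form is the inner product up to scalar, so both readings coincide. For the general lemma I would state and prove precisely the twisted identity $B(k_1v,k_2w)=B(k_2^{-1}k_1v,w)$ and note it is equivalent to (i) under the convention that $K$ acts unitarily-compatibly; alternatively, and most simply, I would just prove $B(kv,kw)=B(v,w)$ and remark that this is the content of (i) after the standard substitution, since $B(v,w)=B(k^{-1}(kv), k^{-1}(kw))$ shows the diagonal action is trivial and the "independent" version in (i) is then a notational variant. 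The differentiation step itself is routine given axioms (2)–(3) and the finite-dimensionality of $\mathrm{span}(Kv)$, which makes all smoothness and product-rule manipulations legitimate.
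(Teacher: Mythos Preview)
Your core argument is correct and is essentially the paper's approach: both use surjectivity of $\exp\colon\mathfrak{k}\to K$ to reduce $K$-invariance to $\mathfrak{g}$-invariance, the only cosmetic difference being that the paper expands $\pi(\exp X)=\exp\dot\pi(X)$ as a power series and moves $\dot\pi(X)$ across term by term, whereas you differentiate $t\mapsto B(\exp(tY)v,\exp(tY)w)$ and cancel. (For the cancellation you are tacitly using $\exp(tY)\cdot Yv=Y\cdot\exp(tY)v$, which follows from axiom~(1) together with $\mathrm{Ad}(\exp tY)Y=Y$; it is worth saying this explicitly.)

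Your lengthy worry about condition~(i) is well-founded but resolves simply: Definition~\ref{gkinvform}(i) as printed contains a typo, since as written it would exclude the inner product on a unitary representation, contradicting Corollary~\ref{Schur}. The intended condition is the diagonal one $B(kv,kw)=B(v,w)$, equivalently $B(kv,w)=B(v,k^{-1}w)$, and indeed the paper's own proof of this proposition establishes exactly the latter identity and nothing stronger. So you should drop the attempted upgrade to the independent two-variable form and simply record the diagonal statement.
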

\begin{proof} Given any pair of vectors $v,w\in V$, we can find a finite-dimensional subspace of $V$, say $W$, which contains both and on which $K$ acts continuously through a representation $\pi$. The restriction of the bilinear form $B(\cdot,\cdot)$ to $W$ is continuous.
To prove that $B(\pi(k)v,\pi(k)w)=B(v,w)$ for all $k\in K$, it suffices to prove that $B(\pi(k)v,w)=B(v,\pi(k^{-1})w)$ for all $k\in K$. Given $k\in K$, let $X\in\mathfrak{k}$ be such that $k=\text{exp}(X)$. We begin by writing 
\begin{equation*}
B(\pi(k)v,w)=B(\pi(\text{exp}X)v,w).
\end{equation*}

Since $\pi(\text{exp}X)=\text{exp}\dot{\pi}(X)v$, we obtain 
\begin{equation*}
B(\pi(\text{exp}X)v,w)=B(\text{exp}\dot{\pi}(X)v,w).
\end{equation*}
The continuity of $B(\cdot,\cdot)$ on $V$ gives 
\begin{equation*}
B(\text{exp}\dot{\pi}(X)v,w)=\text{exp}B(\dot{\pi}(X)v,w).
\end{equation*}
By the $\mathfrak{g}$-invariance of $B(\cdot,\cdot)$, we have 
\begin{equation*}
\text{exp}B(\dot{\pi}(X)v,w)=\text{exp}B(v,\dot{\pi}(-X)w)
\end{equation*}
and, finally, 
\begin{equation*}
\text{exp}B(v,\dot{\pi}(-X)w)=B(v,\pi(\text{exp}(-X))w).
\end{equation*}
This is precisely
\begin{equation*} 
B(\pi(k)v,w)=B(v,\pi(k^{-1})w).
\end{equation*}
\end{proof}

Let us recall that any locally compact Hausdorff group $G$ acts on the Hilbert space $L^{2}(G)$ by the prescription 
\begin{equation*}
R(g)f(x):=f(xg).
\end{equation*}
The representation so obtained is unitary and if $G$ is a Lie group the notion of smooth vectors in $L^{2}(G)$ makes sense. In the next section, we will need a criterion to establish that certain functions are smooth vectors in $L^{2}(G)$. We will make use of the following notion:
\begin{definition} Let $G$ be a Lie group and let $(\pi,H)$ be a Hilbert representation of $G$. The \textbf{G\aa rding subspace} of $H$ is the vector subspace of $H$ spanned by the set 
\begin{equation*}
\{\pi(f)v|v\in H,\text{ } f\in C^{\infty}_{c}(G)\}.
\end{equation*}
\end{definition}

\begin{prop}\label{garsmooth} Let $G$ be a Lie group with finitely many connected components, let $(\pi,H)$ be a Hilbert representation of $G$. Then every vector in the G\aa rding subspace of $H$ is a smooth vector in $H$.
\end{prop}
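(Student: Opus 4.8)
The plan is to show directly that for every $v \in H$ and every $f \in C_c^\infty(G)$, the vector $\pi(f)v$ is smooth, since smoothness is preserved under finite linear combinations. First I would recall that $\pi(f)v$ is defined by $\langle \pi(f)v, w\rangle = \int_G f(g)\langle \pi(g)v, w\rangle\, dg$ for all $w \in H$, which makes sense by Proposition \ref{intop} (after noting $f \in C_c^\infty(G) \subset L^1(G)$ since $G$ is locally compact and $f$ has compact support). The key computation is to understand the action of left translation: for $x \in G$ one has $\pi(x)\pi(f)v = \pi(L_x f)v$, where $(L_x f)(g) := f(x^{-1}g)$. This follows by a change of variables inside the defining integral: $\langle \pi(x)\pi(f)v, w\rangle = \langle \pi(f)v, \pi(x^{-1})w\rangle = \int_G f(g)\langle \pi(g)v,\pi(x^{-1})w\rangle\, dg = \int_G f(g)\langle \pi(xg)v, w\rangle\, dg = \int_G f(x^{-1}g)\langle \pi(g)v, w\rangle\, dg$, using left-invariance of the Haar measure (and, if $G$ is not unimodular, one should carry the modular function; but for the semisimple/reductive groups in this paper $G$ is unimodular, so I would simply invoke left-invariance — or state the argument for unimodular $G$, which suffices here).

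Having reduced to this, the core of the argument is that the assignment $x \mapsto L_x f$ is a smooth map from $G$ into $C_c^\infty(G)$, and more precisely into $L^1(G)$ with all its $G$-derivatives again lying in $L^1(G)$. Concretely: fix a chart around $e$, or better, for $X \in \mathfrak g$ consider $t \mapsto L_{\exp(tX)} f$. Since $f$ is smooth with compact support, $\frac{d}{dt}\big|_{t=0} (L_{\exp(tX)} f)(g) = \frac{d}{dt}\big|_{t=0} f(\exp(-tX)g)$ exists pointwise, equals $-(\widetilde X f)(g)$ for the appropriate left-invariant (or right-invariant, depending on convention) vector field $\widetilde X$ applied to $f$, and the difference quotients $\frac{1}{t}\big((L_{\exp(tX)}f)(g) - f(g)\big)$ are uniformly bounded and supported in a fixed compact set for small $t$ (by Taylor's theorem with the smoothness of $f$). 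Hence by dominated convergence the difference quotients converge in $L^1(G)$-norm, so $t \mapsto L_{\exp(tX)}f$ is differentiable into $L^1(G)$ with derivative in $C_c^\infty(G)$ again; iterating, $x \mapsto L_x f$ is $C^\infty$ into $L^1(G)$. Then, since $\pi$ restricted to $L^1(G)$ is norm-continuous and linear (Proposition \ref{intop}: $\|\pi(h)\| \le \|h\|_{L^1}$), the composition $x \mapsto \pi(L_x f)v = \pi(x)\pi(f)v$ is smooth from $G$ into $H$ — that is precisely the statement that $\pi(f)v$ is a smooth vector. I would then remark that $\dot\pi(X)\pi(f)v = \pi(-\widetilde X f)v = \pi(X \cdot f)v$ with the obvious convention, which also re-proves that the Gårding subspace is $\mathfrak g$-stable, though that is not needed for the statement.

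The main obstacle I anticipate is the careful justification of the "smooth into $L^1(G)$" claim — specifically, that the $G$-valued (really $L^1$-valued) difference quotients converge in norm, not merely pointwise. This requires the compact-support hypothesis on $f$ in an essential way (to get a uniform dominating function and to keep supports in a fixed compact set as the translation parameter varies in a neighborhood), together with the uniform continuity of $f$ and its derivatives. A clean way to package this is: for a fixed compact neighborhood $U$ of $e$, the family $\{L_x f : x \in U\}$ lies in $C_c^\infty(G)$ with supports in a common compact set $L$, and the map $U \times G \to \mathbb C$, $(x,g)\mapsto f(x^{-1}g)$ is jointly smooth, so one may differentiate under the integral sign freely; the norm-continuity of $\pi$ on $L^1$ then transfers all of this to $H$. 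Alternatively — and perhaps more transparently — I could cite that this is the classical theorem of Gårding (e.g. \cite{Wal}, or Knapp–Vogan), but since the statement is given as a standalone proposition here, I would present the dominated-convergence argument in full. No step after that is more than routine.
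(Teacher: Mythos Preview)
The paper does not give its own proof; it simply cites \cite{Wal}, Lemma 1.6.1. Your argument is the standard one that such a reference would contain, and it is correct in outline.

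Two small remarks. First, the identity $\pi(x)\pi(f)v = \pi(L_x f)v$ uses only left-invariance of the left Haar measure (pull the bounded operator $\pi(x)$ through the integral and substitute $g \mapsto x^{-1}g$); no unimodularity and no modular function enter, so that worry can be dropped entirely. Second, Proposition~\ref{intop} in this paper is stated only for \emph{unitary} $\pi$, whereas Proposition~\ref{garsmooth} is for arbitrary Hilbert representations; in that generality the bound $\|\pi(h)\| \le \|h\|_{L^1}$ need not hold. What rescues your argument is that a strongly continuous Hilbert representation is locally bounded in operator norm (Banach--Steinhaus), so for $h$ supported in a fixed compact set $L$ one has $\|\pi(h)\| \le C_L \|h\|_{L^1}$. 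Since the translates $L_x f$ for $x$ in a compact neighbourhood of $e$ all have support in one common compact set, this localised bound is exactly what is needed to conclude that $x \mapsto \pi(L_x f)v$ is smooth from the smoothness of $x \mapsto L_x f$ into $L^1$.
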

\begin{proof} See \cite[Lemma 1.6.1]{Wal}.
\end{proof}

Recall that $f\in C^{\infty}(G)$ is called $\mathrm{Z}(\mathfrak{g}_{\mathbb{C}})$-\textbf{finite} if it is annihilated by an ideal of $\mathrm{Z}(\mathfrak{g}_{\mathbb{C}})$ of finite codimension. The criterion we need is the following result of Harish-Chandra:

\begin{thm}\label{smoothvector} Let $G$ be a group in the class $\mathcal{H}$ as in \cite[p. 192]{V}. Let $f\in C^{\infty}(G)$ be $K$-finite and $\mathrm{Z}(\mathfrak{g}_{\mathbb{C}})$-finite. Then there exists a function $h\in C_{c}^{\infty}(G)$ which satisfies $h(kgk^{-1})=h(g)$ for all $k\in K$ and for all $g\in G$ and such that $f*h=f$. If $f\in C^{\infty}(G)$, in addition, is square-integrable, then $f$ is a smooth vector in $L^{2}(G)$. 
\end{thm}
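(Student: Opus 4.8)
The plan is to separate the two assertions. The square-integrability conclusion will follow quickly from the first assertion together with Proposition~\ref{garsmooth}, so I would dispose of it first. Suppose $h\in C_c^\infty(G)$ has been found with $h(kgk^{-1})=h(g)$ for all $k\in K$, $g\in G$, and with $f$ reproduced by convolution against $h$ — concretely $f=f*h$. Setting $h^\vee(g):=h(g^{-1})$, again a member of $C_c^\infty(G)$, and using that a group in the class $\mathcal H$ is unimodular, one rewrites
\[
(f*h)(x)=\int_G f(xy^{-1})h(y)\,dy=\int_G f(xy)h^\vee(y)\,dy=\int_G h^\vee(y)\,(R(y)f)(x)\,dy=(R(h^\vee)f)(x),
\]
so $f=R(h^\vee)f$, where $R(h^\vee)$ is the integral operator of Proposition~\ref{intop} attached to the right regular representation $(R,L^2(G))$. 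When $f\in L^2(G)$, this exhibits $f$ in the G\aa rding subspace of $(R,L^2(G))$; since a group in $\mathcal H$ has finitely many connected components, Proposition~\ref{garsmooth} then gives that $f$ is a smooth vector of $L^2(G)$.

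For the construction of $h$ both hypotheses are needed, and they enter differently. The $K$-finiteness I would handle by Peter--Weyl: if $F$ is the finite set of $K$-types occurring in $f$ under right translation and $\alpha_F:=\sum_{\gamma\in F}\dim(\gamma)\overline{\chi_\gamma}$, then by Proposition~\ref{PW} the (restricted-to-$K$) integral operator $R(\alpha_F)$ fixes $f$; this settles the $K$-aspect and records the finite set $F$ used below. For the $Z(\mathfrak g_{\mathbb C})$-finiteness, I would pass to the $(\mathfrak g,K)$-module $V\subseteq C^\infty(G)$ generated by $f$ under right translations: the finite-codimension ideal annihilating $f$ is central, hence annihilates all of $V$, so $V$ is $Z(\mathfrak g_{\mathbb C})$-finite and finitely generated, and therefore admissible by Harish-Chandra's theorem that finitely generated $Z(\mathfrak g_{\mathbb C})$-finite $(\mathfrak g,K)$-modules are admissible. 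Consequently each $V(\gamma)$ is finite-dimensional and $f$ lies in the finite-dimensional space $U:=\bigoplus_{\gamma\in F}V(\gamma)$. The remaining point is that a $C_c^\infty$-function $h$ with $h(kgk^{-1})=h(g)$ has integral operator $R(h^\vee)$ commuting with the $K$-action; using Dixmier--Malliavin (compactly supported functions act with dense range on smooth vectors) and correcting on the finite-dimensional space $U$, one arranges that $R(h^\vee)$ restricts to the identity on $U$, so that $f=f*h$. In full, this is Harish-Chandra's analysis of $Z(\mathfrak g_{\mathbb C})$-finite, $K$-finite functions on groups in $\mathcal H$, which is why the statement is quoted from \cite{V}, p.~192~ff.

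The step I expect to be the main obstacle is exactly this last one: turning $Z(\mathfrak g_{\mathbb C})$-finiteness — via the admissibility of the module generated by $f$ — into the existence of a \emph{single} compactly supported, $K$-conjugation-invariant function reproducing $f$, which rests on the finer structure theory of reductive groups in $\mathcal H$. I note that the square-integrability conclusion alone has a more self-contained proof avoiding $h$ altogether: $Z(\mathfrak g_{\mathbb C})$-finiteness makes $f$ a generalized eigenfunction of the (elliptic) Casimir operator, so elliptic regularity together with the translation-invariance of the equation yields uniform interior $L^2$-estimates on all left-invariant derivatives of $f$; summing over a uniformly locally finite cover of $G$ shows every such derivative lies in $L^2(G)$, whence $f$ is a smooth vector of $L^2(G)$.
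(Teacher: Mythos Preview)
Your derivation of the second assertion is exactly the paper's: write $f=f*h=R(\tilde h)f$ with $\tilde h(x)=h(x^{-1})$, conclude that $f$ lies in the G\aa rding subspace of $(R,L^2(G))$, and invoke Proposition~\ref{garsmooth}. (The paper's statement has the typo $f*h=h$; you correctly read it as $f*h=f$.)

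For the first assertion the paper does not argue at all: it simply cites \cite{V}, Proposition~14, p.~352. Your sketch is therefore extra content. Its structure is right---pass to the $(\mathfrak g,K)$-module $V$ generated by $f$, use $Z(\mathfrak g_{\mathbb C})$-finiteness plus Harish-Chandra's admissibility theorem to see that the relevant $K$-isotypic sum $U$ is finite-dimensional, then produce $h$ acting as the identity on $U$---but the appeal to Dixmier--Malliavin is not the natural tool and does not by itself give what you need. What actually closes the argument is elementary: the linear map $C_c^\infty(G)\to\mathrm{End}(U)$, $\phi\mapsto(u\mapsto u*\phi)$, has image a vector subspace of the finite-dimensional $\mathrm{End}(U)$, hence closed; approximate identities show $I_U$ lies in the closure, so $I_U$ lies in the image. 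Averaging the resulting $\phi_0$ over $K$-conjugation preserves the identity action on $U$ because $U$ is $K$-stable and conjugation by $R(k)$ intertwines $R(\phi_0)$ with $R(\phi_0^k)$. So your outline is salvageable once Dixmier--Malliavin is replaced by this finite-dimensional approximate-identity step.

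Your closing remark about bypassing $h$ via elliptic regularity of the Casimir is a genuine alternative for the square-integrable conclusion and is worth keeping in mind; the paper does not pursue it.
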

\begin{proof} The first statement is \cite[Proposition 14, p.~352]{V}.
The second conclusion follows from the observation found at the beginning of the proof of  \cite[Corollary 8.42]{Knapp} that $f$ is in the G\aa rding subspace of $L^{2}(G)$ and it is therefore smooth by Proposition\ref{garsmooth}. That $f$ is indeed in the G\aa rding subspace of $L^{2}(G)$ follows from the standard fact that  

\begin{equation}\label{Rconv}
R(\tilde{\psi})f=f*\psi,
\end{equation}
for every $\psi\in C^{\infty}_{c}(G)$. Here, $\tilde{\psi}(x):=\psi(x^{-1})$. The first statement then gives
\begin{equation} 
R(\tilde{h})f=f*h=f,
\end{equation}
concluding the proof.
\end{proof}

\begin{prop}\label{skewinv} Let $G$ be a group in the class $\mathcal{H}$. Let $f\in C^{\infty}(G)$ be $K$-finite, $\mathrm{Z}(\mathfrak{g}_{\mathbb{C}})$-finite and square-integrable. Then, for every $X\in \mathfrak{g}$, we have 
\begin{equation*} 
Xf=\dot{R}(X)f,
\end{equation*}
where $Xf\colon G\longrightarrow \mathbb{C}$ is defined as 
\begin{equation} Xf(g):=\frac{d}{dt}\left[f(g\text{exp}(tX))\right]|_{t=0}.
\end{equation}
\end{prop}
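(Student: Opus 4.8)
The plan is to show that the two a priori different actions of $X\in\mathfrak g$ on $f$ — the "differentiation on the right" operator $Xf(g)=\frac{d}{dt}f(g\exp(tX))|_{t=0}$ and the infinitesimal right-regular action $\dot R(X)f$ — agree. By Theorem \ref{smoothvector}, the hypotheses on $f$ (smooth, $K$-finite, $Z(\mathfrak g_{\mathbb C})$-finite, square-integrable) guarantee that $f$ is a smooth vector in $(R,L^2(G))$, so $\dot R(X)f$ is well-defined as an element of $L^2(G)$, namely the $L^2$-limit of $t^{-1}(R(\exp(tX))f-f)$ as $t\to 0$. The quantity $Xf$ is the pointwise derivative, which exists for every $g$ because $f$ is smooth on $G$.

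First I would fix $X\in\mathfrak g$ and, for each $g\in G$, write down the first-order Taylor expansion with integral remainder:
\begin{equation*}
\frac{(R(\exp tX)f)(g)-f(g)}{t}-Xf(g)=\frac1t\int_0^t\big(Xf(g\exp(sX))-Xf(g)\big)\,ds.
\end{equation*}
The point is then to estimate the $L^2(G)$-norm of the left-hand side and show it tends to $0$ as $t\to 0$. Here I would use that $Xf$ is again smooth, $K$-finite (the right-differentiation action preserves $K$-finiteness up to a fixed finite-dimensional adjustment, or one can observe directly that the span of right translates stays finite-dimensional), $Z(\mathfrak g_{\mathbb C})$-finite (since $Z(\mathfrak g_{\mathbb C})$ commutes with right-differentiation) and square-integrable — this last being the crucial closure point. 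Granting that $Xf\in L^2(G)$ and is itself a smooth vector, the map $s\mapsto R(\exp(sX))(Xf)$ is continuous from a neighbourhood of $0$ into $L^2(G)$, so by the vector-valued fundamental theorem of calculus the $L^2$-norm of the right-hand side is bounded by $\sup_{0\le s\le t}\|R(\exp sX)(Xf)-Xf\|_{L^2}$, which goes to $0$. Hence $t^{-1}(R(\exp tX)f-f)\to Xf$ in $L^2(G)$, and since this limit is by definition $\dot R(X)f$, the two coincide.

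The main obstacle I anticipate is establishing that $Xf$ is again square-integrable (equivalently, again lies in the Gårding/smooth-vector subspace of $L^2(G)$), since nothing in the hypotheses makes integrability of derivatives automatic. I would resolve this by applying Theorem \ref{smoothvector} a second time to the function $Xf$: one checks $Xf\in C^\infty(G)$, $Xf$ is $K$-finite (the $K$-types of $Xf$ lie among the $K$-types of $f$ tensored with the finite-dimensional adjoint action, hence finitely many), and $Xf$ is $Z(\mathfrak g_{\mathbb C})$-finite (right-invariant differential operators commute, so the same finite-codimension ideal annihilating $f$ annihilates $Xf$). For square-integrability, the cleanest route is to invoke the first conclusion of Theorem \ref{smoothvector} applied to $f$ itself: there is $h\in C_c^\infty(G)$ with $f*h=f$, i.e. $f=R(\tilde h)f$ with $\tilde h(x)=h(x^{-1})$; differentiating, $Xf=X(R(\tilde h)f)=R(\tilde h)(Xf)$ only after one already knows smoothness as a vector, so instead I would write $Xf = R(\widetilde{X^{\mathrm{op}}h})f$ for a suitable $C_c^\infty$ function obtained by differentiating $h$, exhibiting $Xf$ directly in the Gårding subspace of $L^2(G)$ and hence in $L^2(G)$. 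Once $Xf$ is known to be a smooth vector, the $L^2$-convergence argument above closes the proof; the only genuinely delicate bookkeeping is making the "differentiate $h$" identity precise, which is a routine computation with convolution and the chain rule.
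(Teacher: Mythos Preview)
Your argument is correct, but it takes a longer route than the paper's and, in doing so, buries the one-line conclusion inside your ``main obstacle'' paragraph. The paper's proof is essentially your step (D): from $f=f\ast h$ one computes pointwise $Xf=X(f\ast h)=f\ast(Xh)=R(\widetilde{Xh})f$, and then uses the standard G\aa rding identity $R(\widetilde{Xh})=\dot R(X)\,R(\tilde h)$ together with $R(\tilde h)f=f$ to conclude $Xf=\dot R(X)f$ immediately. In other words, once you have written $Xf$ as $R(\psi)f$ for a compactly supported smooth $\psi$, you are already done; the Taylor expansion with integral remainder and the Minkowski-type estimate on $\tfrac{1}{t}\int_0^t\bigl(R(\exp sX)(Xf)-Xf\bigr)\,ds$ are valid but redundant.

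A couple of minor cleanups if you keep your version: you do not need $Xf$ to be a smooth vector for the remainder estimate, only $Xf\in L^2(G)$ together with strong continuity of $R$; and your discussion of $K$-finiteness and $Z(\mathfrak g_{\mathbb C})$-finiteness of $Xf$ is unnecessary, since you never actually reapply Theorem~\ref{smoothvector} to $Xf$ --- the convolution identity already places $Xf$ in the G\aa rding subspace directly.
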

\begin{proof} By Theorem \ref{smoothvector}, there exists $h\in C_{c}^{\infty}(G)$ such that 
\begin{equation*}
f=f*h.
\end{equation*}
From the equalities 
\begin{equation*}
Xf=X(f*h)=f*Xh
\end{equation*}
and 
\begin{equation*}
f*Xh=\dot{R}(\widetilde{Xh})f,
\end{equation*}
the latter being an application of \ref{Rconv}, we obtain 
\begin{equation*}
Xf=\dot{R}(\widetilde{Xh})f.
\end{equation*}
Since 
\begin{equation*}
\dot{R}(\widetilde{Xh})f=\dot{R}(X)R(\Tilde{h})f
\end{equation*}
and 
\begin{equation*}
R(\Tilde{h})f=f*h=f,
\end{equation*}
we conclude
\begin{equation*}
Xf=\dot{R}(X)f.
\end{equation*}
\end{proof}

We will apply Proposition \ref{skewinv} to the group $M$ in the Langlands decomposition of a parabolic subgroup $P=MAN$ of a connected semisimple Lie group with finite centre. A group $M$ of this form will not be connected, semisimple in general. However, it belongs to the class $\mathcal{H}$ by \cite[Lemma 9, p. 108]{Harish-Chandra}.\\

We briefly recall the construction of parabolically induced representations. We refer the reader to \cite[Chapter XI]{KnappVogan}, for a more thorough account. \\

Let $G$ be a connected, semisimple Lie group with finite centre and let $P=MAN$ be a parabolic subgroup of $G$. The group $K_{M}:=K\cap M$ is a maximal compact subgroup of $M$. Let $\lambda$ be a complex-valued real-linear functional on $\mathfrak{a}$ and let $(\sigma,H_{\sigma})$ be a Hilbert representation of $M$. We define an action of $G$ on the space of functions 
\begin{equation*}\{f\in C(K,H_{\sigma})|\text{ }f(mk)=\sigma(m)f(k)\text{ for all }m\in K_{M}\text{ and all }k\in K\}
\end{equation*}
by declaring 
\begin{equation*}
\text{Ind}_{P}(\sigma,\lambda,g)f(k):=e^{(\lambda+\rho)(\textbf{h}(kg))}\sigma(\textbf{m}(kg))f(\textbf{k}(kg)),
\end{equation*}
where, if $g=kman$ for some $k\in K$, $m\in M$, $a\in A$, $n\in N$, we set $\textbf{k}(g):=k$, $\textbf{m}(g):=m$, $\textbf{h}(g):=\text{log}(a)$, $\textbf{n}(g):=n$. The symbol $\rho$ denotes half of the sum of the positive restricted roots determined by $\mathfrak{a}$ counted with multiplicities. On this space of functions, we introduce the norm 
\begin{equation*}
\|f\|_{\text{Ind}_{P}(\sigma,\lambda)}:=(\int_{K}\|f(k)\|_{\sigma}^{2}\,dk)^{\frac{1}{2}}
\end{equation*}
and, upon completing, we obtain a Hilbert representation of $G$ which we denote $\text{Ind}_{P}(\sigma,\lambda)$. We will denote $\text{Ind}_{P,K_{M}}(\sigma,\lambda)$ the space of $K_{M}$-finite vectors in $\text{Ind}_{P}(\sigma,\lambda)$.\\

\begin{prop}\label{parabunit} Let $G$ be a connected, semisimple Lie group with finite centre and let $P=MAN$ be a parabolic subgroup of $G$. Let $\lambda$ be a complex-valued, real-linear, totally imaginary functional on $\mathfrak{a}$ and let $(\sigma,H_{\sigma})$ be a unitary representation of $M$. Then $\text{Ind}_{P}(\sigma,\lambda)$ is a unitary representation of $G$. 
\end{prop}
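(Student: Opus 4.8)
Here is the plan of proof.

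The plan is to show that for every $g\in G$ the operator $\text{Ind}_{P}(\sigma,\lambda,g)$ is an isometry of $\text{Ind}_{P}(\sigma,\lambda)$. Since the $\text{Ind}_{P}(\sigma,\lambda,g)$ form a representation, $\text{Ind}_{P}(\sigma,\lambda,g)$ is invertible with inverse $\text{Ind}_{P}(\sigma,\lambda,g^{-1})$, and an invertible isometry of a Hilbert space is unitary; so unitarity follows once the isometry property is established. The strong continuity and the fact that $\text{Ind}_{P}(\sigma,\lambda)$ is a Hilbert representation are already part of the construction recalled above, hence the only point to check is preservation of the norm $\|\cdot\|_{\text{Ind}_{P}(\sigma,\lambda)}$, and by density it suffices to verify this on continuous functions $f$ in the representation space.

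Fix such an $f$ and $g\in G$. First I would expand
\begin{equation*}
\|\text{Ind}_{P}(\sigma,\lambda,g)f\|_{\text{Ind}_{P}(\sigma,\lambda)}^{2}=\int_{K}\bigl|e^{(\lambda+\rho)(\textbf{h}(kg))}\bigr|^{2}\,\bigl\|\sigma(\textbf{m}(kg))f(\textbf{k}(kg))\bigr\|_{\sigma}^{2}\,dk .
\end{equation*}
Since $(\sigma,H_{\sigma})$ is unitary, $\|\sigma(\textbf{m}(kg))f(\textbf{k}(kg))\|_{\sigma}=\|f(\textbf{k}(kg))\|_{\sigma}$. Since $\lambda$ is totally imaginary while $\rho$ is real-valued on $\mathfrak{a}$, for the real element $\textbf{h}(kg)\in\mathfrak{a}$ we have $|e^{\lambda(\textbf{h}(kg))}|=1$, so that $\bigl|e^{(\lambda+\rho)(\textbf{h}(kg))}\bigr|^{2}=e^{2\rho(\textbf{h}(kg))}$. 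Therefore
\begin{equation*}
\|\text{Ind}_{P}(\sigma,\lambda,g)f\|_{\text{Ind}_{P}(\sigma,\lambda)}^{2}=\int_{K}e^{2\rho(\textbf{h}(kg))}\,\|f(\textbf{k}(kg))\|_{\sigma}^{2}\,dk .
\end{equation*}

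To conclude I would invoke the change-of-variables formula attached to the decomposition $g=kman$: the function $k\mapsto\|f(k)\|_{\sigma}^{2}$ is left $K_{M}$-invariant (because $\sigma$ is unitary and $f(mk)=\sigma(m)f(k)$), hence descends to $K/K_{M}\cong G/P$, and for any such function $\psi$ one has
\begin{equation*}
\int_{K}e^{2\rho(\textbf{h}(kg))}\,\psi(\textbf{k}(kg))\,dk=\int_{K}\psi(k)\,dk
\end{equation*}
(see \cite{KnappVogan}, Chapter XI). Applying this with $\psi(k)=\|f(k)\|_{\sigma}^{2}$ yields $\|\text{Ind}_{P}(\sigma,\lambda,g)f\|_{\text{Ind}_{P}(\sigma,\lambda)}=\|f\|_{\text{Ind}_{P}(\sigma,\lambda)}$, as required. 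The single non-formal ingredient is exactly this last identity: it records that the shift by $\rho$ in the definition of $\text{Ind}_{P}(\sigma,\lambda)$ is precisely the factor compensating the distortion of the Haar measure of $K$ under $k\mapsto\textbf{k}(kg)$. It can be quoted from the references, or derived from the Haar-measure integration formula for the Iwasawa/Langlands decomposition (in which the Jacobian $e^{2\rho(\log a)}$ appears) by a routine computation. This is the only step where I expect any real work.
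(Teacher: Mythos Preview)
Your argument is correct and is precisely the standard proof that appears in the reference the paper cites: the paper's own proof consists solely of the sentence ``See \cite{KnappVogan}, Corollary 11.39'', and what you have written is essentially the content of that corollary. In short, you have unpacked what the paper merely quotes.
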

\begin{proof} See \cite[Corollary 11.39]{KnappVogan}.
\end{proof}

\begin{cor}\label{corparabunit} Let $G$ be a connected, semisimple Lie group with finite centre and let $P=MAN$ be a parabolic subgroup of $G$. Let $\lambda$ be a complex-valued, real-linear, totally imaginary functional on $\mathfrak{a}$ and let $(\sigma,H_{\sigma})$ be a unitary representation of $M$. Then, for every $f_{1},f_{2}\in \text{Ind}_{P,K_{M}}(\sigma,\lambda)$ and for every $X\in \mathfrak{g}$, we have 
\begin{equation*}
\langle \dot{\text{Ind}}_{P}(\sigma,\lambda,X)f_{1},f_{2} \rangle_{\text{Ind}_{P}(\sigma,\lambda)}=-\langle f_{1}, \dot{\text{Ind}}_{P}(\sigma,\lambda,X)f_{2}\rangle_{\text{Ind}_{P}(\sigma,\lambda)}.
\end{equation*}
\end{cor}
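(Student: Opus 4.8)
The plan is to deduce the corollary directly from Proposition \ref{parabunit} together with the definition of the infinitesimal action $\dot{\text{Ind}}_P(\sigma,\lambda)$ as the derived representation of a unitary representation. First I would invoke Proposition \ref{parabunit}: since $\lambda$ is totally imaginary and $(\sigma,H_\sigma)$ is unitary, $\text{Ind}_P(\sigma,\lambda)$ is a unitary Hilbert representation of $G$, so the norm $\|\cdot\|_{\text{Ind}_P(\sigma,\lambda)}$ comes from a $G$-invariant inner product $\langle\cdot,\cdot\rangle_{\text{Ind}_P(\sigma,\lambda)}$. Next I would recall that for $f_1,f_2\in\text{Ind}_{P,K_M}(\sigma,\lambda)$, which are $K$-finite hence smooth vectors (by admissibility of parabolically induced representations and Theorem \ref{Ksmooth}, or directly since $K$-finite vectors in an induced representation are smooth), the infinitesimal action is well-defined: $\dot{\text{Ind}}_P(\sigma,\lambda,X)f = \frac{d}{dt}\text{Ind}_P(\sigma,\lambda,\exp(tX))f|_{t=0}$.

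The core computation is the standard one: for a unitary representation $(\rho,V)$ of $G$ and smooth vectors $u,w\in V^\infty$, one has $\langle\dot\rho(X)u,w\rangle = -\langle u,\dot\rho(X)w\rangle$. To see this, differentiate the identity $\langle\rho(\exp(tX))u,\rho(\exp(tX))w\rangle = \langle u,w\rangle$, which holds for all $t$ by unitarity, at $t=0$; the left-hand side differentiates, by the product rule and smoothness of the vectors, to $\langle\dot\rho(X)u,w\rangle + \langle u,\dot\rho(X)w\rangle$, while the right-hand side is constant, giving $0$. Applying this with $\rho=\text{Ind}_P(\sigma,\lambda)$, $u=f_1$, $w=f_2$ yields exactly the claimed skew-symmetry.

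The only point requiring a word of care is the justification that the pairing $t\mapsto\langle\text{Ind}_P(\sigma,\lambda,\exp tX)f_1,\text{Ind}_P(\sigma,\lambda,\exp tX)f_2\rangle$ is differentiable at $t=0$ with derivative computed by the product rule; this is immediate from the fact that $f_1,f_2$ are smooth vectors, so that $t\mapsto\text{Ind}_P(\sigma,\lambda,\exp tX)f_i$ is a smooth curve in the Hilbert space, and the inner product is a continuous bilinear form, so $t\mapsto\langle\gamma_1(t),\gamma_2(t)\rangle$ is differentiable with $\frac{d}{dt}\langle\gamma_1,\gamma_2\rangle = \langle\gamma_1',\gamma_2\rangle + \langle\gamma_1,\gamma_2'\rangle$. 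I do not anticipate any genuine obstacle here — the corollary is essentially a formal consequence of unitarity, the only substantive input being Proposition \ref{parabunit}, which has already been cited.
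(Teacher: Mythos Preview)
Your proposal is correct and follows essentially the same approach as the paper: invoke Proposition \ref{parabunit} to obtain unitarity of $\text{Ind}_P(\sigma,\lambda)$, and then use the standard skew-adjointness of the derived Lie algebra action on smooth vectors of a unitary representation (for which the paper simply cites \cite{Warner}, p.~266). Your additional justification via differentiating $t\mapsto\langle\rho(\exp tX)u,\rho(\exp tX)w\rangle$ is exactly the standard argument behind that citation.
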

\begin{proof} This is a consequence of Proposition \ref{parabunit} and the skew-invariance of the inner product on a unitary representation with respect to the action of the Lie algebra on the space of smooth vectors \cite[p. 266]{Warner}.
\end{proof}

Next, we recall a form of the Frobenius reciprocity originally observed by Casselman. We first need some preparation.\\

First of all, we record the following.

\begin{lem}\label{nV} Let $G$ be a connected, semisimple Lie group with finite centre and let $P=MAN$ be a parabolic subgroup of $G$. If $V$ is a $(\mathfrak{g},K)$-module, then the $(\mathfrak{g},K)$-module structure on $V$ induces a structure of  $(\mathfrak{m}\oplus\mathfrak{a},K_{M})$-module on $V\backslash \mathfrak{n}V$ in such a way that the quotient map 
\begin{equation*}
q\colon V\longrightarrow V/\mathfrak{n}V
\end{equation*}
is $(\mathfrak{m}\oplus\mathfrak{a},K_{M})$-equivariant.
\end{lem}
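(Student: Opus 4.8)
The plan is to produce the $(\mathfrak{m}\oplus\mathfrak{a},K_M)$-module structure on $V/\mathfrak{n}V$ by checking that the relevant subalgebras and the group $K_M$ preserve $\mathfrak{n}V$, so that the quotient structures descend, and that the compatibility axioms for a $(\mathfrak{m}\oplus\mathfrak{a},K_M)$-module are inherited from those of the $(\mathfrak{g},K)$-module $V$. Concretely, I would first recall that $\mathfrak{g}=\overline{\mathfrak{n}}\oplus\mathfrak{m}\oplus\mathfrak{a}\oplus\mathfrak{n}$ and that $\mathfrak{q}:=\mathfrak{m}\oplus\mathfrak{a}\oplus\mathfrak{n}$ is a subalgebra with $\mathfrak{n}$ an ideal, while $K_M=K\cap M$ normalizes $N$ (hence $\mathrm{Ad}(K_M)$ preserves $\mathfrak{n}$). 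From these two facts it follows that $\mathfrak{n}V$ is stable under the action of $\mathfrak{m}\oplus\mathfrak{a}$: for $Y\in\mathfrak{m}\oplus\mathfrak{a}$, $X\in\mathfrak{n}$, $v\in V$ one has $Y(Xv)=X(Yv)+[Y,X]v\in\mathfrak{n}V$ since $[Y,X]\in\mathfrak{n}$. Similarly $\mathfrak{n}V$ is $K_M$-stable because $k(Xv)=\mathrm{Ad}(k)X\cdot kv\in\mathfrak{n}V$ by axiom (1) of a $(\mathfrak{g},K)$-module together with $\mathrm{Ad}(k)X\in\mathfrak{n}$. Hence the $\mathfrak{m}\oplus\mathfrak{a}$-action and the $K_M$-action on $V$ descend to $V/\mathfrak{n}V$, and the quotient map $q$ is by construction equivariant for both.

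Next I would verify the three defining axioms for $V/\mathfrak{n}V$ to be a $(\mathfrak{m}\oplus\mathfrak{a},K_M)$-module. Axiom (1), the compatibility $kYq(v)=\mathrm{Ad}(k)Y\cdot kq(v)$ for $k\in K_M$ and $Y\in\mathfrak{m}\oplus\mathfrak{a}$, follows by applying $q$ to the corresponding identity $kYv=\mathrm{Ad}(k)Y\cdot kv$ in $V$, which holds because $Y\in\mathfrak{m}\oplus\mathfrak{a}\subset\mathfrak{g}$ and $k\in K_M\subset K$; one uses here that $\mathrm{Ad}(k)$ restricted to $\mathfrak{m}\oplus\mathfrak{a}$ agrees with the adjoint action of $M$, which is automatic. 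Axiom (2), the local finiteness and continuity of the $K_M$-action, is inherited: for $\bar v=q(v)$ the span of $K_M\bar v$ is the image under $q$ of a subspace of the span of $Kv$, hence finite-dimensional, and continuity of the $K_M$-action on this finite-dimensional space is the restriction of the continuous $K$-action (a continuous linear surjection between finite-dimensional spaces intertwines the actions). Axiom (3), that $\tfrac{d}{dt}\exp(tY)\bar v|_{t=0}=Y\bar v$ for $Y\in\mathfrak{k}_M:=\mathfrak{k}\cap\mathfrak{m}$, follows again by applying $q$ to the same identity in $V$, valid since $\mathfrak{k}_M\subset\mathfrak{k}$ and $q$ is continuous and linear on the finite-dimensional $K$-subrepresentations through which the exponentials act.

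The only genuinely substantive point — and the one I would flag as the main obstacle, though it is really more of a bookkeeping subtlety than a deep difficulty — is making sure the $\mathfrak{a}$-action is honestly part of the picture: one must check that $[\mathfrak{a},\mathfrak{n}]\subset\mathfrak{n}$ (true, since $\mathfrak{n}$ is the sum of positive restricted root spaces and $\mathfrak{a}$ acts on each by a scalar) and that $\mathfrak{m}$ and $\mathfrak{a}$ commute so that $\mathfrak{m}\oplus\mathfrak{a}$ is indeed a Lie algebra acting on the quotient. Since $A$ is central in $MA$ and $\mathrm{Ad}(K_M)$ fixes $\mathfrak{a}$ pointwise (as $K_M\subset M$ centralizes $A$), all the compatibility checks involving $\mathfrak{a}$ are immediate. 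I would organize the write-up as: (i) the root-space decomposition and the two stability statements for $\mathfrak{n}V$; (ii) the induced actions and equivariance of $q$; (iii) the verification of the three module axioms. No step requires more than the structure theory of parabolic subgroups and the definition of a $(\mathfrak{g},K)$-module recalled above, so the proof is short.
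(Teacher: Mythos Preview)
Your proof is correct and follows essentially the same approach as the paper: both arguments reduce to checking that $\mathfrak{n}V$ is stable under $\mathfrak{m}\oplus\mathfrak{a}$ (via $Y(Xv)=X(Yv)+[Y,X]v$ with $[Y,X]\in\mathfrak{n}$) and under $K_M$ (via $k(Xv)=\mathrm{Ad}(k)X\cdot kv$ with $\mathrm{Ad}(k)X\in\mathfrak{n}$). Your write-up is in fact more thorough than the paper's, since you explicitly verify the three $(\mathfrak{m}\oplus\mathfrak{a},K_M)$-module axioms on the quotient, whereas the paper leaves these as implicit.
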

\begin{proof} It suffices to show that if $v\in V$ is of the form $$v=Xw$$ for some $w\in V$ and $X\in \mathfrak{n}$, then, for all $\xi\in K_{M}$, we have $$\xi v\in \mathfrak{n}V,$$ and, for all $Y\in\mathfrak{m}\oplus \mathfrak{a}$, we have 
\begin{equation*}
Yv\in \mathfrak{n}V.
\end{equation*}

Let $\xi\in K_{M}$. We have $$\xi v=\xi Xw=\text{Ad}(\xi)X \xi w$$ and, since $K_{M}$, being contained in $M$, normalises $\mathfrak{n}$ by \cite[Proposition 7.83]{Knapp2}, it follows that $\text{Ad}(\xi)X\in\mathfrak{n}$.\\

Let $Y\in \mathfrak{m}\oplus\mathfrak{a}$. We have 
\begin{equation*}
Yv=YXw=\left[Y,X\right]w+XYw.
\end{equation*}
The second term in the RHS belongs to $\mathfrak{n}V$ because $X\in\mathfrak{n}$ and the first belongs to $\mathfrak{n}V$ because $\mathfrak{n}$ is an ideal in $\mathfrak{p}=\mathfrak{m}\oplus\mathfrak{a}\oplus\mathfrak{n}$ by \cite[Proposition 7.78]{Knapp2}.
\end{proof}

Let us recall that a $(\mathfrak{g},K)$-module is \textbf{finitely generated} if it is a finitely generated $U(\mathfrak{g}_{\mathbb{C}})$-module. We say that a Hilbert representation $(\pi,H)$ of $G$ is finitely generated if $H_{K}$ is finitely generated. We record the following result of Casselman.

\begin{thm}\label{quotadfin} Let $G$ be a connected, semisimple Lie group with finite centre and let $P=MAN$ be a parabolic subgroup of $G$. Let $V$ be an admissible, finitely generated $(\mathfrak{g},K)$-module. Then $V/\mathfrak{n}V$ is an admissible, finitely generated $(\mathfrak{m}\oplus \mathfrak{a},K_{M})$-module.
\end{thm}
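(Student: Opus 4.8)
The plan is to reduce to the two separate statements -- admissibility and finite generation -- and prove each by the standard arguments of Casselman and Osborne, which we will only sketch. First I would address finite generation, which is the more elementary half. If $v_1,\dots,v_k$ generate $V$ as a $U(\mathfrak g_{\mathbb C})$-module, then using the Poincaré--Birkhoff--Witt theorem relative to the decomposition $\mathfrak g_{\mathbb C}=\overline{\mathfrak n}_{\mathbb C}\oplus(\mathfrak m\oplus\mathfrak a)_{\mathbb C}\oplus\mathfrak n_{\mathbb C}$, one writes $U(\mathfrak g_{\mathbb C})=U(\overline{\mathfrak n}_{\mathbb C})\,U((\mathfrak m\oplus\mathfrak a)_{\mathbb C})\,U(\mathfrak n_{\mathbb C})$, and modulo $\mathfrak n V$ the rightmost factor disappears; so $V/\mathfrak n V$ is generated over $U((\mathfrak m\oplus\mathfrak a)_{\mathbb C})$ by the images of the finitely many vectors obtained by applying a PBW basis of $U(\overline{\mathfrak n}_{\mathbb C})$ to the $v_i$ -- but that is infinitely many. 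The genuine input is that $\overline{\mathfrak n}_{\mathbb C}$ acts locally nilpotently on $V/\mathfrak n V$ (a theorem of Casselman, sometimes deduced from $Z(\mathfrak g_{\mathbb C})$-finiteness together with the structure of the $\overline{\mathfrak n}$-action), which forces the relevant PBW span to be finite-dimensional, hence $V/\mathfrak n V$ is finitely generated over $U((\mathfrak m\oplus\mathfrak a)_{\mathbb C})$.

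For admissibility, the cleanest route is to invoke Casselman's subrepresentation (or subquotient) theorem together with the stability of admissibility under Jacquet-module-type operations; concretely, I would cite that $V/\mathfrak n V$ is the ``opposite'' Jacquet module construction and appeal directly to Casselman's original result (or to its exposition, e.g. in Wallach's book \cite{Wal} or Knapp--Vogan \cite{KnappVogan}) that the Jacquet module of an admissible finitely generated $(\mathfrak g,K)$-module is an admissible finitely generated $(\mathfrak m\oplus\mathfrak a,K_M)$-module. If a self-contained argument is wanted instead, one shows finite generation first (as above), then deduces admissibility: a finitely generated $(\mathfrak m\oplus\mathfrak a,K_M)$-module on which $Z((\mathfrak m\oplus\mathfrak a)_{\mathbb C})$ acts with finite-dimensional image on each generator (which follows because $Z(\mathfrak g_{\mathbb C})$ acts finitely on $V$ and maps into $Z(\mathfrak m_{\mathbb C})\oplus\cdots$ via the Harish-Chandra homomorphism for the parabolic) is automatically admissible by Harish-Chandra's theorem that $Z$-finite finitely generated $(\mathfrak g,K)$-modules are admissible.

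The main obstacle is the local nilpotence of the $\overline{\mathfrak n}$-action on $V/\mathfrak n V$: this is the one place where one cannot get away with pure linear algebra and must use either $Z(\mathfrak g_{\mathbb C})$-finiteness of $V$ in an essential way (tracking how central characters constrain the $\mathfrak a$-weights that can appear, via Casselman's estimate on the exponents) or a spectral-sequence/filtration argument on $\mathfrak n$-homology. Everything else -- the PBW bookkeeping, the verification that the $Z(\mathfrak g_{\mathbb C})$-finiteness descends through the Harish-Chandra homomorphism relative to $P$, and the application of Harish-Chandra's admissibility criterion -- is routine. Accordingly, I would state the proof as: finite generation by the PBW argument plus Casselman's local nilpotence lemma; then admissibility by reduction to Harish-Chandra's $Z$-finiteness criterion; with a remark that both facts are due to Casselman and pointers to \cite{Wal} and \cite{KnappVogan} for complete proofs.
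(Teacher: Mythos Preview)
The paper's own proof is a one-line citation to \cite{Wal}, Lemma~4.3.1, so your proposal---which sketches the Casselman--Osborne mechanism and then points to \cite{Wal} and \cite{KnappVogan}---is entirely consistent with it, and in fact supplies more context than the paper does.

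One imprecision worth flagging: you speak of ``local nilpotence of the $\overline{\mathfrak n}$-action on $V/\mathfrak n V$'', but $\overline{\mathfrak n}$ does not act on $V/\mathfrak n V$, since $\overline{\mathfrak n}$ does not normalise $\mathfrak n$ (indeed $[\overline{\mathfrak n},\mathfrak n]\subset\mathfrak m\oplus\mathfrak a$). The correct formulation of the key input is Osborne's lemma: an admissible finitely generated $(\mathfrak g,K)$-module is finitely generated already as a $U(\mathfrak n_{\mathbb C})$-module (for $\mathfrak n$ the nilradical of a minimal parabolic). From this one deduces that $V/\mathfrak n_0 V$ is finite-dimensional for the minimal $\mathfrak n_0$, and the general parabolic case is then handled by combining this with the transitivity $V/\mathfrak n V\twoheadrightarrow V/\mathfrak n_0 V$ and the structure of $M$. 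Your PBW bookkeeping and the descent of $Z(\mathfrak g_{\mathbb C})$-finiteness through $\mu_P$ are correct, and the admissibility step via Harish-Chandra's criterion is the standard route taken in \cite{Wal}; so apart from rephrasing the nilpotence input, your outline matches what a reader would find upon following the paper's citation.
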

\begin{proof} See \cite[Lemma 4.3.1]{Wal}.
\end{proof}

If $V$ is an irreducible $(\mathfrak{g},K)$-module, we say that $V$ admits an infinitesimal character if the center $\mathrm{Z}(\mathfrak{g}_{\mathbb{C}})$ of the universal enveloping algebra $U(\mathfrak{g}_{\mathbb{C}})$ of the complexification $\mathfrak{g}_{\mathbb{C}}$ of $\mathfrak{g}$, acts on $V$ by a character, that is: for every $Z\in \mathrm{Z}(\mathfrak{g}_{\mathbb{C}})$ and for every $v\in V$, we have
\begin{equation*}
Zv=\chi(Z)v,    
\end{equation*}
where $\chi\colon \mathrm{Z}(\mathfrak{g}_{\mathbb{C}})\longrightarrow \mathbb{C}$ is a morphism of complex, unital algebras. The action of $\mathrm{Z}(\mathfrak{g}_{\mathbb{C}})$ on $V$ in question is the one obtained by first extending the action of $\mathfrak{g}$ to an action of $\mathfrak{g}_{\mathbb{C}}$ and then to an action of $U(\mathfrak{g}_{\mathbb{C}})$ using the PBW Theorem.

\begin{cor}\label{corquotadfin} Let $G$ be a connected, semisimple Lie group with finite centre and let $P=MAN$ be a parabolic subgroup of $G$. If $V$ is an irreducible $(\mathfrak{g},K)$-module admitting an infinitesimal character, then $V/\mathfrak{n}V$ is an admissible, finitely generated $(\mathfrak{m}\oplus\mathfrak{a},K_{M})$-module.
\end{cor}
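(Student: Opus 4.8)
The plan is to deduce this from Theorem~\ref{quotadfin}: once we know that an irreducible $(\mathfrak{g},K)$-module $V$ admitting an infinitesimal character is both admissible and finitely generated, the corollary is immediate.

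First I would check finite generation, which is elementary. Fix a nonzero $v\in V$ and let $W\subseteq V$ be the span of $Kv$; this is finite-dimensional by axiom (2) in the definition of a $(\mathfrak{g},K)$-module, so it has a finite basis $v_{1},\dots,v_{n}$. The subspace $U(\mathfrak{g}_{\mathbb{C}})W$ is a $\mathfrak{g}$-submodule, and it is stable under $K$: extending $\mathrm{Ad}$ to an algebra automorphism of $U(\mathfrak{g}_{\mathbb{C}})$, the compatibility relation $kXw=\mathrm{Ad}(k)X\cdot kw$ upgrades to $k(uw)=(\mathrm{Ad}(k)u)(kw)$ for $u\in U(\mathfrak{g}_{\mathbb{C}})$, and since $KW\subseteq W$ we get $K\cdot U(\mathfrak{g}_{\mathbb{C}})W\subseteq U(\mathfrak{g}_{\mathbb{C}})W$. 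Thus $U(\mathfrak{g}_{\mathbb{C}})W$ is a nonzero $(\mathfrak{g},K)$-submodule of $V$, hence equals $V$ by irreducibility, so $v_{1},\dots,v_{n}$ is a finite generating set.

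Next I would invoke admissibility, which is the one substantive ingredient. By Harish-Chandra's admissibility theorem for $(\mathfrak{g},K)$-modules (see \cite{Wal}), a finitely generated $(\mathfrak{g},K)$-module on which $Z(\mathfrak{g}_{\mathbb{C}})$ acts through a finite-codimensional ideal is admissible. An infinitesimal character means precisely that $Z(\mathfrak{g}_{\mathbb{C}})$ acts through the kernel of a character, which is a maximal, hence finite-codimensional, ideal; so combined with the finite generation just established the theorem applies to $V$.

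With $V$ now known to be admissible and finitely generated, Theorem~\ref{quotadfin} applies verbatim and yields that $V/\mathfrak{n}V$ is an admissible, finitely generated $(\mathfrak{m}\oplus\mathfrak{a},K_{M})$-module, as required. The finite generation step and the final step are formal; the only real content is hidden in Harish-Chandra's admissibility theorem, and the one point to be careful about is matching its precise hypotheses in the reference (it is usually stated for $Z(\mathfrak{g}_{\mathbb{C}})$-finite finitely generated modules), which is immediate here since an infinitesimal character entails $Z(\mathfrak{g}_{\mathbb{C}})$-finiteness.
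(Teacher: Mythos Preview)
Your proof is correct and follows the same strategy as the paper: verify that $V$ is admissible and finitely generated, then apply Theorem~\ref{quotadfin}. The paper proceeds in the opposite order---citing \cite{KnappVogan}, Theorem~7.204 for admissibility of irreducible modules and then \cite{KnappVogan}, Corollary~7.207 for finite generation---whereas you give the elementary finite-generation argument first and deduce admissibility from the $Z(\mathfrak{g}_{\mathbb{C}})$-finite, finitely-generated version of Harish-Chandra's theorem; both routes are standard and the difference is cosmetic.
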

\begin{proof} By \cite[Theorem 2.2, p. 256]{Helg}, $K$ is connected. By \cite[Theorem 7.204]{KnappVogan}, $V$ is admissible. Combining \cite[Example 1, p. 442]{KnappVogan} and \cite[Corollary 7.207]{KnappVogan}, it follows that $V$ is finitely generated. The result now follows from Theorem \ref{quotadfin}. 
\end{proof}

Let $\mathfrak{p}$, $\mathfrak{m}$, $\mathfrak{a}$ and $\mathfrak{n}$ denote the Lie algebras of $P$, $M$, $A$ and $N$, respectively.\\

Let $(\sigma,H_{\sigma})$ be an admissible and finitely generated Hilbert representation of $M$ which is unitary when restricted to $K_{M}$. Let $\lambda$ be a complex-valued real-linear functional on $\mathfrak{a}$. Consider the $(\mathfrak{m}\oplus \mathfrak{a},K_{M})$-module $H^{\lambda}_{\sigma,K_{M}}$ defined as 
\begin{equation*} H^{\lambda}_{\sigma,K_{M}}:=H_{\sigma,K_{M}}\otimes \mathbb{C}_{\lambda+\rho}
\end{equation*}
where the pair $(\mathfrak{m},K_{M})$ acts on $H_{\sigma,K_{M}}$ and $\mathfrak{a}$ acts on $\mathbb{C}_{\lambda+\rho}$ via the functional $\lambda+\rho$.\\

If $V$ is a $(\mathfrak{g},K)$-module and $T\in \text{Hom}_{\mathfrak{g},K}(V,\text{Ind}_{P,K_{M}}(\sigma,\lambda))$, then we can define an element $\hat{T}\in \text{Hom}_{\mathfrak{\mathfrak{m}\oplus\mathfrak{a}},K_{M}}(V/{\mathfrak{n}V},H^{\lambda}_{\sigma,K_{M}})$ by setting 
\begin{equation*}
\hat{T}(v):=T(v)(1).
\end{equation*}

\begin{thm}\label{Frobenius} Let $G$ be a connected, semisimple Lie group with finite centre. Let $V$ be a $(\mathfrak{g},K)$-module. Let $(\sigma,H_{\sigma})$ be an admissible and finitely generated Hilbert representation of $M$ which is unitary when restricted to $K_{M}$ and let $\lambda$ be a complex-valued real-linear functional on $\mathfrak{a}$. Consider the $(\mathfrak{m}\oplus \mathfrak{a},K_{M})$-module $H^{\lambda}_{\sigma,K_{M}}$. Then the map 
\begin{equation*}
\text{Hom}_{\mathfrak{g},K}(V,\text{Ind}_{P,K_{M}}(\sigma,\lambda))\longrightarrow \text{Hom}_{\mathfrak{m}\oplus \mathfrak{a},K_{M}}(V/{\mathfrak{n}V},H^{\lambda}_{\sigma,K_{M}}),\text{ }T\mapsto \hat{T}
\end{equation*}
is a bijection.
\end{thm}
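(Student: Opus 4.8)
The plan is to prove Theorem \ref{Frobenius} by constructing an explicit inverse to the map $T \mapsto \hat{T}$, following the classical Frobenius reciprocity argument adapted to the $(\mathfrak{g},K)$-setting, and then verifying that the two constructions are mutually inverse. First I would check that $\hat{T}$ is well-defined as an element of $\text{Hom}_{\mathfrak{m}\oplus\mathfrak{a},K_M}(V/\mathfrak{n}V, H^\lambda_{\sigma,K_M})$: since $T$ is $\mathfrak{g}$-equivariant and the functions in $\text{Ind}_{P,K_M}(\sigma,\lambda)$ vanish appropriately, evaluation at the identity $k=1$ kills $\mathfrak{n}V$ (because $\dot{\text{Ind}}_P(\sigma,\lambda,X)f(1) = 0$ for $X \in \mathfrak{n}$, the shift by $\textbf{n}(kg)$ contributing trivially at $k=1$ in the relevant sense), so $\hat{T}$ factors through the quotient; the $K_M$-equivariance and the $\mathfrak{m}\oplus\mathfrak{a}$-equivariance with the $\lambda+\rho$-twist follow from computing $\dot{\text{Ind}}_P(\sigma,\lambda,Y)f(1)$ for $Y \in \mathfrak{m}\oplus\mathfrak{a}$ against the definition of $H^\lambda_{\sigma,K_M} = H_{\sigma,K_M}\otimes\mathbb{C}_{\lambda+\rho}$, using Lemma \ref{nV} to make sense of the target.

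Next I would construct the inverse map. Given $S \in \text{Hom}_{\mathfrak{m}\oplus\mathfrak{a},K_M}(V/\mathfrak{n}V, H^\lambda_{\sigma,K_M})$, precompose with the quotient map $q\colon V \to V/\mathfrak{n}V$ to get $\tilde{S}\colon V \to H^\lambda_{\sigma,K_M}$, and then define $T_S(v)(k) := \tilde{S}(\pi(k)v)$ for $k \in K$, where $\pi$ denotes the $K$-action on $V$ — more precisely one must untwist the $\rho$-shift and land in the space of $H_\sigma$-valued functions on $K$ satisfying the $K_M$-covariance relation $f(mk)=\sigma(m)f(k)$. The covariance relation is exactly the $K_M$-equivariance of $\tilde{S}$ (using that $K_M$ normalizes $\mathfrak{n}$, so $q$ is $K_M$-equivariant). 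The content then is to check that $T_S$ so defined is $\mathfrak{g}$-equivariant, not just $K$-equivariant: this is where the $\mathfrak{n}V$-vanishing and the $\lambda+\rho$ twist do their work, via the Iwasawa decomposition $\mathfrak{g} = \mathfrak{k}\oplus\mathfrak{a}\oplus\mathfrak{n}$ and the cocycle identities for $\textbf{k}, \textbf{m}, \textbf{h}$. One differentiates $\text{Ind}_P(\sigma,\lambda,\exp(tX))$ and matches term by term; the $\mathfrak{k}$-component is handled by $K$-equivariance, the $\mathfrak{a}$-component produces exactly the $\lambda+\rho$ factor, and the $\mathfrak{n}$-component is annihilated precisely because $S$ is defined on $V/\mathfrak{n}V$.

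The main obstacle I expect is the verification of $\mathfrak{g}$-equivariance of $T_S$, i.e. that precomposition-with-$q$-then-left-translate really produces a $\mathfrak{g}$-morphism into the induced module. This requires a careful infinitesimal computation with the Iwasawa projections: one needs that for $X \in \mathfrak{g}$, writing $\text{Ad}(k^{-1})X = X_{\mathfrak{k}} + X_{\mathfrak{a}} + X_{\mathfrak{n}}$ according to the Iwasawa decomposition (with $k$-dependent components), the derivative $\frac{d}{dt}T_S(v)(k\exp(tX))|_{t=0}$ decomposes, and that the $\mathfrak{n}$-part, which a priori would be problematic, dies because $\tilde{S}(X_{\mathfrak{n}}\cdot(\text{something in }V)) \in \tilde{S}(\mathfrak{n}V) = 0$. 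A clean way to organize this is to first treat $X \in \mathfrak{k}$ (immediate from $K$-equivariance by differentiating), then $X \in \mathfrak{a}\oplus\mathfrak{n} = $ the nilradical-plus-split part, reducing to checking the defining relation at $k=1$ and using that $\mathfrak{g} = \mathfrak{k} + (\mathfrak{a}\oplus\mathfrak{n})$ spans. Finally, the two checks $\widehat{T_S} = S$ and $T_{\hat{T}} = T$ are formal once the constructions are in place: $\widehat{T_S}(q(v)) = T_S(v)(1) = \tilde{S}(\pi(1)v) = S(q(v))$, and $T_{\hat{T}}(v)(k) = \hat{T}(q(\pi(k)v)) = T(\pi(k)v)(1) = (\text{Ind}_P(\sigma,\lambda,k)T(v))(1) = T(v)(k)$ using $K$-equivariance of $T$ and the definition of the induced action at the identity.
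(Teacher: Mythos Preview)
Your proposal is correct and follows the same route as the paper: the paper does not give a self-contained argument but cites Wallach, \textit{Real Reductive Groups I}, Lemma 5.2.3 (and Hecht--Schmid, Theorem 4.9), and then records the inverse construction $\tilde{S}(v)(k) := S(q(kv))$ exactly as you describe, including the verification that $T \mapsto \hat{T}$ and $S \mapsto \tilde{S}$ are mutually inverse. Your sketch is in fact more detailed than what the paper provides.

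One minor imprecision worth flagging: your decomposition $\mathfrak{g} = \mathfrak{k} \oplus \mathfrak{a} \oplus \mathfrak{n}$ is only the Iwasawa decomposition when $P$ is minimal. For a general parabolic $P = MAN$ (which is the setting of the theorem), one has $\mathfrak{g} = \mathfrak{k} + \mathfrak{m} + \mathfrak{a} + \mathfrak{n}$ with $\mathfrak{k} \cap \mathfrak{m} = \mathfrak{k}_M$, so the splitting of $\mathrm{Ad}(k^{-1})X$ must include an $\mathfrak{m}$-component as well. This extra piece is handled by the $\mathfrak{m}$-equivariance of $S$ (which you already noted when checking that $\hat{T}$ lands in the right space), so the argument goes through unchanged once the correct decomposition is used.
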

\begin{proof} See \cite[ Lemma 5.2.3]{Wal} and the discussion preceding it. 
\end{proof}

For clarity, we point out that the formulation in \cite{Wal} seems to contain some typos and so we modified it following \cite[Theorem 4.9]{HechtSchmidt}.\\

The inverse of the map $T\mapsto \hat{T}$ is constructed as follows (see \cite[Lemma 5.2.3 and Lemma 3.8.2]{Wal}. Alternatively, \cite[Theorem 4.9]{HechtSchmidt}). Let $S\in \text{Hom}_{\mathfrak{m}\oplus\mathfrak{a},K_{M}}(V/{\mathfrak{n}V},H^{\lambda}_{\sigma,K_{M}})$. Then we obtain an element $\Tilde{S}\in \text{Hom}_{\mathfrak{g},K}(V,\text{Ind}_{P,K_{M}}(\sigma,\lambda))$ by setting 
\begin{equation*}
\Tilde{S}(v)(k):=S(q(kv)), 
\end{equation*}
where $q\colon V\longrightarrow V/\mathfrak{n}V$ denotes the quotient map. Then the inverse of $T\mapsto \hat{T}$ is given by the map 
\begin{equation*}
\text{Hom}_{\mathfrak{m}\oplus \mathfrak{a},K_{M}}(V/{\mathfrak{n}V},H^{\lambda}_{\sigma,K_{M}})\longrightarrow \text{Hom}_{\mathfrak{g},K}(V,\text{Ind}_{P,K_{M}}(\sigma,\lambda)),\text{ }S\mapsto \Tilde{S}.
\end{equation*}


\section{Asymptotic behaviour of representations} \label{sec:3}

\subsection{Asymptotic expansions of matrix coefficients}

We begin by collecting the fundamental facts concerning asymptotic expansions of matrix coefficients of tempered representations. 
We refer the reader to \cite[Chapter VIII]{Knapp} for a more thorough exposition of the topic. \\

Let $G$ be a connected, semisimple Lie group with finite centre, let $K$ be a fixed maximal compact subgroup of $G$ and let $\mathfrak{k}$ be its Lie algebra. Let $P=MAN$ denote the minimal parabolic subgroup of $G$ with Lie algebra $\mathfrak{p}$. Given a maximal abelian subspace $\mathfrak{a}$ of $\mathfrak{p}$, we call $A$ the corresponding subgroup of $P$ and $M$ the centraliser of $A$ in $K$.
We fix a system $\Delta$ of simple roots of the restricted root system attached to $(\mathfrak{g},\mathfrak{a})$, we use $\Delta^{+}$ to denote the corresponding set of positive roots.\\

Let $\mathfrak{a}^{+}$ denote the set $\{H\in\mathfrak{a}|\alpha(H)>0\text{ for all }\alpha\in \Delta\}$. Then the subset of regular elements $G^{\text{reg}}$ of $G$ admits a decomposition as $G^{\text{reg}}=K\text{exp}(\mathfrak{a}^{+})K$ and $G$ itself admits a decomposition $G=K\overline{\text{exp}(\mathfrak{a}^{+})}K$.\\

We write $\Delta=\{\alpha_{1},\dots,\alpha_{n}\}$ and we identify it with the ordered set $\{1,\dots,n\}$ in the obvious way. We adopt the following notation to simplify the appearance of the expansions we are going to work with. \\

For $H\in\mathfrak{a}$ and $l\in \mathbb{Z}^{n}_{\geq 0}$, we set $\alpha(H)^{l}:=\prod^{n}_{i=1}\alpha_{i}(H)^{l_{i}}$.\\

If $\lambda$ is a real-linear complex-valued functional on $\mathfrak{a}$, since, for every $H\in \mathfrak{a}$, we have 

\begin{equation*}
\lambda(H)=\sum^{n}_{i=1}\lambda_{i}\alpha_{i}(H)
\end{equation*}

for some $\lambda_{1},\dots,\lambda_{n}\in\mathbb{C}$ , we will often identify $\lambda$ with the $n$-tuple $(\lambda_{1},\dots,\lambda_{n})$.\\  

The next result is concerned with the expansion of $K$-finite matrix coefficients relative to $P$. 

\begin{thm}\label{asymp1} Let $G$ be a connected, semisimple Lie group with finite centre and let $(\pi,H)$ be an irreducible, Hilbert representation of $G$. Then there exist a non-negative integer $l_{0}$ and a finite set of real-linear complex-valued functionals on $\mathfrak{a}$, denoted $\mathcal{E}_{0}$, such that, for every $v,w\in H_{K}$, the restriction to $\text{exp}(\mathfrak{a}^{+})$ of the matrix coefficient 
\begin{equation*}
\phi_{v,w}(g)=\langle \pi(g)v,w\rangle
\end{equation*}
admits a uniformly and absolutely convergent expansion as 
\begin{equation*}
\phi_{v,w}(\text{exp}H)=e^{-\rho(H)}\sum_{\lambda\in \mathcal{E}_{0}}\text{ }\sum_{l\in \mathbb{Z}^{n}_{\geq 0}:|l|\leq l_{0}}\text{ }\sum_{k\in\mathbb{Z}^{n}_{\geq 0}}\alpha(H)^{l}e^{(\lambda-k)(H)}\langle c_{\lambda-k,l}(v),w\rangle,
\end{equation*}
where each $c_{\lambda-k,l}\colon H_{K}\longrightarrow H_{K}$ is a complex-linear map and $\rho_{\mathfrak{p}}$ denotes half of the sum of the elements in $\Delta^{+}$ counted with multiplicities. The maps $c_{\lambda-k,l}$ are completely determined by the representation $(\pi,H)$.
\end{thm}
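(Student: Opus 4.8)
The plan is to establish the asymptotic expansion of Theorem~\ref{asymp1} by the classical differential-equations method, which reduces the problem to a system of ODEs on $\mathfrak{a}^{+}$ coming from the action of the centre $Z(\mathfrak{g}_{\mathbb{C}})$. First I would pass to the Harish-Chandra module: since $v,w\in H_{K}$, by Theorem~\ref{HC} the representation is admissible, so by Theorem~\ref{Ksmooth} the vectors are smooth, and the $Z(\mathfrak{g}_{\mathbb{C}})$-finiteness of $H_{K}$ (irreducibility gives an infinitesimal character via Theorem~\ref{g,k}, or one argues directly that $U(\mathfrak{g}_{\mathbb{C}})v$ is admissible and finitely generated) means that $\phi_{v,w}$ is annihilated by a finite-codimension ideal of $Z(\mathfrak{g}_{\mathbb{C}})$. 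Using the Iwasawa decomposition $G=KAK$ (or $G=K\overline{\exp(\mathfrak{a}^{+})}K$ as recalled in the text) and the $K$-finiteness of $v$ and $w$, one reduces the study of $\phi_{v,w}$ to its restriction to $\exp(\mathfrak{a}^{+})$; there the radial part of the Casimir operator — more generally of a set of generators of $Z(\mathfrak{g}_{\mathbb{C}})$ — becomes, after the substitution $t_{i}=e^{-\alpha_{i}(H)}$, a regular-singular system of differential equations near the origin $t=0$ (i.e. $H\to\infty$ in $\mathfrak{a}^{+}$).

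The core step is then the Frobenius-type analysis of this regular-singular system. One considers the finite-dimensional space $W$ spanned by all functions $g\mapsto \langle\pi(g)v',w'\rangle$ with $v'$ ranging over the $K\times K$-span of $v$ and similarly $w'$ — by admissibility this is finite-dimensional, and it is stable under the relevant differential operators. The indicial equation of the system has finitely many roots modulo $\mathbb{Z}_{\geq 0}^{n}$; collecting one representative from each congruence class gives the finite set $\mathcal{E}_{0}$, and the bound on logarithmic powers $|l|\le l_{0}$ comes from the size of the Jordan blocks, which is bounded by $\dim W$ (hence depends only on $\pi$). The classical theory of such systems — worked out for this setting in Harish-Chandra's work and in Casselman--Mili\v{c}i\'c, and recalled in Knapp, Chapter VIII — yields a formal series solution $e^{-\rho(H)}\sum_{\lambda\in\mathcal{E}_{0}}\sum_{|l|\le l_{0}}\sum_{k}\alpha(H)^{l}e^{(\lambda-k)(H)}\langle c_{\lambda-k,l}(v),w\rangle$, and one then proves convergence: the recursion for the coefficients $c_{\lambda-k,l}$ gives geometric-type bounds, so the series converges uniformly and absolutely on the region where all $\alpha_{i}(H)\ge\varepsilon>0$, and in fact on all of $\mathfrak{a}^{+}$ after checking the estimates near the walls. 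The linearity of each $c_{\lambda-k,l}$ in $v$ (and conjugate-linearity absorbed into $w$) is immediate from the construction, since the series solution depends linearly on the initial data, which is linear in $v,w$.

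The main obstacle, and the step I expect to require the most care, is proving that the \emph{same} finite set $\mathcal{E}_{0}$ and the \emph{same} bound $l_{0}$ work uniformly for all $v,w\in H_{K}$ — not just for vectors inside one fixed finite-dimensional $K\times K$-invariant subspace. This requires knowing that the exponents and logarithmic degrees appearing in the expansion are governed entirely by the infinitesimal character of $\pi$ (which determines the indicial roots) together with a global bound on Jordan block sizes valid across all $K$-types; the latter follows from the fact that $Z(\mathfrak{g}_{\mathbb{C}})$ acts with a \emph{fixed} ideal of finite codimension on \emph{all} of $H_{K}$, so the relevant differential system on $\mathfrak{a}^{+}$ is the same for every matrix coefficient, only the initial conditions vary. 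Secondarily, justifying convergence of the formal series up to (and including estimates near) the walls of $\mathfrak{a}^{+}$, rather than merely on compact subsets of the open chamber, demands the careful estimates of Harish-Chandra; since the statement only asserts convergence on $\exp(\mathfrak{a}^{+})$ one may phrase this as uniform absolute convergence on each subset $\{\alpha_i(H)\ge\varepsilon\}$, which is what the subsequent integration arguments actually use.
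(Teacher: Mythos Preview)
Your proposal follows the same route as the references the paper cites, so at the level of ideas you agree with the paper. The paper, however, does not carry out the differential-equations argument itself: it simply invokes \cite{Knapp}, Theorem~8.32, to obtain the expansion for a fixed $\tau$-spherical function (so with $\mathcal{E}_{0}$, $l_{0}$, and the $c_{\lambda-k,l}$ a priori depending on the finite pair of $K$-types containing $v,w$), and then invokes \cite{CassMil}, Theorem~8.8, to remove the $\tau$-dependence, noting that the latter applies because $H_{K}$ is finitely generated (\cite{KnappVogan}, Corollary~7.207).

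You correctly isolate the uniformity of $\mathcal{E}_{0}$ and $l_{0}$ over all of $H_{K}$ as the crux, but the justification you give does not close the gap. That $Z(\mathfrak{g}_{\mathbb{C}})$ acts through a single finite-codimension ideal does determine the indicial roots, hence $\mathcal{E}_{0}$; but it does \emph{not} force the differential system to be ``the same for every matrix coefficient'': the radial system is vector-valued in the $K$-type spaces, and the lower-order terms depend on $\tau$. In particular your bound $l_{0}\le\dim W$ is tied to the $K\times K$-span of the chosen $v,w$ and is not uniform. The uniform bound in Casselman--Mili\v{c}i\'c comes from a different source: finite generation and admissibility of $H_{K}$ imply (their Theorem in 8.8, or the paper's Theorem~\ref{quotadfin}) that the Jacquet module $H_{K}/\overline{\mathfrak{n}}H_{K}$ is finite-dimensional, and the logarithmic degree is bounded by the nilpotency index of $\mathfrak{a}$ on its generalized weight spaces --- a single number attached to $\pi$, not to a choice of $K$-types. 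This is exactly the ingredient the paper singles out when it appeals to finite generation.
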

\begin{proof} By Theorem \ref{HC}, $(\pi,H)$ is admissible and therefore has an infinitesimal character. By \cite[Theorem 8.32]{Knapp},  we have the stated expansion for any $\tau$-spherical function (in the sense of \cite[p.215]{Knapp}) $F$ on $G$ of the form 
\begin{equation*}
F(g)=E_{2}\pi(g)E_{1},
\end{equation*}
where $\tau_{1}$ and $\tau_{2}$ are sub-representations of 
\begin{equation*}
\pi|_{K}\cong \bigoplus_{\gamma\in \Tilde{K}}n_{\gamma}\gamma
\end{equation*}
of the form 
\begin{equation*}
\tau_{1}:=\bigoplus_{\gamma\in \Theta_{1}}n_{\gamma}\gamma \text{ and }\tau_{2}:=\bigoplus_{\gamma\in \Theta_{2}}n_{\gamma}\gamma
\end{equation*}
for finite collections $\Theta_{1},\Theta_{2}\in \hat{K}$, and $E_{1}$, $E_{2}$ are the orthogonal projections to $\tau_{1}$, $\tau_{2}$, respectively. In this expansion, the set $\mathcal{E}_{0}$, the maps $c_{\lambda-k,l}$ and the number $l_{0}$ depend on $\tau=(\tau_{1},\tau_{2})$ and we can expand $\phi_{v,w}$ provided that $v\in \tau_{1}$ and $w\in \tau_{2}$. To obtain an expansion valid for every $v,w\in H_{K}$ and with $\mathcal{E}_{0}$, $l_{0}$ and the $c_{\lambda-k,l}$ independent of $\tau$, we appeal to the theory developed in \cite[Section 8]{CassMil}, which we can apply since $(\pi,H)$ is finitely generated by \cite[Corollary 7.207]{KnappVogan} (for clarity, we should mention that the set-up in \cite{Knapp} is different, but entirely equivalent to 
that in \cite{CassMil}, translating between the two is just book-keeping). First, by \cite[Theorem 8.7]{CassMil}, the representation $(\pi,H)$ has a unique matrix coefficient map in the sense  of \cite[p. 907]{CassMil}. The required expansion on the region $\text{exp}(\mathfrak{a}^{+})$ of the matrix coefficient map is given by \cite[Theorem 8.8]{CassMil}. For completeness, the relation between the $\tau$-dependent expansion and the expansion in our statement is given in \cite[Lemma 8.3]{CassMil}. 
\end{proof}

We recall that if $\nu,\nu'$ are real-linear complex-valued functionals on $\mathfrak{a}$ such that $\nu-\nu'$ is an integral linear combination of the simple roots, then we say that $\nu$ and $\nu'$ are \textbf{integrally equivalent}. \\

The set $\mathcal{E}_{0}$ has the property that if $\lambda,\lambda'\in \mathcal{E}_{0}$ with $\lambda\neq \lambda'$, then $\lambda$ and $\lambda'$ are not integrally equivalent.\\

If $\nu$ and $\nu'$ are integrally equivalent and $\nu-\nu'$ is a non-negative integral combination of the simple roots, we write $\nu\geq\nu'$, thus introducing an order relation among integrally equivalent functionals on $\mathfrak{a}$. \\

If $k\in \mathbb{Z}^{n}_{\geq 0}$ is such that the term 
\begin{equation*}\alpha(H)^{l}e^{(\lambda-k)(H)}\langle c_{\lambda-k,l}(v),w\rangle
\end{equation*}
is non-zero for some $\lambda\in\mathcal{E}_{0}$ and for some $v,w\in H_{K}$, then we say that $\nu:=(\lambda-k)$ is an \textbf{exponent} and we let $\mathcal{E}$ denote the set of exponents. The exponents which are maximal with respect to the order relation introduced above are called \textbf{leading exponents}: $\mathcal{E}_{0}$ is precisely the set of leading exponents. \\

The following result is used crucially in \cite{KYD} and in the following. 

\begin{thm}\label{tempcriterion} Let $(\pi,H)$ be an irreducible, tempered, Hilbert representation of $G$. Then every $\lambda\in \mathcal{E}_{0}$ satisfies 
\begin{equation*}
\text{Re}\lambda_{i}\leq 0
\end{equation*}
for every $i\in \{1,\dots,n\}$.
\end{thm}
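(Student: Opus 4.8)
The plan is to deduce Theorem~\ref{tempcriterion} from the characterization of temperedness in terms of the asymptotic behaviour of $K$-finite matrix coefficients, namely Casselman's criterion. Recall that $(\pi,H)$ tempered means precisely that its $K$-finite matrix coefficients lie in $L^{2+\epsilon}(G)$ for every $\epsilon>0$, equivalently — after combining the Cartan decomposition $G=K\overline{\exp(\mathfrak{a}^+)}K$ with the Weyl integration formula, whose Jacobian on $\exp(\mathfrak{a}^+)$ grows like $e^{2\rho(H)}$ — that for every $K$-finite matrix coefficient $\phi_{v,w}$ and every $\epsilon>0$ one has $|\phi_{v,w}(\exp H)|\le C_{\epsilon}\,e^{-\rho(H)}e^{\epsilon\|H\|}$ as $H\to\infty$ in the closed positive chamber $\overline{\mathfrak{a}^+}$. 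So the strategy is: pick a leading exponent $\lambda\in\mathcal{E}_0$, isolate its contribution in the expansion of Theorem~\ref{asymp1}, and show that $\operatorname{Re}\lambda_i>0$ for some $i$ would force exponential growth of a matrix coefficient violating this bound.

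First I would fix $\lambda\in\mathcal{E}_0$ and use the non-vanishing part of the setup preceding the theorem: there exist $v,w\in H_K$ and some $l$ with $|l|\le l_0$ such that $c_{\lambda,l}(v,w)\ne 0$, and among such $l$ we may pick one of maximal total degree $|l|$. Next I would probe the asymptotics along a ray. Choose $H_0\in\mathfrak{a}^+$, set $H=tH_0$ for $t\to+\infty$, and examine $\phi_{v,w}(\exp(tH_0))$. The key point is the integral inequivalence built into $\mathcal{E}_0$: distinct elements of $\mathcal{E}_0$ differ by non-integral combinations of simple roots, so along a generic ray the exponentials $e^{(\lambda-k)(tH_0)}$ attached to different $\lambda\in\mathcal{E}_0$ (and different $k\in\mathbb{Z}^n_{\ge 0}$) have pairwise distinct "growth rates" $\operatorname{Re}\big((\lambda-k)(H_0)\big)$, or at worst coincide only in a controlled way; choosing $H_0$ generically separates them. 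Then the dominant term in $e^{\rho(tH_0)}\phi_{v,w}(\exp(tH_0))$ is the one maximizing $\operatorname{Re}(\lambda-k)(H_0)$, and among exponents on that top line the polynomial factor $\alpha(tH_0)^l = t^{|l|}\prod\alpha_i(H_0)^{l_i}$ with largest $|l|$ wins — precisely the leading exponent $\lambda$ with our chosen $l$. Hence $e^{\rho(tH_0)}\phi_{v,w}(\exp(tH_0))\sim t^{|l|}\,e^{\lambda(tH_0)}\cdot(\text{nonzero constant})$ up to lower-order terms.

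Comparing with the temperedness bound $|\phi_{v,w}(\exp(tH_0))|\le C_\epsilon e^{-\rho(tH_0)}e^{\epsilon t\|H_0\|}$, we get $t^{|l|}e^{t\operatorname{Re}\lambda(H_0)}\le C'_\epsilon e^{\epsilon t\|H_0\|}$ for all large $t$ and all $\epsilon>0$, which forces $\operatorname{Re}\lambda(H_0)\le 0$. Since this holds for all $H_0$ in a dense open subset of $\mathfrak{a}^+$ (those making the growth rates separate), by continuity it holds for all $H_0\in\overline{\mathfrak{a}^+}$, i.e. $\operatorname{Re}\lambda\le 0$ on the closed positive Weyl chamber. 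The final step is to translate "$\operatorname{Re}\lambda\le 0$ on $\overline{\mathfrak{a}^+}$" into "$\operatorname{Re}\lambda_i\le 0$ for all $i$": writing $\lambda=\sum_i\lambda_i\alpha_i$ in the basis of simple roots, the coefficients $\operatorname{Re}\lambda_i$ are obtained by pairing $\operatorname{Re}\lambda$ against the dual basis of fundamental coweights $\{\varpi_i^\vee\}$, and each $\varpi_i^\vee$ lies in $\overline{\mathfrak{a}^+}$ (this is the standard fact that fundamental coweights are dominant). Evaluating the inequality at $H_0=\varpi_i^\vee$ gives $\operatorname{Re}\lambda_i\le 0$ for each $i$, which is the claim.

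The main obstacle I anticipate is the bookkeeping in isolating the genuinely dominant term: one must verify that the generic choice of ray $H_0$ really does separate the countably many growth rates $\operatorname{Re}(\lambda-k)(H_0)$ (a countable union of hyperplanes is avoided), that the tail $\sum_k$ of the uniformly convergent expansion does not conspire to cancel or overwhelm the leading term, and — most delicately — that on the top growth line the term with maximal $|l|$ is not cancelled by others with the same $\lambda-k$ and same $|l|$; here one uses that the coefficient functionals $c_{\lambda,l}$ are well-defined and that we chose $v,w$ to make the relevant one nonzero, together with linear independence of the monomials $\alpha(H)^l$ as functions of $H$. Alternatively, and perhaps more cleanly, one can bypass the ray analysis entirely and cite directly Casselman's temperedness criterion in the form stated in \cite{Knapp}, Chapter VIII (the theorem characterizing tempered representations by the condition $\operatorname{Re}\langle\lambda,\alpha_i^\vee\rangle\le 0$, equivalently $\operatorname{Re}\lambda_i\le 0$, on leading exponents), which is exactly the assertion; I would present the proof as an appeal to that criterion, recording the normalization of exponents used here so that the inequality comes out with the stated sign.
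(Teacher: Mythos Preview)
Your proposal is correct, and your final suggested approach---simply citing Casselman's temperedness criterion from \cite{Knapp}, Chapter VIII---is exactly what the paper does: its proof consists solely of references to \cite{Knapp}, Theorem 8.53, together with \cite{BorelWallach}, Proposition 3.7, and \cite{CassMil}, Corollary 8.12, for groups outside Knapp's standing hypotheses. Your preceding sketch via ray asymptotics goes well beyond what the paper supplies and is essentially the standard argument behind the cited result; the bookkeeping issues you flag (separating exponents along a generic ray, non-cancellation of the leading term) are genuine but are handled in the references by the uniqueness of asymptotic expansions of this type, so your caution there is appropriate rather than indicative of a gap.
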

\begin{proof} See \cite[Theorem 8.53]{Knapp}. Strictly speaking, in loc. cit. the theorem is formulated under some restrictions on $G$, but it is a convenient reference since we are adopting the same normalisation of the exponents. See \cite[Proposition 3.7, p. 83]{BorelWallach} or \cite[Corollary 8.12]{CassMil},  for proofs for more general groups.  
\end{proof}

We now turn to asymptotic expansions of matrix coefficients of $(\pi,H)$ relative to standard (for $P$) parabolic subgroups of $G$. We follow \cite[Chapter VIII, Section 12]{Knapp}. \\

Given a subset $I\subset \{1,...,n\}$, and recalling that we identified $\Delta$ with $\{1,\dots,n\}$, we can associate to it a parabolic subgroup 
\begin{equation*}
P_{I}=M_{I}A_{I^{c}}N_{I^{c}}
\end{equation*}
of $G$ containing $P$ in such a way that the restricted root space $\mathfrak{g}_{-\alpha}$ satisfies $\mathfrak{g}_{-\alpha}\subset \mathfrak{m}_{I}$ if and only if $\alpha\in I$ (with $\mathfrak{m}_{I}$ denoting the Lie algebra of $M_{I}$). For the details, we refer the reader to \cite[Chapter VII]{Knapp2} and \cite[Proposition 5.23]{Knapp}. \\ 

First, we introduce the basis $\{H_{1},\dots,H_{n}\}$ of $\mathfrak{a}$ dual to $\Delta$. We define the Lie algebra $\mathfrak{a}_{I}$ as 
\begin{equation*}
\mathfrak{a}_{I}:=\sum_{i\in I}\mathbb{R}H_{i}
\end{equation*}
and the group $A_{I}$ as 
\begin{equation*}
A_{I}:=\text{exp}(\sum_{i\in I}\mathbb{R}\alpha_{i}).
\end{equation*}
We can then write 
\begin{equation*}
\mathfrak{a}=\mathfrak{a}_{I}\oplus \mathfrak{a}_{I^{c}}\quad\text{and}\quad
A=A_{I}A_{I^{c}}.
\end{equation*}
The groups $N_{I}$ and $N_{I^{c}}$ are the analytic subgroups of $G$ corresponding to the Lie algebras 
\begin{equation*}
\mathfrak{n}_{I}:=\sum_{\beta\in \Delta^{+}:\beta|_{\mathfrak{a}_{I^{c}}}=0}\mathfrak{g}_{\beta}\quad \text{and}\quad\mathfrak{n}_{I^{c}}:=\sum_{\beta\in \Delta^{+}:\beta|_{\mathfrak{a}_{I^{c}}}\neq 0}\mathfrak{g}_{\beta}.
\end{equation*}
We have 
\begin{equation*}
\rho=\rho_{I}+\rho_{I^{c}}
\end{equation*}
with 
\begin{equation*}
\rho_{I}:=\frac{1}{2}\sum_{\beta\in \Delta^{+}:\beta|_{\mathfrak{a}_{I^{c}}}=0}(\text{dim}\mathfrak{g}_{\beta})\beta
\end{equation*}
and analogously for $\rho_{I^{c}}$. Denoting $M_{0,I}$ the analytic subgroup of $G$ corresponding to the Lie algebra 
\begin{equation*}
\mathfrak{m}_{I}=\mathfrak{m}\oplus \mathfrak{a}_{I}\oplus \mathfrak{n}_{I}\oplus \overline{\mathfrak{n}_{I}},
\end{equation*}
the group $M_{I}$ is then given as 
\begin{equation*}
M_{I}:=Z_{K}(\mathfrak{a}_{I^{c}})M_{0,I}.
\end{equation*}
Finally, $K_{I}:=K\cap M_{I}$ is a maximal compact subgroup of $M_{I}$ and $MA_{I}N_{I}$ is a minimal parabolic subgroup of $M_{I}$.\\

\begin{thm}\label{asymp2} Let $G$ be a connected, semisimple Lie group with finite centre and let $(\pi,H)$ be an irreducible, Hilbert representation of $G$. Let $C$ be a compact subset of $M_{I}$ satisfying $K_{I}CK_{I}=C$. Then there exists a positive real number $R$ depending on $C$ such that, for every $m\in C$ and for every $a=\text{exp}H\in A_{I^{c}}$ which satisfies $\alpha_{i}(H)>\text{log}R$ for every $i\in I^{c}$, 
we have 
\begin{equation*}
\phi_{v,w}(m\,\text{exp}H)=e^{-\rho_{I^{c}}(H)}\sum_{\nu\in \mathcal{E}_{I}}\text{ }\sum_{q\in \mathbb{Z}^{I^{c}}_{\geq 0}:|q|\leq q_{0}}\alpha(H)^{q}e^{\nu(H)}c_{\nu,q}^{P_{I}}(m,v,w)
\end{equation*}
for every $v,w\in H_{K}$. Here, $\mathcal{E}_{I}$ is a countable set of real-linear complex-valued functionals on $\mathfrak{a}_{I^{c}}$, each $c^{P_{I}}_{\nu,q}$ extends to a real analytic function on $M_{I}$ and satisfies 
\begin{equation*}
c^{P_{I}}_{\nu,q}(\xi_{2}m\xi_{1},v,w)=c^{P_{I}}_{\nu,q}(m,\pi(\xi_{1})v,\pi(\xi_{2}^{-1})w)
\end{equation*}
for every $\xi_{1},\xi_{2}\in K_{I}$. Moreover, for every $m\in M_{I}$ and $w\in H_{K}$, the map 
\begin{equation*}
H_{K}\longrightarrow \mathbb{C},\text{ }v\mapsto c^{P_{I}}_{\nu,q}(m,v,w)
\end{equation*}
is complex-linear and, for every $m\in M_{I}$ and $v\in H_{K}$, the map 
\begin{equation*}
H_{K}\longrightarrow \mathbb{C},\text{ }w\mapsto c^{P_{I}}_{\nu,q}(m,v,w)
\end{equation*}
is conjugate-linear. The maps $c_{\nu,q}^{P_{I}}\colon M_{I}\times H_K\times H_K\longrightarrow \mathbb{C}$ are completely determined by the representation $(\pi,H)$.
\end{thm}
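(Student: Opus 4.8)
The plan is to obtain the expansion relative to $P_I$ by re-grouping the triple series of Theorem~\ref{asymp1} according to the behaviour of its terms along $A_{I^c}$; this is the method of \cite{Knapp}, Chapter VIII, Section 12, and, since Theorem~\ref{asymp1} is already available uniformly for all $v,w\in H_K$ with the same maps $c_{\lambda-k,l}$, the corresponding uniformity in $v,w$ will be inherited throughout. First I would reduce to elements of a convenient shape. Any compact $C\subset M_I$ with $K_ICK_I=C$ can be written $C=K_IC_0K_I$ with $C_0:=C\cap\overline{\exp(\mathfrak a_I^+)}$ compact, using the Cartan decomposition $M_I=K_I\,\overline{\exp(\mathfrak a_I^+)}\,K_I$; and since $A_{I^c}$ centralises $M_I$, one has $\phi_{v,w}(\xi_2 m\xi_1 a)=\phi_{\pi(\xi_1)v,\pi(\xi_2^{-1})w}(ma)$ for $\xi_1,\xi_2\in K_I$, $m\in M_I$, $a\in A_{I^c}$ ($\pi|_K$ being unitary), so that both the asserted transformation law under $K_I\times K_I$ and the well-definedness of the resulting global coefficient are forced by the uniqueness of the exponents, i.e.\ by the linear independence, for distinct pairs $(\nu,q)$, of the functions $a\mapsto\alpha(\log a)^q e^{\nu(\log a)}$ on $A_{I^c}$. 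Hence it suffices to establish the expansion for $m=\exp H_I$ with $H_I\in\mathfrak a_I^+$ and then propagate it by $K_I$-bi-equivariance.

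For such $H_I$ and for $H_{I^c}\in\mathfrak a_{I^c}$ in the $I^c$-chamber, the element $H:=H_I+H_{I^c}$ lies in $\mathfrak a^+$, because $\alpha_i(H)=\alpha_i(H_I)>0$ for $i\in I$ and $\alpha_i(H)=\alpha_i(H_{I^c})>0$ for $i\in I^c$; so Theorem~\ref{asymp1} applies at $\exp H$. I would then split every factor along $\mathfrak a=\mathfrak a_I\oplus\mathfrak a_{I^c}$: one has $\rho(H)=\rho(H_I)+\rho_{I^c}(H_{I^c})$ (as $\rho_I$ vanishes on $\mathfrak a_{I^c}$), $\alpha(H)^l=\bigl(\prod_{i\in I}\alpha_i(H_I)^{l_i}\bigr)\alpha(H_{I^c})^q$ with $q:=(l_i)_{i\in I^c}$ (so automatically $|q|\le l_0=:q_0$), and $(\lambda-k)(H)=(\lambda-k)(H_I)+\nu(H_{I^c})$ with $\nu:=(\lambda-k)|_{\mathfrak a_{I^c}}$. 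The terms of the triple series thus depend on $H_{I^c}$ only through the factor $e^{-\rho_{I^c}(H_{I^c})}\alpha(H_{I^c})^q e^{\nu(H_{I^c})}$; since that series converges absolutely and uniformly on compacta of $\mathfrak a^+$, it may be rearranged into
\[
\phi_{v,w}(\exp(H_I+H_{I^c}))=e^{-\rho_{I^c}(H_{I^c})}\sum_{\nu\in\mathcal E_I}\ \sum_{q:\,|q|\le q_0}\alpha(H_{I^c})^q e^{\nu(H_{I^c})}\,c^{P_I}_{\nu,q}(\exp H_I,v,w),
\]
where $\mathcal E_I:=\{(\lambda-k)|_{\mathfrak a_{I^c}}:\lambda\in\mathcal E_0,\ k\in\mathbb Z^n_{\ge0}\}$ is countable and
\[
c^{P_I}_{\nu,q}(\exp H_I,v,w):=e^{-\rho(H_I)}\sum_{\substack{(\lambda,k,l):\ l_{I^c}=q\\ (\lambda-k)|_{\mathfrak a_{I^c}}=\nu}}\Bigl(\prod_{i\in I}\alpha_i(H_I)^{l_i}\Bigr)e^{(\lambda-k)(H_I)}\langle c_{\lambda-k,l}(v),w\rangle.
\]
This is the asserted identity on $\exp(\mathfrak a_I^+)\cdot A_{I^c}$; the complex-linearity of $v\mapsto c^{P_I}_{\nu,q}(m,v,w)$ and the conjugate-linearity of $w\mapsto c^{P_I}_{\nu,q}(m,v,w)$ are immediate from the corresponding properties of the maps $c_{\lambda-k,l}$ and of the inner product.

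The main obstacle is to show that each $c^{P_I}_{\nu,q}(\cdot,v,w)$ extends to a real-analytic function on all of $M_I$. On the open chamber this is immediate: the terms of the defining series are restrictions to $\mathfrak a_I^+$ of entire functions, and the series converges absolutely and uniformly on compacta --- by the uniform absolute convergence in Theorem~\ref{asymp1} along slices $L\times\{H_{I^c}\}\subset\mathfrak a^+$ --- so $c^{P_I}_{\nu,q}(\cdot,v,w)$ is real-analytic on $\exp(\mathfrak a_I^+)$. To extend it across the walls of $\mathfrak a_I$ and then over $M_I$, I would follow \cite{Knapp}, Chapter VIII, Section 12: for each fixed $a\in A_{I^c}$ the function $m\mapsto\phi_{v,w}(ma)$ is real-analytic on $M_I$ (matrix coefficients of the admissible representation $(\pi,H)$ being real-analytic, by Theorem~\ref{HC} and Theorem~\ref{Ksmooth}), and, together with the uniqueness of the $(\nu,q)$-expansion and the fact that the $(\mathfrak m_I\oplus\mathfrak a_{I^c},K_I)$-module $H_K/\mathfrak n_{I^c}H_K$ of Lemma~\ref{nV} is admissible and finitely generated (Theorem~\ref{quotadfin}), this yields the real-analytic continuation of each $c^{P_I}_{\nu,q}(\cdot,v,w)$ to $M_I$ and simultaneously forces the $K_I$-transformation law on it; I expect reconciling the ``internal'' asymptotics along $A_{I^c}$ with the global real-analytic structure on $M_I$ to be the real work of the proof. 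Finally, the requirement that $a=\exp H$ be deep in the $I^c$-chamber, with $R$ depending on $C$, enters only in propagating the expansion from the open chamber to the compact set $C$: since every $\nu\in\mathcal E_I$ satisfies $\text{Re}\,\nu_i\le0$ for all $i\in I^c$ by Theorem~\ref{tempcriterion}, making $\exp H$ deep enough damps the high-order terms and gives the uniform convergence of the outer sum over $(\nu,q)$ for $m\in C$, the necessary depth being governed by $C$.
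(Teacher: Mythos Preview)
Your approach is essentially that of the paper: both obtain the $P_I$-expansion from the minimal-parabolic expansion via the comparison of \cite{Knapp}, p.~251. The paper cites \cite{Knapp}, Theorem~8.45, for the $\tau$-dependent statement (which already contains the analytic continuation to $M_I$ and the $K_I$-transformation law) and then argues $\tau$-independence by comparing with the $\tau$-independent Theorem~\ref{asymp1}; you instead regroup Theorem~\ref{asymp1} directly and defer the analytic continuation to the same place in \cite{Knapp}. These are two presentations of one argument, and your explicit regrouping is exactly the comparison the paper invokes in the last sentence of its proof.

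There is, however, one concrete error in your final paragraph: you invoke Theorem~\ref{tempcriterion} to control $\mathrm{Re}\,\nu_i$, but the statement under proof makes no temperedness assumption on $(\pi,H)$, so Theorem~\ref{tempcriterion} is unavailable. Fortunately temperedness is not needed. Every $\nu\in\mathcal E_I$ is of the form $(\lambda-k)|_{\mathfrak a_{I^c}}$ with $\lambda$ in the \emph{finite} set $\mathcal E_0$ and $k\in\mathbb Z_{\ge0}^n$, so for $i\in I^c$ one has $\nu_i=\lambda_i-k_i$; hence $|e^{\nu(H_{I^c})}|\le\bigl(\max_{\lambda\in\mathcal E_0}|e^{\lambda(H_{I^c})}|\bigr)\prod_{i\in I^c}e^{-k_i\alpha_i(H_{I^c})}$, and it is the geometric decay in $(k_i)_{i\in I^c}$ that gives uniform convergence of the outer sum once each $\alpha_i(H_{I^c})$ is large, regardless of the sign of $\mathrm{Re}\,\lambda_i$. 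The dependence of $R$ on $C$ then enters through the inner coefficients: for $m=\xi_2\exp(H_I)\xi_1\in C$ the bound on $|c^{P_I}_{\nu,q}(m,v,w)|$ involves the factors $\prod_{i\in I}\alpha_i(H_I)^{l_i}e^{(\lambda-k)(H_I)}$ together with the finitely many $K$-types needed to span $\pi(K_I)v$ and $\pi(K_I)w$, and controlling these uniformly over $H_I$ in the compact set $C_0$ is what forces $R$ to grow with $C$.
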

\begin{proof} For a $\tau$-spherical function $F$ as in the proof of Theorem \ref{asymp1}, the result follows from \cite[Theorem 8.45]{Knapp}. To obtain an expansion independent of $\tau$, it suffices to prove that each $F_{\nu-\rho_{I^{c}_{\lambda}}}$ is independent of $\tau$. \\ 
Let $m\in M_{I}$ and write $m=\xi_{2}a_{I}\xi_{2}$ for some $a_{I}\in \overline{A_{I}^{+}}$, where $A^{+}_{I}$ is the positive Weyl chamber, and some $\xi_{1},\xi_{2}\in K_{I}$. Since 
\begin{equation*}
F_{\nu-\rho_{I^{c}}}(ma,v,w)=F_{\nu-\rho_{I^{c}}}(a_{I}a,\pi(\xi_{1})v,\pi(\xi_{2}^{-1})w),
\end{equation*}
re-labeling things, it suffices to prove that $F_{\nu-\rho_{I^{c}}}(\cdot,v,w)$ is independent of $\tau$ as a function on $\overline{A^{+}_{I}}A_{I^{c}}$. By \cite[ Corollary 8.46]{Knapp}, the functional $\nu\in \mathcal{E_{I}}$ is the restriction of an element in the set of exponents $\mathcal{E}$ in the expansion relative to $P$ and this set is independent of $\tau$ by \cite[Theorem 8.8]{CassMil}. Therefore, it remains to prove that each $c^{P_{\lambda}}_{\nu,q}$ is independent of $\tau$. Since $c^{P_{\lambda}}_{\nu,q}$ is analytic on $M_{I}$, it suffices to prove that $c^{P_{\lambda}}_{\nu,q}(\cdot,v,w)$ as a function on $A_{I}^{+}$ is independent of $\tau$. Given $a_{I}\in A^{+}_{I}$, we can find a compact subset $C$ of $M_{I}$ containing and $a_{I}$ such that $K_{I}CK_{I}=C$, and a positive $R$ depending on $C$, such that for every $H\in \mathfrak{a}_{I^{c}}$ satisfying $\alpha_{i}(H)>\text{log}R$ for every $i\in I^{c}$, the expansion of $\phi_{v,w}(a_{I}a)$ relative to $P$ and the expansion relative to $P_{I}$ are both valid. 
Comparing them as in \cite[p. 251]{Knapp}, it follows that expansion relative to $P_{I}$ is completely determined by the expansion relative to $P$ and the latter is independent of $\tau$ by Theorem \ref{asymp1}.
\end{proof}

For every $\nu\in \mathcal{E}_{I}$, the term 
\begin{equation*}
\alpha(H)^{q}e^{(\nu-\rho_{I^{c}})(H)}c^{P_{I}}_{\nu,q}(m,v,w)
\end{equation*}
is non-zero for some $v,w\in H_{K}$ and some $m\in M$. The set $\mathcal{E}_{I}$ is the set of \textbf{exponents relative to $P_{I}$}.\\

To define the functions of the form $\Gamma_{\lambda,l}$ discussed in the Introduction, the first step consists in associating a standard (for $P$) parabolic subgroup of $G$ to each $\lambda\in \mathcal{E}_{0}$. \\

Let $(\pi,H)$ be an irreducible, tempered, Hilbert representation of $G$ and let $\lambda\in \mathcal{E}_{0}$. We set $I_{\lambda}:=\{i\in \{1,\dots,n\}|\text{Re}\lambda_{i}<0 \}$ which we identify with the subset $\Delta_{\lambda}$ of $\Delta$ defined as $$\Delta_{\lambda}:=\{\alpha_{i}\in \Delta|i\in I_{\lambda}\}.$$ The construction of standard parabolic subgroups from the datum of a subset of $\Delta$ assigns to $I_{\lambda}$ the standard parabolic subgroup $P_{\lambda}$ defined as $$P_{\lambda}:=P_{I_{\lambda}}.$$ It admits a decomposition 
\begin{equation*}
P_{\lambda}=M_{\lambda}A_{\lambda_{0}}N_{\lambda_{0}},
\end{equation*}
where  
\begin{equation*}
A_{\lambda_{0}}:=A_{I^{c}_{\lambda}}.
\end{equation*}
The subgroup $M$ admits a decomposition 
\begin{equation*}
M_{\lambda}=K_{\lambda}A_{\lambda}K_{\lambda},
\end{equation*}
where 
\begin{equation*}
A_{\lambda}:=A_{I_{\lambda}}\quad\text{and }\quad
K_{\lambda}:=K\cap M_{\lambda}.
\end{equation*}
The group $A$ decomposes as $A=A_{\lambda}A_{\lambda_{0}}$. We write $\mathfrak{a}_{\lambda}$ and $\mathfrak{a}_{\lambda_{0}}$ for $\mathfrak{a}_{I_{\lambda}}$ and $\mathfrak{a}_{I^{c}_{\lambda}}$, respectively. Similarly, we write $\rho_{\lambda}$ and $\rho_{\lambda_{0}}$ for $\rho_{I_{\lambda}}$ and $\rho_{I^{c}_{\lambda}}$, respectively.\\

\begin{rem} The theory recalled so far is sufficient to prove that tempered, irreducible, Hilbert representations are unitarisable. From now on, given a tempered, irreducible, Hilbert representation $(\pi,H)$, we will implicitly assume that is unitary and we will refer to it simply as a tempered, irreducible representation.
\end{rem}

\subsection{The functions $\Gamma_{\lambda,l}$}

We are going to introduce an equivalence relation on the data indexing the expansion of $\phi_{v,w}$ relative to $P$. The definition is motivated by the construction of $\textbf{d}(\pi)$ in \cite{KYD}. Let $v,w\in H_{K}$. We have  
\begin{equation*}
\begin{split}
\phi_{v,w}(\text{exp}H)&=e^{-\rho(H)}\sum_{\lambda\in \mathcal{E}_{0}}\text{ }\sum_{l\in \mathbb{Z}^{n}_{\geq 0}:|l|\leq l_{0}}\alpha(H)^{l}e^{\lambda(H)}\Phi^{v,w}_{\lambda,l}(H),
\end{split}
\end{equation*}
where 
\begin{equation*}
\Phi^{v,w}_{\lambda,l}(H):=\sum_{k\in\mathbb{Z}^{n}_{\geq0}}e^{-k(H)}\langle c_{\lambda-k,l}(v_{1}),v_{2}\rangle.
\end{equation*}
The terms in this expansion are indexed by the finite set 
\begin{equation*}\mathcal{C}:=\{(\lambda,l)\}_{\lambda\in \mathcal{E}_{0},\text{ } l\in \mathbb{Z}^{n}_{\geq 0}:|l|\leq l_{0}}.
\end{equation*}
We introduce a relation on $\mathcal{C}$ by declaring that $(\lambda,l)\sim (\mu,m)$ if $I_{\lambda}=I_{\mu}$, $\lambda|_{\mathfrak{a}_{\lambda_{0}}}=\mu|_{\mathfrak{a}_{\lambda_{0}}}$ and $\text{res}_{I^{c}_{\lambda}}l=\text{res}_{I^{c}_{\mu}}m$. To define this relation we have implicitly used the identification of $I_{\lambda}$ with the subset $\Delta_{\lambda}$ of $\Delta$ at the end of the previous subsection.\\

It is clear that $\sim$ is an equivalence relation. We denote $\left[\lambda,l\right]$ the equivalence class containing $(\lambda,l)$.\\

We can therefore re-group the expansion of $\phi_{v,w}$ as follows: 

\begin{equation*}
\begin{split}
\phi_{v,w}(\text{exp}H)&=e^{-\rho(H)}\sum_{\left[\lambda,l\right]\in \mathcal{C}/{\sim}}\alpha(H_{\lambda_{0}})^{l_{\lambda_{0}}}e^{\lambda|_{\mathfrak{a}_{\lambda_{0}}}(H_{\lambda_{0}})}\sum_{(\lambda',l')\in \left[\lambda,l\right]}\alpha(H_{\lambda})^{l'_{\lambda}}e^{\lambda'|_{\mathfrak{a}_{\lambda}}(H_{\lambda})}\Phi^{v,w}_{\lambda',l'}(H),
\end{split}
\end{equation*}
where 
\begin{equation*}
l_{\lambda_{0}}:=\text{res}_{I^{c}_{\lambda}}l,\text{ }\alpha(H_{\lambda_{0}})^{l_{\lambda_{0}}}:=\prod_{i\in I^{c}_{\lambda}}\alpha_{i}(H_{\lambda_{0}})^{l_{i}},\text{ } l'_{\lambda}:=\text{res}_{I_{\lambda}}l',\text{ } \alpha(H_{\lambda})^{l'_{\lambda}}:=\prod_{i\in I_{\lambda}}\alpha(H_{\lambda})^{l'_{i}}
\end{equation*}
and $H=H_{\lambda_{0}}+H_{\lambda}$ corresponds to the decomposition 
\begin{equation*}\mathfrak{a}^{+}=\mathfrak{a}^{+}_{\lambda_{0}}\oplus\mathfrak{a}^{+}_{\lambda}.
\end{equation*}

We are also implicitly using the fact that $\alpha(H)^{l}=\alpha(H_{\lambda})^{l_{\lambda}}\alpha(H_{\lambda_{0}})^{l_{\lambda_{0}}}$ which follows from writing $H$ with respect to the basis dual to $\Delta$.\\

To proceed, we need to isolate certain equivalence classes in $\mathcal{C}/{\sim}$. First, we recall from the Introduction how the quantity $\textbf{d}_{P}(\lambda,l)$, for $(\lambda,l)\in \mathcal{C}$ and $P$ a fixed minimal parabolic subgroup of $G$, and the quantity $\textbf{d}(\pi)$ are defined.\\

For $(\lambda,l)\in \mathcal{C}$, we set 
\begin{equation*}
\textbf{d}_{P}(\lambda,l):=|I^{c}_{\lambda}|+\sum_{i\in I^{c}_{\lambda}}2l_{i}
\end{equation*}
and we observe that this number only depends on the equivalence class of $(\lambda,l)$. Then we take the maximum, $\textbf{d}_{P}$, as $(\lambda,l)$ ranges over $\mathcal{C}$. We can proceed analogously for every standard (for $P$) parabolic subgroup of $P'$ of $G$ to obtain a non-negative integer $\textbf{d}_{P'}$. Then $\textbf{d}(\pi)$ is defined to be the maximum over all $P'$ of the quantities $\textbf{d}_{P'}$.

\begin{definition} Let $\left[\lambda,l\right]\in \mathcal{C}/{\sim}$. We say that $\left[\lambda,l\right]$ is \textbf{relevant} if it satisfies 
\begin{equation*}
\textbf{d}_{P}(\lambda,l)=\textbf{d}(\pi),
\end{equation*}
where $\textbf{d}_{P}(\lambda,l)$ is defined by \eqref{defdp}.   
\end{definition}

Let $\left[\lambda,l\right]\in \mathcal{C}/{\sim}$ be a relevant equivalence class. For $H_{\lambda}\in \mathfrak{a}^{+}_{\lambda}$, we set 
\begin{equation}\label{defgamma}
\Gamma_{\lambda,l}(\text{exp}H_{\lambda},v,w):=e^{-\rho(H)}\sum_{(\lambda',l')\in\left[\lambda,l\right]}\alpha(H_{\lambda})^{l'_{\lambda}}e^{\lambda'|_{\mathfrak{a}_{\lambda_{0}}}(H_{\lambda})}\Phi^{v,w}_{\lambda',l'}(H_{\lambda}).
\end{equation}
Before establishing the properties of $\Gamma_{\lambda,l}$, let us pause to explain the motivation behind the definitions above. The discussion that follows will be used only in Section~\ref{sec:4}. The reader who prefers to do so can skip to Proposition \ref{Gamma} without any loss of continuity.\\

Let $v_{1},v_{2},v_{3},v_{4}\in H_{K}$. We will be considering integrals of the form 
\begin{equation*}
\lim_{r\rightarrow \infty }\frac{1}{r^{\textbf{d}(\pi)}}\int_{\mathfrak{a}_{<r}^{+}}\phi_{v_{1},v_{2}}(\text{exp}H)\overline{\phi_{v_{3},v_{4}}(\text{exp}H)}\prod_{\beta\in\Delta^{+}}(e^{\beta(H)}-e^{-\beta(H)})^{\text{dim}\mathfrak{g}_{\beta}}\,dH,
\end{equation*}
where 
\begin{equation}\label{defregiona}
\mathfrak{a}_{<r}^{+}:=\mathfrak{a}^{+}\cap\{H\in\mathfrak{a}|\beta(H)<r\text{ for all }\beta\in \Delta^{+}\}.
\end{equation}
Treating these is the content of \cite[Appendix A ]{KYD}. We remark that our region of integration is defined as to exclude the subset of $\overline{\mathfrak{a}^{+}}$ where at least one of the simple roots vanishes. It is a set of measure zero.  \\

We want to interpret  \cite[Lemma A.5]{KYD} in group-theoretic terms. \\

Let us consider the matrix coefficients $\phi_{v_{1},v_{2}}$ and $\phi_{v_{3},v_{4}}$. On $A^{+}:=\text{exp}(\mathfrak{a}^{+})$, they can be expanded as 
\begin{equation*}
\phi_{v_{1},v_{2}}(\text{exp}H)=e^{-\rho(H)}\sum_{\left[\lambda,l\right]\in \mathcal{C}/{\sim}}\alpha(H_{\lambda_{0}})^{l_{\lambda_{0}}}e^{\lambda|_{\mathfrak{a}_{\lambda_{0}}}(H_{\lambda_{0}})}\sum_{(\lambda',l')\in \left[\lambda,l\right]}\Psi^{v_{1},v_{2}}_{\lambda',l'}(H)
\end{equation*}
and 
\begin{equation*}
\phi_{v_{3},v_{4}}(\text{exp}H)=e^{-\rho(H)}\sum_{\left[\mu,m\right]\in \mathcal{C}/{\sim}}\alpha(H_{\mu_{0}})^{m_{\mu_{0}}}e^{\mu|_{\mathfrak{a}_{\mu_{0}}}(H_{\mu_{0}})}\sum_{(\mu',m')\in \left[\mu,m\right]}\Psi^{v_{3},v_{4}}_{\mu',m'}(H).
\end{equation*}
where, for $(\lambda',l')\in\left[\lambda,l\right]$, we set 
\begin{equation*}
\Psi^{v_{1},v_{2}}_{\lambda',l'}(H):=\alpha(H_{\lambda})^{l'_{\lambda}}e^{\lambda'|_{\mathfrak{a}_{\lambda}}(H_{\lambda})}\Phi^{v_{1},v_{2}}_{\lambda',l'}(H)
\end{equation*}
and similarly for $(\mu',m')\in\left[\mu,m\right].$\\
Let $\left[\lambda,l\right]\in\mathcal{C}/{\sim}$ and $\left[\mu,m\right]\in\mathcal{C}/{\sim}$ be such that $I_{\lambda}=I_{\mu}$, $\lambda|_{\mathfrak{a}_{\lambda_{0}}}=\mu|_{\mathfrak{a}_{\lambda_{0}}}$ and 
\begin{equation*}
\textbf{d}(\pi)=|I_{\lambda}|+\sum_{i\in I_{\lambda}}(l_{i}+m_{i}).
\end{equation*}
In view of the first condition, the third is equivalent to the requirement 
\begin{equation*}
\textbf{d}_{P}(\lambda,l)=\textbf{d}(\pi)\text{ and }\textbf{d}_{P}(\mu,m)=\textbf{d}(\pi).
\end{equation*}
Consider the summand 
\begin{equation*}
e^{-2\rho(H)}\alpha(H)^{l'+m'}e^{(\lambda'+\overline{\mu'})(H)}\Phi^{v_{1},v_{2}}_{\lambda',l'}\overline{\Phi^{v_{3},v_{4}}_{\mu',m'}}(H)
\end{equation*}
in the expansion of the product $\phi_{v_{1},v_{2}}\overline{\phi_{v_{3},v_{4}}}$ on $A^{+}$.\\
Taking into account the factor $e^{-2\rho(H)}$ and the fact that the term 
\begin{equation}\label{Omega}
\Omega(H):=\prod_{\beta\in \Delta^{+}}(e^{\beta(H)}-e^{-\beta(H)})^{\text{dim}g_{\beta}}
\end{equation}
is incorporated in the function $\phi$ in \cite[Lemma A.5]{KYD} (compare Section 4.7 in \textit{loc. cit.}), this lemma shows that, as $r\rightarrow \infty$, the integral 
\begin{equation*}
\frac{1}{r^{\textbf{d}(\pi)}}\int_{\mathfrak{a}^{+}_{<r}}e^{-2\rho(H)}\alpha(H)^{l'+m'}e^{(\lambda'+\overline{\mu'})(H)}\Phi^{v_{1},v_{2}}_{\lambda',l'}\overline{\Phi^{v_{3},v_{4}}_{\mu',m'}}(H)\Omega(H)\,dH
\end{equation*}
tends to 
\begin{equation*}
C(\lambda,l,m)\int_{\mathfrak{a}^{+}_{\lambda}}e^{-2\rho_{\lambda}(H_{\lambda})}\left[\Psi^{v_{1},v_{2}}_{\lambda',l'}\overline{\Psi^{v_{3},v_{4}}_{\mu',m'}}\right]|_{\mathfrak{a}_{\lambda}}(H_{\lambda})\Omega_{\lambda}(H_{\lambda})\,
dH_{\lambda},
\end{equation*}
where 
\begin{equation}\label{defomegalambda} \Omega_{\lambda}(H_{\lambda}):=\prod_{\beta\in \Delta^{+}_{\lambda}}(e^{\beta(H_{\lambda})}-e^{-\beta(H_{\lambda})})^{\text{dim}g_{\beta}},
\end{equation}
with
\begin{equation*}
\Delta^{+}_{\lambda}:=\{\beta\in \Delta^{+}|\beta|_{\mathfrak{a}_{\lambda_{0}}}=0\},
\end{equation*}
and the quantity $C(\lambda,l,m)$ is given by 
\begin{equation}\label{defC}
C(\lambda,l,m):=\int_{\{H\in \mathfrak{a}_{{\lambda}_{0}}|\text{ext}^{I^{c}_{\lambda}}(H)\in \mathfrak{a}^{+}_{<1}\}}\alpha(H_{\lambda_{0}})^{l_{\lambda_{0}}+m_{\mu_{0}}}\,dH_{{\lambda_{0}}}.
\end{equation}
Now, summing over all $(\lambda',l')\in \left[\lambda,l\right]$ and over all $(\mu',m')\in\left[\mu,m\right]$, we obtain that the integral over $\mathfrak{a}^{+}_{<r}$ of 
\begin{equation*}
e^{-2\rho(H)}\sum_{(\lambda',l')\in\left[\lambda,l\right]}\text{ }\sum_{(\mu',m')\in\left[\mu,m\right]}\alpha(H)^{l'+m'}e^{(\lambda'+\overline{\mu'})(H)}\Phi^{v_{1},v_{2}}_{\lambda',l'}\overline{\Phi_{\mu',m}^{v_{3},v_{4}}}(H)\Omega(H),
\end{equation*}
upon multiplying by $\frac{1}{r^{\textbf{d}(\pi)}}$ and letting $r\rightarrow \infty$, equals 
\begin{equation*}
C(\lambda,l,m)\int_{\mathfrak{a}^{+}_{\lambda}}e^{-2\rho_{\lambda}(H_{\lambda})}\sum_{(\lambda',l')\in\left[\lambda,l\right]}\text{ }\sum_{(\mu',m')\in\left[\mu,m\right]}\left[\Psi^{v_{1},v_{2}}_{\lambda',l'}\overline{\Psi^{v_{3},v_{4}}_{\mu',m'}}\right]|_{\mathfrak{a}_{\lambda}}(H_{\lambda})\Omega_{\lambda}(H_{\lambda})\,dH_{\lambda}.
\end{equation*}
Finally, since 
\begin{equation*}
\Phi^{v_{1},v_{2}}_{\lambda',l'}|_{\mathfrak{a}_{\lambda}}(H_{\lambda})=\sum_{k\in \mathbb{Z}_{\geq 0}^{I_{\lambda}}}e^{-k(H_{\mathfrak{\lambda}})}\langle c_{\lambda'-k,l'}(v_{1}),v_{2}\rangle,
\end{equation*}
and similarly for $\Phi_{\mu',m,}^{v_{3},v_{4}}$, the integral above equals 
\begin{equation}\label{asympint1}
C(\lambda,l,m)\int_{\mathfrak{a}^{+}_{\lambda}}\Gamma_{\lambda,l}(\text{exp}H_{\lambda},v,w)\overline{\Gamma_{\mu,m}(\text{exp}H_{\lambda},v,w)}\Omega_{\lambda}(H_{\lambda})\,dH_{\lambda}.
\end{equation}
If $\left[\lambda,l\right], \left[\mu,m\right]\in \mathcal{C}/{\sim}$ fail to satisfy any of the three conditions $I_{\lambda}=I_{\mu}$, $\lambda|_{\mathfrak{a}_{\lambda_{0}}}=\mu|_{\mathfrak{a}_{\lambda}}$ and 
\begin{equation*}
\textbf{d}_{P}(\lambda,l)=\textbf{d}(\pi)=\textbf{d}_{P}(\mu,m),
\end{equation*}
then, for every $(\lambda',l')\in\left[\lambda,l\right]$ and for every $(\mu',m')\in \left[\mu,m\right]$, by the considerations in the proof of Claim A.6 and Lemma A.5 in \cite{KYD}, the integral 
\begin{equation*}
\frac{1}{r^{\textbf{d}(\pi)}}\int_{A^{+}_{<r}}e^{-2\rho(H)}\alpha(H)^{l'+m'}e^{(\lambda'+\overline{\mu'})(H)}\Phi^{v_{1},v_{2}}_{\lambda',l'}\overline{\Phi^{v_{3},v_{4}}_{\mu',m'}}(H)\Omega(H)\,dH
\end{equation*}
vanishes as $r\rightarrow \infty$. \\

Therefore, the relevant equivalence classes $\left[\lambda,l\right]\in \mathcal{C}/{\sim}$, those for which the functions of the form $\Gamma_{\lambda,l}$ are defined, are precisely the ones that may contribute a non-zero term to the expression 
\begin{equation*}
\lim_{r\rightarrow \infty }\frac{1}{r^{\textbf{d}(\pi)}}\int_{\mathfrak{a}^{+}_{<r}}\phi_{v_{1},v_{2}}(\text{exp}H)\overline{\phi_{v_{3},v_{4}}(\text{exp}H)}\Omega(H)\,dH.
\end{equation*}
Throughout the rest of this section, we fix a tempered, irreducible representation of a connected, semisimple Lie group $G$ with finite centre. \\

\subsection{Some properties of the functions $\Gamma_{\lambda,l}$}

To study the properties of $\Gamma_{\lambda,l}$, we begin by showing that it is equal to a function of the form $c_{\nu,q}^{P_{\lambda}}$. More precisely, we have:

\begin{prop}\label{Gamma} Let $v,w\in H_{K}$. Let $\left[\lambda,l\right]\in \mathcal{C}/{\sim}$ be a relevant equivalence class.
Set $\nu:=\lambda|_{\mathfrak{a}_{\lambda_{0}}}$
and $q:=l_{\lambda_{0}}$. Then, for every $H_{\lambda}\in \mathfrak{a}^{+}_{\lambda}$, we have 
\begin{equation*}
\Gamma_{\lambda,l}(\text{exp}H_{\lambda},v,w)=c^{P_{\lambda}}_{\nu,q}(\text{exp}H_{\lambda},v,w).
\end{equation*}
\end{prop}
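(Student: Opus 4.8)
The plan is to derive the identity by comparing, on a region where both are valid, the two asymptotic expansions of $\phi_{v,w}$ available here: the expansion relative to the minimal parabolic $P$ (Theorem~\ref{asymp1}), in its regrouped form \eqref{eqn:expansion}, and the expansion relative to the standard parabolic $P_{\lambda}=P_{I_{\lambda}}$ (Theorem~\ref{asymp2}), and then to extract one coefficient function from each. Fix $H_{\lambda}\in\mathfrak{a}^{+}_{\lambda}$, let $C\subset M_{\lambda}$ be a compact $K_{\lambda}$-bi-invariant set containing $\exp H_{\lambda}$, and let $R=R(C)$ be as in Theorem~\ref{asymp2}. For every $H_{\lambda_{0}}\in\mathfrak{a}^{+}_{\lambda_{0}}$ with $\alpha_{i}(H_{\lambda_{0}})>\log R$ for all $i\in I^{c}_{\lambda}$, the element $g=\exp(H_{\lambda}+H_{\lambda_{0}})=(\exp H_{\lambda})(\exp H_{\lambda_{0}})$ lies in $\exp(\mathfrak{a}^{+})$, so $\phi_{v,w}(g)$ is computed both by \eqref{eqn:expansion} (valid on all of $\exp(\mathfrak{a}^{+})$) and by the $P_{\lambda}$-expansion of Theorem~\ref{asymp2} applied with $m=\exp H_{\lambda}$.

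I would then regard both expressions as convergent generalised exponential--polynomial series in the variable $H_{\lambda_{0}}\in\mathfrak{a}^{+}_{\lambda_{0}}$, the coefficients being functions of $H_{\lambda}$ and of $v,w$, and match them term by term using the uniqueness of such expansions (the term-separation argument of \cite{Knapp}, p.~251, along rays tending to infinity in $\mathfrak{a}^{+}_{\lambda_{0}}$; this uses the uniform and absolute convergence in Theorem~\ref{asymp1}). On the $P_{\lambda}$-side, after pulling out $e^{-\rho_{\lambda_{0}}(H_{\lambda_{0}})}$, the coefficient of $\alpha(H_{\lambda_{0}})^{q}e^{\nu(H_{\lambda_{0}})}$ is by definition $c^{P_{\lambda}}_{\nu,q}(\exp H_{\lambda},v,w)$. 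On the $P$-side I would factor $e^{-\rho(H)}=e^{-\rho(H_{\lambda})}e^{-\rho_{\lambda_{0}}(H_{\lambda_{0}})}$ and $\alpha(H)^{l'}=\alpha(H_{\lambda})^{\text{res}_{I_{\lambda}}l'}\alpha(H_{\lambda_{0}})^{\text{res}_{I^{c}_{\lambda}}l'}$ (using $\rho=\rho_{\lambda}+\rho_{\lambda_{0}}$, $\rho_{\lambda}|_{\mathfrak{a}_{\lambda_{0}}}=0$, and $\alpha_{i}|_{\mathfrak{a}_{\lambda_{0}}}=0$ for $i\in I_{\lambda}$), expand each $\Phi^{v,w}_{\lambda',l'}$, and collect terms by their $\mathfrak{a}_{\lambda_{0}}$-exponential and their $H_{\lambda_{0}}$-polynomial degree.

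The heart of the argument is to show that, taking $\nu:=\lambda|_{\mathfrak{a}_{\lambda_{0}}}$ and $q:=l_{\lambda_{0}}$, the resulting $P$-side coefficient is precisely the expression \eqref{defgamma} defining $\Gamma_{\lambda,l}(\exp H_{\lambda},v,w)$ — that is, only data $(\lambda',l',k)$ with $(\lambda',l')\in[\lambda,l]$ and with the $I^{c}_{\lambda}$-part of $k$ equal to $0$ survive. First, if $\lambda'\in\mathcal{E}_{0}$ contributes, the $\mathfrak{a}_{\lambda_{0}}$-exponent it produces is $\lambda'|_{\mathfrak{a}_{\lambda_{0}}}-\text{res}_{I^{c}_{\lambda}}k$; setting this equal to $\nu$ forces $\text{Re}\,\lambda'_{i}=(\text{res}_{I^{c}_{\lambda}}k)_{i}\ge 0$ for every $i\in I^{c}_{\lambda}$ (here $\text{Re}\,\lambda_{i}=0$ because $i\notin I_{\lambda}$ and $\text{Re}\,\lambda_{i}\le 0$ by Theorem~\ref{tempcriterion}); but Theorem~\ref{tempcriterion} also gives $\text{Re}\,\lambda'_{i}\le 0$, so $\text{res}_{I^{c}_{\lambda}}k=0$ and $\lambda'|_{\mathfrak{a}_{\lambda_{0}}}=\nu$. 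In particular $\text{Re}\,\lambda'_{i}=0$ on $I^{c}_{\lambda}$, whence $I_{\lambda'}\subseteq I_{\lambda}$; and if the inclusion were strict then, using the degree matching $\text{res}_{I^{c}_{\lambda}}l'=q=\text{res}_{I^{c}_{\lambda}}l$ and $l'_{i}\ge 0$,
\[
\textbf{d}_{P}(\lambda',l')=|I^{c}_{\lambda'}|+\sum_{i\in I^{c}_{\lambda'}}2l'_{i}\ \ge\ \bigl(|I^{c}_{\lambda}|+1\bigr)+\sum_{i\in I^{c}_{\lambda}}2l_{i}\ =\ \textbf{d}_{P}(\lambda,l)+1\ =\ \textbf{d}(\pi)+1,
\]
contradicting $\textbf{d}_{P}(\lambda',l')\le\textbf{d}_{P}\le\textbf{d}(\pi)$. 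Hence $I_{\lambda'}=I_{\lambda}$, and together with $\lambda'|_{\mathfrak{a}_{\lambda_{0}}}=\nu$ and $\text{res}_{I^{c}_{\lambda}}l'=\text{res}_{I^{c}_{\lambda}}l$ this says exactly $(\lambda',l')\in[\lambda,l]$. Gathering the surviving terms reproduces \eqref{defgamma}, so $\Gamma_{\lambda,l}(\exp H_{\lambda},v,w)=c^{P_{\lambda}}_{\nu,q}(\exp H_{\lambda},v,w)$, and since $H_{\lambda}\in\mathfrak{a}^{+}_{\lambda}$ was arbitrary the proposition follows. (This is the whole point of the equivalence relation $\sim$ on $\mathcal{C}$ together with the hypothesis $\textbf{d}_{P}(\lambda,l)=\textbf{d}(\pi)$: they are designed so that $[\lambda,l]$ is exactly the set of data carrying the same $\mathfrak{a}_{\lambda_{0}}$-contribution.)

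I expect the main obstacle to be this last step — the bookkeeping of which triples $(\lambda',l',k)$ feed a prescribed $\mathfrak{a}_{\lambda_{0}}$-exponential--polynomial term once the $P$-expansion is re-expanded around $\mathfrak{a}_{\lambda_{0}}$, and in particular ruling out any leakage from classes $[\mu,m]$ with $I_{\mu}\subsetneq I_{\lambda}$; it is here that temperedness (Theorem~\ref{tempcriterion}) and the maximality built into $\textbf{d}(\pi)$ must be used in tandem. A secondary technical point, which I would settle first, is the legitimacy of separating the two convergent series coefficient by coefficient, for which the uniform/absolute convergence of Theorem~\ref{asymp1} and a ray-limit argument in $\mathfrak{a}^{+}_{\lambda_{0}}$ suffice.
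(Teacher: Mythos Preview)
Your proposal is correct and follows essentially the same route as the paper: both compare the $P$- and $P_{\lambda}$-expansions on their common domain of validity (via \cite{Knapp}, p.~251), and both use temperedness (Theorem~\ref{tempcriterion}) together with the maximality of $\textbf{d}(\pi)$ to show that the set of $\lambda'\in\mathcal{E}_{0}$ with $\lambda'|_{\mathfrak{a}_{\lambda_{0}}}=\nu$ coincides with those having $I_{\lambda'}=I_{\lambda}$. Your version unpacks the $k$-bookkeeping (forcing $\text{res}_{I^{c}_{\lambda}}k=0$) a bit more explicitly than the paper, but the argument is the same.
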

\begin{proof} For every $H_{\lambda}\in\mathfrak{a}^{+}_{\lambda}$, we can find a compact subset $C$ of $M_{\lambda}$ such that $K_{\lambda}CK_{\lambda}=C$ and which contains $H_{\lambda}$, and a positive real $R>0$ such that if $H_{\lambda_{0}}\in \mathfrak{a}^{+}_{\lambda_{0}}$ satisfies $\alpha_{i}(H_{\lambda_{0}})>\text{log}R$ for every $i \in I^{c}_{\lambda}$, then the expansion of $\phi_{v,w}$ with respect to $P$ and the expansion with respect to $P_{\lambda}$ are both valid at $H=H_{\lambda}+H_{\lambda_{0}}$. Comparing them as in \cite[p. 251]{Knapp}, we see that 
\begin{equation*}
c^{P_{\lambda}}_{\nu,q}(\text{exp}H_{\lambda},v,w)=\sum_{\lambda'\in\mathcal{E}_{0}: \lambda'|_{\mathfrak{a}_{\lambda_{0}}}=\nu }\text{ }\sum_{l': |l'|\leq l_{0}\text{ and }l'_{\lambda_{0}}=q}e^{-\rho_{\lambda}(H_{\lambda})}\Psi_{\lambda',l'}^{v,w}(H_{\lambda}).
\end{equation*}
Since, by definition of $\Gamma_{\lambda,l}(\cdot,v,w)$, we have 
\begin{equation*}
\Gamma_{\lambda,l}(\text{exp}H_{\lambda},v,w)=e^{-\rho(H_{\lambda})}\sum_{(\lambda',l')\in \left[\lambda,l\right]}\Psi_{\lambda',l'}^{v,w}(H_{\lambda}),
\end{equation*}
recalling the definition of the equivalence relation that we imposed on $\mathcal{C}$, we only need to show that the set 
\begin{equation*}
\{\lambda'\in\mathcal{E}_{0}|\lambda'|_{\mathfrak{a}_{\lambda_{0}}}=\nu\}
\end{equation*}
is equal to the set 
\begin{equation*}
\{\lambda\in \mathcal{E}_{0}|I_{\lambda'}=I_{\lambda}\text{ and }\lambda'|_{\mathfrak{a}_{\lambda_{0}}}=\lambda|_{\mathfrak{a}_{\lambda_{0}}}\}.
\end{equation*}
Because of the assumption on $\left[\lambda,l\right]$, for every $\lambda'\in \mathcal{E}_{0}$ such that $\lambda'|_{\mathfrak{a}_{\lambda_{0}}}=\nu$, we have $\text{Re}\lambda'_{j}\neq 0$ for every $j\in I_{\lambda}$. Indeed, if there existed a $j\in I_{\lambda}$ for which $\text{Re}\lambda'_{j}=0$, we would have 
\begin{equation*}
|I^{c}_{\lambda'}|\geq1+|I^{c}_{\lambda}|
\end{equation*}
and, since $l'_{\lambda_{0}}=l_{\lambda_{0}}$, this would imply 
\begin{equation*}
\textbf{d}_{P}(\lambda',l')>|I^{c}_{\lambda}|+\sum_{i\in I^{c}_{\lambda'}}2l'_{i}\geq \textbf{d}_{P}(\lambda,l)=\textbf{d}(\pi),
\end{equation*}
contradicting the maximality of $\textbf{d}(\pi)$. Since, by Theorem \ref{tempcriterion}, we have $\text{Re}\lambda_{i}'\leq 0$ for every $i\in \{1,\dots,n\}$, this concludes the proof.  
\end{proof}

Theorem 8.45 in \cite{Knapp} and the discussion at the beginning of p. 251 in loc. cit. now show that $\Gamma_{\lambda,l}(\cdot,v,w)$, being equal to $c_{\nu,q}^{P_{\lambda}}$, extends to an analytic function on $M_{\lambda}$, which we denote again $\Gamma_{\lambda,l}(\cdot,v,w)$. If we decompose $M_{\lambda}$ as 
\begin{equation*}
M_{\lambda}=K_{\lambda}\text{exp}(\overline{\mathfrak{a}^{+}_{\lambda}})K_{\lambda},
\end{equation*}
and if we write $m\in M_{\lambda}$ as $m=\xi_{2}\text{exp}H_{\lambda}\xi_{1}$ for some $\xi_{1},\xi_{2}\in K_{\lambda}$ and some $H_{\lambda}\in\overline{\mathfrak{a}^{+}_{\lambda}}$, then we have 
\begin{equation*}
\Gamma_{\lambda,l}(m,v,w)=\Gamma_{\lambda,l}(\text{exp}H_{\lambda},\pi(\xi_{1})v,\pi(\xi_{2})^{-1}w)
\end{equation*}
because $c_{\nu,q}^{P_{\lambda}}(\cdot,v,w)$ exhibits the same behaviour.\\

We want to prove that $\Gamma_{\lambda,l}(\cdot,v,w)$ belongs to $L^{2}(M_{\lambda})$ and it is $Z(\mathfrak{m_{\lambda}}_{\mathbb{C}})$-finite. An application of Theorem \ref{smoothvector} will imply that $\Gamma_{\lambda,l}(\cdot,v,w)$ is a smooth vector in $L^{2}(M_{\lambda})$. Similar ideas appear in \cite[Chapter VIII]{Knapp}, and in \cite{LanglandsClass}.  \\

\begin{prop}\label{squareint} Let $v,w\in H_{K}$. Let $\left[\lambda,l\right]\in \mathcal{C}/{\sim}$ be a relevant equivalence class. Then $\Gamma_{\lambda,l}(\cdot,v,w)$ belongs to $L^{2}(M_{\lambda})$. 
\end{prop}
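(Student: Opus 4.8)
The plan is to prove that $\Gamma_{\lambda,l}(\cdot,v,w)$, as a function on $M_\lambda$, is square-integrable with respect to a Haar measure on $M_\lambda$. By Proposition \ref{Gamma} we know $\Gamma_{\lambda,l}(\cdot,v,w) = c^{P_\lambda}_{\nu,q}(\cdot,v,w)$ with $\nu = \lambda|_{\mathfrak{a}_{\lambda_0}}$ and $q = l_{\lambda_0}$, and this is a real-analytic function on $M_\lambda$. The strategy is to control its size via the integral formula in the $K_\lambda A_\lambda K_\lambda$-decomposition of $M_\lambda$: since $M_\lambda$ has $M A_\lambda N_\lambda$ as a minimal parabolic and $K_\lambda = K \cap M_\lambda$ as maximal compact, the Haar measure on $M_\lambda$ decomposes (up to constants) as $\int_{K_\lambda}\int_{\overline{\mathfrak{a}_\lambda^+}}\int_{K_\lambda} f(\xi_2 \exp H_\lambda \xi_1)\,\Omega_\lambda(H_\lambda)\,d\xi_1\,dH_\lambda\,d\xi_2$, where $\Omega_\lambda$ is the Jacobian factor defined in \eqref{defomegalambda}. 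Using the $K_\lambda$-equivariance already recorded for $\Gamma_{\lambda,l}$ (it transforms like $c^{P_\lambda}_{\nu,q}$), square-integrability on $M_\lambda$ reduces, since $H_K$ has finite-dimensional $K$-span containing $v$ and $w$ and the $K_\lambda$-action is by a finite-dimensional unitary representation, to showing the finiteness of
\begin{equation*}
\int_{\overline{\mathfrak{a}_\lambda^+}} |\Gamma_{\lambda,l}(\exp H_\lambda, v', w')|^2\,\Omega_\lambda(H_\lambda)\,dH_\lambda
\end{equation*}
for $v',w'$ ranging over finite spanning sets.

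**Next I would** estimate $|\Gamma_{\lambda,l}(\exp H_\lambda,v,w)|^2 \Omega_\lambda(H_\lambda)$ for large $H_\lambda \in \mathfrak{a}_\lambda^+$. Recall from \eqref{defgamma} that
\begin{equation*}
\Gamma_{\lambda,l}(\exp H_\lambda,v,w) = e^{-\rho(H_\lambda)}\sum_{(\lambda',l')\in[\lambda,l]}\alpha(H_\lambda)^{l'_\lambda}e^{\lambda'|_{\mathfrak{a}_\lambda}(H_\lambda)}\Phi^{v,w}_{\lambda',l'}(H_\lambda),
\end{equation*}
and $\Phi^{v,w}_{\lambda',l'}(H_\lambda) = \sum_{k\in\mathbb{Z}_{\geq 0}^{I_\lambda}}e^{-k(H_\lambda)}\langle c_{\lambda'-k,l'}(v),w\rangle$ is bounded on $\mathfrak{a}_\lambda^+$ by uniform convergence of the expansion (Theorem \ref{asymp1}). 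Thus $|\Gamma_{\lambda,l}(\exp H_\lambda,v,w)| \ll e^{-\rho(H_\lambda)}(1+|H_\lambda|)^{|l|}\,e^{\max_{(\lambda',l')\in[\lambda,l]}\operatorname{Re}\lambda'|_{\mathfrak{a}_\lambda}(H_\lambda)}$ for $H_\lambda$ in a fixed-distance cone inside $\mathfrak{a}_\lambda^+$. On the other hand $\Omega_\lambda(H_\lambda) = \prod_{\beta\in\Delta_\lambda^+}(e^{\beta(H_\lambda)} - e^{-\beta(H_\lambda)})^{\dim\mathfrak{g}_\beta} \ll e^{2\rho_\lambda(H_\lambda)}$. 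Since $\rho = \rho_\lambda + \rho_{\lambda_0}$ and $\rho_{\lambda_0}$ vanishes on $\mathfrak{a}_\lambda$, we have $e^{-2\rho(H_\lambda)}\Omega_\lambda(H_\lambda) \ll 1$, so the whole integrand is bounded by a constant times $(1+|H_\lambda|)^{2|l|}e^{2\operatorname{Re}(\lambda'|_{\mathfrak{a}_\lambda})(H_\lambda)}$. The key point is then: for each $(\lambda',l')\in[\lambda,l]$, one has $\operatorname{Re}\lambda'_i < 0$ for every $i\in I_\lambda$. This is exactly the statement extracted in the proof of Proposition \ref{Gamma} from the maximality of $\mathbf{d}(\pi)$ together with Casselman's temperedness criterion (Theorem \ref{tempcriterion}): if some $\operatorname{Re}\lambda'_j = 0$ for $j \in I_\lambda = I_{\lambda'}$ (wait — note $I_{\lambda'} = I_\lambda$ for $(\lambda',l')\in[\lambda,l]$ since the equivalence relation fixes $I$), we contradict $\mathbf{d}_P(\lambda,l) = \mathbf{d}(\pi)$. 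Hence $\operatorname{Re}(\lambda'|_{\mathfrak{a}_\lambda})$ is a strictly negative linear functional on the cone $\mathfrak{a}_\lambda^+$, and $(1+|H_\lambda|)^{2|l|}e^{2\operatorname{Re}(\lambda'|_{\mathfrak{a}_\lambda})(H_\lambda)}$ is integrable over $\overline{\mathfrak{a}_\lambda^+}$. This gives the finiteness.

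**The subtlety / main obstacle** lies in making the integral-formula reduction fully rigorous near the walls of $\mathfrak{a}_\lambda^+$: the asymptotic expansion of Theorem \ref{asymp1} is only valid (and the identification with $c^{P_\lambda}_{\nu,q}$ via Theorem \ref{asymp2} only applies) deep in the positive chamber, i.e. for $\alpha_i(H_\lambda)$ large for $i \in I_\lambda$, whereas $L^2(M_\lambda)$ integrability also requires control on the bounded part of $M_\lambda$. But this is handled by splitting $M_\lambda = (K_\lambda \cdot C_0 \cdot K_\lambda) \cup (K_\lambda \cdot (\overline{\mathfrak{a}_\lambda^+}\setminus C_0)\cdot K_\lambda)$ where $C_0$ is a large compact set: on the compact piece, $\Gamma_{\lambda,l}(\cdot,v,w)$ is continuous (being real-analytic on $M_\lambda$) hence bounded, and $M_\lambda$ has finite Haar measure restricted there; on the complement, the estimate above applies. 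One must also invoke the $K_\lambda$-equivariance to reduce the $K_\lambda \times K_\lambda$ integration to a finite sum over a basis of the (finite-dimensional) span of $\pi(K)v$ and $\pi(K)w$ — here the boundedness of the $K_\lambda$-representation matrix coefficients and compactness of $K_\lambda$ suffice. Assembling these pieces yields $\Gamma_{\lambda,l}(\cdot,v,w) \in L^2(M_\lambda)$.
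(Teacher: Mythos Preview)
Your overall architecture matches the paper's: reduce to $\overline{A_\lambda^+}$ via the $K_\lambda A_\lambda K_\lambda$ integral formula and the $K_\lambda$-equivariance of $\Gamma_{\lambda,l}$, and use the key input from the proof of Proposition~\ref{Gamma} that every leading exponent $\lambda'$ with $(\lambda',l')\in[\lambda,l]$ satisfies $\operatorname{Re}\lambda'_i<0$ for all $i\in I_\lambda$. That is exactly what the paper does.

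However, there is a genuine gap in your treatment of the $A_\lambda^+$ integral. Your exponential-decay estimate on $|\Gamma_{\lambda,l}(\exp H_\lambda,v,w)|$ is derived from the series \eqref{defgamma}, and you yourself note that it is only justified ``for $H_\lambda$ in a fixed-distance cone inside $\mathfrak{a}_\lambda^+$''. Your compact/non-compact split does \emph{not} repair this when $|I_\lambda|\ge 2$: the set $\overline{\mathfrak{a}_\lambda^+}\setminus C_0$ contains points with $|H_\lambda|$ arbitrarily large but with some $\alpha_j(H_\lambda)$ ($j\in I_\lambda$) bounded, i.e.\ points near a wall at infinity, and such points lie in no fixed-distance sub-cone. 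On those regions the series underlying $\Phi^{v,w}_{\lambda',l'}(H_\lambda)$ is not known to converge (indeed $\mathfrak{a}_\lambda^+$ sits on the boundary of $\mathfrak{a}^+$, where Theorem~\ref{asymp1} gives no uniform control), so your bound is unavailable there. Continuity handles only the genuinely compact piece, not these unbounded wall neighbourhoods.

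The paper closes this gap not by a direct series estimate but by invoking analytical machinery: since $\Gamma_{\lambda,l}(\cdot,v,w)=c^{P_\lambda}_{\nu,q}(\cdot,v,w)$ is analytic on $\overline{A_\lambda^+}$ and all exponents appearing along $A_\lambda^+$ have strictly negative real part in every $I_\lambda$-coordinate, Harish-Chandra's estimate (\cite{Harish-ChandraSupp}, Theorem~4) combined with the argument of \cite{CassMil}, Theorem~7.5, yields the required uniform bound on all of $\overline{A_\lambda^+}$, including the wall-at-infinity regions. To complete your argument you would need either to quote these results or to set up an induction on standard parabolics of $M_\lambda$, which is essentially what that machinery encapsulates.
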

\begin{proof} We argue as in the proof of  \cite[Lemma 4.10]{LanglandsClass}. By the proof of Proposition~\ref{Gamma}, we have $\text{Re}\lambda_{i}'<0$ for every $\lambda'$ appearing in the expansion of $\Gamma_{\lambda,l}(\cdot,v,w)$ on $A_{\lambda}^{+}$ and for every $i\in I_{\lambda}$. Since $\Gamma_{\lambda,l}(\cdot,v,w)$ is analytic on $\overline{A_{\lambda}^{+}}$, we can apply \cite[Theorem 4]{Harish-ChandraSupp} and then argue as in  \cite[Theorem 7.5]{CassMil} to establish the desired square-integrability on $\overline{A_{\lambda}^{+}}$. The square-integrability on $M_{\lambda}$ follows from combining the decomposition of $M_{\lambda}$ as $M_{\lambda}=K_{\lambda}\overline{A^{+}_{\lambda}}K_{\lambda}$, the corresponding integral formula and the fact that if $m=\xi_{2}\text{exp}H_{\lambda}\xi_{2}$, for some $H_{\lambda}\in\overline{\mathfrak{a}^{+}_{\lambda}}$ and some $\xi_{1},\xi_{2}\in K_{\lambda}$, then 
\begin{equation*}
\Gamma_{\lambda,l}(m,v,w)=\Gamma_{\lambda,l}(\text{exp}H_{\lambda},\pi(\xi_{1})v,\pi(\xi_{2})^{-1}w).
\end{equation*}
\end{proof}

We recall that there exists an injective algebra homomorphism 
\begin{equation*}
\mu_{P_{\lambda}}\colon \mathrm{Z}(\mathfrak{g}_{\mathbb{C}})\longrightarrow Z((\mathfrak{m}_{\lambda}\oplus\mathfrak{a}_{\lambda_{0}})_{\mathbb{C}})\cong Z(\mathfrak{m}_{\lambda\mathbb{C}})\otimes U(\mathfrak{a}_{\lambda_{0}\mathbb{C}})
\end{equation*}
which turns   $Z(\mathfrak{m}_{\lambda\mathbb{C}})\otimes U(\mathfrak{a}_{\lambda_{0}\mathbb{C}})$ into a free module of finite rank over $\mu_{P_{\lambda}}(\mathrm{Z}(\mathfrak{g}_{\mathbb{C}}))$ by \cite[Lemma 21]{Harish-ChandraInv}.\\ 

\begin{prop}\label{GammaGar} Let $v,w\in H_{K}$. Let $\left[\lambda,l\right]\in \mathcal{C}/{\sim}$ be a relevant equivalence class. Then, for every $X\in U(\mathfrak{m}_{\lambda\mathbb{C}})$ and for every $m\in M_{\lambda}$, we have 
\begin{equation*}
X\Gamma_{\lambda,l}(m,v,w)=\Gamma_{\lambda,l}(m,\dot{\pi}(X)v,w).
\end{equation*}
Moreover, the function $\Gamma_{\lambda,l}(\cdot,v,w)$ is a smooth vector in the right-regular representation $(R,L^{2}(M_{\lambda}))$ of $M_{\lambda}$. 
\end{prop}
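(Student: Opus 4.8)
The plan is to first establish the intertwining identity $X\Gamma_{\lambda,l}(m,v,w) = \Gamma_{\lambda,l}(m,\dot\pi(X)v,w)$ for $X\in U(\mathfrak{m}_{\lambda\mathbb{C}})$, and then to feed this into Theorem \ref{smoothvector}. For the first part, since both sides are real-analytic functions on $M_\lambda$ (by Proposition \ref{Gamma} together with the analyticity of $c^{P_\lambda}_{\nu,q}$) and since $U(\mathfrak{m}_{\lambda\mathbb{C}})$ is generated by $\mathfrak{m}_\lambda$, it suffices to treat the case $X\in\mathfrak{m}_\lambda$ and then iterate. I would use that $M_\lambda = K_\lambda\overline{A^+_\lambda}K_\lambda$ and the $K_\lambda$-equivariance $\Gamma_{\lambda,l}(\xi_2 m\xi_1,v,w) = \Gamma_{\lambda,l}(m,\pi(\xi_1)v,\pi(\xi_2)^{-1}w)$ to reduce to differentiating along $\exp(\mathfrak{a}_\lambda)$-directions (spanned by the $H_i$, $i\in I_\lambda$) and along $\mathfrak{k}_\lambda$-directions; the latter is handled by the $K_\lambda$-equivariance itself. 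For the $\mathfrak{a}_\lambda$-directions, one differentiates the explicit series \eqref{defgamma} term by term (legitimate by the uniform convergence in Theorem \ref{asymp1}, which survives restriction and differentiation in the interior of the cone), using that $c_{\lambda'-k,l'}\colon H_K\to H_K$ is $\mathbb{C}$-linear and that differentiating $\phi_{v,w}$ along $H\in\mathfrak{a}$ produces $\phi_{\dot\pi(H)v,w}$; matching coefficients of the asymptotic expansion (which is unique) gives $c_{\lambda'-k,l'}(\dot\pi(H)v) $ in place of $c_{\lambda'-k,l'}(v)$, i.e. exactly $\Gamma_{\lambda,l}(\cdot,\dot\pi(H)v,w)$. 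Alternatively, and more cleanly, one observes that $c^{P_\lambda}_{\nu,q}(m,v,w)$ is defined as a coefficient in the asymptotic expansion of $\phi_{v,w}$ relative to $P_\lambda$, and the uniqueness of that expansion together with $\phi_{Xv,w} = $ (the result of applying $X$, acting on the $M_\lambda$-variable, to $\phi_{v,w}$ along $M_\lambda$) forces the coefficient maps to intertwine; this is the substance of the comparison argument on p.~251 of \cite{Knapp}.

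Granting the intertwining identity, the second assertion follows quickly. By Proposition \ref{squareint}, $\Gamma_{\lambda,l}(\cdot,v,w)\in L^2(M_\lambda)$. The function is $K_\lambda$-finite: since $v,w\in H_K$ lie in a finite-dimensional $K$-stable subspace, and a fortiori in one stable under $K_\lambda\subset K$, the $K_\lambda\times K_\lambda$-translates $\Gamma_{\lambda,l}(\xi_2(\cdot)\xi_1,v,w) = \Gamma_{\lambda,l}(\cdot,\pi(\xi_1)v,\pi(\xi_2)^{-1}w)$ span a finite-dimensional space, so in particular the right $K_\lambda$-translates do. The function is $Z(\mathfrak{m}_{\lambda\mathbb{C}})$-finite: $(\pi,H)$ has an infinitesimal character $\chi$ (Theorem \ref{HC} plus admissibility), so $H_K$ is annihilated by a finite-codimension ideal of $Z(\mathfrak{g}_{\mathbb{C}})$; applying the intertwining identity with $X=\mu_{P_\lambda}(z)$ for $z\in Z(\mathfrak{g}_{\mathbb{C}})$ — noting $\mu_{P_\lambda}(z)\in Z(\mathfrak{m}_{\lambda\mathbb{C}})\otimes U(\mathfrak{a}_{\lambda_0\mathbb{C}})$ but acts on the $M_\lambda$-variable through its $Z(\mathfrak{m}_{\lambda\mathbb{C}})$-component after evaluating the $\mathfrak{a}_{\lambda_0}$-part on the exponent $\nu$ — shows $\mu_{P_\lambda}(z)$ acts on $\Gamma_{\lambda,l}(\cdot,v,w)$ by the scalar determined by $\chi$ and $\nu$. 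Since $Z(\mathfrak{m}_{\lambda\mathbb{C}})\otimes U(\mathfrak{a}_{\lambda_0\mathbb{C}})$ is a finite module over $\mu_{P_\lambda}(Z(\mathfrak{g}_{\mathbb{C}}))$ by \cite{Harish-ChandraInv}, Lemma 21, it follows that $Z(\mathfrak{m}_{\lambda\mathbb{C}})$ itself acts on $\Gamma_{\lambda,l}(\cdot,v,w)$ through a finite-dimensional quotient, i.e. the function is $Z(\mathfrak{m}_{\lambda\mathbb{C}})$-finite. Since $M_\lambda$ is in the class $\mathcal{H}$ (as remarked after Proposition \ref{skewinv}), Theorem \ref{smoothvector} applies and gives that $\Gamma_{\lambda,l}(\cdot,v,w)$ is a smooth vector in $(R,L^2(M_\lambda))$.

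The main obstacle I anticipate is the careful justification of the intertwining identity for $X\in\mathfrak{m}_\lambda$: one must (i) legitimately differentiate the asymptotic series — which is only guaranteed to converge in the open cone, not up to the walls, so differentiation along $\mathfrak{a}_\lambda$-directions interior to $\mathfrak{a}^+_\lambda$ is fine but one then invokes analyticity to extend to all of $M_\lambda$ — and (ii) handle the directions in $\mathfrak{m}_\lambda$ that are neither in $\mathfrak{a}_\lambda$ nor in $\mathfrak{k}_\lambda$ (recall $\mathfrak{m}_\lambda = \mathfrak{m}\oplus\mathfrak{a}_\lambda\oplus\mathfrak{n}_\lambda\oplus\overline{\mathfrak{n}}_\lambda$, enlarged by $Z_K(\mathfrak{a}_{\lambda_0})$), for which the Cartan-type decomposition $M_\lambda = K_\lambda\overline{A^+_\lambda}K_\lambda$ together with the product rule reduces everything to the $\mathfrak{a}_\lambda$ and $\mathfrak{k}_\lambda$ cases. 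The cleanest route, avoiding delicate term-by-term manipulation, is to phrase the whole thing as an identity of coefficient functions in the asymptotic expansion relative to $P_\lambda$ and to invoke the uniqueness of such expansions, i.e. to argue purely on the level of the comparison carried out in the proof of Proposition \ref{Gamma}; I would adopt that formulation in the final write-up.
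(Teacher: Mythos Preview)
Your overall strategy matches the paper's: establish the intertwining identity via uniqueness of the asymptotic expansion relative to $P_\lambda$ (your approach (b), which is exactly what the paper does, differentiating $\phi_{v,w}|_{M_\lambda A_{\lambda_0}}$ term by term and comparing via \cite{Knapp}, Corollary B.26), then deduce $Z(\mathfrak{m}_{\lambda\mathbb{C}})$-finiteness and apply Theorem~\ref{smoothvector}. The $K_\lambda$-finiteness and $L^2$ inputs are as you say.

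There is, however, a genuine gap in your $Z(\mathfrak{m}_{\lambda\mathbb{C}})$-finiteness argument. You propose to ``apply the intertwining identity with $X=\mu_{P_\lambda}(z)$'', but $\mu_{P_\lambda}(z)$ lies in $Z(\mathfrak{m}_{\lambda\mathbb{C}})\otimes U(\mathfrak{a}_{\lambda_0\mathbb{C}})$, not in $U(\mathfrak{m}_{\lambda\mathbb{C}})$, so the intertwining identity you just proved does not apply to it. Your fix, ``evaluating the $\mathfrak{a}_{\lambda_0}$-part on the exponent $\nu$'', is not well-defined as stated: $\Gamma_{\lambda,l}(\cdot,v,w)$ is a function of $m\in M_\lambda$ alone, with no $\mathfrak{a}_{\lambda_0}$-variable to differentiate, and even if one reintroduces the $A_{\lambda_0}$-factor via $F_{\nu-\rho_{\lambda_0}}(ma)=\sum_q c^{P_\lambda}_{\nu,q}(m)\alpha(H)^q e^{(\nu-\rho_{\lambda_0})(H)}$, the $U(\mathfrak{a}_{\lambda_0\mathbb{C}})$-action is not scalar on each $c^{P_\lambda}_{\nu,q}$ separately because differentiating the polynomial factors $\alpha(H)^q$ mixes different $q$. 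The paper closes this gap by working with the full function $F_{\nu-\rho_{\lambda_0}}$ on $M_\lambda A_{\lambda_0}$ and invoking (8.68) of \cite{Knapp}: for $Z_i\in J=\ker\chi$, the operator $\mu_{P_\lambda}(Z_i)$ annihilates $F_{\nu-\rho_{\lambda_0}}$. One then takes $X\in J_{\mathfrak{m}_\lambda}$ (the pullback to $Z(\mathfrak{m}_{\lambda\mathbb{C}})$ of the ideal generated by $\mu_{P_\lambda}(J)$, shown to have finite codimension via \cite{Harish-ChandraInv}, Lemma 21), writes $X\otimes 1=\sum Y_i\mu_{P_\lambda}(Z_i)$, concludes $(X\otimes 1)F_{\nu-\rho_{\lambda_0}}=0$, and \emph{only then} uses the intertwining identity (now legitimately, since $X\in Z(\mathfrak{m}_{\lambda\mathbb{C}})\subset U(\mathfrak{m}_{\lambda\mathbb{C}})$) to read off $Xc^{P_\lambda}_{\nu,q}=c^{P_\lambda}_{\nu,q}(\cdot,\dot\pi(X)v,w)=0$ for each $q$. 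So the missing ingredient is precisely (8.68), and the order of operations is to pass through $F_{\nu-\rho_{\lambda_0}}$ rather than attempt to let $\mu_{P_\lambda}(z)$ act directly on $\Gamma_{\lambda,l}$.
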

\begin{proof} For a given $X\in U(\mathfrak{m}_{\lambda\mathbb{C}})$ and every $g\in G$, we have 
\begin{equation*}
X\phi_{v,w}(g)=\phi_{\dot{\pi}(X)v,w}(g).
\end{equation*}
Therefore, the restriction of $X\phi_{v,w}(\cdot)$ to $M_{\lambda}A_{\lambda_{0}}$ satisfies 
\begin{equation*}
X\phi_{v,w}(ma)=\phi_{\dot{\pi}(X)v,w}(ma).
\end{equation*}

Given $m\in M_{\lambda}$ we can find a compact subset $C$ of $M_{\lambda}$ containing $m$ such that $K_{\lambda}CK_{\lambda}=C$ and a positive $R$ depending on $C$ such that if $a=\text{exp}H\in A_{\lambda_{0}}^{+}$ satisfies $\alpha_{i}(H)>\text{log}R$ for every $i\in I^{c}_{\lambda}$, then $\phi_{\dot{\pi}(X)v,w}(ma)$ may be expanded with respect to $P_{\lambda}$. Since $X\in U(\mathfrak{m}_{\lambda\mathbb{C}})$, the restriction of $X\phi_{v,w}(\cdot)$ to $M_{\lambda}A_{\lambda_{0}}$ can also be computed as the action of the differential operator $X$ on the restriction of $\phi_{v,w}(\cdot)$ to $M_{\lambda}A_{\lambda_{0}}$. For $m\in M_{\lambda}$ and $a\in A_{\lambda_{0}}^{+}$ as above, we expand the function so obtained with respect to $P_{\lambda}$ 
and, as in the proof of (4.8) in \cite{LanglandsClass}, because of the convergence of the series, we can apply the differential operator term by term. By comparing the resulting expansion with the expansion of $\phi_{\dot{\pi}(X)v,w}(ma)$, and 
invoking \cite[Corollary B.26]{Knapp}, we obtain 
\begin{equation*}
Xc^{P_{\lambda}}_{\nu,q}(m,v,w)=c^{P_{\lambda}}_{\nu,q}(m,\dot{\pi}(X)v,w)
\end{equation*}
for every $\nu\in \mathcal{E}_{I}$ and every $q\in\mathbb{Z}^{I^{c}_{\lambda}}_{\geq 0}$. The first statement now follows from choosing $\nu$ and $q$ as in Proposition \ref{Gamma}. \\

For the last statement, we need to show that $\Gamma_{\lambda,l}(\cdot,v,w)$ is annihilated by an ideal of finite codimension in $Z(\mathfrak{m}_{\lambda\mathbb{C}})$; the result will then follow from Theorem \ref{smoothvector}. Let $J$ be the kernel of the infinitesimal character of $(\pi,H)$. Then $J$ is an ideal of finite codimension in $\mathrm{Z}(\mathfrak{g}_{\mathbb{C}})$. As observed in \cite[p.182]{Harish-Chandra}, the inverse image $J_{\mathfrak{m}_{\lambda}}$ along the inclusion 
\begin{equation*}
Z(\mathfrak{m}_{\lambda\mathbb{C}})\longrightarrow Z(\mathfrak{m}_{\lambda\mathbb{C}})\otimes U(\mathfrak{a}_{\lambda_{0}\mathbb{C}}),\text{ } X\mapsto X\otimes 1
\end{equation*}
of the ideal generated by $\mu_{P_{\lambda}}(J)$ in $Z(\mathfrak{m}_{\lambda\mathbb{C}})\otimes U(\mathfrak{a}_{\lambda_{0}\mathbb{C}})$ is an ideal of finite codimension in $Z(\mathfrak{m_{\lambda\mathbb{C}}})$. This follows from the fact that the ideal generated by $\mu_{P_{\lambda}}(J)$ is of finite codimension in $Z(\mathfrak{m}_{\lambda\mathbb{C}})\otimes U(\mathfrak{a}_{\lambda_{0}\mathbb{C}})$, since $Z(\mathfrak{m}_{\lambda\mathbb{C}})\otimes U(\mathfrak{a}_{\lambda_{0}\mathbb{C}})$ is a free module of finite type over $\mu_{P_{\lambda}}(\mathrm{Z}(\mathfrak{g}_{\mathbb{C}}))$ by \cite[Lemma 21]{Harish-ChandraInv}. Denoting $\mu_{P_{\lambda}}(J)^{e}$ the ideal generated by $\mu_{P_{\lambda}}(J)$, we see that $J_{\mathfrak{m}_{\lambda}}$ is precisely the kernel of the homomorphism 
\begin{equation*}
Z(\mathfrak{m}_{\lambda\mathbb{C}})\longrightarrow (Z(\mathfrak{m_{\lambda\mathbb{C}}})\otimes U(\mathfrak{a}_{\lambda_{0}\mathbb{C}}))/\mu_{P_{\lambda}}(J)^{e},\text{ }X\mapsto (X\otimes 1)+\mu_{P_{\lambda}}(J)^{e}.
\end{equation*}
This exhibits $J_{\mathfrak{m}_{\lambda}}$ as an ideal of finite codimension in $Z(\mathfrak{m}_{\lambda\mathbb{C}})$. Now, if $X\in J_{\mathfrak{m}_{\lambda}}$, then $X\otimes 1$ belongs to $\mu_{P_{\lambda}}(J)^{e}$. Hence $X\otimes 1$ can be written as 
\begin{equation*}
X\otimes 1=\sum^{r}_{i=1}Y_{i}\mu_{P_{\lambda}}(Z_{i})
\end{equation*}
with $Y_{i}\in Z(\mathfrak{m}_{\lambda\mathbb{C}})\otimes U(\mathfrak{a}_{\lambda_{0}\mathbb{C}})$ and $Z_{i}\in J$. For every $i\in\{1,\dots,r\}$, by (8.68) in \cite[p. 251]{Knapp}, the differential operator $\mu_{P_{\lambda}}(Z_{i})$ annihilates the function 
\begin{equation*}
F_{\nu-\rho_{\lambda_{0}}}(ma,v,w):=\sum_{q:|q|\leq q_{0}}c^{P_{\lambda}}_{\nu,q}(m,v,w)\alpha(H)^{q}e^{(\nu-\rho_{\lambda_{0}})(H)}.
\end{equation*}
Therefore, $X\otimes 1$ annihilates it, as well. On the other hand, by the first part of the proof, we have 
\begin{equation*}
(X\otimes 1)F_{\nu-\rho_{\lambda_{0}}}(ma,v,w)=\sum_{q:|q|\leq q_{0}}c^{P_{\lambda}}_{\nu,q}(m,\dot{\pi}(X)v,w)\alpha(H)^{q}e^{(\nu-\rho_{\lambda_{0}})(H)}.
\end{equation*}
Since the LHS vanishes identically on $M_{\lambda}A_{\lambda_{0}}$, it follows that 
\begin{equation*}
c^{P_{\lambda}}_{\nu,q}(m,\dot{\pi}(X)v,w)=0
\end{equation*}
for every $m\in M_{\lambda}$. Choosing $\nu$ and $q$ as in Proposition \ref{Gamma}, we find that $\Gamma_{\lambda,l}(\cdot,v,w)$ is annihilated by $J_{\mathfrak{m}_{\lambda}}$.
\end{proof}

\subsection{The functions $\Gamma_{\lambda,l}$ as intertwining operators}

Let $w\in H_{K}$. The next two technical lemmata, together with Proposition \ref{GammaGar}, will be used to prove the $(\mathfrak{m}_{\lambda}\oplus\mathfrak{a},K_{\lambda})$-equivariance of the map 
\begin{equation*}
S_{w}\colon H_{K}\longrightarrow L^{2}(M_{\lambda})\otimes \mathbb{C}_{\lambda|_{\mathfrak{a}_{\lambda_{0}}}-\rho_{\lambda_{0}}},\text{ }S_{w}(v)(m):=\Gamma_{\lambda,l}(m,v,w).
\end{equation*}
We are not claiming that for every $w\in H_{K}$ this map is non-zero: the only thing we need to know is that, whenever $w\in H_{K}$ is such that $S_{w}$ is not identically zero, then $S_{w}$ is $(\mathfrak{m}_{\lambda}\oplus\mathfrak{a},K_{\lambda})$-equivariant. In the final part of this subsection, we show the existence of an admissible, finitely generated, unitary representation of $M_{\lambda}$ which will allow us to apply Theorem \ref{Frobenius} in the way we explained in the Introduction. 
\begin{lem}\label{computationa} Let $v,w\in H_{K}$. Let $\left[\lambda,l\right]\in \mathcal{C}/{\sim}$ be a relevant equivalence class. Then, for every $X\in \mathfrak{a}_{\lambda_{0}}$ and every $m\in M_{\lambda}$, we have 
\begin{equation*}
\Gamma_{\lambda,l}(m,\dot{\pi}(X)v,w)=(\lambda|_{\mathfrak{a}_{\lambda_{0}}}-\rho_{\lambda_{0}})(X)\Gamma_{\lambda,l}(m,v,w).
\end{equation*}
\end{lem}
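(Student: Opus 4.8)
The plan is to reduce the statement, via Proposition \ref{Gamma}, to an identity among the coefficient functions $c^{P_\lambda}_{\nu,q}$ occurring in the asymptotic expansion of matrix coefficients relative to $P_\lambda$, and then to extract that identity by differentiating the expansion term by term, exactly as in the proof of Proposition \ref{GammaGar}. By Proposition \ref{Gamma}, setting $\nu:=\lambda|_{\mathfrak{a}_{\lambda_0}}$ and $q:=l_{\lambda_0}$, we have $\Gamma_{\lambda,l}(\cdot,v,w)=c^{P_\lambda}_{\nu,q}(\cdot,v,w)$ as functions on $M_\lambda$, so it suffices to prove
\[c^{P_\lambda}_{\nu,q}(m,\dot\pi(X)v,w)=(\nu-\rho_{\lambda_0})(X)\,c^{P_\lambda}_{\nu,q}(m,v,w).\]
Since both sides are real-analytic in $m\in M_\lambda$, it is enough to verify this for $m$ lying in a region where the expansion of Theorem \ref{asymp2} is valid. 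Fix such an $m$ together with an $H\in\mathfrak{a}_{\lambda_0}$ with $\alpha_i(H)$ large for all $i\in I^c_\lambda$. Because $A_{\lambda_0}$ is abelian, $(m\exp H)\exp(tX)=m\exp(H+tX)$ for $X\in\mathfrak{a}_{\lambda_0}$, whence
\[\phi_{\dot\pi(X)v,w}(m\exp H)=\left.\tfrac{d}{dt}\right|_{t=0}\phi_{v,w}(m\exp(H+tX)).\]
I would then substitute the expansion of Theorem \ref{asymp2} on the right-hand side and differentiate term by term; this is legitimate by the same argument as in Proposition \ref{GammaGar} (cf. the proof of (4.8) in \cite{LanglandsClass}), the relevant operator being the constant-coefficient derivative along $X$ on $\mathfrak{a}_{\lambda_0}$.

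Differentiating the general summand $e^{-\rho_{\lambda_0}(H)}\alpha(H)^{q'}e^{\nu'(H)}c^{P_\lambda}_{\nu',q'}(m,v,w)$ produces $(\nu'-\rho_{\lambda_0})(X)$ times the same summand plus the ``lowering'' contributions arising from $\tfrac{d}{dt}\alpha(H+tX)^{q'}|_{t=0}=\sum_{i\in I^c_\lambda}q'_i\alpha_i(X)\,\alpha(H)^{q'-\varepsilon_i}$, where $\varepsilon_i$ denotes the $i$-th coordinate vector. Comparing the result with the expansion of $\phi_{\dot\pi(X)v,w}(m\exp H)$ given by Theorem \ref{asymp2} and matching the coefficients of the functions $\alpha(H)^{q'}e^{\nu'(H)}$ — which is legitimate by the uniqueness of such exponential-polynomial expansions, as already invoked in Proposition \ref{GammaGar} via Corollary B.26 of \cite{Knapp} — the coefficient of $\alpha(H)^qe^{\nu(H)}$ yields
\[c^{P_\lambda}_{\nu,q}(m,\dot\pi(X)v,w)=(\nu-\rho_{\lambda_0})(X)\,c^{P_\lambda}_{\nu,q}(m,v,w)+\sum_{i\in I^c_\lambda}(q_i+1)\alpha_i(X)\,c^{P_\lambda}_{\nu,q+\varepsilon_i}(m,v,w),\]
the sum being the only other source of the monomial $\alpha(H)^q$ (the summands indexed by $(\nu,q'')$ with $|q''|=|q|$, $q''\neq q$, after differentiation contribute only monomials of degree $|q|-1$). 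It therefore remains to show that $c^{P_\lambda}_{\nu,q+\varepsilon_i}\equiv 0$ on $M_\lambda$ for each $i\in I^c_\lambda$.

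To prove this vanishing I would argue exactly as in the proof of Proposition \ref{Gamma}. Comparing the $P_\lambda$-expansion with the $P$-expansion as on p.~251 of \cite{Knapp}, a non-vanishing of $c^{P_\lambda}_{\nu,q+\varepsilon_i}$ would require a leading exponent $\mu\in\mathcal{E}_0$, a multi-index $l'$ with $|l'|\le l_0$ and $l'_{\lambda_0}=q+\varepsilon_i$, and a $k\in\mathbb{Z}^n_{\ge0}$ with $(\mu-k)|_{\mathfrak{a}_{\lambda_0}}=\nu$ such that $c_{\mu-k,l'}$ is non-zero. Since $\mathrm{Re}\,\lambda_j=0$ for $j\in I^c_\lambda$ (by the definition of $I_\lambda$ together with Theorem \ref{tempcriterion}), while $\mathrm{Re}\,\mu_j\le 0$ (Theorem \ref{tempcriterion}) and $k_j\ge 0$, the condition $(\mu-k)|_{\mathfrak{a}_{\lambda_0}}=\nu$ forces $\mathrm{Re}\,\mu_j=0$ and $k_j=0$ for all $j\in I^c_\lambda$, hence $I^c_\lambda\subseteq I^c_\mu$. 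Using \eqref{defdp} and the hypothesis $\mathbf{d}_P(\lambda,l)=\mathbf{d}(\pi)=|I^c_\lambda|+2|q|$, this would give
\[\mathbf{d}_P(\mu,l')=|I^c_\mu|+\sum_{j\in I^c_\mu}2l'_j\ \ge\ |I^c_\lambda|+\sum_{j\in I^c_\lambda}2l'_j\ =\ |I^c_\lambda|+2|l'_{\lambda_0}|\ =\ |I^c_\lambda|+2(|q|+1)\ =\ \mathbf{d}(\pi)+2,\]
contradicting the maximality of $\mathbf{d}(\pi)$. Hence $c^{P_\lambda}_{\nu,q+\varepsilon_i}\equiv 0$, the correction terms disappear, and the displayed identity collapses to the assertion of the lemma.

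The only delicate point is the last vanishing step: everything hinges on the interplay, already exploited in Proposition \ref{Gamma}, between the comparison of the $P$- and $P_\lambda$-expansions, the temperedness bound of Theorem \ref{tempcriterion}, and the maximality built into the definition of $\mathbf{d}(\pi)$. The term-by-term differentiation and the coefficient matching are routine consequences of Theorem \ref{asymp2} and the arguments already carried out for Proposition \ref{GammaGar}.
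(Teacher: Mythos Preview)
Your proof is correct and follows essentially the same strategy as the paper's: differentiate the asymptotic expansion term by term, then use the maximality of $\mathbf{d}(\pi)$ to kill the ``extra'' contributions coming from the polynomial part. The only difference is in packaging: the paper first reduces to $a_\lambda\in A_\lambda^+$ via the decomposition $M_\lambda=K_\lambda\overline{A_\lambda^+}K_\lambda$ (using that $M_\lambda$ centralises $\mathfrak{a}_{\lambda_0}$) and then differentiates the $P$-expansion, identifying the $(\nu,q)$-coefficient through the comparison on p.~251 of \cite{Knapp}; you instead differentiate the $P_\lambda$-expansion directly and only invoke the $P$-expansion to establish the vanishing of $c^{P_\lambda}_{\nu,q+\varepsilon_i}$. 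Your vanishing argument, via $I^c_\lambda\subseteq I^c_\mu$ and the resulting inequality $\mathbf{d}_P(\mu,l')\geq \mathbf{d}(\pi)+2$, is in fact a slightly sharper and more explicit version of the paper's contradiction step.
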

\begin{proof} We write $m\in M_{\lambda}$ as $m=\xi_{2}a_{\lambda}\xi_{2}$ for some $\xi_{1},\xi_{2}\in K_{\lambda}$ and some $a_{\lambda}\in \overline{A^{+}_{\lambda}}$. Then we have 
\begin{equation*}
\Gamma_{\lambda,l}(m,\dot{\pi}(X)v,w)=\Gamma_{\lambda,l}(a_{\lambda},\pi(\xi_{1})\dot{\pi}(X)v,\pi(\xi^{-1}_{2})w).
\end{equation*}
Recalling that 
\begin{equation*}
\pi(\xi_{1})\dot{\pi}(X)v=\dot{\pi}(\text{Ad}(\xi_{1})X)\pi(\xi_{1})v,
\end{equation*}
since $M_{\lambda}$ centralises $\mathfrak{a}_{\lambda_{0}}$ \cite[ Proposition 7.82]{Knapp2}, and $K_{\lambda}$ is contained in $M_{\lambda}$, we have 
\begin{equation*}
\Gamma_{\lambda,l}(a_{\lambda},\pi(\xi_{1})\dot{\pi}(X)v,\pi(\xi^{-1}_{2})w)=\Gamma_{\lambda,l}(a_{\lambda},\dot{\pi}(X)\pi(\xi_{1})v,\pi(\xi^{-1}_{2})w).
\end{equation*}
Therefore, re-labeling things, it suffices to prove that for every $X\in \mathfrak{a}_{\lambda_{0}}$ and for every $a_{\lambda}\in \overline{A^{+}_{\lambda}}$, we have 
\begin{equation*}
\Gamma_{\lambda,l}(a_{\lambda},\dot{\pi}(X)v,w)=(\lambda|_{\mathfrak{a}_{\lambda_{0}}}-\rho_{\lambda_{0}})(X)\Gamma_{\lambda,l}(a_{\lambda},v,w).
\end{equation*}
Moreover, since $\Gamma_{\lambda,l}(\cdot,v,w)$ is analytic, it suffices to prove the identity for every $a_{\lambda}\in A_{\lambda}^{+}$. \\
Let $a_{\lambda}=\text{exp}H_{\lambda}\in A^{+}_{\lambda}$. Then there exist a compact subset $C$ of $M_{\lambda}$ containing $a_{\lambda}$ and such that $K_{\lambda}CK_{\lambda}=C$, and a positive $R$ depending on $C$ such that, for all $H_{\lambda_{0}}\in \mathfrak{a}^{+}_{\lambda_{0}}$ satisfying $\alpha_{i}(H_{\lambda_{0}})>\text{log}R$ for every $i\in I^{c}_{\lambda}$, the expansion of $\phi_{\dot{\pi}(X)v,w}(a_{\lambda}\text{exp}H_{\lambda_{0}})$ relative to $P$ (Theorem \ref{asymp1}) and the expansion of $\phi_{\dot{\pi}(X)v,w}(a_{\lambda}\text{exp}H_{\lambda_{0}})$ relative to $P_{\lambda}$ (Theorem \ref{asymp2}) are both valid.\\
Setting $H:=H_{\lambda}+H_{\lambda_{0}}$ for $H_{\lambda_{0}}$ as above, the expansion in Theorem \ref{asymp1} gives 
\begin{equation*}
\phi_{\dot{\pi}(X)v,w}(H)=\sum_{\Tilde{\lambda}\in \mathcal{E}}\text{ }\sum_{\Tilde{l}\in \mathbb{Z}^{n}_{\geq 0}:|\Tilde{l}|\leq l_{0}}\alpha(H)^{\Tilde{l}}e^{(\Tilde{\lambda}-\rho)(H)}\langle c_{\Tilde{\lambda},\Tilde{l}}(\dot{\pi}(X)v),w\rangle
\end{equation*}
By linearity we can assume that $X=H_{i}$ for some $i\in I^{c}_{\lambda}$, where $H_{i}$, we recall, is the element in $\mathfrak{a}_{\lambda_{0}}$ dual to to the simple root $\alpha_{i}$.\\ 

Differentiating term by term and taking into account the computation 
\begin{equation*}
H_{i}\left[\alpha(H)^{\Tilde{l}}e^{(\Tilde{\lambda}-\rho)(H)}\right]=\Tilde{l}_{i}\alpha(H)^{\Tilde{l}-e_{i}}e^{(\Tilde{\lambda}-\rho)(H)}+(\Tilde{\lambda}|_{\mathfrak{a}_{\lambda_{0}}}-\rho)(H_{i})\alpha(H)^{\Tilde{l}}e^{(\Tilde{\lambda}-\rho)(H)},
\end{equation*}
where $e_{i}$ is the element in $\mathbb{Z}_{\geq 0}^{n}$ having $1$ as its $i$-th co-ordinate and $0$ as every other co-ordinate, we observe that the only terms in the expansion  
\begin{equation*}
\phi_{v,w}(H)=\sum_{\Tilde{\lambda}\in \mathcal{E}}\text{ }\sum_{\Tilde{l}\in \mathbb{Z}^{n}_{\geq 0}:|\Tilde{l}|\leq l_{0}}\alpha(H)^{\Tilde{l}}e^{(\Tilde{\lambda}-\rho)(H)}\langle c_{\Tilde{\lambda},\Tilde{l}}(v),w\rangle
\end{equation*}
that after differentiation by $H_{i}\in \mathfrak{a}_{\lambda_{0}}$ can contribute a term of the form 
\begin{equation*}
c\alpha(H)^{\Tilde{l}}e^{(\Tilde{\lambda}-\rho)(H)}\langle c_{\Tilde{\lambda},\Tilde{l}}(v),w\rangle,
\end{equation*}
with $c\in \mathbb{C}$, to the expansion of $\phi_{\dot{\pi}(X)v,w}(H)$, is precisely 
\begin{equation*}
\alpha(H)^{\Tilde{l}}e^{(\Tilde{\lambda}-\rho)(H)}\langle c_{\Tilde{\lambda},\Tilde{l}}(v),w\rangle.
\end{equation*}
This reasoning shows that in the expansion 
\begin{equation*}
\phi_{\dot{\pi}(H_{i})v,w}(a_{\lambda}\text{exp}H_{\lambda_{0}})=\sum_{\nu\in \mathcal{E}_{I}}\text{ }\sum_{q\in \mathbb{Z}^{I^{c}_{\lambda}}_{\geq 0}:|q|\leq q_{0}}\alpha(H_{\lambda_{0}})^{q}e^{(\nu-\rho_{\lambda_{0}})(H_{\lambda_{0}})}c_{\nu,q}^{P_{\lambda}}(a_{\lambda},\dot{\pi}(H_{i})v,w)
\end{equation*}
relative to $P_{\lambda}$, the term indexed by $(\nu,q)$ with $\nu=\lambda|_{\mathfrak{a}_{\lambda_{0}}}$ and $q=l_{\lambda_{0}}$ satisfies 
\begin{equation*}
c^{P_{\lambda}}_{\nu,q}(a_{\lambda},\dot{\pi}(H_{i})v,w)=(\lambda|_{\mathfrak{a}_{\lambda_{0}}}-\rho_{\lambda_{0}})(H_{i})c^{P_{\lambda}}_{\nu,q}(a_{\lambda},v,w).
\end{equation*}
Indeed, the comparison in \cite[p. 251]{Knapp}, shows that 
\begin{equation*}
\alpha(H_{\lambda_{0}})^{q}e^{(\nu-\rho_{\lambda_{0}})(H_{\lambda_{0}})}c^{P_{\lambda}}_{\nu,q}(a_{\lambda},\dot{\pi}(H_{i})v,w)
\end{equation*}
is the sum of all the terms in the expansion of $\phi_{\dot{\pi}(H_{i})v,w}(H)$ relative to $P$ which are indexed by couples $(\Tilde{\lambda},\Tilde{l})$ satisfying 
\begin{equation*}
\Tilde{\lambda}|_{\mathfrak{a}_{\lambda_{0}}}=\lambda|_{\mathfrak{a}_{\lambda_{0}}} \text{ and } \Tilde{l}_{\lambda_{0}}=l_{\lambda_{0}}
\end{equation*}
and, as we saw, these are the terms of the form 
\begin{equation*}(\lambda|_{\mathfrak{a}_{\lambda_{0}}}-\rho_{\lambda_{0}})(H_{i})\alpha(H)^{\Tilde{l}}e^{(\Tilde{\lambda}-\rho)(H)}\langle c_{\Tilde{\lambda},\Tilde{l}}(v),w\rangle.
\end{equation*}
Finally, since $$\Gamma_{\lambda,l}(a_{\lambda},v,w)=c^{P_{\lambda}}_{\nu,q}(a_{\lambda},v,w)$$ by Proposition \ref{Gamma}, we obtain 
\begin{equation*}
\Gamma_{\lambda,l}(a_{\lambda},v,w)=(\lambda|_{\mathfrak{a}_{\lambda_{0}}}-\rho_{\lambda_{0}})(H_{i})\Gamma_{\lambda,l}(a_{\lambda},\dot{\pi}(H_{i})v,w).
\end{equation*}
\end{proof}

\begin{lem}\label{computationn} Let $v,w\in H_{K}$. Let $\left[\lambda,l\right]\in \mathcal{C}/{\sim}$ be a relevant equivalence class. Then, for every $X\in \mathfrak{n}_{\lambda_{0}}$ and every $m\in M_{\lambda}$, we have 
\begin{equation*}
\Gamma_{\lambda,l}(m,\dot{\pi}(X)v,w)=0.
\end{equation*}
\end{lem}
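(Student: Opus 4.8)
The plan is to mirror the proof of Lemma~\ref{computationa}, the new feature being that, for $X$ in a restricted root space $\mathfrak{g}_{\beta}$, conjugation by $\exp H$ scales $X$ by the factor $e^{\beta(H)}$ instead of fixing it, which produces a shift of the asymptotic exponents by $\beta$. First I would carry out the two usual reductions. Writing $m=\xi_{2}\exp H_{\lambda}\,\xi_{1}$ with $\xi_{1},\xi_{2}\in K_{\lambda}$ and $H_{\lambda}\in\overline{\mathfrak{a}^{+}_{\lambda}}$, and using $\pi(\xi_{1})\dot{\pi}(X)=\dot{\pi}(\mathrm{Ad}(\xi_{1})X)\pi(\xi_{1})$ together with the fact that $K_{\lambda}\subset M_{\lambda}$ normalises $\mathfrak{n}_{\lambda_{0}}$ (so that $\mathrm{Ad}(\xi_{1})X\in\mathfrak{n}_{\lambda_{0}}$, as in Lemma~\ref{nV}), one reduces, exactly as in Lemma~\ref{computationa}, to proving $\Gamma_{\lambda,l}(\exp H_{\lambda},\dot{\pi}(X)v,w)=0$ for all $v,w\in H_{K}$, all $X\in\mathfrak{n}_{\lambda_{0}}$ and all $H_{\lambda}\in\overline{\mathfrak{a}^{+}_{\lambda}}$; by analyticity of $\Gamma_{\lambda,l}(\cdot,v,w)$ it is enough to treat $H_{\lambda}\in\mathfrak{a}^{+}_{\lambda}$. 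Since $\mathfrak{n}_{\lambda_{0}}=\bigoplus_{\beta\in\Delta^{+}\setminus\Delta^{+}_{\lambda}}\mathfrak{g}_{\beta}$, by linearity I may assume $X\in\mathfrak{g}_{\beta}$ for a fixed positive root $\beta$ with $\beta|_{\mathfrak{a}_{\lambda_{0}}}\neq 0$.

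Next I would record the key identity. For such an $X$ and $H\in\mathfrak{a}^{+}$ one has $\mathrm{Ad}(\exp H)X=e^{\beta(H)}X$, hence $\pi(\exp H)\dot{\pi}(X)=e^{\beta(H)}\dot{\pi}(X)\pi(\exp H)$ on $H^{\infty}$ (recall that $K$-finite vectors are smooth by Theorems~\ref{HC} and~\ref{Ksmooth}); since $\pi$ is unitary, $\dot{\pi}(X)$ is skew-Hermitian on smooth vectors, and therefore
\[\phi_{\dot{\pi}(X)v,w}(\exp H)=-e^{\beta(H)}\,\phi_{v,\dot{\pi}(X)w}(\exp H),\qquad H\in\mathfrak{a}^{+}.\]
Inserting the asymptotic expansions relative to $P_{\lambda}$ (Theorem~\ref{asymp2}) of both sides, writing $e^{\beta(H)}=e^{\beta|_{\mathfrak{a}_{\lambda}}(H_{\lambda})}e^{\beta|_{\mathfrak{a}_{\lambda_{0}}}(H_{\lambda_{0}})}$ and matching coefficients in the $A_{\lambda_{0}}$-variable (uniqueness of asymptotic expansions, \cite{Knapp}, Corollary~B.26), and using Proposition~\ref{Gamma} to identify $\Gamma_{\lambda,l}(\cdot,\dot{\pi}(X)v,w)$ with $c^{P_{\lambda}}_{\nu,q}(\cdot,\dot{\pi}(X)v,w)$ for $\nu=\lambda|_{\mathfrak{a}_{\lambda_{0}}}$ and $q=l_{\lambda_{0}}$, this reduces the lemma to showing that the coefficient $c^{P_{\lambda}}_{\nu-\beta|_{\mathfrak{a}_{\lambda_{0}}},\,q}(\cdot,v,\dot{\pi}(X)w)$ at the shifted exponent $\nu-\beta|_{\mathfrak{a}_{\lambda_{0}}}$ vanishes identically.

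To finish I would argue as in the proof of Proposition~\ref{Gamma}. The functional $\nu=\lambda|_{\mathfrak{a}_{\lambda_{0}}}$ is a leading exponent relative to $P_{\lambda}$: any $P_{\lambda}$-exponent $\nu^{\ast}\geq\nu$ is the restriction to $\mathfrak{a}_{\lambda_{0}}$ of a $P$-exponent $\hat\lambda-k$ with $\hat\lambda\in\mathcal{E}_{0}$ and $k\in\mathbb{Z}^{n}_{\geq 0}$, so $\mathrm{Re}\,\nu^{\ast}_{i}\leq 0=\mathrm{Re}\,\nu_{i}$ for $i\in I^{c}_{\lambda}$ by Theorem~\ref{tempcriterion}, and expressing $\nu^{\ast}-\nu$ in the basis $\{\alpha_{i}|_{\mathfrak{a}_{\lambda_{0}}}\}_{i\in I^{c}_{\lambda}}$ of $\mathfrak{a}_{\lambda_{0}}^{\ast}$ the non-negativity of its coordinates forces $\nu^{\ast}=\nu$. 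Since $\beta|_{\mathfrak{a}_{\lambda_{0}}}$ is a non-zero non-negative integral combination of the simple restricted roots $\alpha_{i}|_{\mathfrak{a}_{\lambda_{0}}}$, $i\in I^{c}_{\lambda}$, the exponent $\nu-\beta|_{\mathfrak{a}_{\lambda_{0}}}$ lies strictly below $\nu$; comparing the expansions relative to $P$ and to $P_{\lambda}$ as at \cite{Knapp}, p.~251, and invoking Theorem~\ref{tempcriterion} together with the maximality of $\textbf{d}(\pi)$ (through the hypothesis $\textbf{d}_{P}(\lambda,l)=\textbf{d}(\pi)$) exactly as in Proposition~\ref{Gamma} and Lemma~\ref{computationa}, one checks that no surviving term can contribute to this coefficient, so it is zero. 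Conceptually this is just the statement that the leading coefficient relative to $P_{\lambda}$ factors, in its first argument, through the Casselman–Jacquet module $H_{K}/\mathfrak{n}_{\lambda_{0}}H_{K}$, which is annihilated by $\mathfrak{n}_{\lambda_{0}}$; one could alternatively cite the structure of the leading term from the asymptotic theory directly.

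The step I expect to be the main obstacle is this last one. Unlike in Lemma~\ref{computationa}, where differentiation by $\mathfrak{a}_{\lambda_{0}}$ reproduced only the same exponent, with the same or a lower polynomial index, here the factor $e^{\beta(H)}$ genuinely moves one to a non-leading exponent, and one must check with care — balancing the temperedness bound of Theorem~\ref{tempcriterion} against the definition of $\textbf{d}(\pi)$, and keeping track of the polynomial index on $I^{c}_{\lambda}$, which is pinned to $l_{\lambda_{0}}$ — that the $P$-versus-$P_{\lambda}$ comparison feeds nothing from that shifted exponent back into $\Gamma_{\lambda,l}(\cdot,\dot{\pi}(X)v,w)$.
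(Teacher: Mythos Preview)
Your reductions and the overall shape of the argument match the paper's. The gap is in your final step. With $X\in\mathfrak{g}_{\beta}$ for $\beta$ positive, the identity $\phi_{\dot\pi(X)v,w}=-e^{\beta(H)}\phi_{v,\dot\pi(X)w}$ forces you to control the coefficient $c^{P_{\lambda}}_{\nu-\beta|_{\mathfrak{a}_{\lambda_{0}}},\,q}(\cdot,v,\dot\pi(X)w)$; but $\nu-\beta|_{\mathfrak{a}_{\lambda_{0}}}$ lies \emph{below} $\nu$, and neither Theorem~\ref{tempcriterion} nor the maximality of $\textbf{d}(\pi)$ forbids a non-leading $P_{\lambda}$-exponent from carrying a non-zero coefficient with polynomial index $q=l_{\lambda_{0}}$. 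Your appeals to Proposition~\ref{Gamma} and Lemma~\ref{computationa} do not transfer: those arguments rule out exponents with \emph{more} purely imaginary coordinates on $I^{c}_{\lambda}$, or a \emph{higher} polynomial degree there, whereas here the shift gives at least one $I^{c}_{\lambda}$-coordinate strictly negative real part, which is perfectly compatible with temperedness. Your own closing paragraph anticipates exactly this difficulty without resolving it.

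The paper's proof sidesteps this by shifting in the opposite direction. It reduces (by linearity and the reference to \cite{Knapp}, Proposition~5.23) to $X\in\mathfrak{g}_{-\alpha_{i}}$ with $i\in I^{c}_{\lambda}$, obtaining $\phi_{\dot\pi(X)v,w}(a)=-e^{-\alpha_{i}(H)}\phi_{v,\dot\pi(X)w}(a)$. Then every $P$-exponent of the left side is of the form $\lambda'-e_{i}$ with $\lambda'\in\mathcal{E}$, and demanding $(\lambda'-e_{i})|_{\mathfrak{a}_{\lambda_{0}}}=\lambda|_{\mathfrak{a}_{\lambda_{0}}}$ forces $\mathrm{Re}\,\lambda'_{i}=1>0$, contradicting Theorem~\ref{tempcriterion} directly --- no analysis of non-leading terms is needed. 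So the decisive point is that the shift must push an $I^{c}_{\lambda}$-coordinate of a genuine exponent \emph{above} zero; with your choice of sign that mechanism is unavailable.
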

\begin{proof} We write $m\in M_{\lambda}$ as $m=\xi_{2}a_{\lambda}\xi_{2}$ for some $\xi_{1},\xi_{2}\in K_{\lambda}$ and some $a_{\lambda}\in \overline{A^{+}_{\lambda}}$. Then we have 
\begin{equation*}
\Gamma_{\lambda,l}(m,\dot{\pi}(X)v,w)=\Gamma_{\lambda,l}(a_{\lambda},\pi(\xi_{1})\dot{\pi}(X)v,\pi(\xi^{-1}_{2})w).
\end{equation*}
Recalling that 
\begin{equation*}
\pi(\xi_{1})\dot{\pi}(X)v=\dot{\pi}(\text{Ad}(\xi_{1})X)\pi(\xi_{1})v,
\end{equation*}
since $M_{\lambda}$ normalises $\mathfrak{n}_{\lambda_{0}}$ \cite[ Proposition 7.83]{Knapp2}, and $K_{\lambda}$ is contained in $M_{\lambda}$, we have 
\begin{equation*}
\Gamma_{\lambda,l}(a_{\lambda},\pi(\xi_{1})\dot{\pi}(X)v,\pi(\xi^{-1}_{2})w)=\Gamma_{\lambda,l}(a_{\lambda},\dot{\pi}(X')\pi(\xi_{1})v,\pi(\xi^{-1}_{2})w)
\end{equation*}
for some $X'\in \mathfrak{n}_{\lambda_{0}}$. Therefore, re-labeling things, it suffices to prove that for every $X\in \mathfrak{a}_{\lambda_{0}}$ and for every $a_{\lambda}\in \overline{A^{+}_{\lambda}}$, we have 
\begin{equation*}\Gamma_{\lambda,l}(a_{\lambda},\dot{\pi}(X)v,w)=0.
\end{equation*}
Again, since $\Gamma_{\lambda,l}(\cdot,v,w)$ is analytic, it suffices to prove the identity for every $a_{\lambda}\in A_{\lambda}^{+}$. \\ 

As in the previous proof, we set $H:=H_{\lambda}+H_{\lambda_{0}}$ for $H_{\lambda_{0}}$ in an appropriate region and the expansion in Theorem \ref{asymp1} gives 
\begin{equation*}
\phi_{\dot{\pi}(X)v,w}(H)=\sum_{\tilde{\lambda}\in \mathcal{E}}\text{ }\sum_{\tilde{l}\in \mathbb{Z}^{n}_{\geq 0}:|\tilde{l}|\leq l_{0}}\alpha(H)^{\tilde{l}}e^{(\tilde{\lambda}-\rho)(H)}\langle c_{\tilde{\lambda},\tilde{l}}(\dot{\pi}(X)v),w\rangle.
\end{equation*}
The expansion in Theorem \ref{asymp2} gives 
\begin{equation*}
\phi_{\dot{\pi}(X)v,w}(a_{\lambda}\text{exp}H_{\lambda_{0}})=\sum_{\nu\in \mathcal{E}_{I}}\text{ }\sum_{q\in \mathbb{Z}^{I^{c}_{\lambda}}_{\geq 0}:|q|\leq q_{0}}\alpha(H_{\lambda_{0}})^{q}e^{(\nu-\rho_{\lambda_{0}})(H_{\lambda_{0}})}c_{\nu,q}^{P_{\lambda}}(a_{\lambda},\dot{\pi}(X)v,w).
\end{equation*}
By \cite[Corollary 8.46]{Knapp}, each $\nu-\rho_{\lambda_{0}}$ in the second expansion is of the form $\Tilde{\lambda}|_{\mathfrak{a}_{\lambda_{0}}}-\rho_{\lambda_{0}}$ for some exponent $\Tilde{\lambda}$ in the first expansion. Therefore, it suffices to prove that if $\lambda\in \mathcal{E}_{0}$ and $l\in \mathbb{Z}^{n}_{\geq 0}$ with $|l| \leq l_{0}$ satisfy 
\begin{equation*}
\textbf{d}_{P}(\lambda,l)=\textbf{d}(\pi),
\end{equation*}
then no term with exponent $\Tilde{\lambda}-\rho$ for which $\Tilde{\lambda}|_{\mathfrak{a}_{\lambda_{0}}}=\lambda|_{\mathfrak{a}_{\lambda_{0}}}$ appears in the first expansion. Indeed, if we can show this, since by the comparison in 
\cite[p. 251]{Knapp}, the term 
\begin{equation*}
\alpha(H_{\lambda_{0}})^{q}e^{(\nu-\rho_{\lambda_{0}})(H_{\lambda_{0}})}c^{P_{\lambda}}_{\nu,q}(a_{\lambda},\dot{\pi}(X)v,w),
\end{equation*}
for $\nu=\lambda|_{\mathfrak{a}_{\lambda_{0}}}$ and $q_{\lambda_{0}}=l_{\lambda_{0}}$ is the sum of all the terms in the expansion of $\phi_{\dot{\pi}(X)v,w}(H)$ relative to $P$ which are indexed by couples $(\Tilde{\lambda},\Tilde{l})$ satisfying 
\begin{equation*}
\Tilde{\lambda}|_{\mathfrak{a}_{\lambda_{0}}}=\lambda|_{\mathfrak{a}_{\lambda_{0}}} \text{ and } \Tilde{l}_{\lambda_{0}}=l_{\lambda_{0}},
\end{equation*}
it would follow that $$c^{P_{\lambda}}_{\nu,q}(a_{\lambda},\dot{\pi}(X)v,w)=0$$
and therefore 
\begin{equation*}
\Gamma_{\lambda,l}(a_{\lambda},\dot{\pi}(X)v,w)=0.
\end{equation*}
By linearity we can assume that $X\in \mathfrak{g}_{-\alpha_{i}}$ for some $i\in I^{c}_{\lambda}$ \cite[Proposition 5.23]{Knapp}.\\

Computing as in \cite[Lemma 8.16]{CassMil}, we have 
\begin{equation*}
\phi_{\dot{\pi}(X)v,w}(a)=\langle \dot{\pi}(\text{Ad}(a)X)\pi(a)v,w\rangle=-e^{-\alpha_{i}(H)}\phi_{ v,\dot{\pi}(X)w}(a).
\end{equation*}
Hence every exponent in the expansion of $\phi_{\dot{\pi}(X)v,w}(a)$ relative to $P$ is of the form $\Tilde{\lambda}=\lambda'-e_{i}$ for some $\lambda'\in \mathcal{E}$. Now, if there existed $\lambda'\in \mathcal{E}$ with 
\begin{equation*}
(\lambda'-e_{i})|_{\mathfrak{a}_{\lambda_{0}}}=\lambda|_{\mathfrak{a}_{\lambda_{0}}},
\end{equation*}
we would have 
\begin{equation*}
\text{Re}(\lambda'-e_{i})_{i}=\text{Re}\lambda_{i}=0
\end{equation*}
since $i\in I^{c}_{\lambda}$. This means that $\text{Re}\lambda'_{i}>0$, a contradiction. Indeed, since $(\pi,H)$ is tempered, the real part of every co-ordinate of each leading exponent is at most zero by Theorem \ref{tempcriterion} and it follows that the same property holds for every element in $\mathcal{E}$. This concludes the proof.
\end{proof} 

\begin{lem}\label{referee} Let $w\in H_{K}$. Let $\left[\lambda,l\right]\in \mathcal{C}/{\sim}$ be a relevant equivalence class. Then the prescription 
\begin{equation*}
S_{w}\colon H_{K}\longrightarrow L^{2}(M_{\lambda}),\text{ }S_{w}(v)(m):=\Gamma_{\lambda,l}(m,v,w)
\end{equation*}
is a well-defined, $(\mathfrak{m}_{\lambda},K_{\lambda})$-equivariant map with image contained in $L^{2}(M_{\lambda})_{K_{\lambda}}$.
\end{lem}
\begin{proof} The map $S_{w}$ is well-defined by Proposition \ref{GammaGar}.
For every $\xi\in K_{\lambda}$ and every $m\in M_{\lambda}$, we have 
\begin{equation*}
S_{w}(\pi(\xi)v)(m)=\Gamma_{\lambda,l}(m,\pi(\xi)v,w)=\Gamma_{\lambda,l}(m\xi,v,w)=R(\xi)S_{w}(v)(m).
\end{equation*}
By Proposition \ref{GammaGar}, for all $X\in \mathfrak{m}_{\lambda}$ and for all $m\in M_{\lambda}$, we have 
\begin{equation*}
S_{w}(\dot{\pi}(X)v)(m)=X\Gamma_{\lambda,l}(m,v,w)
\end{equation*}
and, by Proposition \ref{skewinv}, we have 
\begin{equation*}
X\Gamma_{\lambda,l}(m,v,w)=\dot{R}(X)\Gamma_{\lambda,l}(m,v,w).
\end{equation*}
Therefore 
\begin{equation*}
S_{w}(\dot{\pi}(X)v)(m)=\dot{R}(X)S_{w}(v)(m)
\end{equation*}
and this concludes the proof that $S_{w}$ is $(\mathfrak{m}_{\lambda},K_{\lambda})$-equivariant. \newline
To prove that the image of $S_{w}$ is contained in $L^{2}(M_{\lambda})_{K_{\lambda}}$, we observe that, for every $v\in H_{K}$, the $K_{\lambda}$-finiteness of $v$ implies the existence of finitely many $v_{1},\dots,v_{r}\in H_{K}$ such that 
\begin{equation*}
R(K_{\lambda})\Gamma_{\lambda,l}(\cdot,v,w)\in \text{span}\{\Gamma_{\lambda,l}(\cdot,v_{i},w)|i\in \{1,\dots,r\}\}.
\end{equation*}
Hence, $\Gamma_{\lambda,l}(\cdot,v,w)$ is $K_{\lambda}$-finite and, since it is a smooth vector in $(R,L^{2}(M_{\lambda}))$ by Proposition \ref{GammaGar}, it belongs to $L^{2}(M_{\lambda})_{K_{\lambda}}$. 
\end{proof}

We now construct a sub-representation $(\Theta,H_{\Theta})$ of $(R,L^{2}(M_{\lambda}))$ which, as we will show in the next two results, has precisely those properties that we need to proceed with the strategy outlined in the Introduction. We will show that $(\Theta,H_{\Theta})$ is an admissible, finitely generated, unitary (this follows since it is a sub-representation of $L^{2}(M_{\lambda})$) representation of $M_{\lambda}$ such that the image of the $(\mathfrak{m}_{\lambda},K_{\lambda})$-equivariant map $S_{w}$ is precisely the $(\mathfrak{m}_{\lambda},K_{\lambda})$-module $H_{\Theta,K_{\lambda}}$ and such that the map 

\begin{equation*}
S_{w}:H_{K}\longrightarrow H_{\Theta,K_{\lambda}}\otimes \mathbb{C}_{\lambda|_{\mathfrak{a}_{\lambda_{0}}}-\rho_{\lambda_{0}}},\text{ } S_{w}(v)(m):=\Gamma_{\lambda,l}(m,v,w)
\end{equation*}

is $(\mathfrak{m}_{\lambda}\oplus\mathfrak{a}_{\lambda_{0}},K_{\lambda})$-equivariant.\\

The representation $(\Theta,H_{\Theta})$ depends on the choice of $w\in H_{K}$ and of a relevant $[\lambda,l]\in\mathcal{C}/{\sim}$. However, and this is the important point, the construction can be formed for every choice of $\tilde{w}\in H_{k}$ and for every choice of relevant $[\tilde\lambda,\tilde l]\in\mathcal{C}/{\sim}$. 
In Proposition \ref{intrep} below, we will use this construction to define the representation $(\sigma,H_{\sigma})$ discussed in the Introduction.\\

We adopt the notation of the previous lemma. In the proof of Proposition \ref{GammaGar}, we showed that, for each $v\in H_{K}$, the function $\Gamma_{\lambda,l}(\cdot,v,w)$ is a $Z(\mathfrak{m}_{\lambda})$-finite function in $L^{2}(M_{\lambda})$. By \cite[Corollary 8.42]{Knapp}, there exist finitely many orthogonal irreducible sub-representations of $(R,L^{2}(M_{\lambda}))$ such that $\Gamma_{\lambda,l}(\cdot,v,w)$ is contained in their direct sum. It follows that there exists a (not necessarily finite) collection  $\{(\theta,H_{\theta})\}_{\theta\in \Theta}$ of orthogonal irreducible sub-representations of $(R,L^{2}(M_{\lambda}))$ such that $S_{w}(H_{K})$ is contained in their direct sum. Let $(\Theta,H_{\Theta})$ denote the direct sum of the sub-representations in this collection.\\

\begin{lem}\label{beforeapplyfrob} The $(\mathfrak{m}_{\lambda},K_{\lambda})$-module $H_{\Theta,K_{\lambda}}$ is precisely the image of the $(\mathfrak{m}_{\lambda},K_{\lambda})$-equivariant map 

\begin{equation*}
S_{w}:H_{K}\longrightarrow L^{2}(M_{\lambda}),\text{ }S_{w}(v)(m):=\Gamma_{\lambda,l}(m,v,w).
\end{equation*} 

\end{lem}
\begin{proof} It follows from Lemma \ref{referee} that $S_{w}(H_{K})\subset H_{\Theta}\cap L^{2}(M_{\lambda})_{K_{\lambda}}=H_{\Theta,K_{\lambda}}$.
For the reverse inclusion, the irreducibility of each $(\theta,H_{\theta})$ implies that $S_{w}(H_{K})\cap H_{\theta,K_{\lambda}}=H_{\theta,K_{\lambda}}$. Therefore $H_{\Theta,K_{\lambda}}$ is contained in the image of $S_{w}$, completing the proof. 

\end{proof}

\begin{prop}\label{applyfrob} The representation $(\Theta,H_{\Theta})$ of $M_{\lambda}$ is admissible, finitely generated and  unitary. Moreover, the map
\begin{equation*}
S_{w}:H_{K}\longrightarrow H_{\Theta,K_{\lambda}}\otimes \mathbb{C}_{\lambda|_{\mathfrak{a}_{\lambda_{0}}}-\rho_{\lambda_{0}}},\text{ } S_{w}(v)(m):=\Gamma_{\lambda,l}(m,v,w)
\end{equation*}
is $(\mathfrak{m}_{\lambda}\oplus\mathfrak{a}_{\lambda_{0}},K_{\lambda})$-equivariant.
\end{prop}
\begin{proof} By Lemma \ref{beforeapplyfrob} we have $S_{w}(H_{K})=H_{\Theta,K_{\lambda}}$.  
By Lemma \ref{computationa}, for all $X\in \mathfrak{a}_{\lambda_{0}}$ and for all $m\in M_{\lambda}$, we have 
\begin{equation*}
S_{w}(\dot{\pi}(X)v)(m)=(\lambda|_{\mathfrak{a}_{\lambda_{0}}}-\rho_{\lambda_{0}})(X)\Gamma_{\lambda,l}(m,v,w)=(\lambda|_{\mathfrak{a}_{\lambda_{0}}}-\rho_{\lambda_{0}})(X)S_{w}(v).
\end{equation*}
By Lemma \ref{computationn}, for all $X\in \mathfrak{n}_{\lambda_{0}}$ and for all $m\in M_{\lambda}$, we have 
\begin{equation*}
S_{w}(\dot{\pi}(X)v)(m)=\Gamma_{\lambda,l}(m,\dot{\pi}(X)v,w)=0.
\end{equation*}
We thus obtained an $(\mathfrak{m}_{\lambda}\oplus \mathfrak{a}_{\lambda_{0}},K_{\lambda})$-equivariant map 
\begin{equation*}
S_{w}:H_{K}\longrightarrow H_{\Theta,K_{\lambda}}\otimes \mathbb{C}_{\lambda|_{\mathfrak{a}_{\lambda_{0}}}-\rho_{\lambda_{0}}},\text{ }S_{w}(v)(m):=\Gamma_{\lambda,l}(m,v,w)
\end{equation*}
which factors through the quotient map 
\begin{equation*}
q:H_{K}\longrightarrow H_{K}/\mathfrak{n}_{\lambda_{0}}H_{K}
\end{equation*}
which is $(\mathfrak{m}_{\lambda}\oplus \mathfrak{a}_{\lambda_{0}},K_{\lambda})$-equivariant by Lemma \ref{nV}.  

Since $H_{K}$, being irreducible (and hence admissible by Theorem \ref{HC}), has an infinitesimal character, by Corollary \ref{corquotadfin} the $(\mathfrak{m}_{\lambda}\oplus\mathfrak{a}_{\lambda_{0}},K_{\lambda})$-module $H_{K}/\mathfrak{n}_{\lambda_{0}}H_{K}$ is admissible and finitely generated. It follows that 
\begin{equation*}
S_{w}(H_{K})=H_{\Theta,K_{\lambda}}\otimes \mathbb{C}_{\lambda|_{\mathfrak{a}_{\lambda_{0}}}-\rho_{\lambda_{0}}}
\end{equation*}
is an admissible and finitely generated $(\mathfrak{m}_{\lambda}\oplus\mathfrak{a}_{\lambda_{0}},K_{\lambda})$-module. The fact that $\mathfrak{a}_{\lambda_{0}}$ acts by scalars, implies that $H_{\Theta,K_{\lambda}}$ itself is finitely generated (as $U(\mathfrak{m}_{\lambda\mathbb{C}})$-module) and admissible.
\end{proof}

In the next corollary, we apply Casselman's version of the Frobenius reciprocity to construct $(\mathfrak{g},K)$-intertwining operators from the functions $\Gamma_{\lambda,l}$. We recall that $\overline{P_{\lambda}}$ denotes the parabolic subgroup opposite to $P_{\lambda}$ and that the half-sum of positive roots determined by $\overline{P_{\lambda}}$ is precisely $-\rho_{\lambda_{0}}$.

\begin{cor}\label{CassFrob} The map 
\[T_{w}\colon H_{K}\longrightarrow \text{Ind}_{\overline{P_{\lambda}},K_{\lambda}}(\Theta,\lambda|_{\mathfrak{a}_{\lambda_{0}}}),\;\;
T_{w}(v)(k)(m):=\Gamma_{\lambda,l}(m,\pi(k)v,w)\] is $(\mathfrak{g},K)$-equivariant.
\end{cor}
\begin{proof} The equivariance follows from Proposition \ref{applyfrob}, in combination with Theorem \ref{Frobenius} and the discussion following it. More precisely, we have $T_{w}=\Tilde{S}_{w}$ in the notation of the discussion following Theorem \ref{Frobenius}. 
\end{proof}

The next proposition is the core of the article: it allows us to prove an identity of certain integrals using representation-theoretic methods. In the final section, it will be shown that the identity in question implies Proposition \ref{ginv2}.

\begin{prop}\label{intrep} Let $[\lambda,l],[\mu,m]\in \mathcal{C}/{\sim}$ be relevant equivalence classes such that $I_{\lambda}=I_{\mu}$, $\lambda|_{\mathfrak{a}_{\lambda_{0}}}=\mu|_{\mathfrak{a}_{\lambda_{0}}}$ and $\textbf{d}_{P}(\lambda,l)=\textbf{d}_{P}(\mu,m)$. Then, for all $X\in \mathfrak{g}$, for all $k\in K$, and for all $v_{1},v_{2},v_{3},v_{4}\in H_{K}$, the integral

\begin{equation*}
\int_{K}\langle \Gamma_{\lambda,l}(m_{\lambda},\pi(k)\dot{\pi}(X)v_{1},v_{2}),\Gamma_{\mu,m}(m_{\lambda},\pi(k)v_{3},v_{4})\rangle_{L^{2}(M_{\lambda})}\,dk
\end{equation*}
is equal to the integral
\begin{equation*}
-\int_{K}\langle \Gamma_{\lambda,l}(m_{\lambda},\pi(k)v_{1},v_{2}),\Gamma_{\mu,m}(m_{\lambda},\pi(k)\dot{\pi}(X)v_{3},v_{4})\rangle_{L^{2}(M_{\lambda})}\,dk.
\end{equation*}

\end{prop}
\begin{proof} By Proposition \ref{applyfrob} and the discussion before Lemma \ref{referee}, we can construct a representation $(\sigma_{1},H_{\sigma_{1}})$ of $M_{\lambda}$ that is finitely generated, unitary and such that the image of the $(\mathfrak{m}_{\lambda}\oplus \mathfrak{a}_{\lambda_{0}},K_{\lambda})$-equivariant map 
\begin{equation*}
S_{v_{2}}:H_{K}\longrightarrow L^{2}(M_{\lambda})_{K_{\lambda}}\otimes \mathbb{C}_{\lambda|_{\mathfrak{a}_{\lambda_{0}}}-\rho_{\lambda_{0}}},\text{ }S_{v_{2}}(v)(m):=\Gamma_{\lambda,l}(m,v,v_{2})
\end{equation*}
is precisely $H_{\sigma_{1},K_{\lambda}}\otimes \mathbb{C}_{\lambda|_{\mathfrak{a}_{\lambda_{0}}}-\rho_{\lambda_{0}}}$. Similarly, we can construct an admissible, finitely generated, unitary representation $(\sigma_{2},H_{\sigma_{2}})$ such that the image of the $(\mathfrak{m}_{\lambda}\oplus \mathfrak{a}_{\lambda_{0}},K_{\lambda})$-equivariant map 
\begin{equation*}
S_{v_{4}}:H_{K}\longrightarrow L^{2}(M_{\lambda})_{K_{\lambda}}\otimes \mathbb{C}_{\mu|_{\mathfrak{a}_{\lambda_{0}}}-\rho_{\lambda_{0}}},\text{ }S_{v_{4}}(v)(m):=\Gamma_{\mu,m}(m,v,v_{4})
\end{equation*}
is precisely $H_{\sigma_{2},K_{\lambda}}\otimes \mathbb{C}_{\mu|_{\mathfrak{a}_{\lambda_{0}}}-\rho_{\lambda_{0}}}$. Let $(\sigma,H_{\sigma})$ denote the direct sum of $(\sigma_{1},H_{\sigma_{1}})$ and $(\sigma_{2},H_{\sigma_{2}})$. It is an an admissible, finitely generated, unitary representation which restricts to a unitary representation of $K_{\lambda}$. Since $\lambda|_{\mathfrak{a}_{\lambda_{0}}}=\mu|_{\mathfrak{a}_{\lambda_{0}}}$, by the same computations as in Lemma \ref{referee} and Proposition \ref{applyfrob} we obtain \newline $(\mathfrak{m}_{\lambda}\oplus \mathfrak{a}_{\lambda_{0}},K_{\lambda})$-equivariant maps 
\begin{equation*}
S_{v_{2}}:H_{K}\longrightarrow H_{\sigma,K_{\lambda}}\otimes \mathbb{C}_{\lambda|_{\mathfrak{a}_{\lambda_{0}}}-\rho_{\lambda_{0}}},\text{ }S_{v_{2}}(v)(m):=\Gamma_{\lambda,l}(m,v,v_{2})
\end{equation*}
and 
\begin{equation*}
S_{v_{4}}:H_{K}\longrightarrow H_{\sigma,K_{\lambda}}\otimes \mathbb{C}_{\lambda|_{\mathfrak{a}_{\lambda_{0}}}-\rho_{\lambda_{0}}},\text{ }S_{v_{4}}(v)(m):=\Gamma_{\mu,m}(m,v,v_{4})
\end{equation*}
factoring through the $(\mathfrak{m}_{\lambda}\oplus\mathfrak{a}_{\lambda_{0}},K_{\lambda})$-equivariant quotient map 
\begin{equation*}
q:H_{K}\longrightarrow H_{K}/\mathfrak{n}_{\lambda_{0}}H_{K}.
\end{equation*}
From Corollary \ref{CassFrob}, we obtain $(\mathfrak{g},K)$-equivariant maps
\begin{equation*}
T_{v_{2}}:H_{K}\longrightarrow \text{Ind}_{\overline{P_{\lambda}},K_{\lambda}}(\sigma,\lambda|_{\mathfrak{a}_{\lambda_{0}}}),\text{ }T_{v_{2}}(v)(k)(m):=\Gamma_{\lambda,l}(m,\pi(k)v,v_{2})
\end{equation*}
and
\begin{equation*}
T_{v_{4}}:H_{K}\longrightarrow \text{Ind}_{\overline{P_{\lambda}},K_{\lambda}}(\sigma,\lambda|_{\mathfrak{a}_{\lambda_{0}}}),\text{ }T_{v_{4}}(v)(k)(m):=\Gamma_{\mu,m}(m,\pi(k)v,v_{4}).
\end{equation*}
By definition of the inner product on $\text{Ind}_{\overline{P_{\lambda}}}(\sigma,\lambda|_{\mathfrak{a}_{\lambda_{0}}})$, we see that proving the sought identity is equivalent to proving that 
\begin{equation*}
\langle T_{v_{2}}(\dot{\pi}(X)v_{1}),T_{v_{4}}(v_{3})\rangle_{\text{Ind}_{\overline{P_{\lambda}}}(\sigma,\lambda|_{\mathfrak{a}_{\lambda_{0}}})}=-\langle T_{v_{2}}(v_{1}),T_{v_{4}}(\dot{\pi}(X)v_{3})\rangle_{\text{Ind}_{\overline{P_{\lambda}}}(\sigma,\lambda|_{\mathfrak{a}_{\lambda_{0}}})}.
\end{equation*}
By the $(\mathfrak{g},K)$-equivariance of $T_{v_{2}}$, we have
\begin{equation*}
\langle T_{v_{2}}(\dot{\pi}(X)v_{1}),T_{v_{4}}(v_{3})\rangle_{\text{Ind}_{\overline{P_{\lambda}}}(\sigma,\lambda|_{\mathfrak{a}_{\lambda_{0}}})}=\langle \dot{\text{Ind}}_{\overline{P_{\lambda}}}(\sigma,\lambda|_{\mathfrak{a}_{\lambda_{0}}},X)T_{v_{2}}(v_{1}),T_{v_{4}}(v_{3})\rangle_{\text{Ind}_{\overline{P_{\lambda}}}(\sigma,\lambda|_{\mathfrak{a}_{\lambda_{0}}})}
\end{equation*}
and, since $\lambda|_{\mathfrak{a}_{\lambda_{0}}}$ is totally imaginary, from Corollary \ref{corparabunit} we deduce 
\[\begin{split}\langle \dot{\text{Ind}}_{\overline{P_{\lambda}}}(\sigma,\lambda|_{\mathfrak{a}_{\lambda_{0}}},X)T_{v_{2}}(v_{1}),T_{v_{4}}(v_{3})\rangle_{\text{Ind}_{\overline{P_{\lambda}}}(\sigma,\lambda|_{\mathfrak{a}_{\lambda_{0}}})}&\cr
=-\langle T_{v_{2}}(v_{1}),&\dot{\text{Ind}}_{\overline{P_{\lambda}}}(\sigma,\lambda|_{\mathfrak{a}_{\lambda_{0}}},X)T_{v_{4}}(v_{3})\rangle_{\text{Ind}_{\overline{P_{\lambda}}}(\sigma,\lambda|_{\mathfrak{a}_{\lambda_{0}}})}.
\end{split}\]
The result follows from the $(\mathfrak{g},K)$-equivariance of $T_{v_{4}}$. 
\end{proof}


\section{Asymptotic Orthogonality} \label{sec:4}

For a tempered, irreducible representation $(\pi,H)$ of $G$, for $v,w\in H$, let 
\begin{equation*}
\phi_{v,w}(g):=\langle \pi(g)v,w\rangle
\end{equation*}
denote the associated matrix coefficient. By (2) of Theorem , there exists $\textbf{d}(\pi)\in\mathbb{Z}_{\geq0}$ such that 
\begin{equation*}
\lim_{r\rightarrow \infty}\frac{1}{r^{\textbf{d}(\pi)}}\int_{G_{<r}}|\phi_{v,w}(g)|^{2}\,dg<\infty
\end{equation*}
for all $v,w\in H_{K}$. \\

As in \cite[Section 4.1]{KYD}, by the polarisation identity and by (2) of Theorem \ref{kfin}, the prescription 
\begin{equation*}
D(v_{1},v_{2},v_{3},v_{4}):=\lim_{r\to\infty}\frac{1}{r^{\textbf{d}(\pi)}}\int_{G_{<r}} \phi_{v_{1},v_{2}}(g) \overline{\phi_{v_{3},v_{4}}(g)}\,dg
\end{equation*}
is a well-defined form on $H_{K}$ that is linear in the first and fourth variable, conjugate-linear in the second and the third.\\

We explained in the Introduction that the crucial point is the proof of Proposition \ref{ginv2}. We begin with the following reduction.
\begin{lem}\label{redginv} Let $G$ be a connected, semisimple Lie group with finite centre and let $(\pi,H)$ be a tempered, irreducible representation of $G$. If for all $X\in \mathfrak{g}$ and for all $v_{1},v_{2},v_{3},v_{4}\in H_{K}$ we have 
\begin{equation*}
\lim_{r\rightarrow \infty}\frac{1}{r^{\textbf{d}(\pi)}}\int_{G_{<r}}\phi_{\dot{\pi}(X)v_{1},v_{2}}(g)\overline{\phi_{v_{3},v_{4}}(g)}\,dg=-\lim_{r\rightarrow \infty}\frac{1}{r^{\textbf{d}(\pi)}}\int_{G_{<r}}\phi_{v_{1},v_{2}}(g)\overline{\phi_{\dot{\pi}(X)v_{3},v_{4}}(g)}\,dg,
\end{equation*}
then the equality 
\begin{equation*}
\lim_{r\rightarrow \infty}\frac{1}{r^{\textbf{d}(\pi)}}\int_{G_{<r}}\phi_{v_{1},\dot{\pi}(X)v_{2}}(g)\overline{\phi_{v_{3},v_{4}}(g)}\,dg=-\lim_{r\rightarrow \infty}\frac{1}{r^{\textbf{d}(\pi)}}\int_{G_{<r}}\phi_{v_{1},v_{2}}(g)\overline{\phi_{v_{3},\dot{\pi}v_{4}}(g)}\,dg
\end{equation*}
holds for every $X\in \mathfrak{g}$ and for every $v_{1},v_{2},v_{3},v_{4}\in H_{K}$.
\end{lem}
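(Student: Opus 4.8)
The plan is to deduce the second identity from the first by exploiting a symmetry of the limiting form
\[
D(v_{1},v_{2},v_{3},v_{4}):=\lim_{r\to\infty}\frac{1}{r^{\textbf{d}(\pi)}}\int_{G_{<r}}\phi_{v_{1},v_{2}}(g)\overline{\phi}_{v_{3},v_{4}}(g)\,dg
\]
coming from the substitution $g\mapsto g^{-1}$. By part (2) of Theorem \ref{kfin} and the polarisation identity, $D$ is well defined for all $v_{1},v_{2},v_{3},v_{4}\in H_{K}$, and since $H_{K}$ is a $(\mathfrak{g},K)$-module we have $\dot{\pi}(X)v_{i}\in H_{K}$; hence every expression below makes sense. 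In this notation the hypothesis reads
\[
D(\dot{\pi}(X)v_{1},v_{2},v_{3},v_{4})=-D(v_{1},v_{2},\dot{\pi}(X)v_{3},v_{4}),
\]
and the asserted conclusion reads $D(v_{1},\dot{\pi}(X)v_{2},v_{3},v_{4})=-D(v_{1},v_{2},v_{3},\dot{\pi}(X)v_{4})$.

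The first step is to establish the symmetry $D(v_{1},v_{2},v_{3},v_{4})=D(v_{4},v_{3},v_{2},v_{1})$. I would use that (i) $G$, being semisimple, is unimodular, so its Haar measure is invariant under $g\mapsto g^{-1}$; (ii) $\textbf{r}(g^{-1})=\textbf{r}(g)$ by the definition of $\textbf{r}$, so $G_{<r}$ is stable under inversion; and (iii) the unitarity of $\pi$ gives the pointwise identity $\phi_{v,w}(g^{-1})=\langle\pi(g^{-1})v,w\rangle=\langle v,\pi(g)w\rangle=\overline{\phi_{w,v}(g)}$. Performing the change of variables $g\mapsto g^{-1}$ in the integral over $G_{<r}$ and invoking (i)--(iii), the integrand $\phi_{v_{1},v_{2}}(g)\overline{\phi}_{v_{3},v_{4}}(g)$ becomes $\overline{\phi_{v_{2},v_{1}}(g)}\,\phi_{v_{4},v_{3}}(g)=\phi_{v_{4},v_{3}}(g)\overline{\phi}_{v_{2},v_{1}}(g)$; dividing by $r^{\textbf{d}(\pi)}$ and letting $r\to\infty$ gives the claimed equality.

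The second step is to combine the symmetry with the hypothesis. Applying the symmetry, then the hypothesis with $v_{1},v_{2},v_{3},v_{4}$ replaced by $v_{4},v_{3},v_{2},v_{1}$, and then the symmetry once more, we obtain
\begin{align*}
D(v_{1},\dot{\pi}(X)v_{2},v_{3},v_{4})&=D(v_{4},v_{3},\dot{\pi}(X)v_{2},v_{1})\\
&=-D(\dot{\pi}(X)v_{4},v_{3},v_{2},v_{1})\\
&=-D(v_{1},v_{2},v_{3},\dot{\pi}(X)v_{4}),
\end{align*}
which is precisely the second identity of Proposition \ref{ginv2}. As $X\in\mathfrak{g}$ and $v_{1},v_{2},v_{3},v_{4}\in H_{K}$ are arbitrary, this finishes the argument.

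The lemma is elementary and there is no serious obstacle; the only point requiring care is the bookkeeping of which slots of $D$ are linear and which are conjugate-linear, both when reading off $\phi_{v,w}(g^{-1})=\overline{\phi_{w,v}(g)}$ and when transcribing the statement of Proposition \ref{ginv2} into the $D$-notation, together with the standard facts that $G$ is unimodular and that the ball $G_{<r}$ is stable under inversion.
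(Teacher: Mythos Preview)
Your proof is correct and follows essentially the same approach as the paper: both exploit the inversion symmetry $g\mapsto g^{-1}$ (using unimodularity of $G$, stability of $G_{<r}$ under inversion, and unitarity of $\pi$) to reduce the second identity to the first. Your formulation via the clean symmetry $D(v_{1},v_{2},v_{3},v_{4})=D(v_{4},v_{3},v_{2},v_{1})$ is in fact a bit more streamlined than the paper's version, which unrolls the same computation through explicit complex conjugation steps, but the mathematical content is identical.
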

\begin{proof} We write 
\begin{equation*}
\phi_{v_{1},\dot{\pi}(X)v_{2}}(g)\overline{\phi_{v_{3},v_{4}}(g)}=\langle v_{1},\pi(g^{-1})\dot{\pi}(X)v_{2}\rangle\overline{\langle v_{3},\pi(g^{-1})v_{4}\rangle}
\end{equation*}
and since $\langle\cdot,\cdot\rangle$ is Hermitian we have 
\begin{equation*}
\langle v_{1},\pi(g^{-1})\dot{\pi}(X)v_{2}\rangle\overline{\langle v_{3},\pi(g^{-1})v_{4}\rangle}=\phi_{v_{4},v_{3}}(g^{-1})\overline{ \phi_{\dot{\pi}(X)v_{2},v_{1}}(g^{-1})}.
\end{equation*}
Now, since $G_{<r}$ is invariant under $\iota(g)=g^{-1}$ and $G$ is unimodular, we have 
\begin{equation*}
\int_{G_{<r}}\phi_{v_{4},v_{3}}(g^{-1})\overline{ \phi_{\dot{\pi}(X)v_{2},v_{1}}(g^{-1})}\,dg=\int_{G_{<r}} \phi_{v_{4},v_{3}}(g)\overline{\phi_{\dot{\pi}(X)v_{2},v_{1}}(g)}\,dg
\end{equation*}
and therefore
\begin{equation*}
\int_{G_{<r}}\phi_{v_{1},\dot{\pi}(X)v_{2}}(g)\overline{\phi_{v_{3},v_{4}}(g)}\,dg=\int_{G_{<r}} \phi_{v_{4},v_{3}}(g)\overline{\phi_{\dot{\pi(X)v_{2},v_{1}}}(g)}\,dg.
\end{equation*}
Applying complex conjugation, we obtain 
\begin{equation*}
\overline{\int_{G_{<r}}\phi_{v_{1},\dot{\pi}(X)v_{2}}(g)\overline{\phi_{v_{3},v_{4}}(g)}\,dg}=\int_{G_{<r}}\phi_{\dot{\pi}(X)v_{2},v_{1}}(g)\overline{\phi_{v_{4},v_{3}}(g)}\,dg.
\end{equation*}
Assuming the validity of the first identity in the statement, we can write 
\begin{equation*}
\lim_{r\rightarrow \infty}\frac{1}{r^{\textbf{d}(\pi)}}\int_{G_{<r}}\phi_{\dot{\pi}(X)v_{2},v_{1}}(g)\overline{\phi_{v_{4},v_{3}}(g)}\,dg=-\lim_{r\rightarrow \infty}\frac{1}{r^{\textbf{d}(\pi)}}\int_{G_{<r}}\phi_{v_{2},v_{1}}(g)\overline{\phi_{\dot{\pi}(X)v_{4},v_{3}}(g)}\,dg.
\end{equation*}
Now, since 
\begin{equation*}
\int_{G_{<r}}\phi_{\dot{\pi}(X)v_{2},v_{1}}(g)\overline{\phi_{v_{4},v_{3}}(g)}\,dg=\overline{\int_{G_{<r}}\phi_{v_{1},\dot{\pi}(X)v_{2}}(g)\overline{\phi_{v_{3},v_{4}}(g)}\,dg},
\end{equation*}
it follows that 
\begin{equation*}
\lim_{r\rightarrow \infty}\frac{1}{r^{\textbf{d}(\pi)}}\int_{G_{<r}}\phi_{v_{1},\dot{\pi}(X)v_{2}}(g)\overline{\phi_{v_{3},v_{4}}(g)}\,dg=-\lim_{r\rightarrow \infty}\overline{\int_{G_{<r}}\phi_{v_{2},v_{1}}(g)\overline{\phi_{\dot{\pi}(X)v_{4},v_{3}}(g)}\,dg}.
\end{equation*}
Observing that 
\begin{equation*}
\overline{\int_{G_{<r}}\phi_{v_{2},v_{1}}(g)\overline{\phi_{\dot{\pi}(X)v_{4},v_{3}}(g)}\,dg}=\int_{G_{<r}}\phi_{\dot{\pi}(X)v_{4},v_{3}}(g)\overline{\phi_{v_{2},v_{1}}(g)}\,dg
\end{equation*}
and that, using the invariance of $G_{<r}$ under $\iota(g)=g^{-1}$ and the unimodularity of $G$, we have 
\begin{equation*}
\int_{G_{<r}}\phi_{\dot{\pi}(X)v_{4},v_{3}}(g)\overline{\phi_{v_{2},v_{1}}(g)}\,dg=\int_{G_{<r}}\phi_{v_{1},v_{2}}(g)\overline{\phi_{v_{3},\dot{\pi}(X)v_{4}}(g)}\,dg,
\end{equation*}
we finally obtain 
\begin{equation*}
\lim_{r\rightarrow \infty}\frac{1}{r^{\textbf{d}(\pi)}}\int_{G_{<r}}\phi_{v_{1},\dot{\pi}(X)v_{2}}(g)\overline{\phi_{v_{3},v_{4}}(g)}\,dg=-\lim_{r\rightarrow \infty}\frac{1}{r^{\textbf{d}(\pi)}}\int_{G_{<r}}\phi_{v_{1},v_{2}}(g)\overline{\phi_{v_{3},\dot{\pi}(X)v_{4}}(g)}\,dg. 
\end{equation*}
\end{proof}

\begin{prop}\label{ginv} Let $G$ be a connected, semisimple Lie group with finite centre and let $(\pi,H)$ be a tempered, irreducible representation of $G$. Then, for all $X\in\mathfrak{g}$ and for all $v_{1},v_{2},v_{3},v_{4}\in H_{K}$, we have 
\begin{equation*}\lim_{r\rightarrow \infty}\frac{1}{r^{\textbf{d}(\pi)}}\int_{G_{<r}}\phi_{\dot{\pi}(X)v_{1},v_{2}}(g)\overline{\phi_{v_{3},v_{4}}(g)}\,dg=-\lim_{r\rightarrow \infty}\frac{1}{r^{\textbf{d}(\pi)}}\int_{G_{<r}}\phi_{v_{1},v_{2}}(g)\overline{\phi_{\dot{\pi}(X)v_{3},v_{4}}(g)}\,dg.
\end{equation*}
\end{prop}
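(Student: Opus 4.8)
The plan is to reduce the asymptotic orthogonality identity to an integral over $K\times A^+_\lambda$ and then to recognise that integral as an inner product in a parabolically induced \emph{unitary} representation, where the sought skew-symmetry is built in. First I would invoke the discussion of Appendix A of \cite{KYD} that precedes Proposition \ref{Gamma}: after decomposing $G_{<r}$ via $G=K\overline{\exp(\mathfrak{a}^+)}K$ and absorbing the Jacobian factor $\Omega(H)$, the limit $\lim_{r\to\infty}\frac{1}{r^{\textbf{d}(\pi)}}\int_{G_{<r}}\phi_{\dot\pi(X)v_1,v_2}\overline{\phi}_{v_3,v_4}\,dg$ can be written, up to the positive constants $C(\lambda,l,m)$, as a finite sum over pairs of equivalence classes $[\lambda,l],[\mu,m]\in\mathcal{C}/\!\sim$ of integrals
\[
\int_K\int_{A^+_\lambda}\Gamma_{\lambda,l}(\exp H_\lambda,\pi(k)\dot\pi(X)v_1,w_2)\,\overline{\Gamma}_{\mu,m}(\exp H_\lambda,\pi(k)v_3,w_4)\,\Omega_\lambda(H_\lambda)\,dH_\lambda\,dk,
\]
and only the classes with $I_\lambda=I_\mu$, $\lambda|_{\mathfrak{a}_{\lambda_0}}=\mu|_{\mathfrak{a}_{\lambda_0}}$ and $\textbf{d}_P(\lambda,l)=\textbf{d}_P(\mu,m)=\textbf{d}(\pi)$ survive. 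Interchanging the various integrals and limits is exactly the content of the ``Lemma \ref{Interchange}'' referenced in the Introduction, which I would cite; the key input justifying it is the uniform absolute convergence in Theorem \ref{asymp1} together with the integrability estimates underlying Proposition \ref{squareint}.

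Next, for each surviving pair I would interpret the inner integral over $A^+_\lambda$ (with the weight $\Omega_\lambda$) as (a constant multiple of) the $L^2(M_\lambda)$-pairing of $\Gamma_{\lambda,l}(\cdot,\pi(k)v_1,w_2)$ and $\Gamma_{\mu,m}(\cdot,\pi(k)v_3,w_4)$, using the $M_\lambda=K_\lambda\overline{A^+_\lambda}K_\lambda$ integration formula and the $K_\lambda$-equivariance recorded after Proposition \ref{Gamma}; here the hypotheses $I_\lambda=I_\mu$ and $\lambda|_{\mathfrak{a}_{\lambda_0}}=\mu|_{\mathfrak{a}_{\lambda_0}}$ guarantee that $\Gamma_{\lambda,l}$ and $\Gamma_{\mu,m}$ live in the \emph{same} space and that $\lambda|_{\mathfrak{a}_{\lambda_0}}$ is totally imaginary, so that $\mathrm{Ind}_{\overline{P}_\lambda}(\sigma,\lambda|_{\mathfrak{a}_{\lambda_0}})$ is unitary by Proposition \ref{parabunit}. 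By Proposition \ref{applyfrob} and the $(\mathfrak{g},K)$-equivariant maps $T_{w_2},T_{w_4}$ obtained from Casselman's Frobenius reciprocity (Theorem \ref{Frobenius}), the $k$-integral above is precisely
\[
\langle T_{w_2}(\dot\pi(X)v_1),\,T_{w_4}(v_3)\rangle_{\mathrm{Ind}_{\overline{P}_\lambda}(\sigma,\lambda|_{\mathfrak{a}_{\lambda_0}})}.
\]
Now $(\mathfrak{g},K)$-equivariance of $T_{w_2}$ turns $T_{w_2}(\dot\pi(X)v_1)$ into $\dot{\mathrm{Ind}}_{\overline{P}_\lambda}(\sigma,\lambda|_{\mathfrak{a}_{\lambda_0}},X)T_{w_2}(v_1)$, Corollary \ref{corparabunit} moves the $X$ across the inner product with a sign, and $(\mathfrak{g},K)$-equivariance of $T_{w_4}$ turns $\dot{\mathrm{Ind}}_{\overline{P}_\lambda}(\sigma,\lambda|_{\mathfrak{a}_{\lambda_0}},X)T_{w_4}(v_3)$ back into $T_{w_4}(\dot\pi(X)v_3)$. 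Summing over all surviving pairs $([\lambda,l],[\mu,m])$ with the same positive constants $C(\lambda,l,m)$ on both sides then yields the claimed identity.

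By Lemma \ref{redginv} the second identity of Proposition \ref{ginv2} follows from the first, so this suffices. The main obstacle, and the place where I would spend the most care, is the bookkeeping of the first step: one must verify that applying $\dot\pi(X)$ to $v_1$ does not create new exponent data that would make a previously non-contributing pair $([\lambda,l],[\mu,m])$ contribute — this is exactly why Lemma \ref{computationa} and Lemma \ref{computationn} are needed, since they pin down the effect of differentiation in the $\mathfrak{a}_{\lambda_0}$- and $\mathfrak{n}_{\lambda_0}$-directions on the coefficient functions $\Gamma_{\lambda,l}$, and why the maximality of $\textbf{d}(\pi)$ is used repeatedly to rule out growth in the ``bad'' directions. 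A second delicate point is the legitimacy of all the interchanges of $\lim_r$, $\int_K$, $\int_{A^+_\lambda}$ and the (finite) sums, which must be handled through the dominated-convergence-type estimates packaged in Lemma \ref{Interchange} and the square-integrability of Proposition \ref{squareint}; everything after that is the formal manipulation with unitarity sketched above.
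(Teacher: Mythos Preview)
Your proposal is correct and follows essentially the same route as the paper: reduce via the Cartan decomposition and Appendix~A of \cite{KYD} to $L^{2}(M_{\lambda})$-pairings of the $\Gamma$-functions, reinterpret these via Proposition~\ref{applyfrob} and Frobenius reciprocity as inner products in the unitary representation $\mathrm{Ind}_{\overline{P}_{\lambda}}(\sigma,\lambda|_{\mathfrak{a}_{\lambda_{0}}})$, and conclude by the skew-symmetry of Corollary~\ref{corparabunit}. One small correction of emphasis: Lemmas~\ref{computationa} and~\ref{computationn} are not there to prevent $\dot\pi(X)v_{1}$ from producing new exponent data (the set $\mathcal{E}_{0}$ depends only on $(\pi,H)$, so this is automatic); their role is to show that $S_{w}$ carries the correct $\mathfrak{a}_{\lambda_{0}}$-weight and kills $\mathfrak{n}_{\lambda_{0}}H_{K}$, which is exactly the input Proposition~\ref{applyfrob} needs to invoke Theorem~\ref{Frobenius}.
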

\begin{rem} Some of the integral manipulations in the proof require careful justification. We decided to provided it in Lemma \ref{Interchange} after the proof of Proposition \ref{ginv}. 
\end{rem}
\begin{proof} The integral formula for the Cartan decomposition, taking into account the fact that, except for a set of measure zero, every $g\in G_{<r}$ can be written as $g=k_{2}\,\mathrm{exp}H\,k_{1}$, for some $k_{1},k_{2}\in K$ and some $H\in \mathfrak{a}_{<r}^{+}$, with $\mathfrak{a}_{<r}^{+}$ as in \eqref{defregiona}, gives 
\[\begin{split}
\int_{G_{<r}}\phi_{\dot{\pi}(X)v_{1},v_{2}}(g)\overline{\phi_{v_{3},v_{4}}(g)}\,dg& \cr
=\int_{K}\int_{\mathfrak{a}_{<r}^{+}}\int_{K}&\phi_{\dot{\pi}(X)v_{1},v_{2}}(k_{2}\,\text{exp}H\,k_{1})\overline{\phi_{v_{3},v_{4}}(k_{2}\,\text{exp}H\,k_{1})}\Omega(H)\,dk_{1}\,dH\,dk_{2}
\end{split}\]
with $\Omega(H)$ defined in \eqref{Omega}.

Arguing as in \cite[p. 258]{KYD}, we can interchange the two innermost integrals in the RHS and, upon multiplying both sides by $\displaystyle\frac{1}{r^{\textbf{d}(\pi)}}$ and taking the limit as $r\rightarrow \infty$, the RHS can be computed as the integral over $K\times K$ of 
\begin{equation*}
\lim_{r\rightarrow \infty}\frac{1}{r^{\textbf{d}(\pi)}}\int_{\mathfrak{a}^{+}_{<r}}\phi_{\dot{\pi}(X)v_{1},v_{2}}(k_{2}\,\text{exp}H\,k_{1})\overline{\phi_{v_{3},v_{4}}(k_{2}\,\text{exp}H\,k_{1})}\Omega(H)\,dH.
\end{equation*}
We expand $\phi_{v_{1},v_{2}}$ and $\phi_{v_{3},v_{4}}$ as 
\begin{equation*}
\phi_{v_{1},v_{2}}(k_{2}\,\text{exp}H\,k_{1})=e^{-\rho_{\mathfrak{p}}(H)}\sum_{\left[\lambda,l\right]\in \mathcal{C}/{\sim}}\alpha(H_{\lambda_{0}})^{l_{\lambda_{0}}}e^{\lambda|_{\mathfrak{a}_{\lambda_{0}}}(H_{\lambda_{0}})}\sum_{(\lambda',l')\in \left[\lambda,l\right]}\Psi^{\pi(k_{1})v_{1},\pi(k^{-1}_{2})v_{2}}_{\lambda',l'}(H)
\end{equation*}
and
\[\begin{split}
\phi_{v_{3},v_{4}}(k_{2}\,\text{exp}H\,k_{1})\,=\,&e^{-\rho_{\mathfrak{p}}(H)}\sum_{\left[\mu,m\right]\in \mathcal{C}/{\sim}}\alpha(H_{\mu_{0}})^{m_{\mu_{0}}}e^{\mu|_{\mathfrak{a}_{\mu_{0}}}(H_{\mu_{0}})}\cr
&\sum_{(\mu',m')\in \left[\mu,m\right]}\Psi^{\pi(k_{1})v_{1},\pi(k^{-1}_{2})v_{2}}_{\mu',m'}(H).
\end{split}\]
By \cite[Lemma A.5 and Claim A.6]{KYD}, the only non-zero contributions to 
\begin{equation}\label{asympint2}
\lim_{r\rightarrow \infty}\frac{1}{r^{\textbf{d}(\pi)}}\int_{\mathfrak{a}^{+}_{<r}}\phi_{\dot{\pi}(X)v_{1},v_{2}}(k_{2}\,\text{exp}H\,k_{1})\overline{\phi_{v_{3},v_{4}}(k_{2}\,\text{exp}H\,k_{1})}\Omega(H)\,dH
\end{equation}
may come from those $\left[\lambda,l\right]\in \mathcal{C}/{\sim}$ and those $\left[\mu,m\right]\in \mathcal{C}/{\sim}$ for which
\[I_{\lambda}=I_{\mu},\quad \lambda|_{\mathfrak{a}_{\lambda_{0}}}=\mu|_{\mathfrak{a}_{\lambda_{0}}}\quad\text{and}\quad
\textbf{d}(\pi)=|I_{\lambda}|+\sum_{i\in I_{\lambda}}(l_{i}+m_{i}).
\]
In view of the first condition, the third is equivalent to requiring that 
\begin{equation*}
\textbf{d}(\pi)=\textbf{d}_{P}(\lambda,l)=\textbf{d}_{P}(\mu,m),
\end{equation*}
where $\textbf{d}_{P}(\lambda,l)$ and $\textbf{d}_{P}(\mu,m)$ are defined by \eqref{defdp}.\\

By the discussion in Section~\ref{sec:3} and by Proposition~\ref{Gamma}, the expression 
\begin{equation*}
\lim_{r\rightarrow \infty}\frac{1}{r^{\textbf{d}(\pi)}}\int_{\mathfrak{a}^{+}_{<r}}\phi_{\dot{\pi}(X)v_{1},v_{2}}(k_{2}\,\text{exp}H\,k_{1})\overline{\phi_{v_{3},v_{4}}(k_{2}\,\text{exp}H\,k_{1})}\Omega(H)\,dH
\end{equation*}
is equal to a finite sum terms of the form 
\[\begin{split}
C(\lambda,l,m)\int_{\mathfrak{a}^{+}_{\lambda}}\Gamma_{\lambda,l}(\text{exp}H_{\lambda},\pi(k_{1})\dot{\pi}(X)v_{1},\pi(k_{2}^{-1})v_{2})&\cr
\overline{\Gamma_{\mu,m}(\text{exp}H_{\lambda},\pi(k_{1})v_{3},\pi(k_{2}^{-1})v_{4})}&\Omega_{\lambda}(H_{\lambda})\,dH_{\lambda},
\end{split}\]
with $C(\lambda,l,m)$ as in \eqref{defC}, the functions $\Gamma_{\lambda,l}$ and $\Gamma_{\mu,m}$ defined as in \eqref{defgamma} and $\Omega_{\lambda}(H_{\lambda})$ defined as in \eqref{defomegalambda}.\\

Taking into account the integration over $K\times K$, we proved that 
\begin{equation*}
\lim_{r\rightarrow \infty}\frac{1}{r^{\textbf{d}(\pi)}}\int_{K}\int_{\mathfrak{a}_{<r}^{+}}\int_{K}\phi_{\dot{\pi}(X)v_{1},v_{2}}(k_{2}\,\text{exp}H\,k_{1})\overline{\phi_{v_{3},v_{4}}(k_{2}\,\text{exp}H\,k_{1})}\Omega(H)\,dk_{1}\,dH\,dk_{2}
\end{equation*}
is equal to a finite sum of terms of the form 
\[\begin{split}
C(\lambda,l,m)\int_{K}\int_{K}\int_{\mathfrak{a}^{+}_{\lambda}}\Gamma_{\lambda,l}(\text{exp}H_{\lambda},\pi(k_{1})\dot{\pi}(X)v_{1},\pi(k_{2}^{-1})v_{2})&\cr
\overline{\Gamma_{\mu,m}(\text{exp}H_{\lambda},\pi(k_{1})v_{3},\pi(k_{2}^{-1})v_{4})}&\Omega_{\lambda}(H_{\lambda})\,dH_{\lambda}\,dk_{1}\,dk_{2}.
\end{split}\]
By (1) of Lemma \ref{Interchange} and applying the Fubini-Tonelli theorem, we can interchange the two innermost integral and we therefore need to prove that 
\[\begin{split}
\int_{K}\int_{\mathfrak{a}^{+}_{\lambda}}\int_{K}\Gamma_{\lambda,l}(\text{exp}H_{\lambda},\pi(k_{1})\dot{\pi}(X)v_{1},\pi(k_{2}^{-1})v_{2})&\cr
\overline{\Gamma_{\mu,m}(\text{exp}H_{\lambda},\pi(k_{1})v_{3},\pi(k_{2}^{-1})v_{4})}&\Omega_{\lambda}(H_{\lambda})\,dk_{1}\,dH_{\lambda}\,dk_{2}
\end{split}\]
is equal to 
\[\begin{split}
-\int_{K}\int_{\mathfrak{a}^{+}_{\lambda}}\int_{K}\Gamma_{\lambda,l}(\text{exp}H_{\lambda},\pi(k_{1})v_{1},\pi(k_{2}^{-1})v_{2})&\cr
\overline{\Gamma_{\mu,m}(\text{exp}H_{\lambda},\pi(k_{1})\dot{\pi}(X)v_{3},\pi(k_{2}^{-1})v_{4})}&\Omega_{\lambda}(H_{\lambda})\,dk_{1}\,dH_{\lambda}\,dk_{2}.
\end{split}\]
Set 
\begin{equation*}
\mathcal{I}(\text{exp}H_{\lambda},k_{1},k_{2}^{-1}):=\Gamma_{\lambda,l}(\text{exp}_{\lambda},\pi(k_{1})\dot{\pi}(X)v_{1},\pi(k_{2}^{-1})v_{2})\overline{\Gamma_{\mu,m}(\text{exp}H_{\lambda},\pi(k_{1})v_{3},\pi(k_{2}^{-1})v_{4})}.
\end{equation*}
We apply the quotient integral formula \cite[Theorem 2.51]{Folland}, to write the integral 
\begin{equation*}
\int_{K}\int_{\mathfrak{a}^{+}_{\lambda}}\int_{K}\mathcal{I}(\text{exp}H_{\lambda},k_{1},k_{2}^{-1})\Omega(H_{\lambda})\,dk_{1}\,dH_{\lambda}\,dk_{2}
\end{equation*}
as 
\begin{equation*}
\int_{K}\int_{\mathfrak{a}^{+}_{\lambda}}\int_{K_{\lambda}\backslash K}\int_{K_{\lambda}}\mathcal{I}(\text{exp}H_{\lambda},\xi_{1}k_{1},k_{2}^{-1})\Omega_{\lambda}(H_{\lambda})\,d\xi_{1}\,d\dot{k}_{1}\,dH_{\lambda}\,dk_{2}
\end{equation*}
and again to write it as 
\begin{equation*}
\int_{K/K_{\lambda}}\int_{K_{\lambda}}\int_{\mathfrak{a}^{+}_{\lambda}}\int_{K_{\lambda}\backslash K}\int_{K_{\lambda}}\mathcal{I}(\text{exp}H_{\lambda},\xi_{1}k_{1},\xi_{2}^{-1}k_{2}^{-1})\Omega_{\lambda}(H_{\lambda})\,d\xi_{1}\,d\dot{k}_{1}\,dH_{\lambda}\,d\xi_{2}\,d\dot{k}_{2}.
\end{equation*}
By (3) of Lemma \ref{Interchange}, we can appeal to the Fubini-Tonelli theorem to interchange the two innermost integrals and to obtain 
\begin{equation*}
\int_{K/K_{\lambda}}\int_{K_{\lambda}}\int_{\mathfrak{a}^{+}_{\lambda}}\int_{K_{\lambda}}\int_{K_{\lambda}\backslash K}\mathcal{I}(\text{exp}H_{\lambda},\xi_{1}k_{1},\xi_{2}^{-1}k_{2}^{-1})\Omega_{\lambda}(H_{\lambda})\,d\dot{k}_{1}\,d\xi_{1}\,dH_{\lambda}\,d\xi_{2}\,d\dot{k}_{2}.
\end{equation*}
Now, combining the fact that $M^{\text{reg}}_{\lambda}=K_{\lambda}A^{+}_{\lambda}K_{\lambda}$, the relevant integral formula and the fact that the complement of $M^{\text{reg}}$ has measure zero in $M$, it follows that the integral 
\begin{equation*}
\int_{K_{\lambda}}\int_{\mathfrak{a}^{+}_{\lambda}}\int_{K_{\lambda}}\int_{K_{\lambda}\backslash K}\mathcal{I}(\text{exp}H_{\lambda},\xi_{1}k_{1},\xi_{2}^{-1}k_{2}^{-1})\Omega_{\lambda}(H_{\lambda})\,d\dot{k}_{1} \,d\xi_{1}\,dH_{\lambda}\,d\xi_{2}
\end{equation*}
is equal to 
\begin{equation*}
\int_{{M}_{\lambda}}\int_{K_{\lambda}\backslash K}\Gamma_{\lambda,l}(m_{\lambda},\pi(k_{1})\dot{\pi}(X)v_{1},\pi(k_{2}^{-1})v_{2})\overline{\Gamma_{\mu,m}(m_{\lambda},\pi(k_{1})v_{3},\pi(k_{2}^{-1})v_{4})}\,d\dot{k}_{1}\,dm_{\lambda}.
\end{equation*}
For $k_{1}\in K$, we define 
\begin{equation*}
f(k_{1}):=\langle \Gamma_{\lambda,l}(m_{\lambda},\pi(k_{1})\dot{\pi}(X)v_{1},\pi(k^{-1}_{2})v_{2}),\Gamma_{\mu,m}(m_{\lambda},\pi(k_{1})v_{3},\pi(k^{-1}_{2})v_{4})\rangle_{L^{2}(M_{\lambda})}.
\end{equation*}
The function $f$ is invariant under left-multiplication by $K_{\lambda}$. Indeed, if \\ $\xi\in K_{\lambda}$, then 
\begin{equation*}
\Gamma_{\lambda,l}(m_{\lambda},\pi(\xi k_{1})\dot{\pi}(X)v_{1},\pi(k_{2})v_{2})=\Gamma_{\lambda,l}(m_{\lambda}\xi,\pi(k_{1})\dot{\pi}(X)v_{1},\pi(k_{2})v_{2})
\end{equation*}
and similarly for the $\Gamma_{\mu,m}$-term. Since the right-regular representation of $M_{\lambda}$ is unitary, we have 
\begin{equation*}
\langle \Gamma_{\lambda,l}(m_{\lambda}\xi,\pi(k_{1})\dot{\pi}(X)v_{1},\pi(k^{-1}_{2})v_{2}),\Gamma_{\mu,m}(m_{\lambda}\xi,\pi(k_{1})v_{3},\pi(k^{-1}_{2})v_{4})\rangle_{L^{2}(M_{\lambda})}=f(k).
\end{equation*}
An application of the quotient integral formula \cite[Theorem 2.51]{Folland}, gives 
\begin{equation*}
\int_{K}f(k_{1})\,dk_{1}=\int_{K_{\lambda}\backslash K}\int_{K}f(\xi k_{1})\,d\xi\,d\dot{k}_{1}=\text{vol}(K_{\lambda})\int_{K_{\lambda}\backslash K}f(k_{1}) \,d\dot{k}_{1}.
\end{equation*}
By (2) in Lemma~\ref{Interchange} and appealing again to the Fubini-Tonelli theorem, we interchange the integrals over $M_{\lambda}$ and $K_{\lambda}\backslash K$ to obtain that 
\begin{equation*}
\int_{K/K_{\lambda}}\int_{M_{\lambda}}\int_{K_{\lambda}\backslash K}\Gamma_{\lambda,l}(m_{\lambda},\pi(k_{1})\dot{\pi}(X)v_{1},\pi(k_{2}^{-1})v_{2})\overline{\Gamma_{\mu,m}(m_{\lambda},\pi(k_{1})v_{3},\pi(k_{2}^{-1})v_{4})}\,d\xi_{1}dm_{\lambda}\,d\xi_{2}
\end{equation*}
equals
\begin{equation*}
\frac{1}{\text{vol}(K_{\lambda})}\int_{K/K_{\lambda}}\int_{K}f(k_{1})\,dk_{1}\,d\dot{k}_{2}
\end{equation*}
which, in turn, equals 
\[\begin{split}
\frac{1}{\text{vol}(K_{\lambda})}\int_{K/K_{\lambda}}\int_{K}\qquad&\cr
\langle \Gamma_{\lambda,l}(m_{\lambda},\pi(k_{1})\dot{\pi}(X)v_{1},&\pi(k^{-1}_{2})v_{2}),\Gamma_{\mu,m}(m_{\lambda},\pi(k_{1})v_{3},\pi(k^{-1}_{2})v_{4})\rangle_{L^{2}(M_{\lambda})}\,dk_{1}\,d\dot{k}_{2}.
\end{split}\]
For fixed $k_{2}\in K$, set $w_{2}:=\pi(k_{2}^{-1})v_{2}$ and $w_{4}:=\pi(k_{2}^{-1})v_{4}$.
We reduced the problem to proving that 
\begin{equation*}
\int_{K}\langle \Gamma_{\lambda,l}(m_{\lambda},\pi(k_{1})\dot{\pi}(X)v_{1},w_{2}),\Gamma_{\mu,m}(m_{\lambda},\pi(k_{1})v_{3},w_{4})\rangle_{L^{2}(M_{\lambda})}\,dk_{1}
\end{equation*}
equals 
\begin{equation*}
-\int_{K}\langle \Gamma_{\lambda,l}(m_{\lambda},\pi(k_{1})v_{1},w_{2}),\Gamma_{\mu,m}(m_{\lambda},\pi(k_{1})\dot{\pi}(X)v_{3},w_{4})\rangle_{L^{2}(M_{\lambda})}\,dk_{1}.
\end{equation*}
The result is therefore a consequence of Proposition \ref{intrep}.
\end{proof}

\begin{lem}\label{Interchange} Let $v_{1},w_{2},v_{3},w_{4}\in H_{K}$. Let $\left[\lambda,l\right],\left[\mu,m\right]\in \mathcal{C}/{\sim}$ be such that $I_{\lambda}=I_{\mu}$, $\lambda|_{\mathfrak{a}_{\lambda_{0}}}=\mu|_{\mathfrak{a}_{\lambda_{0}}}$ and $\textbf{d}(\pi)=|I_{\lambda}|+\sum_{i\in I_{\lambda}}(l_{i}+m_{i})$. Then the following holds: 
\begin{enumerate}
\item[(1)] 
\begin{equation*}
\int_{K}\int_{\mathfrak{a}_{\lambda}^{+}}|\Gamma_{\lambda,l}(\text{exp}H_{\lambda},\pi(k_{1})v_{1},w_{2})\overline{\Gamma_{\mu,m}(\text{exp}H_{\lambda},\pi(k_{1})v_{3},w_{4})}|\,dH_{\lambda}\,dk_{1}<\infty,
\end{equation*}
\item[(2)] 
\begin{equation*}
\int_{K_{\lambda}\backslash K}\int_{M_{\lambda}}|\Gamma_{\lambda,l}(m_{\lambda},\pi( k)v_{1},w_{2})\overline{\Gamma_{\mu,m}(m_{\lambda},\pi( k)v_{3},w_{4})}|\,dm_{\lambda}\,d\dot{k}<\infty.
\end{equation*}
\item[(3)] For any fixed $H_{\lambda}\in \mathfrak{a}^{+}_{\lambda}$, we have 
\begin{equation*}
\int_{K_{\lambda}\backslash K}\int_{K_{\lambda}}|\Gamma_{\lambda,l}(\text{exp}H_{\lambda},\pi(\xi k)v_{1},w_{2})\overline{\Gamma_{\mu,m}(\text{exp}H_{\lambda},\pi(\xi k)v_{3},w_{4})}|\,d\xi\,d\dot{k}<\infty.
\end{equation*}
\end{enumerate}
\end{lem}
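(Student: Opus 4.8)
The plan is to treat all three statements by a single mechanism whose core is admissibility ($K$-finiteness), linearity of $\Gamma_{\lambda,l}$ in its vector slot, and the square-integrability already proved in Proposition \ref{squareint}. In every case the integrand is, up to modulus, a product $\Gamma_{\lambda,l}(m_\lambda,\pi(k)v_1,w_2)\overline{\Gamma}_{\mu,m}(m_\lambda,\pi(k)v_3,w_4)$ in which $\pi(k)v_1$ and $\pi(k)v_3$ range over compact subsets of the finite-dimensional subspaces $W_1=\mathrm{span}\,\pi(K)v_1$ and $W_3=\mathrm{span}\,\pi(K)v_3$ of $H_K$. Fixing bases $\{v_1^{(j)}\}$, $\{v_3^{(i)}\}$ of $W_1$, $W_3$, one writes $\pi(k)v_1=\sum_j a_j(k)v_1^{(j)}$ and $\pi(k)v_3=\sum_i b_i(k)v_3^{(i)}$ with $a_j,b_i$ continuous on the compact group $K$, hence uniformly bounded by some constant $C_0$; since $\Gamma_{\lambda,l}(m_\lambda,\cdot,w_2)$ and $\Gamma_{\mu,m}(m_\lambda,\cdot,w_4)$ are linear in the vector argument (Theorem \ref{asymp2} via Proposition \ref{Gamma}), the integrand is bounded pointwise, \emph{uniformly in $k$}, by $C_0^2\sum_{i,j}\lvert\Gamma_{\lambda,l}(m_\lambda,v_1^{(j)},w_2)\rvert\,\lvert\Gamma_{\mu,m}(m_\lambda,v_3^{(i)},w_4)\rvert$. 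It therefore suffices to control finitely many integrals involving \emph{fixed} vectors. Note that the hypotheses $I_\lambda=I_\mu$, $\lambda|_{\mathfrak{a}_{\lambda_0}}=\mu|_{\mathfrak{a}_{\lambda_0}}$, $\mathbf{d}(\pi)=|I_\lambda|+\sum_{i\in I_\lambda}(l_i+m_i)$ are equivalent to $\mathbf{d}_P(\lambda,l)=\mathbf{d}(\pi)=\mathbf{d}_P(\mu,m)$, so Propositions \ref{Gamma} and \ref{squareint} apply to both classes.

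For (2), I would then estimate $\int_{M_\lambda}\lvert\Gamma_{\lambda,l}(m_\lambda,\pi(k)v_1,w_2)\overline{\Gamma}_{\mu,m}(m_\lambda,\pi(k)v_3,w_4)\rvert\,dm_\lambda$ by $C_0^2\sum_{i,j}\lVert\Gamma_{\lambda,l}(\cdot,v_1^{(j)},w_2)\rVert_{L^2(M_\lambda)}\lVert\Gamma_{\mu,m}(\cdot,v_3^{(i)},w_4)\rVert_{L^2(M_\lambda)}$ via Cauchy--Schwarz, a finite constant independent of $k$ by Proposition \ref{squareint}; integrating this constant over the compact space $K_\lambda\backslash K$ gives a finite value. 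For (3), with $H_\lambda$ fixed, $\Gamma_{\lambda,l}(\exp H_\lambda,\cdot,w_2)$ and $\Gamma_{\mu,m}(\exp H_\lambda,\cdot,w_4)$ are fixed linear functionals on $W_1$ and $W_3$, so $(\dot k,\xi)\mapsto\lvert\Gamma_{\lambda,l}(\exp H_\lambda,\pi(\xi k)v_1,w_2)\overline{\Gamma}_{\mu,m}(\exp H_\lambda,\pi(\xi k)v_3,w_4)\rvert$ is continuous on the compact set $(K_\lambda\backslash K)\times K_\lambda$, hence bounded, hence integrable over it.

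For (1), the same reduction lowers matters to the finiteness of $\int_{\mathfrak{a}_\lambda^+}\lvert\Gamma_{\lambda,l}(\exp H_\lambda,v_1^{(j)},w_2)\overline{\Gamma}_{\mu,m}(\exp H_\lambda,v_3^{(i)},w_4)\rvert\,dH_\lambda$ for the finitely many fixed pairs of vectors, after which the outer $\int_K$ of the resulting bounded continuous function of $k$ is finite. If this inner integral is taken against the Weyl-integration weight $\Omega_\lambda(H_\lambda)\,dH_\lambda$, the argument is identical to (2): Cauchy--Schwarz plus Proposition \ref{squareint}, recalling that the square-integrability on $M_\lambda$ there is deduced from square-integrability on $\overline{\exp\mathfrak{a}_\lambda^+}$ against $\Omega_\lambda\,dH_\lambda$. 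The hard part will be the purely Lebesgue (unweighted) estimate, where one must compare $dH_\lambda$ with $\Omega_\lambda\,dH_\lambda$ on the cone. The plan there is to split $\mathfrak{a}_\lambda^+$ into $\{\Omega_\lambda\ge 1\}$, where the unweighted integrand is dominated by the weighted one handled above, and $\{\Omega_\lambda<1\}$, which has finite Lebesgue measure — since $\Omega_\lambda(H_\lambda)=\prod_{\beta\in\Delta_\lambda^+}(2\sinh\beta(H_\lambda))^{\dim\mathfrak{g}_\beta}$ can be small only in a region pinched exponentially thin near the walls — and on which $\Gamma_{\lambda,l}(\cdot,v_1^{(j)},w_2)$ and $\Gamma_{\mu,m}(\cdot,v_3^{(i)},w_4)$ are bounded, being real-analytic on $\overline{\exp\mathfrak{a}_\lambda^+}$ and exponentially decaying by the leading-exponent estimate $\mathrm{Re}\,\lambda'_i<0$ for $i\in I_\lambda$ established inside the proof of Proposition \ref{squareint}. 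Adding the two contributions yields the claim.
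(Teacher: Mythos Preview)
Your approach is correct and is built on exactly the same ingredients as the paper's: $K$-finiteness of $v_1,v_3$ to expand $\pi(k)v_1,\pi(k)v_3$ in finite bases with continuous coefficients on $K$, linearity of $\Gamma_{\lambda,l},\Gamma_{\mu,m}$ in the vector slot (Proposition~\ref{Gamma} and Theorem~\ref{asymp2}), and the square-integrability from Proposition~\ref{squareint}. The only organisational difference is that the paper, instead of bounding the inner integral \emph{uniformly} in $k$ as you do, defines the inner integral as a function $h(k)$ and proves it \emph{continuous} on the compact outer domain via the same basis expansion and the reverse triangle inequality; your uniform-bound formulation is a touch more direct but equivalent.

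Your extra care in (1) about the unweighted $dH_\lambda$ versus the Cartan weight $\Omega_\lambda\,dH_\lambda$ is well taken: the paper simply asserts square-integrability on $\mathfrak{a}^+_\lambda$ ``by Proposition~\ref{squareint}'', which literally yields only $L^2(M_\lambda)$, i.e.\ the weighted integral. Two remarks. First, the pointwise exponential decay underlying that proposition (via \cite{Harish-ChandraSupp}, Theorem~4, and \cite{CassMil}, Theorem~7.5, using $\mathrm{Re}\,\lambda'_i<0$ for $i\in I_\lambda$) already gives $|\Gamma_{\lambda,l}(\exp H_\lambda,\cdot,\cdot)|\le C\,e^{(\mathrm{Re}\lambda'-\rho_\lambda)(H_\lambda)}(1+|H_\lambda|)^d$ on $\overline{\mathfrak{a}^+_\lambda}$, from which unweighted $L^2$ (indeed $L^1$) follows immediately without your splitting on $\{\Omega_\lambda\gtrless 1\}$. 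Second, in the application inside Proposition~\ref{ginv} the integral being Fubini'd actually carries the factor $\Omega_\lambda(H_\lambda)$, so only the weighted version of (1) is ever needed. Your splitting argument is sound, but more than the situation requires.
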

\begin{proof} To prove (1), we begin by observing that, for a fixed element $k$ of $K$, the functions $\Gamma_{\lambda,l}(\text{exp}H_{\lambda},\pi(k)v_{1},v_{2})$ and $\Gamma_{\mu,m}(\text{exp}H_{\lambda},\pi(k)v_{3},v_{4})$ are square-integrable on $\mathfrak{a}^{+}_{\lambda}$ by Proposition \ref{squareint}.
Therefore, we have 
\begin{equation*}
\int_{\mathfrak{a}^{+}_{\lambda}}|\Gamma_{\lambda,l}(\text{exp}H_{\lambda},\pi(k)v_{1},w_{2})\overline{\Gamma_{\mu,m}(\text{exp}H_{\lambda},\pi(k)v_{3},w_{4})}|\,dm_{\lambda}<\infty.
\end{equation*}
Hence, we can define the function 
\begin{equation*}
h\colon K\longrightarrow \mathbb{R}_{\geq 0}, \text{ }h(k)=\int_{\mathfrak{a}^{+}_{\lambda}}|\Gamma_{\lambda,l}(\text{exp}H_{\lambda},\pi(k)v_{1},w_{2})\overline{\Gamma_{\mu,m}(\text{exp}H_{\lambda},\pi(k)v_{3},w_{4})}|\,dH_{\lambda}
\end{equation*}
and the result will follow if we establish the continuity of $h$. The $K$-finiteness of $v_{1}$ and $v_{3}$ implies the existence of finitely many $K$-finite vectors $v^{(1)}_{1},\dots,v_{1}^{(p)}$ and finitely many $K$-finite vectors $v^{(1)}_{3},\dots,v_{3}^{(q)}$ such that 
\begin{equation*}
\pi(k)v_{1}=\sum^{p}_{i=1}a_{i}(k)v^{(i)}_{1}, \text{ and }\pi(k)v_{3}=\sum^{q}_{j=1}b_{j}(k)v^{(j)}_{3}
\end{equation*}
for continuous complex-valued functions $a_{i}$ and $b_{j}$.  
Let $k_{0}\in K$. Then 
\begin{equation*}
|h(k)-h(k_{0})|
\end{equation*}
is majorised by the integral over $\mathfrak{a}^{+}_{\lambda}$ of 
\[\begin{split}
||\Gamma_{\lambda,l}(\text{exp}H_{\lambda},\pi(k)v_{1},w_{2})\overline{\Gamma_{\mu,m}(\text{exp}H_{\lambda},\pi(k)v_{3},w_{4})}|&\cr
-|\Gamma_{\lambda,l}(\text{exp}H_{\lambda},\pi(k_{0})v_{1},w_{2})&\overline{\Gamma_{\mu,m}(\text{exp}H_{\lambda},\pi(k_{0})v_{3},w_{4})}||.
\end{split}\]
By reverse triangle inequality, the integrand is majorised by 
\[\begin{split}
|\Gamma_{\lambda,l}(\text{exp}_{\lambda},\pi(k)v_{1},w_{2})\overline{\Gamma_{\mu,m}(\text{exp}H_{\lambda},\pi(k)v_{3},w_{4})}&\cr
-\Gamma_{\lambda,l}(\text{exp}H_{\lambda},\pi(k_{0})v_{1},w_{2})&\overline{\Gamma_{\mu,m}(\text{exp}H_{\lambda},\pi(k_{0})v_{3},w_{4})}|,
\end{split}\]
which, in turn, is less than or equal to 
\begin{equation*}
\sum^{p}_{i=1}\sum^{q}_{j=1}|a_{i}(k)\overline{b_{j}(k)}-a_{i}(k_{0})\overline{b_{j}(k_{0})}||\Gamma_{\lambda,l}(\text{exp}H_{\lambda},v^{(i)}_{1},w_{2})\overline{\Gamma_{\mu,m}(\text{exp}H_{\lambda},v_{3}^{(j)},w_{4})}|.
\end{equation*}
We obtained 
\[\begin{split}
|h(k)-h(k_{0})|\leq \sum^{p}_{i=1}\sum^{q}_{j=1}|a_{i}(k)\overline{b_{j}(k)}-a_{i}(k_{0})\overline{b_{j}(k_{0})}&\cr
|\int_{\mathfrak{a}^{+}_{\lambda}}|\Gamma_{\lambda,l}(\text{exp}H_{\lambda},v^{(i)}_{1},w_{2})&\overline{\Gamma_{\mu,m}(\text{exp}H_{\lambda},v_{3}^{(j)},w_{4})}|\,dH_{\lambda},
\end{split}\]
and the continuity follows from the continuity of the $a_{i}$'s and $b_{j}$'s. \\ 

For (2), we begin by observing that for fixed $k\in K$, the functions $\Gamma_{\lambda,l}(m_{\lambda},\pi(k)v_{1},w_{2})$ and $\Gamma_{\mu,m}(m_{\lambda},\pi(k)v_{3},w_{4})$ are square-integrable on $M_{\lambda}$ by Proposition \ref{squareint}. Therefore, we have 
\begin{equation*}
\int_{M_{\lambda}}|\Gamma_{\lambda,l}(m_{\lambda},\pi(k)v_{1},w_{2})\overline{\Gamma_{\mu,m}(m_{\lambda},\pi(k)v_{3},w_{4})}|\,dm_{\lambda}<\infty.
\end{equation*}
Hence, we can define the function 
\begin{equation*}
h\colon K\longrightarrow \mathbb{R}_{\geq 0}, \text{ }h(k)=\int_{M_{\lambda}}|\Gamma_{\lambda,l}(m_{\lambda},\pi(k)v_{1},w_{2})\overline{\Gamma_{\mu,m}(m_{\lambda},\pi(k)v_{3},w_{4})}|\,dm_{\lambda}.
\end{equation*}
Arguing as for (1), we obtain that $h$ is continuous.\\

By the right-invariance of the Haar measure on $M_{\lambda}$ and since 
\begin{equation*}
\Gamma_{\lambda,l}(m_{\lambda},\pi(\xi k)v_{1},w_{2})=\Gamma_{\lambda,l}(m_{\lambda}\xi,\pi(k)v_{1},w_{2})
\end{equation*}
for every $\xi\in K_{\lambda}$ (and similarly for the $\Gamma_{\mu,m}$-term), the function $h$ is invariant under multiplication on the left by elements in $K_{\lambda}$ and it therefore descends to a continuous function on $K_{\lambda}\backslash K$, concluding the proof of (2).\\

For (3), given a fixed $H_{\lambda}\in \mathfrak{a}^{+}_{\lambda}$ the function 
\begin{equation*}
K_{\lambda}\longrightarrow \mathbb{C},\text{ } \xi\mapsto \Gamma_{\lambda,l}(\text{exp}H_{\lambda},\pi(\xi k)v_{1},w_{2})
\end{equation*}
is continuous. Indeed, let $\xi_{0}\in K_{\lambda}$. Since $\pi(k)v_{1}$ is $K$-finite, it is in particular $K_{\lambda}$-finite. Hence, there exist finitely many $K_{\lambda}$-finite vectors $v^{(1)}_{1},\dots,v^{(r)}_{1}$ such that 
\begin{equation*}
\pi(\xi)\pi(k)v=\sum^{r}_{i=1}c_{i}(\xi)v^{(i)}_{1},
\end{equation*}
where each $c_{i}$ is a complex-valued continuous function on $K_{\lambda}$. Therefore, the quantity 
\begin{equation*}
|\Gamma_{\lambda,l}(\text{exp}H_{\lambda},\pi(\xi k)v_{1},w_{2})-\Gamma_{\lambda,l}(\text{exp}H_{\lambda},\pi(\xi_{0}k)v_{1},w_{2})|
\end{equation*}
is bounded by 
\begin{equation*}
\sum^{r}_{i=1}|c_{i}(\xi)-c_{i}(\xi_{0})||\Gamma_{\lambda,l}(\text{exp}H_{\lambda},v^{(i)}_{1},w_{2})|
\end{equation*}
and the claim follows from the continuity of the $c_{i}$'s. \\

The same argument shows that, for fixed $H_{\lambda}\in\mathfrak{a}^{+}_{\lambda}$, the function 
\begin{equation*}
K_{\lambda}\longrightarrow \mathbb{C},\text{ } \xi\mapsto \Gamma_{\mu,m}(\text{exp}H_{\lambda},\pi(\xi )v_{3},w_{4})
\end{equation*}
is continuous and it follows that 
\begin{equation*}
\int_{K_{\lambda}}|\Gamma_{\lambda,l}(\text{exp}H_{\lambda},\pi(\xi k)v_{1},w_{2})\overline{\Gamma_{\mu,m}(\text{exp}H_{\lambda},\pi(\xi k)v_{3},w_{4})}|\,d\xi<\infty.
\end{equation*}
Hence, we can define the function 
\begin{equation*}
f\colon K\rightarrow \mathbb{R}_{\geq 0},\text{ } f(k)=\int_{K_{\lambda}}|\Gamma_{\lambda,l}(\text{exp}H_{\lambda},\pi(\xi k)v_{1},w_{2})\overline{\Gamma_{\mu,m}(\text{exp}H_{\lambda},\pi(\xi k)v_{3},w_{4})}|\,d\xi
\end{equation*}
and argue as in the proof of (2).
\end{proof}

We can now complete the strategy outlined in the Introduction. For fixed $v_{2},v_{4}\in H_{K}$, we define 
\begin{equation*}
A_{v_{2},v_{4}}:=D(\cdot,v_{2},\cdot,v_{4}),
\end{equation*}
which is linear in the first variable and conjugate linear in the second. For fixed $v_{1},v_{3}\in H_{K}$, we define 
\begin{equation*}
B_{v_{1},v_{3}}:=D(v_{1},\cdot,v_{3},\cdot),
\end{equation*}
which is conjugate-linear in the first variable and linear in the second.\\

\begin{thm}\label{main3} Let $G$ be a connected, semisimple Lie group with finite centre. Let $(\pi,H)$ be a tempered, irreducible representation of $G$. Then there exists $\textbf{f}(\pi)\in\mathbb{R}_{>0}$ such that, for all $v_{1},v_{2},v_{3},v_{4}\in H_{K}$, we have 
\begin{equation*}
\lim_{r\to\infty}\frac{1}{r^{\textbf{d}(\pi)}}\int_{G_{<r}}\langle \pi(g) v_{1},v_{2}\rangle \overline{\langle \pi(g)v_{3},v_{4}\rangle}\,dg=\frac{1}{\textbf{f}(\pi)}\langle v_{1},v_{3}\rangle \overline{\langle v_{2},v_{4}\rangle}.
\end{equation*}
\end{thm}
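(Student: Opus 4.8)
The plan is to deduce Theorem~\ref{main3} from Proposition~\ref{ginv} by running the classical derivation of Schur's orthogonality relations, with the $(\mathfrak{g},K)$-module version of Schur's lemma (Corollary~\ref{Schur}) in place of the operator-theoretic one. Being tempered, $(\pi,H)$ is unitary, and by Theorem~\ref{HC} it is admissible; by part~(2) of Theorem~\ref{kfin} together with the polarisation identity the form
\[
D(v_1,v_2,v_3,v_4):=\lim_{r\to\infty}\frac{1}{r^{\textbf{d}(\pi)}}\int_{G_{<r}}\phi_{v_1,v_2}(g)\,\overline{\phi}_{v_3,v_4}(g)\,dg
\]
is well defined on $H_K\times H_K\times H_K\times H_K$, linear in the first and fourth arguments and conjugate-linear in the second and third.

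First I would fix $v_2,v_4\in H_K$ and look at $A_{v_2,v_4}(v_1,v_3):=D(v_1,v_2,v_3,v_4)$, a sesquilinear form on $H_K$ that is linear in $v_1$ and conjugate-linear in $v_3$. The first identity of Proposition~\ref{ginv} is precisely the statement that $A_{v_2,v_4}(\dot\pi(X)v_1,v_3)=-A_{v_2,v_4}(v_1,\dot\pi(X)v_3)$ for every $X\in\mathfrak{g}$, i.e.\ that $A_{v_2,v_4}$ is $\mathfrak{g}$-invariant; by Proposition~\ref{ginvkinv} it is then $(\mathfrak{g},K)$-invariant, and Corollary~\ref{Schur} supplies a scalar $c(v_2,v_4)\in\mathbb{C}$ with
\[
D(v_1,v_2,v_3,v_4)=c(v_2,v_4)\,\langle v_1,v_3\rangle\qquad\text{for all }v_1,v_3\in H_K.
\]
Symmetrically, I would fix $v_1,v_3\in H_K$ and apply the same reasoning to $\widetilde B_{v_1,v_3}(v_4,v_2):=D(v_1,v_2,v_3,v_4)$ (this is $B_{v_1,v_3}$ with its two arguments interchanged, so it is linear in $v_4$ and conjugate-linear in $v_2$): the second identity of Proposition~\ref{ginv} makes it $\mathfrak{g}$-invariant, Proposition~\ref{ginvkinv} upgrades this to $(\mathfrak{g},K)$-invariance, and Corollary~\ref{Schur} gives $c'(v_1,v_3)\in\mathbb{C}$ with $D(v_1,v_2,v_3,v_4)=c'(v_1,v_3)\,\overline{\langle v_2,v_4\rangle}$ for all $v_2,v_4\in H_K$.

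Comparing the two formulas, $c(v_2,v_4)\langle v_1,v_3\rangle=c'(v_1,v_3)\overline{\langle v_2,v_4\rangle}$ identically; taking any fixed $v_1=v_3\ne 0$ forces $c(v_2,v_4)=\kappa\,\overline{\langle v_2,v_4\rangle}$, where the constant $\kappa:=c'(v_1,v_1)/\|v_1\|^2$ cannot depend on the choice of $v_1$ because the left-hand side does not. Hence $D(v_1,v_2,v_3,v_4)=\kappa\,\langle v_1,v_3\rangle\,\overline{\langle v_2,v_4\rangle}$. Evaluating at $v_3=v_1$, $v_4=v_2$ with $v_1,v_2\ne 0$ gives $\kappa\,\|v_1\|^2\,\|v_2\|^2=C(v_1,v_2)$, the positive constant of part~(2) of Theorem~\ref{kfin}; thus $\kappa>0$, and setting $\textbf{f}(\pi):=\kappa^{-1}$ finishes the proof.

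Since all the analytic difficulty has been concentrated in Proposition~\ref{ginv}, I do not anticipate a genuine obstacle at this stage: the only things to watch are keeping track of the (anti)linearity conventions so that Corollary~\ref{Schur} is invoked on the correct pair of slots each time, and noting that the comparison step is not vacuous — which is guaranteed by the strict positivity of $C(v_1,v_2)$, and which simultaneously delivers $\textbf{f}(\pi)\in\mathbb{R}_{>0}$.
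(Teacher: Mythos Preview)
Your proposal is correct and follows essentially the same route as the paper's own proof. The only cosmetic discrepancy is that Proposition~\ref{ginv} in the paper states only the first identity; the second identity you invoke for the $B$-form is derived from it via Lemma~\ref{redginv}, which the paper cites alongside Proposition~\ref{ginv} at that step.
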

\begin{proof} Fix $v_{2},v_{4}\in H_{K}$. By Proposition \ref{ginv}, we can apply Corollary~\ref{Schur} to the form $A_{v_{2},v_{4}}$. Hence there exists $c_{v_{2},v_{4}}\in\mathbb{C}$ such that for all $v_{1},v_{3}\in H_{K}$ we have 
\begin{equation*}
A_{v_{2},v_{4}}(v_{1},v_{3})=c_{v_{2},v_{4}}\langle v_{1},v_{3}\rangle.
\end{equation*}
Similarly, fixing $v_{1},v_{3}\in H_{K}$, by Proposition \ref{ginv} and Lemma \ref{redginv} there exists a $d_{v_{1},v_{3}}\in\mathbb{C}$ such that 
\begin{equation*}
\overline{B_{v_{3},v_{1}}(v_{4},v_{2})}=d_{v_{1},v_{3}}\langle v_{4},v_{2}\rangle,
\end{equation*}
since the left-hand side is conjugate-linear in the first variable. Hence, since 
\begin{equation*}
\overline{B_{v_{3},v_{1}}(v_{4},v_{2})}=B_{v_{1},v_{3}}(v_{2},v_{4}),
\end{equation*}
we obtain 
\begin{equation*}
B_{v_{1},v_{3}}(v_{2},v_{4})=d_{v_{1},v_{3}}\overline{\langle v_{2},v_{4}\rangle}
\end{equation*}
By definition, we have 
\begin{equation*}
D(v_{1},v_{2},v_{3},v_{4})=A_{v_{2},v_{4}}(v_{1},v_{3})=B_{v_{1},v_{3}}(v_{2},v_{4}),
\end{equation*}
so, for a vector $v_{0}\in H_{K}$ of norm $1$, using (2) of Theorem 1.2, we obtain a real number $C(v_{0},v_{0})>0$ such that 
\begin{equation*}
D(v_{0},v_{0},v_{0},v_{0})=C(v_{0},v_{0})=c_{v_{0},v_{0}}=d_{v_{0},v_{0}}.
\end{equation*}
Computing $D(v_{1},v_{0},v_{3},v_{0})$, we have 
\begin{equation*}
d_{v_{1},v_{3}}=c_{v_{0},v_{0}}\langle v_{1},v_{3}\rangle.
\end{equation*}
Therefore, we obtained 
\begin{equation*}
D(v_{1},v_{2},v_{3},v_{4})=c_{v_{0},v_{0}}\langle v_{1},v_{3}\rangle\overline{\langle v_{2},v_{4}\rangle},
\end{equation*}
showing that $\textbf{f}(\pi):=\displaystyle\frac{1}{C(v_{0},v_{0})}$ does not depend on the choice of $v_{0}$, as required.
\end{proof}

\end{document}